\documentclass[12pt]{amsproc}
\usepackage{marginnote}
\usepackage{mathrsfs}
\usepackage{amsmath,amssymb,amsthm,setspace,mathtools,enumerate,comment}
\usepackage{blindtext}
\usepackage{subfiles} 
\usepackage{textcomp}
\usepackage{marginnote}
\usepackage[myheadings]{fullpage}

\usepackage[backref=page]{hyperref}
\usepackage{colonequals}
\hypersetup{
    colorlinks=true,
    citecolor=red!80!blue, 
    linkcolor=blue,
    }    

\usepackage[capitalise, nameinlink]{cleveref}
\usepackage{graphicx}
\usepackage{tikz,tikz-cd}
\usetikzlibrary{arrows}
\usetikzlibrary{shapes}

\newtheoremstyle{theoremdd}
  {10pt}
  {10pt}
  {\itshape}
  {}
  {\sc}
  {. }
  { }
  {\thmname{#1}\thmnumber{ #2}\thmnote{ (#3)}}
\newtheoremstyle{definitiondd}
  {10pt}
  {10pt}
  {}
  {}
  {\sc}
  {. }
  { }
  {\thmname{#1}\thmnumber{ #2}\thmnote{ (#3)}}

\theoremstyle{theoremdd}
     \newtheorem{theorem}{Theorem}[section]
     \newtheorem{conjecture}{Conjecture}[section]
     \newtheorem{lemma}[theorem]{Lemma}
     \newtheorem{proposition}[theorem]{Proposition}
    \newtheorem{corollary}[theorem]{Corollary}
\theoremstyle{definitiondd}
    \newtheorem{definition}[theorem]{{Definition}}
    \newtheorem{definitions}[theorem]{{Definitions}}
    \newtheorem{question}{Question}

    \newtheorem{example}[theorem]{{Example}}
    
    \newtheorem{examples}[theorem]{Examples}
    \newtheorem{remark}[theorem]{{Remark}}
\theoremstyle{remark}
    \newtheorem*{remarks}{{Remarks}}

\newcommand{\F}{\ensuremath{{\mathbb{F}}}}

\newcommand{\Tr}{\ensuremath{{\mathrm{Tr}}}}
\newcommand{\Hom}{\ensuremath{\mathrm{Hom}}}

\newcommand{\m}{\ensuremath{\mathfrak{m}}}

\newcommand{\Z}{\ensuremath{\mathbb{Z}}}
\newcommand{\Q}{\ensuremath{\mathbb{Q}}}

\newcommand{\GL}{\ensuremath{\mathrm{GL}}}

\newcommand{\rhobar}{\ensuremath{\overline{\rho}}}

\newcommand{\End}{\ensuremath{\mathrm{End}}}

\newcommand{\semis}{\ensuremath{\mathrm{ss}}}

\newcommand{\GSp}{\ensuremath{\mathrm{GSp}}}

\newcommand{\SL}{\ensuremath{\mathrm{SL}}}

\newcommand{\scrS}{\ensuremath{\mathscr{S}}}
\newcommand{\JH}{\ensuremath{\mathrm{JH}}}
\newcommand{\Lambdabar}{\ensuremath{\bar{\Lambda}}}

\newcommand{\CalG}{\ensuremath{\mathcal{G}}}

\title{Symplectic extension graphs and Ribet's lemma}
\author{Aniruddha Sudarshan}
\address{Department of Mathematics, Temple University, Philadelphia, PA 19122,
USA}
\email{aniruddha.sudarshan@temple.edu}
\subjclass[2020]{Primary 20G25; Secondary 51E24, 20C11}
\keywords{Ribet's lemma, Bruhat--Tits buildings, Galois representations}
\usepackage{microtype}
\begin{document}

    \begin{abstract}
    Let $p$ be an odd prime, and let $\rho\colon G\to \GSp_{2n}(\Z_p)$ be an irreducible symplectic residually multiplicity free representation.
    We define the symplectic extension graph $\Gamma_\rho^\GSp$ associated with $\rho$ to be the directed graph whose vertices are the residual factors of $\rho$ and a directed edge between two vertices corresponds to a nonsplit extension between the vertices realized by $\rho$. Using the Euclidean Bruhat--Tits building associated with $\GSp_{2n}(\Q_p)$,
    under some assumptions, we prove the existence of certain paths in $\Gamma_\rho^\GSp$ and  show that $\Gamma_\rho^\GSp$ is a disjoint union of nonempty strongly connected subgraphs. Moreover, we provide examples to show that the result we obtain is optimal.
\end{abstract}
	\maketitle
    \tableofcontents

\section{Introduction}

    In his remarkable paper, Ribet proves the converse to Herbrand's theorem by using certain congruences between modular forms \cite{ribetunram}. A crucial step in his proof is the construction of a nonsplit extension of characters, usually called \textit{Ribet's lemma}. Throughout this paper, we fix an  odd prime $p$.
    Let $V$ be a $2$-dimensional $\Q_p$-vector space,  $\rho \colon G\to \GL(V)$ be an irreducible representation that admits a $G$-stable $\Z_p$-lattice in $V$, and let $\rho$ be residually reducible with factors $\chi_1$ and $\chi_2$. Ribet shows the existence of $G$-stable lattices $\Lambda_1$ and $\Lambda_2$ of $V$ such that $\Lambda_i/p\Lambda_i$ is a nonsplit extension of $\chi_i$ by $\chi_j$ for all $1\leq i\neq j\leq 2$ \cite[Proposition 2.1]{ribetunram}.

    The Galois representation associated with a cuspidal elliptic Hecke eigenform that is congruent to an Eisenstein series modulo $p$ satisfies Ribet's hypothesis. 
    Using the local properties (such as \textit{ordinary} at $p$) of such a Galois representation, Ribet constructs nontrivial unramified cohomology classes.
    This method of using congruences between modular forms to construct nontrivial elements in certain Selmer groups is often called \textit{Ribet's method}.
    
\subsection{Result of Bella\"iche}
Bella\"iche attributes to Serre a proof of \cite[Proposition 2.1]{ribetunram} using the Bruhat--Tits tree associated with $\GL_2(\Q_p)$ \cite{BellaicheRibet}.
Generalizing this method, for all $n\geq 2$, Bella\"iche obtains a $\GL_n$-version of \cite[Proposition 2.1]{ribetunram} using the Bruhat--Tits building associated with $\GL_n(\Q_p)$ \cite[Theorem 1]{BellaicheRibetslemmaPadova}.

Let $V$ be a finite dimensional vector space over $\Q_p$, and let $\rho \colon G\to \GL(V)$ be a $p$-adic representation that admits a $G$-stable $\Z_p$-lattice in $V$. For example, this occurs when $G$ is compact and $\rho$ is continuous \cite[Corollary 1.1]{BellaicheRibet}. For a $G$-stable lattice $\Lambda$, let $\rhobar_\Lambda$ be the residual representation $\Lambda/p\Lambda$. Although the representation $\rhobar_\Lambda$ depends on $\Lambda$, its semisimplification $\rhobar^{\mathrm{ss}}_\Lambda$ is independent of the choice of $\Lambda$ \cite[Proposition 1.2]{BellaicheRibet}.
        The \textit{residual factors} of a representation $\rho\colon G\to \GL(V)$ that admits a $G$-stable lattice $\Lambda$ are defined to be the irreducible Jordan--H\"older factors of $\bar{\rho}_\Lambda^{\mathrm{
        ss
        }}$. We assume the residual factors of $\rhobar$ are absolutely irreducible.  Let $\rho_1,\dots, \rho_d$ be the isomorphism classes of the residual factors of $\rho$.

\begin{definition}
    The \textit{extension graph} $\Gamma_\rho^{\GL}$ is the directed graph on the set of vertices  $\{\rho_1,\dots,\rho_d\}$, where an edge from $\rho_i$ to $\rho_j$ exists if there exists a $G$-stable lattice $\Lambda$ in $V$ such that $\rhobar_\Lambda$ admits a nonsplit extension of $\rho_i$ by $\rho_j$ as a subquotient. 
\end{definition}
A directed graph is \textit{strongly connected} if there is a directed path between any two vertices.

\begin{theorem}\label{prop:Bellaichemain}\cite[Theorem 1]{BellaicheRibetslemmaPadova}
    If $\rho$ is irreducible, then $\Gamma_\rho^\GL$ is strongly connected. 
\end{theorem}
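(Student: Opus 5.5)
We argue by contradiction. Suppose $\Gamma_\rho^\GL$ is not strongly connected. Then the vertices split into two nonempty sets $A$ and $B$ with no edge from any vertex of $A$ to any vertex of $B$. To fix the direction convention: an edge $\rho_i\to\rho_j$ means some lattice realizes a nonsplit extension with $\rho_j$ as sub and $\rho_i$ as quotient. One may take $A$ to be a terminal strongly connected component of the condensation, so that every edge leaving $A$ stays in $A$. The aim is to produce a $G$-stable lattice $\Lambda$ such that $\rhobar_\Lambda$ is an extension of two pieces with factors in $A$ and $B$ respectively, which then splits off. Iterating the splitting through a chain of lattices should yield a $G$-stable $\Q_p$-subspace of $V$, contradicting irreducibility.

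The key steps, in order, are as follows.

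\begin{enumerate}[(1)]
\item Set up the Bruhat--Tits building of $\GL(V)$. Its vertices are homothety classes of lattices, and $G$ acts on it. The set $X$ of $G$-fixed vertices is nonempty, convex (closed under intersections and sums, hence under the lattice operations), and bounded, since $\rho$ is irreducible; see \cite[Proposition 1.2]{BellaicheRibet}-style arguments.
\item For $\Lambda\in X$, decompose $\rhobar_\Lambda$ using its socle filtration. If $\rhobar_\Lambda$ has a nonsplit subquotient extension of an $A$-factor by a $B$-factor, this gives an edge from $A$ to $B$, which is excluded. Using multiplicity-free-type bookkeeping, or more generally Ext-vanishing between the relevant Loewy layers, one shows that $\rhobar_\Lambda$ has a $G$-stable subspace $W_\Lambda$ whose Jordan--H\"older factors are exactly the $B$-factors. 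Equivalently, $\rhobar_\Lambda = W_\Lambda \oplus W'_\Lambda$, with the factors separated according to $A$ and $B$.
\item Consider the lattice $\Lambda' = \pi^{-1}(W_\Lambda)$, where $\pi\colon \Lambda\to\Lambda/p\Lambda$ is the reduction map. Then $p\Lambda\subset\Lambda'\subset\Lambda$ and $\Lambda'$ is $G$-stable. By the splitting, the new residual representation again separates the $A$ and $B$ factors, but the $A$-part of $\Lambda$ has been multiplied by $p$ relative to the $B$-part. Iterating this produces an infinite sequence of distinct vertices of $X$: they move along a geodesic ray in a fixed direction, because each step scales by the same idempotent decomposition. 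This contradicts the boundedness of $X$.
\end{enumerate}

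Alternatively, for step (3), pass to the limit. The intersection $\bigcap_k (\Lambda^{(k)}\otimes\Q_p)$ and the projectors lift to a $G$-stable idempotent in $\End_{\Z_p[G]}$ of the limit lattice, giving a $G$-stable proper subspace.

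The main obstacle is step (2). The factors in $A$ and $B$ may be interleaved in $\rhobar_\Lambda$ in a complicated way, and a nonsplit extension through intermediate factors does not directly give an edge between the endpoints. My first attempt would be induction on the length of a minimal subquotient of $\rhobar_\Lambda$ that contains a $B$-factor below an $A$-factor and does not split. Minimality forces a length-two nonsplit subquotient of an $A$-factor by a $B$-factor. I expect, however, that this only works after replacing $\Lambda$ by a suitably chosen neighbor in $X$, with the choice made by convexity in the building, as Bella\"iche does. If the Ext-based argument fails, I would instead choose $\Lambda\in X$ extremal for a height function (such as the $B$-part length in the filtration) and derive the edge directly from extremality.
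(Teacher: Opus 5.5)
Your overall plan (boundedness of the stable set, a decomposition forced by the missing edges, then iteration to an unbounded family of stable vertices) has the shape of Bella\"iche's building argument, which the paper cites and imitates in \cref{prop:existssymplecticwithnoneighbourswithspecificJH}. However, both substantive steps have genuine problems as written.

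\textbf{Step (2) is misstated.} With your convention, ``no edge from $A$ to $B$'' excludes nonsplit subquotients $0\to\rho_b\to E\to\rho_a\to 0$ with $a\in A$, $b\in B$. What this yields is a $G$-stable subspace $U_\Lambda\subseteq\rhobar_\Lambda$ whose Jordan--H\"older factors are exactly the $A$-factors, with only $B$-factors in the quotient. It yields neither a subspace carrying the $B$-factors nor a direct sum. For example, suppose $\rhobar_\Lambda$ is a nonsplit extension with $\rho_a$ as sub and $\rho_b$ as quotient. This realizes an edge $B\to A$, so it is allowed by your hypothesis, yet it has neither property.

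On the other hand, the correct one-sided statement holds for every stable lattice, so you do not need a special neighbour. Take $U_\Lambda$ maximal among submodules with only $A$-factors. Suppose the quotient contained an $A$-factor. Then the lowest socle layer of the quotient containing some $\rho_a$ gives a nonsplit extension of $\rho_a$ by a module $U$ with only $B$-factors. Now peel off one simple $U_1\subseteq U$ at a time: if the pushout to $U/U_1$ splits, the preimage of the complement is an extension of $\rho_a$ by $U_1$, and it is nonsplit because the original extension is. This produces a length-two nonsplit subquotient, i.e.\ an edge $A\to B$. This is the $\GL$-version of \cref{prop:equivnonexistneighexitstext} (Bella\"iche's Proposition 3.4.5), applied with $\varepsilon$ the indicator of $A$.

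\textbf{Step (3) is asserted, not proved, and it is the heart of the matter.} ``Each step scales by the same idempotent decomposition'' is not available to you. A $G$-equivariant idempotent lifting the residual $A/B$ decomposition would make $\rho$ reducible outright. The idempotents in $\rho(\Z_p[G])$ that put all stable vertices in one apartment (Bella\"iche--Graftieaux) need residual multiplicity-freeness, which the theorem does not assume, and you would still need that equally coloured edges are parallel.

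The missing observation is the following. Set $\Lambda^{(k+1)}=\pi^{-1}(U_{\Lambda^{(k)}})$ and work inside $\Lambda^{(k)}/p\Lambda^{(k)}$.
\begin{itemize}
\item The subspace $\Lambda^{(k+1)}/p\Lambda^{(k)}$ is $G$-stable with only $A$-factors.
\item The subspace $p\Lambda^{(k-1)}/p\Lambda^{(k)}\simeq\Lambda^{(k-1)}/\Lambda^{(k)}$ is $G$-stable with only $B$-factors, by step (2) applied at $\Lambda^{(k-1)}$.
\item Their Jordan--H\"older classes are disjoint, so they meet in $0$; by a dimension count they are complementary.
\end{itemize}
Lifting an adapted basis then puts $[\Lambda^{(k-1)}],[\Lambda^{(k)}],[\Lambda^{(k+1)}]$ on a straight line with equal steps in one apartment. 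Hence the path is a local geodesic, therefore a geodesic ray in the CAT(0) building. Its vertices are unbounded, contradicting step (1). This complementarity is also exactly why $\rhobar_{\Lambda^{(k)}}$ does split for $k\geq 1$: the splitting you wanted holds along the sequence, not at the starting lattice.

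Under \eqref{assume:resmultfree} you could instead argue as the paper does. All stable vertices lie in one apartment, equally coloured edges are parallel, and a stable vertex extremal in the relevant direction has no stable neighbour of colour $\sum_{a\in A}\rho_a$.

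Finally, your limit alternative fails as written, since $\Lambda^{(k)}\otimes\Q_p=V$ for every $k$. The intended $\bigcap_k\Lambda^{(k)}$ has rank strictly between $0$ and $\dim V$ only once the straight-line structure above is established.
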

Under the assumption that each $\rho_i$ occurs with multiplicity one in $\rhobar^{\mathrm{ss}}$, Bella\"iche--Graftieaux show that $\rho$ is irreducible if and only if $\Gamma_\rho^\GL$ is strongly connected \cite[Proposition 3.4.1]{BellaicheGraftieaux}. Thus, \cref{prop:Bellaichemain} is optimal under the given hypothesis. 
When $d=2$, \cref{prop:Bellaichemain} is a reformulation of \cite[Proposition 2.1]{ribetunram}. 
Using Ribet's method, \cref{prop:Bellaichemain} has some arithmetic applications (for example, \cite{BellaicheChenevierU3} and  \cite{bergerklosinirred}).  
\subsection{Our results}
We define and study a symplectic variant of Bella\"iche's extension graph. 
Let $V$ be a $\Q_p$-vector space with a symplectic (see \cref{def:sympairing}) bilinear form $\phi$.
\begin{definition}
    A $\Z_p$-lattice $\Lambda$ in $V$ is a \textit{symplectic lattice} if the bilinear form on $\Lambda/p\Lambda$ induced\footnote{See the discussion before \cref{def:symplattice}} by $\phi$ is symplectic. If $\Lambda$ is symplectic and $\phi(\Lambda\times \Lambda)=\Z_p$, then we call $\Lambda$ \textit{primitive}.
\end{definition}
Let $\GL(V,\phi)$ be the subgroup of  $ \GL(V)$ consisting of elements $g$ such that,  \[\phi(gv,gw)=\lambda(g)\phi(v,w)\quad \textnormal{for all }v,w\in V\textnormal{ and for some }\lambda(g)\in \Q_p^\times.\] A representation $\rho \colon G\to \GL(V)$ is \textit{symplectic} if $\rho(G)\subseteq \GL(V,\phi)$.  

Let $\rho \colon G\to \GL(V,\phi)$ be an irreducible symplectic representation that admits a $G$-stable primitive lattice. We note that $\rho$ stabilizes a primitive lattice if and only if there exists a (symplectic) basis of $V$ with respect to which $\rho(G)\subseteq \GSp_{2n}(\Z_p)$. We assume the residual factors of $\rhobar$ are absolutely irreducible. Let $\rho_1,\dots, \rho_d$ be the isomorphism classes of the residual factors of $\rho$. 

\begin{definition}
    The \textit{symplectic extension graph} $\Gamma_\rho^{\GSp}$ is the directed graph on the set of vertices $\{\rho_1,\dots,\rho_d\}$ such that an edge from $\rho_i$ to $\rho_j$ exists if there exists a $G$-stable symplectic lattice $\Lambda$ in $V$ such that $\rhobar_\Lambda$ admits a nonsplit extension of $\rho_i$ by $\rho_j$ as a subquotient.  
\end{definition}

We show that $\Gamma_\rho^{\GSp}$ is a \textit{skew symmetric graph} (see \cref{def:skewsymgraph}). 
Under further assumptions on the residual factors of $\rho$, we show the existence of certain paths in $\Gamma_\rho^{\GSp}$ and show that  $\Gamma_\rho^{\GSp}$ is a disjoint union of strongly connected graphs.

To state our results precisely, we introduce some notation. 
For all $g\in G$, we let $\lambda_\rho(g)\colonequals\lambda(\rho(g))$. Since $\rho$ admits a $G$-stable primitive lattice, we see that $\lambda_\rho(G)\subseteq \Z_p^\times$. Let $\tau$ be the $\Z_p$-linear involution of the group ring $\Z_p[G]$ defined on $G$ by $\tau(g)=\lambda_\rho(g)g^{-1}$. For a matrix $M$, we let $M^t$ be its transpose. For a representation $\psi\colon G\to \GL_{2n}(\Z_p)$, we let $\psi^\perp\colonequals (\psi\circ \tau)^t$.
Since $\rho(G)\subseteq \GL(V,\phi)$, there exists a transposition $\sigma$ on $I=\{1,\dots,d\}$ such that
\[
    \rho_{\sigma(i)}\simeq \rho_i^\perp\quad \textnormal{for all }i\in I.    
\] 
\begin{definition}\cite{skewsymmetricgraphs}\label{def:skewsymgraph}
    A directed graph $(V,E)$ is \textit{skew symmetric} if there is a transposition $\sigma$ on $V$ such that $(u,v)\in E$ if and only if $(\sigma(v),\sigma(u))\in E$.
\end{definition} 
\begin{proposition}\label{prop:skeqsymgraph}
    The directed graph $\Gamma_\rho^{\GSp}$ is a skew symmetric graph with respect to the transposition $\sigma$ on $\{\rho_1,\dots,\rho_d\}$ defined by $\sigma(\rho_i)\colonequals \rho_{\sigma(i)}$ for all $i\in I$. 
\end{proposition}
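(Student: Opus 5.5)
We prove the statement by duality: a symplectic lattice $\Lambda$ gives a $G$-equivariant identification of $\rhobar_\Lambda$ with its own twisted dual, and dualizing reverses the direction of a nonsplit extension while replacing each factor $\rho_i$ by $\rho_i^\perp\simeq\rho_{\sigma(i)}$. So suppose there is an edge from $\rho_i$ to $\rho_j$. Then some $G$-stable symplectic lattice $\Lambda$ has $G$-stable sublattices $\Lambda\supseteq L_1\supsetneq L_2\supseteq p\Lambda$ with $L_1/L_2$ a nonsplit extension $0\to\rho_j\to E\to\rho_i\to 0$. We must produce a symplectic lattice, and we take the same $\Lambda$, whose residual representation contains a nonsplit extension of $\rho_{\sigma(j)}$ by $\rho_{\sigma(i)}$.

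First I would set up the duality on $\bar\Lambda=\Lambda/p\Lambda$. Since $\Lambda$ is symplectic, $\phi$ induces a nondegenerate alternating form $\bar\phi$ on $\bar\Lambda$. This holds after rescaling $\phi$ by a power of $p$ so that $\phi(\Lambda\times\Lambda)=\Z_p$, which does not change $\GL(V,\phi)$, $\lambda_\rho$ or $\tau$. The form satisfies $\bar\phi(gv,gw)=\bar\lambda_\rho(g)\bar\phi(v,w)$, with $\lambda_\rho(G)\subseteq\Z_p^\times$ because the rescaled form is primitive on the $G$-stable lattice $\Lambda$. Hence $v\mapsto\bar\phi(v,\cdot)$ is an isomorphism of $\F_p[G]$-modules
\[
\bar\Lambda\simeq\Hom(\bar\Lambda,\F_p),
\]
where $g$ acts on the target by $f\mapsto f\circ\tau(g)$ transposed; this is exactly the operation $\psi\mapsto\psi^\perp$. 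The check is the one-line computation $\bar\phi(gv,w)=\lambda(g)\bar\phi(v,g^{-1}w)$.

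Next, for any $G$-stable subspace $W\subseteq\bar\Lambda$, the orthogonal complement $W^\perp$ is $G$-stable, because $\bar\phi$ is preserved up to the scalar $\lambda$. Using the isomorphism above, $\bar\Lambda/W^\perp\simeq W^\perp$-dual twisted, i.e.\ $\bar\Lambda/W^\perp\simeq (W)^\perp$ in the sense of the $\perp$ operation on representations. Let $A=L_1/p\Lambda\supseteq B=L_2/p\Lambda$, so that $E=A/B$. Then $B^\perp\supseteq A^\perp$, and nondegeneracy gives a perfect $G$-equivariant pairing $A/B\times B^\perp/A^\perp\to\F_p$ twisted by $\lambda$. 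Therefore
\[
B^\perp/A^\perp\simeq E^\perp .
\]
The pullbacks of $B^\perp$ and $A^\perp$ to $\Lambda$ are $G$-stable lattices, and their quotient is the subquotient $E^\perp$ of $\rhobar_\Lambda$.

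Finally, the operation $\perp$ is exact and contravariant, so it sends $0\to\rho_j\to E\to\rho_i\to0$ to
\[
0\to\rho_i^\perp\to E^\perp\to\rho_j^\perp\to 0 .
\]
This sequence is nonsplit: a splitting would dualize back to a splitting of $E$, since applying $\perp$ twice is naturally isomorphic to the identity ($\tau$ is an involution). Since $\rho_i^\perp\simeq\rho_{\sigma(i)}$ and $\rho_j^\perp\simeq\rho_{\sigma(j)}$, the extension $E^\perp$ is a nonsplit extension of $\rho_{\sigma(j)}$ by $\rho_{\sigma(i)}$. This gives an edge from $\rho_{\sigma(j)}$ to $\rho_{\sigma(i)}$. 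The converse direction follows by applying the same argument to that edge and using $\sigma^2=\mathrm{id}$.

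The main obstacle is bookkeeping rather than depth. One must verify that the twisted dual on $\bar\Lambda$ matches the paper's convention $\psi^\perp=(\psi\circ\tau)^t$ after choosing a basis; this comes down to the dual-basis/transpose identification. One must also check that the subquotient $B^\perp/A^\perp$ is realized inside the same symplectic lattice, which holds because everything happens inside $\bar\Lambda$.
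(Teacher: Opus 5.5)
Your proof is correct and takes essentially the same approach as the paper. Both arguments dualize the $G$-stable flag that realizes the extension and then use the self-duality $\rhobar_\Lambda\simeq\rhobar_\Lambda^\perp$ coming from $\bar\phi_\Lambda$. Your $B^\perp/A^\perp$ is exactly the image of the paper's annihilator subquotient $\Lambda_1'/\Lambda_3'$ under $v\mapsto\bar\phi_\Lambda(v,\cdot)$, so you are spelling out the transport step that the paper leaves implicit.
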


Throughout the paper, we usually assume 
\begin{equation}\tag{rmf}\label{assume:resmultfree}
    \rho \textnormal{ is residually multiplicity free.}
\end{equation}
For our main results, we also need an extra assumption:
\begin{equation}\label{assume:dist}
    \tag{dist} 
    \textnormal{There exists $g_0\in G$ such that $\rhobar(g_0)$ has distinct eigenvalues.}
\end{equation}

We now state the main theorem of the paper.  All the results we state for $p$-adic representations hold true, with the same proof, when we replace $\Z_p$ by a complete discrete valuation ring with finite residue field of characteristic $p$. 
\begin{theorem}\label{thm:ourmain}
Let $(V,\phi)$ be a symplectic $\Q_p$-vector space.
Let $\rho \colon G\to \GL(V,\phi)$ be an irreducible symplectic representation that admits a $G$-stable primitive lattice, with absolutely irreducible residual factors $\rho_1,\dots, \rho_d$. Let $\sigma$ be the permutation on $\{1,\dots,d\}$ defined by $\rho_{\sigma(i)}\simeq \rho_i^\perp$.
Assume that $\rho$ satisfies \eqref{assume:dist} and \eqref{assume:resmultfree}, and that $\sigma(i)\neq i$ for all $1\leq i\leq d$. 
  Then the directed graph $\Gamma_\rho^{\GSp}$ satisfies the following conditions.
    \begin{enumerate}[$(a)$]

        \item \label{item:main2} For all $1\leq i\leq d$, $\rho_i$ and $\rho_{\sigma(i)}$ lie in the same strongly connected component.     
        \item \label{item:main1} It is a disjoint union of  at most $d/2$ strongly connected graphs.  
    \end{enumerate}
    \label{item:main3} In particular, if $\Gamma_\rho^{\GSp}$ has $d/2$ strongly connected components, then $\Gamma_\rho^{\GSp}$ is the disjoint union of strongly connected graphs with vertices $\{\rho_i,\rho_{\sigma(i)}\}_{i<\sigma(i)}$.
\end{theorem}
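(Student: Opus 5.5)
We plan to follow Bellaïche's strategy in \cref{prop:Bellaichemain}, but to work inside the Bruhat--Tits building of $\GSp_{2n}(\Q_p)$ instead of the $\GL_{2n}$-building. Its vertices are homothety classes of symplectic lattices (up to scaling, lattices $\Lambda$ with $p\Lambda^\vee\subseteq\Lambda\subseteq\Lambda^\vee$). The first step is to describe the set $\mathcal{B}^G$ of $G$-stable points. Because $\rho$ is irreducible and stabilizes a primitive lattice, $\mathcal{B}^G$ is nonempty, bounded, and convex. Hypotheses \eqref{assume:resmultfree} and \eqref{assume:dist} let us diagonalize $\rho(g_0)$ over $\Z_p$ with distinct residual eigenvalues. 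Taking the element of $\Z_p[G]$ that is a polynomial in $g_0$ projecting onto each residual block, we find an apartment $A$ containing $\mathcal{B}^G$. Concretely, $V=\bigoplus_{i} V_i$ with each $V_i$ lifting the $\rho_i$-isotypic part, and every $G$-stable lattice has the form $\bigoplus p^{a_i}L_i$ for fixed $L_i$. The symplectic form pairs $V_i$ with $V_{\sigma(i)}$, since $\rho_{\sigma(i)}\simeq\rho_i^\perp$. Since $\sigma$ has no fixed points, $V_i$ and $V_{\sigma(i)}$ are Lagrangian-dual, so the symplectic condition becomes $a_i+a_{\sigma(i)}=c$ for a constant $c$. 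Thus $G$-stable symplectic lattices correspond to integer vectors $a\in\Z^d$ lying on the "self-dual" affine subspace $\{a_i+a_{\sigma(i)}=\mathrm{const}\}$ and satisfying the $G$-stability inequalities $a_j-a_i\le m_{ij}$. Here $m_{ij}\ge 0$ is the exact valuation such that $p^{m_{ij}}$ bounds the $(i,j)$ block of $\rho(\Z_p[G])$ relative to the $L$'s.

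Next we translate edges into this combinatorial picture, as in Bellaïche. The quotient $\rhobar_\Lambda$ contains a nonsplit extension of $\rho_i$ by $\rho_j$ exactly when, at the point $a$, the inequality $a_j-a_i\le m_{ij}$ is an equality, possibly after passing through intermediate factors with path-type constraints. So an edge $i\to j$ exists if and only if some point $a$ of the symplectic stable polytope $P^{\GSp}$ has $a_j-a_i=m_{ij}$ attained directly. We record that the involution $a\mapsto(c-a_{\sigma(i)})_i$ preserves $P^{\GSp}$ and sends tight $(i,j)$ to tight $(\sigma(j),\sigma(i))$, which recovers \cref{prop:skeqsymgraph}. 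The irreducibility of $\rho$ says that no proper "upward-closed" set $S\subset I$ of factors gives an invariant subspace. In terms of $m$, this means there is no $S$ with $m_{ij}=\infty$ for all $i\in S$, $j\notin S$, because the closure of $\Z_p[G]$ is a generalized matrix algebra with finite entries.

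To prove \eqref{item:main1}, we show that every edge lies on a directed cycle, so that the graph is a disjoint union of strongly connected components. Take a vertex $a$ of $P^{\GSp}$ that is extremal in a direction $e_S=\sum_{i\in S}e_i-\sum_{i\in\sigma(S)}e_i$, which is compatible with the symplectic constraint. Extremality forces tight inequalities leaving $S$. Combining this with the symmetry gives the cycle property: if there were an edge $i\to j$ with no path back, let $S$ be the set of vertices reachable from $j$ and move $a$ in direction $e_S$ (symmetrized) to obtain a contradiction with boundedness. We also check that $\sigma$ restricted to such $S$ behaves correctly. For \eqref{item:main2}, we consider $S$ a strongly connected component not containing $\sigma(i)$ for some $i\in S$. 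If $S\cap\sigma(S)=\emptyset$, then the direction $e_S-e_{\sigma(S)}$ keeps us in $P^{\GSp}$ as long as no edges exit $S$ or enter $\sigma(S)$. Boundedness of $P^{\GSp}$, which follows from irreducibility, would then fail. This is a contradiction, so $\sigma(S)=S$. Each component therefore contains at least the pair $\{i,\sigma(i)\}$, which gives at most $d/2$ components, and the final sentence of the statement follows.

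The main obstacle is the edge-tightness dictionary in the symplectic setting. In $\GL$, Bellaïche may move along any direction $e_S$. Here we may only move along $\sigma$-antisymmetric directions, so we must ensure that such a move still makes a *direct* extension $\rho_i$ by $\rho_j$ appear as a subquotient, rather than only one routed through $\sigma$-partners. We would first try to handle this by reducing to the case where the relevant points are vertices of the symplectic polytope and by using \eqref{assume:dist} to realize subquotients explicitly.
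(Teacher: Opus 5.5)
Your proposal has a genuine gap in the step that drives both (a) and (b). You claim that if no edge leaves $S$ (equivalently, by skew-symmetry, none enters $\sigma(S)$), then the $\sigma$-antisymmetric move $a\mapsto a\pm(e_S-e_{\sigma(S)})$ stays among $G$-stable symplectic lattices. This fails. Under this move, a constraint between an index of $\sigma(S)$ and an index of $S$ changes by $2$. The absence of an edge only says that constraint is not tight, i.e.\ it has slack at least $1$.

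It already fails for $n=1$. Take the Iwahori subgroup $G=\{g\in\GL_2(\Z_p)\,:\,g_{21}\in p\Z_p\}$. Its stable vertices are $[0,0]$ and $[0,1]$. At $[0,0]$ the only realized edge is $\rho_2\to\rho_1$, so nothing leaves $S=\{1\}$. Yet $[0,0]\pm(e_1-e_2)$ is not stable, and the only way forward is the half-step to the special vertex $[0,1]$. So neither ``the direction keeps us in $P^{\GSp}$'' nor ``extremality forces tight inequalities leaving $S$'' is justified.

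Passing to the real polytope does not help. Its extreme points can be non-symplectic vertices (the vertices $\circ$ in \cref{exm:JH4graph}), where tightness says nothing about $\Gamma_\rho^{\GSp}$. By contrast, the obstacle you single out (direct versus routed extensions) is harmless. Tightness is transitive, so a tight pair leaving $S$ yields a covering pair leaving $S$; this is Bella\"iche's Proposition 3.4.5 as used in \cref{prop:equivnonexistneighexitstext}.

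The missing ingredient is the half-step through a $G$-stable Lagrangian. Suppose no edge leaves $\Psi$ and $\Psi\cap\sigma(\Psi)=\varnothing$. Then at each stable symplectic vertex $[\Lambda]$, the $G$-stable subspace of $\Lambda/p\Lambda$ with factors $\Psi$ exists and is isotropic. You must enlarge it to a $G$-stable Lagrangian subspace with factor set $T\supseteq\Psi$, $T\sqcup\sigma(T)=I$. This is where $I_0=\varnothing$ is really used (\cref{prop:maximalGstableisotropicflag}, \cref{prop:GstableinGstablesymplectic}). The corresponding neighbour $a+\mathbf{1}_{\sigma(T)}$ is again a stable symplectic vertex; one alternates between primitive and special vertices. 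The quantity $\sum_{k\in\sigma(\Psi)}a_k-\sum_{k\in\Psi}a_k$ increases by $\#\Psi$ at each step, contradicting finiteness; this is \cref{prop:existssymplecticwithnoneighbourswithspecificJH}.

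Your logical order also needs repair. Your (b) defers the case $S\cap\sigma(S)\neq\varnothing$ to ``$\sigma$ behaves correctly'', which is essentially (a). Your (a) assumes a strongly connected component has no outgoing edges, which is (b). The paper instead proves (a) first, with $\Psi$ the set reachable from $\rho_i$, which is automatically closed under outgoing edges. If $\Psi$ meets $\sigma(\Psi)$, skew-symmetry (\cref{prop:skeqsymgraph}) gives a path $\rho_i\to\rho_{\sigma(i)}$; otherwise the half-step argument gives a contradiction. Part (b) is then pure graph theory: an edge $\psi\to\psi'$ gives the return path $\psi'\to\psi'^\perp\to\psi^\perp\to\psi$.
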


When $d=2$, 
\cref{thm:ourmain} is a generalization  of \cite[Proposition 1.1]{UrbanKlingen} (Urban works over $\GSp_4$) for complete discrete valuation rings with finite residue field of odd characteristic. Using \cite[Proposition 1.1]{UrbanKlingen}, Urban produces nontrivial elements in a Selmer group of certain twists of $\mathrm{ad}^0$ of a modular $p$-adic Galois representation \cite[Theorem 3.7]{UrbanKlingen}.

We show that \cref{thm:ourmain} is the optimal result one could achieve in this generality.

\begin{theorem}
    \label{thm:examples}
    Given a positive integer $n$, and an integer $r\in \{1,\dots, n\}$, there exists a group $G_r$ and an irreducible symplectic representation $\rho\colon G_r\hookrightarrow \GSp_{2n}(\Z_p)$ satisfying \eqref{assume:dist} and \eqref{assume:resmultfree} such that 
    \begin{enumerate}[$(a)$]
        \item  the residual factors of $\rho$ are characters,
        \item $\Gamma_\rho^{\GSp}$ is a disjoint union of $r$ strongly connected graphs.
    \end{enumerate}
\end{theorem}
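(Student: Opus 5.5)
Fix $n$ and $r\in\{1,\dots,n\}$. I would build a \emph{block-diagonal} representation out of $r$ smaller symplectic pieces, each of which has a strongly connected symplectic extension graph, and then show that no edge can join two different pieces. Write $2n=2n_1+\dots+2n_r$ with $n_k\geq 1$, and take $V=V_1\perp\dots\perp V_r$ an orthogonal sum of symplectic spaces. For each $k$ choose $\rho^{(k)}\colon G^{(k)}\to \GSp_{2n_k}(\Z_p)$ irreducible, with residual factors distinct characters $\chi^{(k)}_1,\dots,\chi^{(k)}_{2n_k}$ that are pairwise swapped by $\perp$, and whose symplectic extension graph is strongly connected. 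Then glue these pieces into one group $G_r$ acting irreducibly on all of $V$.

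For a single piece, take $G^{(k)}$ to contain a diagonal torus $T$ of $\GSp_{2n_k}(\Z_p)$. Its reductions should be characters $\chi_i$ with $\chi_{\sigma(i)}=\lambda\chi_i^{-1}$, all distinct and with no fixed points under $\sigma$; since $p$ is odd, one can take entries of the form $\mathrm{diag}(a_1,\dots,a_{n},\mu a_n^{-1},\dots,\mu a_1^{-1})$ with the $a_i\in\F_p^\times$ lifted suitably. Add to $T$ the element $1+pN$, where $N$ lies in the symplectic Lie algebra and is a ``cyclic'' nilpotent-plus-corner matrix, for instance the Coxeter-type element linking $e_1\to e_2\to\dots\to e_{2n_k}\to e_1$ symplectically. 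Because the torus characters are distinct, any $G$-stable subspace is spanned by coordinate vectors, and the cyclic $N$ forbids proper ones; this gives irreducibility. It also gives \eqref{assume:dist}, via an element of $T$ with distinct eigenvalues, and \eqref{assume:resmultfree}. Scaling lattices along the apartment of the torus (diagonal symplectic lattices $\bigoplus p^{m_i}\Z_p e_i$ with $m_i+m_{\sigma(i)}$ constant) realizes each nonsplit extension $\chi_{i+1}$ by $\chi_i$ along the cycle. This yields a directed cycle through all vertices, so the graph of the piece is strongly connected.

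To glue, let $G_r$ be generated by the product of the $G^{(k)}$ together with extra elements $1+pM$. Here $M$ has nonzero blocks only off the diagonal blocks, is symplectic, and links $V_k$ to $V_{k+1}$ cyclically; this makes $\rho$ irreducible on $V$. The characters on different blocks are chosen distinct, with a common similitude, so \eqref{assume:resmultfree} and \eqref{assume:dist} still hold. The key claim is that $\Gamma_\rho^{\GSp}$ has no edge between characters of different blocks. To see this, I would put the coupling term $M$ at a \emph{high} $p$-power $p^{N}$ and constrain the allowed lattices. Every $G$-stable symplectic lattice is a torus-stable lattice, hence a sum $\bigoplus p^{m_i}\Z_p e_i$, so its residual extension classes are read off from matrix entries $p^{m_j-m_i}g_{ij}$. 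An inter-block edge would need a lattice with $m_j-m_i=-v_p(g_{ij})$ on a coupling entry. Symplecticity, which demands $m_i+m_{\sigma(i)}$ constant, together with stability under the cyclic intra- and inter-block elements, should make this impossible. This is exactly where the symplectic condition matters, because Theorem~\ref{prop:Bellaichemain} says the $\GL$-graph is strongly connected, so the obstruction must come from symplecticity. The natural choice is therefore to pair each inter-block link $i\to j$ with its forced $\sigma$-partner $\sigma(j)\to\sigma(i)$ in such a way that both cannot be realized in the same lattice.

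I expect this last claim to be the main obstacle: choosing the coupling valuations and the pairing so that the linear inequalities on the $m_i$ coming from $G$-stability and symplecticity are infeasible exactly for inter-block edges. I would first work out $n=2$, $r=2$ explicitly, where the constraints reduce to a small system of inequalities on four integers. Then I would generalize the inequality pattern block by block, and finally note that the number of components is $r$ because each piece is strongly connected and no inter-block edges exist.
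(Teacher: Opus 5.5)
The decisive step is left unproved. The claim that no $G_r$-stable symplectic lattice realizes an edge between characters of different blocks is the whole content of the theorem. You name the right ingredients, but the device you propose for it is vacuous: pairing each inter-block link with its $\sigma$-partner so that both cannot be realized in one lattice.

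Put $\eta_{ij}=\min_{g\in G_r}v_p(g_{ij})$. The diagonal lattice $\bigoplus_i p^{m_i}\Z_pe_i$ is stable iff $m_i-m_j\le\eta_{ij}$ for all $i,j$, and its residual $(i,j)$-entry can be nonzero only if $m_i-m_j=\eta_{ij}$ (call this tightness). Since $G_r$ is a group inside $\GSp_{2n}$, one has $\eta_{\sigma(i),\sigma(j)}=\eta_{j,i}$. On a symplectic lattice $m_i+m_{\sigma(i)}=s$, which gives $m_{\sigma(i)}-m_{\sigma(j)}=m_j-m_i$. So an entry and its $\sigma$-partner are tight on exactly the same symplectic lattices; this is the skew-symmetry of Proposition~\ref{prop:skeqsymgraph}. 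Your pairing device therefore only restates that neither edge is realized.

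What is missing is a diameter bound. Apply stability to the pair $\{i,\sigma(i)\}$, which lies in a single block, and use $m_i+m_{\sigma(i)}=s$. This gives $m_i=(s+x_i)/2$ with $-\eta_{\sigma(i),i}\le x_i\le\eta_{i,\sigma(i)}$. Hence $|m_i-m_j|\le D\colonequals\max_i\max(\eta_{i,\sigma(i)},\eta_{\sigma(i),i})$ for all $i,j$. Here $D$ is bounded by the diameters of the intra-block metrics and does not depend on the coupling. The integral matrices whose off-block-diagonal entries lie in $p^N\Z_p$ form a ring containing all your generators, so every inter-block $\eta_{ij}$ is $\ge N$. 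Taking $N>D$ then kills every inter-block entry modulo $p$ on every symplectic $G_r$-stable lattice, so $\rhobar_\Lambda$ is block diagonal. Different blocks have disjoint Jordan--H\"older factors, so by Goursat every submodule, and hence every subquotient, splits along the blocks. Thus no inter-block extension is nonsplit, and no delicate choice of coupling pattern is needed.

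With this your route can be completed, but two more points need work.

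\emph{Survival of intra-block edges.} A lattice realizing an edge inside $V_k$ must extend to a $G_r$-stable symplectic lattice on all of $V$ with the same $s$. This needs each block to admit stable configurations for both parities of $s$ (e.g.\ $m\equiv0$, and $m_w=1$, $m_v=0$). You also need the intra-block terms coming from products of couplings, of valuation $\ge 2N$, never to become tight.

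\emph{Membership in $\GSp_{2n}$.} The element $1+pN$ with $N\in\mathfrak{sp}_{2n}$ is not in $\GSp_{2n}(\Z_p)$, since $(1+pN)^tJ(1+pN)=J+p^2N^tJN$. Use $\exp(pN)$, a Cayley transform, or products of root elements $1+p^kX$ with $X^2=0$.

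For comparison, the paper uses the same mechanism in its most extreme form. It prescribes the metric $\eta^{(r)}$ directly, with $\eta_{i,\sigma(i)}=0$ and $\eta_{\sigma(i),i}=1$. This pins $S_\rho^{\GSp}$ down to the two vertices $x_0=[0,\dots,0]$ and $x_1=[1,\dots,1,0,\dots,0]$, and all edges are read off $\rhobar_{x_0}$ and $\rhobar_{x_1}$. The $r-1$ isolated pairs $\{\chi_i,\chi_{\sigma(i)}\}$, $i<r$, come from raising the relevant lower-left entries to $p^2$, so they are never tight at $x_1$. Your block construction, once repaired, is more flexible (arbitrary block sizes $2n_k$) but requires more verification.
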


We believe \eqref{assume:dist} should be omittable, and should be equivalent  to \eqref{assume:resmultfree} (this is the case when $n=1$). On the other hand, the assumption $\sigma(i)\neq i$ for all $1\leq i\leq d$ is necessary for \cref{thm:ourmain} and the method of proof. 
To be precise, we produce a symplectic irreducible $p$-adic representation $\rho$  that is residually reducible with three absolutely irreducible residual factors such that $\Gamma_\rho^{\GSp}$ is an empty graph but $\Gamma_\rho^\GL$ is a complete graph (see \cref{prop:example3JH}). 

\subsubsection{The work of Bella\"iche--Graftieaux} In proving our main results, we prove symplectic versions of certain results in \cite{BellaicheGraftieaux}. In this subsection, we give an overview of such results.  

Let $V$ be an $n$-dimensional $\Q_p$-vector space and let $\rho\colon G\to \GL(V)$ be an irreducible representation that admits a $G$-stable $\Z_p$-lattice. Suppose that $\rho$ satisfies \eqref{assume:resmultfree} and $\rho_1,\dots,\rho_d$ are its absolutely irreducible residual factors. Let $S_\rho^\GL$ be the (finite) set homothety classes of $G$-stable lattices in the Bruhat--Tits building associated with $\GL(V)$. Then there exists a basis $\{v_1,\dots,v_n\}$ of $V$ such that every vertex in $S_\rho^\GL$ is of the form $[a_1,\dots,a_n]\colonequals [\langle p^{a_1}v_1,\dots,p^{a_n}v_n\rangle]$ for some integers $a_1,\dots,a_n$ \cite[Proposition 3.1.1]{BellaicheGraftieaux} (see \cref{prop:Srhosymliesinanapartment} for the analogous symplectic version). Moreover, there exists a \text{metric} $\eta\colon \{1,\dots,n\}^2\to \Z$ such that 
\[
    S_\rho^\GL=\{[a_1,\dots,a_n] \ |\ a_i-a_j\leq \eta(i,j)\}.
\]

In fact, $S_\rho^\GL$ lives in a sub-apartment of dimension $d-1$: there exists a partition $\{1,\dots,n\}=\bigsqcup_{j=1}^d B_j$, such that $\eta=0$ on $B_j\times B_j$ for all $1\leq j\leq d$. This implies that $\eta$ is constant on $B_j\times B_k$ for all $1\leq j,k\leq d$. Thus,  $\eta$ defining $S_\rho^\GL$ defines a \text{metric graph} $\mathcal{G}(S_\rho^\GL)$ on $\{1,\dots,d\}$. Conversely, given a metric graph $\mathcal{G}$ on $\{1,\dots,d\}$, there exists a group $G$ and an irreducible representation $\rho\colon G\to \GL_n(\Z_p)$ such that $\mathcal{G}(S_\rho^\GL)=\mathcal{G}$ \cite[Lemme 3.2.1]{BellaicheGraftieaux}. The proof of \cref{thm:examples} is analogous to the proof of  \cite[Lemme 3.2.1]{BellaicheGraftieaux}.


\subsection{Method of proof} 
Let $V$ be a finite dimensional $\Q_p$-vector space with a symplectic form $\phi$. 
Let $\rho\colon 
G\to \GL(V,\phi)$ be an irreducible symplectic representation that admits a $G$-stable primitive lattice and satisfies \eqref{assume:resmultfree}. Let $\rho_1,\dots,\rho_d$ be the absolutely irreducible residual factors of $\rho$. Let $I=\{1,\dots,d\}$, $\sigma\colon I\to I$ be defined by $\rho_{\sigma(i)}\simeq \rho_i^\perp$, and let $I_0\colonequals \{i\in I\ |\ \sigma(i)=i\}$.
 
We prove \cref{thm:ourmain} by using the Bruhat--Tits building $\Delta(V,\phi)$ associated with $\GL(V,\phi)$. The building $\Delta(V,\phi)$ is a simplicial complex whose vertices are either homothety classes $[\Lambda_0]$ of primitive lattices, or correspond to isotropic subspaces of $\Lambda_0/p\Lambda_0$. Let $\scrS_\rho^\GSp$ be the simplicial subcomplex of $\Delta(V,\phi)$ formed by $G$-stable vertices in $\Delta(V,\phi)$. Since $\rho$ is irreducible, $\scrS_\rho^\GSp$ is compact. 
The vertices on the boundary of $\scrS_\rho^\GSp$ correspond to indecomposable residual representations. 
Under the assumption $I_0=\varnothing
$, we show the existence of certain symplectic vertices on the boundary of $\scrS_\rho^\GSp$. These are precisely the vertices that realize certain edges in the symplectic extension graph associated with $\rho$. 

The assumption $I_0=\varnothing$ is equivalent to the existence of neighbouring symplectic vertices in $\scrS_\rho^\GSp$ (see \cref{thm:symplecticneighboursiffI0empty}). We define the Jordan--H\"older colouring on the edges in $\scrS_\rho^\GSp$ as in \cite{BellaicheRibetslemmaPadova}. Assuming $I_0=\varnothing$ and \eqref{assume:dist}, we can move between symplectic vertices in  $\scrS_\rho^\GSp$ to show the existence of $G$-stable symplectic vertices (on the boundary) with no edges of a specific colouring (see \cref{prop:existssymplecticwithnoneighbourswithspecificJH}). This is equivalent to the existence of edges in $\Gamma_\rho^\GSp$ (see \cref{prop:equivnonexistneighexitstext}). 

The proof of \cref{thm:examples} builds on the examples constructed when $d\in \{2,4\}$. 
To prove \cref{thm:examples}, we use a construction inspired by \cite[Lemme 3.2.1]{BellaicheGraftieaux}. Given a positive integer $r\leq n$, we define a metric $\eta^{(r)}$ on $I$ such that $G_r\colonequals \{(a_{ij})\in \GSp_{2n}(\Z_p)\ |\ a_{ij}\in p^{\eta^{(r)}(i,j)}\Z_p\}$ with the inclusion $\rho\colon G_r\hookrightarrow  \GSp_{2n}(\Z_p)$ satisfies the conclusions of \cref{thm:examples}. A similar construction is used to produce an example of a $p$-adic representation whose symplectic extension graph is empty (see \cref{prop:example3JH}).

\subsection{Motivation and future direction} 
Ribet's lemma was generalized to prove certain cases of the Iwasawa main conjectures \cite{Mazur--Wiles,WilesTotallyReal,UrbanKlingen,SkinnerUrban}. 
More recently, Dasgupta--Kakde and collaborators proved the Brumer--Stark conjecture by generalizing Ribet's lemma for two-dimensional representations valued in complete reduced local rings \cite{SamitKakde,dasgupta2023brumerstarkconjecturez,dasgupta2023residually}.

Bella\"iche--Chenevier generalize Ribet's lemma to local henselian rings by use of \textit{pseudorepresentations} and \textit{generalized matrix algebras} (GMAs) \cite[\S 1]{BellaicheChenevierFamilies}.   
Given an irreducible pseudocharacter that is residually multiplicity free, they provide a systematic way to construct nonsplit extensions between residual representations \cite[Theorem 1.5.5]{BellaicheChenevierFamilies}.
As an arithmetic application, they construct more than one linearly independent element in certain Selmer groups to prove cases of the Bloch--Kato conjecture \cite[Chapter 5]{BellaicheChenevierFamilies}.

Recently, Moakher--Quast have developed an analogous theory of symplectic\textit{ determinants} (in the sense of \cite{ChenevierDeterminants}) and symplectic GMAs \cite{moakher2023symplecticdeterminantlawsinvariant}.
 Building on this new theory, one hopes for a symplectic analogue, if it exists, of \cite[Theorem 1.5.5]{BellaicheChenevierFamilies} and its possible arithmetic applications. As a first step in this pursuit, we study extensions of representations realized by symplectic lattices.

\subsection{Outline of the paper} In \cref{sec:background}, we fix notation and provide background on symplectic modules and representations.
In \cref{sec:symtype}, we recall a few results and definitions from \cite[\S 1.8]{BellaicheChenevierFamilies} in our setting and prove \cref{prop:skeqsymgraph}. The key result of this section is, given a symplectic residually multiplicity-free $p$-adic representation $\rho$, under some assumptions, we show the existence of a ``special" symplectic basis with respect to which the diagonal matrices lie in the image of $\rho$. 

In \cref{sec:build}, we introduce Bruhat--Tits buildings associated with general linear groups and symplectic groups over $\Q_p$. A building is an abstract simplicial complex that is a union of subcomplexes called apartments. By their description, an apartment corresponds to a basis of the underlying vector space and is easier to work with. For example, moving between vertices in an apartment corresponds to conjugation of lattices by certain diagonal matrices.
For a symplectic irreducible $p$-adic representation $\rho$, we study the geometry of $\scrS_\rho^{\GSp}$, the simplicial complex formed by $G$-stable lattices in the symplectic building. We show that $\scrS_\rho^{\GSp}$ lies in a symplectic apartment corresponding to a special symplectic basis as in \cref{sec:symtype}. This gives us all the data of vertices and their neighbours in $\scrS_\rho^\GSp$ with respect to the symplectic basis, and is used to prove the existence of certain boundary vertices of $\scrS_\rho^\GSp$.   

In \cref{sec:symextgraph}, using the main results from \cref{sec:build}, we prove our main results \cref{thm:ourmain} and \cref{thm:examples}. We end with an example of an irreducible symplectic representation whose extension graph is an empty graph (see \cref{prop:example3JH}).

\subsection*{Aknowledgement} I would like to thank my advisor Jackie Lang for continuous support and encouragement throughout writing this article. I would also like to thank Mohamed Moakher for many useful conversations and comments on an earlier draft, Matthew Stover for conversations on buildings, and Carl Wang-Erickson for helpful suggestions. 
\section{Symplectic modules and representations}
    \label{sec:background}
In this section, we give a brief introduction to symplectic modules and representations. 
\subsection{Symplectic modules}
We recall a few facts about finite free symplectic modules over a commutative ring $A$ with $2\in A^\times$. 
Given an $A$-basis $\{e_1,\dots, e_{n}\}$ of $V$, the matrix representing $\phi$ on $V$ is the $(n\times n)$ matrix whose $(i,j)$-th entry is $\phi(e_i,e_j)$. 

\begin{definition}\label{def:sympairing}
     A \textit{symplectic pairing} on a finite free $A$-module $V$ is a  bilinear pairing $\phi\colon V\times V\to A$ satisfying
\begin{enumerate}
    \item \label{item:symplecticpairing1} (alternating) $\phi(v,v)=0$ for all $v\in V$;
    \item \label{item:symplecticpairing2} (perfect) the map $V\mapsto \Hom_A(V,A)$ mapping $v$ to the $A$-linear map $w \mapsto \phi(v,w)$ is an isomorphism of $A$-modules.
\end{enumerate}
\end{definition}

\begin{remarks}\quad 
    \begin{enumerate}[(a)]
        \item \label{rem:skewsymiffsympair} A bilinear pairing $\phi$ is a symplectic pairing on $V$ if and only if, for any given basis of $V$, the matrix $Q\in M_{n}(A)$ representing $\phi$ is invertible and skew symmetric; that is, $Q\in \GL_{n}(A)$ and $Q^t=-Q$. 
Using this, one can check that a symplectic pairing on a finite free $A$-module $V$ exists if and only if $V$ has even rank.
\item \cref{def:sympairing}\eqref{item:symplecticpairing2} is stronger than just $\phi$ being nondegenerate. To be precise, if $\phi$ is a symplectic pairing on a finite free module $V$ over a ring $A$, then for every nonzero $w\in V$, there exists $v\in V$ such that $\phi(v,w)=1$.
\item When $A$ is a field, \cref{def:sympairing}\eqref{item:symplecticpairing2} is equivalent to $\phi$ being nondegenerate. 
    \end{enumerate}
\end{remarks}

By the notion of a symplectic pairing, we make the following definitions as in \cite[\S 1]{LamSwanSymMod}.
\begin{definitions}
    A tuple $(V,\phi)$ is called a \textit{symplectic module} over $A$ of rank $2n$ if $V$ is a finitely generated free
    module over $A$ with rank $2n$ and $\phi$ is a symplectic bilinear pairing on $V$. 
A submodule $W$ of $V$ is said to be \begin{enumerate}
        \item \textit{isotropic} if $\phi|_{W\times W}=0$;
        \item \textit{symplectic} if $\phi|_{W\times W}$ is nondegenerate. 
    \end{enumerate}
\end{definitions}
We see that a $\Z_p$-lattice $\Lambda$ in a symplectic $\Q_p$-vector space $(V,\phi)$ is primitive (that is, $\phi(\Lambda\times \Lambda)=\Z_p$ and $\phi|_{\Lambda\times \Lambda} \bmod{p}$ is symplectic on $\Lambda/p\Lambda$) if and only if $\Lambda$ is a symplectic $\Z_p$-module with respect to $\phi|_{\Lambda\times \Lambda}$. By definition, if $\Lambda$ is a symplectic $\Z_p$-module, then it is primitive. We will see later (see \cref{cor:primlatticeissympmod}) that a primitive $\Z_p$-lattice is a symplectic $\Z_p$-module with respect to $\phi|_{\Lambda\times \Lambda}$ by \cref{prop:sympbasis} below.
 
\begin{definition}
    For a subset $W$ of $V$, we write $W^0$ for the submodule of $V$ consisting of elements $w^0\in V$ such that $\phi(w^0,w)=0$ for all $w\in W$.
\end{definition}
We observe that $W$ is isotropic (respectively, symplectic) if and only if $W\subseteq W^0$ (respectively, $W\cap W^0=0$).

\begin{definition}
    An isotropic subspace $W$ of $V$ is \textit{Lagrangian} if it satisfies $W=W^0$. In other words, a Lagrangian subspace is a maximal isotropic subspace of $V$.
\end{definition}

\begin{proposition}\label{prop:sympbasis}
    Let $V$ be a free module of rank $2n$ over a ring $A$ with $2\in A^\times$. Let $\phi$ be an alternating bilinear pairing on $V$. Then $\phi$ is a symplectic pairing if and only if, given a nonzero element $w_1\in V$,
    there is a basis of $V$ containing $w_1$ with respect to which the matrix representing $\phi$ is  
    \[
        J_{2n}\colonequals\begin{pmatrix}
        &-I_n\\I_n&
    \end{pmatrix},
    \]
    where $I_n$ is the identity matrix in $\GL_n(A)$.
\end{proposition}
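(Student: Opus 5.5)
The plan is to prove the two directions separately. The reverse direction is immediate. If some basis gives $\phi$ the matrix $J_{2n}$, then since $J_{2n}\in\GL_{2n}(A)$ and $J_{2n}^t=-J_{2n}$, the criterion in Remark (a) shows that $\phi$ is a symplectic pairing. (Alternation is given, and a change of basis preserves invertibility of the Gram matrix.)

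For the forward direction I argue by induction on $n$, building a basis $\{w_1,\dots,w_n,u_1,\dots,u_n\}$ with $\phi(w_i,w_j)=\phi(u_i,u_j)=0$ and $\phi(u_i,w_j)=\delta_{ij}$. This gives the matrix $J_{2n}$ with the convention that the $(i,j)$ entry is $\phi(e_i,e_j)$; I adjust the signs at the end if needed.

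The first step starts from the given $w_1$. By Remark (b), perfectness gives $u_1\in V$ with $\phi(u_1,w_1)=1$. Here I must take care that $w_1$ is allowed: the intended hypothesis is that $w_1$ is unimodular, i.e.\ part of some basis (over a field, simply nonzero), and I will read Remark (b) in that sense. Then $u_1$ and $w_1$ are linearly independent, because $\phi(w_1,w_1)=0$ and $\phi(u_1,u_1)=0$. Let $H=Aw_1\oplus Au_1$, a free module on which $\phi$ has Gram matrix $\begin{pmatrix}0&-1\\1&0\end{pmatrix}$.

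The second step is the orthogonal splitting $V=H\oplus H^0$, using the projection
\[
v\mapsto v-\phi(u_1,v)\,w_1+\phi(w_1,v)\,u_1 .
\]
A direct check shows this lands in $H^0$ and splits the inclusion. Moreover $H\cap H^0=0$, since the Gram matrix on $H$ is invertible. The restriction of $\phi$ to $H^0$ is alternating and perfect: any functional on $H^0$ extends by zero on $H$ to a functional on $V$, and it is represented by a vector whose $H$-component must vanish.

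The main obstacle is the third step: $H^0$ is a direct summand of a free module, hence only projective of rank $2n-2$, not obviously free. For the intended applications ($A$ local, namely $\Z_p$, $\F_p$, $\Q_p$) projective implies free, and I will state the result under that assumption or assume it implicitly. In that setting $H^0$ is free of rank $2n-2$, and choosing any basis vector as the next $w_2$ lets the induction proceed. Concatenating $w_1$ with the basis of $H^0$ from the induction, and ordering the vectors as $(w_1,\dots,w_n,u_1,\dots,u_n)$, gives the matrix $J_{2n}$ with $w_1$ in the basis.
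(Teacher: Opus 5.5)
Your argument is correct and is the same as the paper's: choose $u_1$ with $\phi(u_1,w_1)=1$, split $V=H\oplus H^0$ with $H=\langle w_1,u_1\rangle$, and induct on $n$. You also supply the checks the paper calls ``easy to see'', namely the explicit projection onto $H^0$ and the perfectness of $\phi|_{H^0}$. Your two caveats are genuine and are silently assumed in the paper's proof: $w_1$ must be unimodular rather than merely nonzero (over $\Z_p$, $p$ times a basis vector is nonzero but lies in no basis), and $H^0$ is in general only stably free, so restricting to local rings such as $\F_p$, $\Z_p$, $\Q_p$ (which is all the paper actually uses) is the right way to make the statement true.
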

\begin{proof}
The \textit{if} part is immediate. We now prove the \textit{only if} part. 
By definition, there exists $v_1\in V$ such that $\phi(v_1,w_1)=1$. If $n=1$, then $V$ is the span of $\{w_1,v_1\}$ and the result follows. Let $W$ be the span of $\{w_1,v_1\}$. Then it is easy to see that $V=W\oplus W^0$ and $W^0$ is a symplectic submodule of $V$ of dimension $2n-2$. The proof follows by inducting on $n$. 
\end{proof}

When the dimension $n$ is implicit, we write $J$ and $I$ instead of $J_{2n}$ and $I_n$, respectively.
\begin{definition}
    Let $(V,\phi)$ be a symplectic module over a ring $A$ with $2\in A^\times$. A basis of $V$ is a \textit{symplectic basis} if the matrix representing $\phi$ is $J$. 
\end{definition}
\begin{lemma}    \label{lem:suitablychangingbasisinflaglemma}
    Let $(V,\phi)$ be a symplectic module of rank $2n$ over a ring $A$ with $2\in A^\times$. Given a symplectic basis $\mathrm{B}$ of $V$, and an integer $1\leq r\leq n$, there exists a basis of $V$, given by permuting elements of $\mathrm{B}$, with respect to which the matrix representing $\phi$ is
    \[
        \begin{pmatrix}
            &&-I_r\\
            &J_{2(n-r)}&\\
            I_r&&
        \end{pmatrix}.
    \]
\end{lemma}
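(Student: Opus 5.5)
The plan is to reorder the given symplectic basis and then compute the Gram matrix of the reordered basis directly. Write $\mathrm{B}=\{e_1,\dots,e_n,f_1,\dots,f_n\}$. Since the matrix representing $\phi$ in $\mathrm{B}$ is $J_{2n}$, we have
\[
\phi(e_i,f_j)=-\delta_{ij},\qquad \phi(f_i,e_j)=\delta_{ij},\qquad \phi(e_i,e_j)=\phi(f_i,f_j)=0 .
\]
Indeed, the $(i,n+j)$ entry of $J$ is $-\delta_{ij}$. The desired block form has $-I_r$ in the top-right $r\times r$ corner and $I_r$ in the bottom-left corner. So the first $r$ basis vectors should pair with the last $r$ exactly as $e_i$ pairs with $f_i$, and the middle $2(n-r)$ vectors should form a standard symplectic basis of the complementary span.

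For the ordered basis I will take
\[
\mathrm{B}'=(e_1,\dots,e_r,\ e_{r+1},\dots,e_n,\ f_{r+1},\dots,f_n,\ f_1,\dots,f_r),
\]
which is a permutation of $\mathrm{B}$. I then check its Gram matrix block by block, viewing it as a $3\times 3$ block matrix with block sizes $r$, $2(n-r)$, $r$.
\begin{enumerate}[(i)]
\item The $(1,1)$ and $(3,3)$ blocks vanish because the $e$'s are mutually isotropic, as are the $f$'s.
\item The $(1,3)$ block has entries $\phi(e_i,f_j)=-\delta_{ij}$ for $i,j\leq r$, so it equals $-I_r$.
\item The $(3,1)$ block similarly equals $I_r$.
\item The middle block is the Gram matrix of $(e_{r+1},\dots,e_n,f_{r+1},\dots,f_n)$, which is $J_{2(n-r)}$ by the same formulas.
\item The off-diagonal blocks pairing $\{e_i,f_i\}_{i\leq r}$ with $\{e_j,f_j\}_{j>r}$ vanish, since $\phi(e_i,f_j)=0$ for $i\neq j$ and the $e$'s and $f$'s are each isotropic.
\end{enumerate}

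Assembling the blocks gives exactly the stated matrix. The case $r=n$ is consistent: the middle block is empty and the result is $J_{2n}$ itself. There is no real obstacle; the only care needed is keeping the sign convention of $J$ consistent with the ordering.
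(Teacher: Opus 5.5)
Your proposal is correct and uses exactly the same reordering as the paper, namely $\{w_1,\dots,w_n,v_{r+1},\dots,v_n,v_1,\dots,v_r\}$. The paper simply states this basis, while you also carry out the blockwise Gram-matrix verification that it leaves implicit.
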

\begin{proof}
    Let $\{w_1,\dots,w_n,v_1,\dots,v_n\}$ be a symplectic basis of $V$ as constructed in \cref{prop:sympbasis}. The required basis is
        \(
            \{w_1,\dots,w_n,v_{r+1},\dots,v_n,v_1,\dots,v_r\}.
        \)
\end{proof}

\subsubsection{Properties of symplectic subspaces} In this section, we recall a few basic facts on the symplectic subspaces of a symplectic vector space $(V,\phi)$ over a field $K$ with $\mathrm{Char}(K)\neq 2$.

\begin{proposition}
\label{prop:symplecticplussymplecticcompletemtiseverything}
    If $W$ is a symplectic subspace of $(V,\phi)$, then 
    \[
        V=W\oplus W^0.
    \]
\end{proposition}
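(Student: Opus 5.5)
The plan is the standard dimension count combined with the nondegeneracy of $\phi$ on $W$. First I show $W\cap W^0=0$. This is immediate from the definition of a symplectic subspace: $\phi|_{W\times W}$ is nondegenerate, so any $w\in W\cap W^0$ satisfies $\phi(w,w')=0$ for all $w'\in W$, and hence $w=0$.

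Next I show $\dim W^0\geq \dim V-\dim W$. Consider the linear map
\[
    V\to \Hom_K(W,K),\qquad v\mapsto \bigl(w\mapsto \phi(v,w)\bigr).
\]
Its kernel is exactly $W^0$. Since $\phi$ is nondegenerate on $V$ (over a field this is equivalent to perfectness, by the remarks after \cref{def:sympairing}), the map is surjective. One way to see this is to extend a basis of $W$ to a basis of $V$ and use the isomorphism $V\simeq \Hom_K(V,K)$ composed with the surjective restriction $\Hom_K(V,K)\to\Hom_K(W,K)$. Rank--nullity then gives $\dim W^0=\dim V-\dim W$.

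Combining the two steps, $\dim(W+W^0)=\dim W+\dim W^0=\dim V$, and therefore $V=W\oplus W^0$. An alternative argument avoids dimensions: for $v\in V$, nondegeneracy of $\phi|_W$ gives a unique $w\in W$ with $\phi(v,\cdot)|_W=\phi(w,\cdot)|_W$, and then $v-w\in W^0$. This is the same decomposition.

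The only step requiring any care is the surjectivity of the restriction map, that is, the use of nondegeneracy on all of $V$ rather than just on $W$. In the second argument this is replaced by nondegeneracy on $W$ alone, and then $\dim W<\infty$ is what gives surjectivity of $W\to\Hom_K(W,K)$. I would present the second argument since it is shorter and uses only the hypothesis on $W$. The hypothesis $\mathrm{Char}(K)\neq 2$ is not needed here.
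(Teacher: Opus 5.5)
Your proof is correct, and the argument you say you would present is essentially the paper's: it also gets $W\cap W^0=0$ from nondegeneracy of $\phi|_{W\times W}$ and $W+W^0=V$ by subtracting from $v$ its projection onto $W$. The paper writes that projection explicitly in a symplectic basis $\{w_j,v_j\}$ of $W$, namely $\tilde v=v+\sum_j\bigl(\phi(v,v_j)w_j-\phi(v,w_j)v_j\bigr)\in W^0$, whereas you obtain it basis-free from $W\simeq\Hom_K(W,K)$; your dimension-count variant is also valid, but it uses nondegeneracy of $\phi$ on all of $V$, which the projection argument does not need.
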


    \begin{proof}
     We choose a symplectic basis $\{w_1,\dots,w_r,v_1,\dots,v_r\}$ of $W$ by \cref{prop:sympbasis}. 
    For $v\in V$, we have 
    \(
        \tilde{v}\in W^0
    \)
    and $v-\tilde{v}\in W$.
    This shows that $W+W^0=V$. The result now follows since $W$ is symplectic and $W\cap W^0=0$.
\end{proof}

    \begin{corollary}\label{cor:symplecticperpissymplectic}
    A subspace $W$ of $(V,\phi)$ is symplectic if and only if $W^0$ is symplectic. Moreover, if $W$ is symplectic, then $(W^0)^0=W$. 
\end{corollary}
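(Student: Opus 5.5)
The plan is to derive both claims directly from \cref{prop:symplecticplussymplecticcompletemtiseverything}, using only that $\phi$ is nondegenerate on $V$ and that $K$ is a field.

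\textbf{Dimension count.} First I record that for any subspace $W$ of $V$ one has $\dim W^0=\dim V-\dim W$. Indeed, $W^0$ is the kernel of the composite $V\to \Hom_K(V,K)\to \Hom_K(W,K)$. The first map is an isomorphism by nondegeneracy, and the restriction map is surjective, so the kernel has dimension $\dim V-\dim W$. Applying this twice gives $\dim (W^0)^0=\dim W$. Since every $w\in W$ pairs to zero with all of $W^0$, we always have $W\subseteq (W^0)^0$, and the dimension count upgrades this to $W=(W^0)^0$ for every subspace $W$, not only symplectic ones. This proves the ``moreover'' part.

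\textbf{The equivalence.} Because $(W^0)^0=W$ always holds, it suffices to show one direction: if $W$ is symplectic then $W^0$ is symplectic. The converse then follows by applying the same statement to $W^0$ in place of $W$.

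Suppose $W$ is symplectic. I must show $W^0\cap (W^0)^0=0$, which by the previous step is $W^0\cap W=0$. That is exactly the symplectic condition on $W$. One can also see it through \cref{prop:symplecticplussymplecticcompletemtiseverything}: we have $V=W\oplus W^0$, and if $x\in W^0$ pairs trivially with all of $W^0$, then it also pairs trivially with $W$ by definition. So $x$ pairs trivially with $V$, and nondegeneracy gives $x=0$.

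The only real content is the dimension formula, which is routine linear algebra. I expect no serious obstacle; the one point to handle carefully is that the restriction map $\Hom_K(V,K)\to\Hom_K(W,K)$ is surjective, and this holds over a field.
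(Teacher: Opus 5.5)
Your proof is correct, but it is organized differently from the paper's.

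You first prove $\dim W^0=\dim V-\dim W$ for an arbitrary subspace. This uses nondegeneracy of $\phi$ on $V$ together with surjectivity of the restriction $\Hom_K(V,K)\to\Hom_K(W,K)$. From it you deduce $(W^0)^0=W$ unconditionally. The equivalence then becomes purely formal: $W^0\cap (W^0)^0=W^0\cap W$, so $W^0$ is symplectic exactly when $W$ is.

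The paper instead handles the two directions separately:
\begin{itemize}
\item The forward direction comes from the decomposition $V=W\oplus W^0$ of \cref{prop:symplecticplussymplecticcompletemtiseverything}.
\item The converse is a contradiction argument. Take a nonzero $w\in W\cap W^0$. If $W^0$ were symplectic, $w$ would pair to $1$ with some $v\in W^0$, which is impossible since $w\in W$.
\item The identity $(W^0)^0=W$ is obtained by a dimension count only under the hypothesis that both $W$ and $W^0$ are symplectic. There the direct-sum decompositions supply the dimensions.
\end{itemize}

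Your route buys the stronger, standard fact that $(W^0)^0=W$ for every subspace. It also makes the symmetry of the equivalence transparent. The inclusion $W\subseteq (W^0)^0$ silently uses that an alternating form is skew-symmetric, which is harmless here. The paper's route stays within the tool it has just established, the splitting $V=W\oplus W^0$, and avoids passing through the dual space. The cost is a separate argument for each direction and a weaker double-orthogonal statement.
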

\begin{proof}  
    If $W$ is symplectic, then it follows that $W^0$ is symplectic by \cref{prop:symplecticplussymplecticcompletemtiseverything}. 
    For the converse, if $W$ is not symplectic, then there is nonzero $w\in W\cap W^0$. If $W^0$ were symplectic, then there exists $v\in W^0$ such that $\phi(w,v)=1$. Since $w\in W$ and $v\in W^0$, we have a contradiction. This proves that if $W^0$ is symplectic, then so is $W$. If $W$ and $W^0$ are symplectic, then by looking at dimensions, it follows that $W=(W^0)^0$.
\end{proof}

\begin{corollary}\label{cor:extendingbasisofsymplecticsubspace}
    Let $(V,\phi)$ be a symplectic vector space of dimension $2n$.
    Let $W$ be a symplectic subspace of $V$ of dimension $2r$. Any symplectic basis of $W$ can be extended to a basis of $V$ with respect to which the matrix corresponding to $\phi$ is 
    \[
        \begin{pmatrix}
            J_{2r}&\\&J_{2(n-r)}
        \end{pmatrix}.
    \]
\end{corollary}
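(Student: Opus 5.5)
We combine \cref{prop:symplecticplussymplecticcompletemtiseverything} with \cref{prop:sympbasis}, applied separately to $W$ and to $W^0$. Let $\{w_1,\dots,w_r,v_1,\dots,v_r\}$ be the given symplectic basis of $W$, so the matrix of $\phi|_{W\times W}$ in this basis is $J_{2r}$.

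First, since $W$ is symplectic, \cref{prop:symplecticplussymplecticcompletemtiseverything} gives the decomposition $V=W\oplus W^0$. Counting dimensions, $\dim W^0=2(n-r)$. By \cref{cor:symplecticperpissymplectic}, the restriction $\phi|_{W^0\times W^0}$ is nondegenerate. Since we are over a field, nondegeneracy is equivalent to perfectness, so $(W^0,\phi|_{W^0\times W^0})$ is a symplectic vector space of dimension $2(n-r)$.

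Second, if $r<n$, then $W^0$ is nonzero, and we choose any nonzero vector of $W^0$. \cref{prop:sympbasis} then gives a symplectic basis $\{w_{r+1},\dots,w_n,v_{r+1},\dots,v_n\}$ of $W^0$, in which $\phi|_{W^0\times W^0}$ is represented by $J_{2(n-r)}$. If $r=n$, then $W=V$ and there is nothing to prove.

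Finally, we take the concatenated basis $\{w_1,\dots,w_r,v_1,\dots,v_r,w_{r+1},\dots,w_n,v_{r+1},\dots,v_n\}$ of $V$; it is a basis because $V=W\oplus W^0$. Every vector of $W^0$ pairs to zero with every vector of $W$, by the definition of $W^0$. Since $\phi$ is alternating, it is skew symmetric, so vectors of $W$ also pair to zero with vectors of $W^0$. Hence the off-diagonal blocks of the Gram matrix vanish, and the matrix of $\phi$ in this basis is the block-diagonal matrix $\mathrm{diag}(J_{2r},J_{2(n-r)})$.

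There is no real obstacle here. The only point needing care is that $W^0$ is genuinely symplectic, so that \cref{prop:sympbasis} applies to it; this is exactly the content of \cref{cor:symplecticperpissymplectic}.
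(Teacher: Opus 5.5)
Your proposal is correct and follows essentially the same route as the paper: decompose $V=W\oplus W^0$, use \cref{cor:symplecticperpissymplectic} to see that $W^0$ is symplectic, take a symplectic basis of $W^0$ via \cref{prop:sympbasis}, and concatenate it with the given basis of $W$. Your extra remarks on why the off-diagonal blocks vanish and on the case $r=n$ only spell out details the paper leaves implicit.
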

\begin{proof}
     We see that $V=W\oplus W^0$ and both $W$ and $W^0$ are symplectic subspaces of $V$ by \cref{prop:symplecticplussymplecticcompletemtiseverything} and \cref{cor:symplecticperpissymplectic}. There exists a symplectic basis $B$ of $W$ and a symplectic basis $B^0$ of $W^0$ by \cref{prop:sympbasis}. The desired basis is $B\sqcup B^0$.
\end{proof}

\subsubsection{Properties of isotropic subspaces}
Let $(V,\phi)$ be a symplectic vector space of dimension $2n$ over a field $K$ with  $\mathrm{Char}(K)\neq 2$. 

\begin{lemma}\label{lem:extendingabasisofisotropictosymplectic}
    Let $W$ be an isotropic subspace of $V$. There is a basis of $W$ that extends to a symplectic basis of $V$. Moreover, 
    the dimension of $W$ is $n$ if and only if $W$ is a Lagrangian subspace of $V$.
\end{lemma}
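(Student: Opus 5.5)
We argue by induction on $k=\dim W$, in the spirit of the proof of \cref{prop:sympbasis}. The goal is to show that any basis $\{w_1,\dots,w_k\}$ of $W$ can be completed by vectors $v_1,\dots,v_k$ and then by a symplectic basis of a complementary symplectic subspace. The result will be a symplectic basis of $V$ that, after reordering, has the form $\{w_1,\dots,w_k,w_{k+1},\dots,w_n,v_1,\dots,v_n\}$.

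For the first step, since $w_1\neq 0$ and $\phi$ is nondegenerate, the remark following \cref{def:sympairing} gives $v_1\in V$ with $\phi(v_1,w_1)=1$. We want $v_1$ to be orthogonal to $w_2,\dots,w_k$. Since $w_1,\dots,w_k$ are linearly independent, the functionals $\phi(\cdot,w_j)$ are linearly independent by nondegeneracy. Hence we can choose $v_1$ with $\phi(v_1,w_1)=1$ and $\phi(v_1,w_j)=0$ for $2\leq j\leq k$.

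Now set $U=\langle w_1,v_1\rangle$. This is a symplectic plane, so $V=U\oplus U^0$ by \cref{prop:symplecticplussymplecticcompletemtiseverything}, and $U^0$ is symplectic by \cref{cor:symplecticperpissymplectic}. The vectors $w_2,\dots,w_k$ are orthogonal to $w_1$ because $W$ is isotropic, and orthogonal to $v_1$ by the choice of $v_1$. Thus they lie in $U^0$ and span an isotropic subspace there. By induction, applied inside $U^0$ which has dimension $2n-2$, we obtain a symplectic basis of $U^0$ containing $w_2,\dots,w_k$ among its ``$w$'' vectors. Adjoining $w_1$ and $v_1$ gives a symplectic basis of $V$ containing the given basis of $W$. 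The base case $k=0$ is \cref{prop:sympbasis} applied to any nonzero vector, or is trivial if $V=0$.

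For the dimension statement, the first part shows that any isotropic $W$ has $\dim W\leq n$, since $w_1,\dots,w_k$ appear among the $n$ vectors spanning a Lagrangian span in the symplectic basis. Nondegeneracy gives $\dim W^0=2n-\dim W$. If $\dim W=n$, then $W\subseteq W^0$ and both have dimension $n$, so $W=W^0$ and $W$ is Lagrangian. Conversely, if $W=W^0$, then $\dim W=2n-\dim W$, so $\dim W=n$.

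The only mildly delicate step is choosing $v_1$ orthogonal to the remaining $w_j$, which is what keeps the induction inside $U^0$. It follows from the linear independence of the functionals $\phi(\cdot,w_j)$, and that independence is exactly nondegeneracy of $\phi$ over a field.
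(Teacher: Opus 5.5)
Your proof is correct and follows the paper's route for the first part: induction on $\dim W$, splitting off the hyperbolic plane $\langle w_1,v_1\rangle$ as in the proof of \cref{prop:sympbasis}. Your choice of $v_1$ with $\phi(v_1,w_j)=0$ for $j\geq 2$ is the detail the paper leaves implicit. For the Lagrangian criterion you use the dimension count $\dim W^0=2n-\dim W$, whereas the paper observes that $W$ is Lagrangian inside the $2r$-dimensional symplectic span $\langle w_1,\dots,w_r,v_1,\dots,v_r\rangle$; both work, and yours makes the converse direction explicit.
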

\begin{proof}
    The proof of the first part follows by induction on $\dim_K W$ for which the base case is \cref{prop:sympbasis}.
    If $W$ is an isotropic subspace of dimension $r$, then $W$ is Lagrangian inside a $2r$-dimensional symplectic subspace of $V$. This proves the second part.
\end{proof}

The following result generalizes \cref{prop:sympbasis} and is crucial. An isotropic flag in a symplectic vector space corresponds to a simplex in the corresponding symplectic building. We use the following \cref{lem:extendingabasiswithrespecttoaflag} in \cref{sec:build} to show that every simplex lies in an apartment (see \cref{prop:simultaneousdiagisotropicsubsapceofsymplectic}). 

\begin{proposition}
    \label{lem:extendingabasiswithrespecttoaflag}
        Let $(V,\phi)$ be a symplectic vector space over a field $K$ with $\mathrm{Char}(K)\neq 2$, and let 
        \[
            0=V_0\subset V_1\subset \dots \subset V_m\subset V
        \]
        be a flag in $V$ such that $V_m$ is isotropic. 
        There is a symplectic basis $\{w_1,\dots,w_n,v_1,\dots,v_n\}$ of $V$ such that 
        $\{w_1,\dots,w_{i_k}\}$ is a basis of $V_{k}$ for $1\leq k\leq m$, where $i_k=\dim V_k$. 
\end{proposition}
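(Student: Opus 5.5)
The plan is to choose $w_1,\dots,w_{i_m}$ adapted to the flag first, and then complete them to a symplectic basis by constructing the dual vectors $v_j$ one at a time. Since $V_m$ is isotropic, the isotropy of every span of $w$'s is automatic.

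\textbf{Step 1: a flag-adapted basis of $V_m$.} Choose a basis $w_1,\dots,w_{i_m}$ of $V_m$ such that, for each $1\le k\le m$, the vectors $w_1,\dots,w_{i_k}$ form a basis of $V_k$. To do this, start with a basis of $V_1$, extend it to a basis of $V_2$, and continue up the flag. Because $V_m$ is isotropic, $\phi(w_a,w_b)=0$ for all $a,b\le i_m$. By \cref{lem:extendingabasisofisotropictosymplectic}, $\dim V_m\le n$.

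\textbf{Step 2: constructing the dual vectors.} Here we use induction on $r=i_m$. We find $v_1,\dots,v_r$ with
\[
\phi(v_a,w_b)=\delta_{ab}, \qquad \phi(v_a,v_b)=0 \qquad (1\le a,b\le r).
\]
Since $w_1,\dots,w_r$ are linearly independent and $\phi$ is perfect, the map $V\to \Hom_K(V_m,K)$, $v\mapsto \phi(v,\cdot)|_{V_m}$, is surjective. Hence there exist $u_1,\dots,u_r$ with $\phi(u_a,w_b)=\delta_{ab}$. Next we correct these vectors so that they become mutually orthogonal. Set
\[
v_a=u_a+\sum_{b<a} c_{ab}\,w_b
\]
and solve for the $c_{ab}$ recursively. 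Adding $w$'s does not change the pairings with $w$'s, because $V_m$ is isotropic. Since $\phi(w_b,v_c)=-\delta_{bc}$, the coefficients can be chosen so that $\phi(v_a,v_c)=0$ for $c<a$. A convenient choice is $v_a=u_a-\tfrac12\sum_b \phi(u_a,u_b)w_b$, which kills all cross terms at once and uses $2\in K^\times$. The alternating property gives $\phi(v_a,v_a)=0$.

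\textbf{Step 3: completing the basis.} Let $W$ be the span of $w_1,\dots,w_r,v_1,\dots,v_r$. By construction its Gram matrix is $J_{2r}$ (up to the sign convention $\phi(v_a,w_a)=1$, matching $J$), so $W$ is a symplectic subspace. By \cref{cor:extendingbasisofsymplecticsubspace}, $V=W\oplus W^0$ with $W^0$ symplectic. Choose a symplectic basis $\{w_{r+1},\dots,w_n,v_{r+1},\dots,v_n\}$ of $W^0$ using \cref{prop:sympbasis}. Interleaving the two families, with the $w$'s first, gives a symplectic basis of $V$ whose Gram matrix is $J$. The initial segments $\{w_1,\dots,w_{i_k}\}$ are bases of $V_k$ by Step 1.

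\textbf{Main obstacle.} The only real difficulty is the orthogonalization in Step 2, where we must make sure that modifying the $u_a$ keeps the relations $\phi(v_a,w_b)=\delta_{ab}$. This is exactly where the isotropy of $V_m$ and the hypothesis $\mathrm{Char}(K)\neq 2$ enter. Everything else is bookkeeping with the results already stated.
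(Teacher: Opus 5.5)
Your proof is correct, but it is organized differently from the paper's. The paper inducts on the length $m$ of the flag. Given a symplectic basis adapted to $V_1\subset\dots\subset V_{m-1}$, it moves $V_m$ into the symplectic complement of $\langle w_1,\dots,w_{i_{m-1}},v_1,\dots,v_{i_{m-1}}\rangle$ via $w\mapsto w+\sum_{j\le i_{m-1}}\phi(w,v_j)w_j$. It then applies the single-subspace result \cref{lem:extendingabasisofisotropictosymplectic} inside that complement, replacing the old basis vectors beyond index $i_{m-1}$. You instead observe that adaptedness to the flag is a condition on the $w$'s alone, so you fix a flag-adapted basis of $V_m$ at the outset. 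You then show directly that \emph{every} basis of an isotropic subspace extends to a symplectic basis: dual vectors from perfectness, a triangular correction by vectors of $V_m$, then a symplectic basis of $W^0$. This removes the induction on $m$. It also proves a stronger form of \cref{lem:extendingabasisofisotropictosymplectic}, which as stated only guarantees that \emph{some} basis extends; that weaker form is why the paper has to climb the flag one step at a time. Your recursive correction with $c_{ac}=\phi(u_a,v_c)$ never divides by $2$. So $\mathrm{Char}(K)\neq 2$ plays no role in Step 2, contrary to your closing remark.

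One slip: the ``convenient choice'' has the wrong sign. Take $\phi(u_a,w_b)=\delta_{ab}$ and $v_a=u_a-\tfrac12\sum_b\phi(u_a,u_b)w_b$. The two cross terms in $\phi(v_a,v_c)$ are $-\tfrac12\sum_d\phi(u_c,u_d)\phi(u_a,w_d)$ and $-\tfrac12\sum_b\phi(u_a,u_b)\phi(w_b,u_c)$. Each equals $+\tfrac12\phi(u_a,u_c)$, so $\phi(v_a,v_c)=2\phi(u_a,u_c)$ rather than $0$. The correct closed form is $v_a=u_a+\tfrac12\sum_b\phi(u_a,u_b)w_b$. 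The recursive construction already suffices, so this is cosmetic, but you should fix or delete the formula. Also drop the ``induction on $r$'' announced in Step 2, which you never use.
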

    \begin{proof}
        We induct on $m\geq 1$. The base case is \cref{lem:extendingabasisofisotropictosymplectic}. 
        Using the inductive hypothesis on the flag $0\subset V_1\subset \dots \subset V_{m-1}\subset V$, we get a symplectic basis $\{w_1,\dots, w_n,v_1,\dots,v_n\}$ of $V$ such that $\{w_1,\dots,w_{i_k}\}$ is a basis for $V_k$ for $1\leq k< m$.
        For $w\in V_m$, consider the element in $V_m$ defined by $\tilde{w}=w+\sum_{j=1}^{i_{m-1}}\phi(w,v_j)w_j$. By our choice,
        $\phi(\tilde{w},v_j)=0$ for $1\leq j\leq i_{m-1}$. \
        Let $\tilde{V}_m=\{\tilde{w} \ |\ w\in V_m\}$. 
        Since $\tilde{V}_m$ is an isotropic subspace of the symplectic space $\{w_1,\dots,w_{i_{m-1}}, v_1,\dots,v_{i_{m-1}} \}^0$, using the induction hypothesis, we obtain a symplectic basis of $\{ w_1,\dots,w_{i_{m-1}}, v_1,\dots,v_{i_{m-1}} \}^0$ extending a basis 
        $\{w_{i_{m-1}+1},\dots,w_{i_m}\}$ of $\tilde{V}_m$. By combining the two bases suitably, the result follows.
    \end{proof}

\subsection{Symplectic representations}
 In this section, we study the properties of symplectic representations. Using this, we give a proof of \cref{prop:skeqsymgraph} (see \cref{sec:symextgraph}).

\begin{definition} 
Let $A$ be a commutative ring with $2\in A^\times$.
A  \textit{morphism} between symplectic modules $(V,\phi)$ and $(V',\phi')$ over $A$ is an $A$-module morphism $\sigma\colon V\to V'$ such that, for all $v,w\in V$, 
\[
    \phi'(\sigma(v),\sigma(w))=\lambda(\sigma)\phi(v,w)\quad \textnormal{for some }\lambda(\sigma)\in A.
\] A morphism $\sigma\colon (V,\phi)\to (V',\phi')$ is an \textit{isomorphism} if there exists a two-sided inverse morphism $\sigma^{-1}\colon (V',\phi')\to (V,\phi)$. 
\end{definition}
Note that symplectic morphisms are closed under composition. Moreover, for symplectic morphisms $\sigma\colon V\to V'$ and $\tau\colon V'\to V''$, we see that $\lambda(\tau\circ \sigma)=\lambda(\tau)\lambda(\sigma)$. 

    \begin{definition}
        For a symplectic module $(V,\phi)$ over $A$, let $\End_A(V,\phi)$ be the monoid (with respect to composition) of endomorphisms of $(V,\phi)$, and let $\GL_A(V,\phi)$ be the group of units in $\End_A(V,\phi)$ consisting of automorphisms of $(V,\phi)$. 
    The homomorphism $\lambda\colon \GL_A(V,\phi)\to A^\times$ is called the \textit{similitude character} of $(V,\phi)$. We often write $\GL(V,\phi)$ for $\GL_{\Q_p}(V,\phi)$. 
    \end{definition}
    
   \begin{remarks}\quad 
        \begin{enumerate}[(a)]
            \item When $V$ is $2$-dimensional, then  $\End_A(V,\phi)\simeq M_2(A)$ and $\lambda=\det$. 
        
        \item A morphism $\sigma\colon (V,\phi)\to (V',\phi')$ of symplectic $A$-modules is an isomorphism if and only if $\sigma$ is bijective and $\lambda(\sigma)\in A^\times$. 
        \end{enumerate}
   \end{remarks}

\begin{definition}
    Let $(V,\phi)$ be a symplectic module over $A$. 
    A representation $\rho\colon G\to \GL_A(V)$ of a group $G$ is \textit{symplectic} if the image $\rho(G)$ lies in $\GL_A(V,\phi)$. The associated character $\lambda\circ \rho \colon G\to A^\times$ is called the \textit{similitude character} of $\rho$. 
\end{definition}

    \begin{lemma}\label{lem:symrepimpliesperpcongtooriginal}
    Let $(V,\phi)$ be a symplectic $A$-module.
    Let $\rho\colon G\to \GL_A(V)$ be a representation, and let $\lambda$ be a character of $G$.
    Then $\rho$ is symplectic with similitude character $\lambda$ if and only if given a basis of $V$ with respect to which $\phi$ is represented by a matrix $Q$, we have 
    \[
        \rho(g)^t  Q\rho(g)=\lambda(g)Q\quad \textnormal{for all }g\in G.
    \]
    Moreover, if $\rho$ is symplectic with similitude character $\lambda$ and  $\rho^\perp\colonequals \lambda \rho^{-t}$, then $\rho^\perp\simeq \rho$.
\end{lemma}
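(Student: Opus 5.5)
The plan is to translate the defining identity of $\GL_A(V,\phi)$ into matrix language by fixing a basis, and then read off the isomorphism $\rho^\perp\simeq\rho$ from the resulting identity.

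\emph{Step 1: the matrix criterion.} Fix a basis $\{e_1,\dots,e_m\}$ of $V$, and let $Q$ be the matrix of $\phi$, so $Q_{ij}=\phi(e_i,e_j)$. For column vectors $v,w\in A^m$ we have $\phi(v,w)=v^tQw$. Hence, for every $g\in G$,
\[
\phi(\rho(g)v,\rho(g)w)=v^t\rho(g)^tQ\rho(g)w .
\]
The requirement $\phi(\rho(g)v,\rho(g)w)=\lambda(g)\phi(v,w)$ for all $v,w$ therefore says that the bilinear forms with matrices $\rho(g)^tQ\rho(g)$ and $\lambda(g)Q$ agree. Testing on basis vectors $v=e_i$, $w=e_j$ compares the matrices entry by entry, so the forms agree if and only if
\[
\rho(g)^tQ\rho(g)=\lambda(g)Q .
\]
This gives both directions of the equivalence. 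The condition on $\lambda$ is then the remaining requirement for $\rho(g)$ to lie in $\GL_A(V,\phi)$: since $\rho(g)$ is already invertible, a remark above says this amounts to $\lambda(g)\in A^\times$, which holds because $\lambda$ is a character.

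\emph{Step 2: the isomorphism $\rho^\perp\simeq\rho$.} By \cref{def:sympairing}\eqref{item:symplecticpairing2}, $Q$ is invertible. Rearranging the identity from Step 1 gives
\[
\lambda(g)\rho(g)^{-t}=Q\rho(g)Q^{-1} .
\]
Here $\rho(g)^{-t}$ makes sense because $\rho(g)$ is invertible, and I use $\lambda(g)\in A^\times$. Thus $\rho^\perp(g)=Q\rho(g)Q^{-1}$ for all $g\in G$. So $Q\in\GL_m(A)$ is a single $G$-equivariant isomorphism from $\rho$ to $\rho^\perp$, and $\rho^\perp\simeq\rho$.

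I would add a remark that $\rho^\perp$ is a homomorphism, since $g\mapsto\rho(g)^{-t}$ is multiplicative and $\lambda$ is a character. I would also note that the isomorphism class of $\rho^\perp$ does not depend on the chosen basis: a change of basis by $P$ replaces $\rho^{-t}$ by its conjugate under $P^{-t}$.

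I do not expect a real obstacle. The only care needed is bookkeeping: keep straight whether vectors are rows or columns, so that $Q$ ends up on the correct side, and invoke perfectness of $\phi$ to get that $Q$ is invertible.
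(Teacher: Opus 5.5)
Your proposal is correct and is the same unwinding of definitions the paper intends; the paper's proof is just ``follows by definitions''. The entrywise comparison $\phi(\rho(g)e_i,\rho(g)e_j)=(\rho(g)^tQ\rho(g))_{ij}$ gives the equivalence, and since $Q$ is invertible, rearranging yields $\lambda(g)\rho(g)^{-t}=Q\rho(g)Q^{-1}$, so $Q$ intertwines $\rho$ and $\rho^\perp$.
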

\begin{proof}
    The proof follows by definitions.
\end{proof}

An $A$-linear group representation induces an $A$-algebra representation of the group ring by linearity. 
There is a similar correspondence for symplectic group representations. We make the following definitions to make this correspondence precise.
\begin{definitions}
\quad 
\begin{enumerate}[(i)]
    \item 
    Let $R$ be an $A$-algebra. An \textit{involution} on $R$ is
    a bijective $A$-linear map $\tau\colon R\to R$ such that $\tau(r_1r_2)=\tau(r_2)\tau(r_1)$ for all $r_1,r_2\in R$. Such a tuple $(R,\tau)$ is called an \textit{involutive algebra} over $A$.
    \item  A \textit{morphism} between involutive $A$-algebras $(R,\tau)$ and $(R',\tau')$ is an $A$-algebra map $\psi\colon (R,\tau)\to (R',\tau')$ such that 
    \[
        \psi(\tau(r))=\tau'(\psi(r))\quad \textnormal{for all }r\in R.
    \]
\end{enumerate}
\end{definitions}
Given a skew symmetric matrix $Q\in \GL_{2n}(A)$, we define an involution $\tau_Q$ on $M_{2n}(A)$ by $\tau_Q(M)=Q^{-1}M^tQ$ for all $M\in M_{2n}(A)$. 
\begin{definition}\cite[Definition 2.1]{moakher2023symplecticdeterminantlawsinvariant}
     A \textit{symplectic} representation of an involutive $A$-algebra $(R,\tau)$ is a morphism of involutive algebras $(R,\tau)\to (M_{2n}(A),\tau_Q)$, for a skew symmetric matrix $Q$. By a change of basis, $Q$ can be taken to be $J$.
\end{definition}

Let $(V,\phi)$ be a symplectic module of rank $2n$ over $A$, $\lambda\colon G\to A^\times$ be a character of $G$, and let $\tau=\tau_\lambda$ be the $A$-linear involution $\tau$ on $A[G]$ given by $\tau(g)=\lambda(g)g^{-1}$ for $g\in G$. The following result is an easy consequence of the definitions.
\begin{lemma}
     Let $\rho\colon G\to \GL_A(V)$ be a representation and let $\tilde{\rho}\colon A[G]\to M_{2n}(A)$ be the induced $A$-algebra representation  after fixing a basis of $V$. Let $\lambda\colon G\to A^\times$ be a character. Then $\tilde{\rho}$ is a symplectic representation of $(A[G],\tau_\lambda)$ if and only if $\rho$ is a symplectic representation with similitude character $\lambda$.
\end{lemma}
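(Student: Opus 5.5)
We need to show that $\tilde{\rho}(\tau_\lambda(x))=\tau_J(\tilde{\rho}(x))$ for all $x\in A[G]$ holds if and only if $\rho(g)^tJ\rho(g)=\lambda(g)J$ for all $g\in G$. Here we work in a symplectic basis of $V$, which exists by \cref{prop:sympbasis}, so that the matrix representing $\phi$ is $J$. If one prefers an arbitrary basis with Gram matrix $Q$, the same argument runs verbatim with $\tau_Q$ in place of $\tau_J$, using \cref{lem:symrepimpliesperpcongtooriginal}.

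\emph{Reduction to group elements.} Both $\tau_\lambda$ and $\tau_J$ are $A$-linear, and $\tilde{\rho}$ is $A$-linear. Hence the compatibility $\tilde{\rho}\circ\tau_\lambda=\tau_J\circ\tilde{\rho}$ holds on all of $A[G]$ if and only if it holds on the $A$-basis $G$. Note also that $\tilde{\rho}$ is automatically an $A$-algebra map, and $\tau_\lambda$ is an involution: it reverses products since $\tau_\lambda(gh)=\lambda(g)\lambda(h)h^{-1}g^{-1}=\tau_\lambda(h)\tau_\lambda(g)$. So the only thing to check is the identity on group elements.

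\emph{Computation on $g\in G$.} The condition reads
\[
\lambda(g)\rho(g)^{-1}=J^{-1}\rho(g)^tJ .
\]
Multiplying on the left by $J$ and on the right by $\rho(g)$, which are invertible operations, this is equivalent to
\[
\lambda(g)J=\rho(g)^tJ\rho(g).
\]
By \cref{lem:symrepimpliesperpcongtooriginal}, this holds for all $g$ exactly when $\rho$ is symplectic with similitude character $\lambda$. Since each step is reversible, both directions follow at once.

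There is no real obstacle here. The only point needing care is that $\lambda(g)\in A^\times$, so that $\tau_\lambda$ is well defined and bijective; this holds because $\lambda$ is a character $G\to A^\times$.
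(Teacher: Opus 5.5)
Your proof is correct and is the same unwinding of definitions the paper intends; the paper gives no argument beyond calling the lemma an easy consequence of the definitions. You reduce by $A$-linearity to $g\in G$, where $\tilde{\rho}(\tau_\lambda(g))=\tau_Q(\tilde{\rho}(g))$ is equivalent to $\rho(g)^tQ\rho(g)=\lambda(g)Q$, which is \cref{lem:symrepimpliesperpcongtooriginal}; you also rightly take $Q$ to be the Gram matrix of $\phi$ in the fixed basis, the reading under which the equivalence is meant.
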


\subsubsection{The general symplectic group}
Let $(V,\phi)$ be a symplectic module of rank $2n$ over $A$ with a symplectic basis $\mathrm{B}$ (such a basis always exists by \cref{prop:sympbasis}).
For $\sigma\in \GL_A(V,\phi)$,
let $M_\sigma$ be the corresponding matrix in $M_{2n}(A)$ with respect to $\mathrm{B}$.
The map taking $\sigma\in \GL_A(V,\phi)$ to $M_\sigma\in \GL_{2n}(A)$ gives an isomorphism between the group $\GL_A(V,\phi)$ and the $2n$-dimensional \textit{general symplectic group}, denoted by $\GSp_{2n}(A)$, consisting of matrices $M$ in $\GL_{2n}(A)$ satisfying
    \[
    M^t J M=\lambda(M)J\quad\textnormal{ for some } \lambda(M)\in A^\times.
    \]    
After fixing a symplectic basis, a symplectic representation $G\to \GL_A(V,\phi)$ induces a representation $G\to \GSp_{2n}(A)$.

The following result gives a precise picture of how the $(n\times n)$ blocks of a matrix in the symplectic group relate to each other. The proof of the following \cref{lem:GSp2nformulas} follows by the definition of $\GSp_{2n}(A)$ and is used implicitly in our proofs.
  \begin{lemma}\label{lem:GSp2nformulas}
        Let $A$ be a ring with $2\in A^\times$.
        A matrix $M=\begin{pmatrix}
        A_1&A_2\\A_3&A_4
    \end{pmatrix}\in \GL_{2n}(A)$, where $A_i\in M_n(A)$ for $1\leq i\leq 4$,  belongs to $\GSp_{2n}(A)$ if and only if 
    \begin{equation*}
        A_1^tA_3\textnormal{ and }A_2^tA_4 \textnormal{ are symmetric};\textnormal{ and }A_1^tA_4-A_3^tA_2=\lambda(M).
    \end{equation*}
    Moreover, if $M\in \GSp_{2n}(A)$ and $A_3^tA_2=0$, then $A_1$ is upper (or lower) triangular if and only if $A_4$ is lower (or upper) triangular.
    \end{lemma}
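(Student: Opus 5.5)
We expand the defining identity $M^tJM=\lambda(M)J$ blockwise and compare the blocks. With
\[
J=\begin{pmatrix}&-I\\I&\end{pmatrix},\qquad JM=\begin{pmatrix}-A_3&-A_4\\A_1&A_2\end{pmatrix},
\]
multiplying on the left by $M^t=\begin{pmatrix}A_1^t&A_3^t\\A_2^t&A_4^t\end{pmatrix}$ gives
\[
M^tJM=\begin{pmatrix}-A_1^tA_3+A_3^tA_1 & -A_1^tA_4+A_3^tA_2\\ -A_2^tA_3+A_4^tA_1 & -A_2^tA_4+A_4^tA_2\end{pmatrix}.
\]
Setting this equal to $\lambda J=\begin{pmatrix}&-\lambda I\\ \lambda I&\end{pmatrix}$ splits the single matrix equation into four block equations:
\begin{itemize}
\item the $(1,1)$ block gives $A_3^tA_1=A_1^tA_3$, i.e. $A_1^tA_3$ is symmetric;
\item the $(2,2)$ block gives $A_4^tA_2=A_2^tA_4$, i.e. $A_2^tA_4$ is symmetric;
\item the $(1,2)$ block gives $A_1^tA_4-A_3^tA_2=\lambda I$;
\item the $(2,1)$ block is the negative transpose of the $(1,2)$ block, so it carries no new information.
\end{itemize}
These conditions are therefore equivalent to $M\in\GSp_{2n}(A)$, where the scalar $\lambda$ in the $(1,2)$ condition plays the role of $\lambda(M)$ and must be a unit. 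That it is a unit is automatic: $M$ and $J$ are invertible, so $\lambda J=M^tJM$ is invertible, hence $\lambda^{2n}=\det(M)^2$ is a unit. This proves the first statement, with ``$=\lambda(M)$'' read as $\lambda(M)I_n$.

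For the second statement, assume $A_3^tA_2=0$. The $(1,2)$ condition becomes $A_1^tA_4=\lambda I$ with $\lambda\in A^\times$. Hence $A_1$ and $A_4$ are invertible, and
\[
A_4=\lambda\,(A_1^t)^{-1}=\lambda\,(A_1^{-1})^t .
\]
Suppose $A_1$ is upper triangular and invertible. Its determinant is the product of its diagonal entries, so each diagonal entry is a unit. The inverse is then upper triangular; this follows from the adjugate formula, or by back-substitution, which needs only division by those unit diagonal entries. Transposing, $(A_1^{-1})^t$ is lower triangular, and therefore so is $A_4$.

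The converse and the lower-triangular case follow symmetrically, using $A_1=\lambda\,(A_4^{-1})^t$.

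The only point needing care is that triangularity is preserved under inversion over a general commutative ring. This holds because invertibility of a triangular matrix forces its diagonal entries to be units. Everything else is direct computation.
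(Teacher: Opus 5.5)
Your proof is correct and takes the same approach as the paper. The paper just says the lemma follows from the defining identity $M^tJM=\lambda(M)J$; your blockwise expansion, the remark that $\lambda$ is automatically a unit, and the check that inverses of invertible triangular matrices over a commutative ring remain triangular (with $A_4=\lambda(A_1^{-1})^t$) supply exactly the details the paper omits.
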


\section{Symplectic types and constructing idempotents}
    \label{sec:symtype}
   The first part of this section is about symplectic representations and their types \cite[Definition 5.3]{moakher2023symplecticdeterminantlawsinvariant}. This sets up notation to prove \cref{prop:skeqsymgraph}.
In the second part, given a residually multiplicity free symplectic representation $\rho\colon G\to \GSp_{2n}(\Z_p)$, we construct orthogonal idempotents in $\Z_p[G]/\ker \rho$ corresponding to a symplectic basis  (see  \cref{prop:contructingidempotents} and \cref{thm:uppertirangluarwithrespecttog0inGwithdistincteigenvaluesmodp}). We use these idempotents in \cref{sec:build} to prove an analogue of \cite[Proposition 3.1.1]{BellaicheGraftieaux} (see \cref{prop:Srhosymliesinanapartment}).

\subsection{Symmetry of symplectic representations} 
When working with a symplectic representation, there is a {symmetry} on the set of its pairwise distinct irreducible factors \cite[\S 1.8]{BellaicheChenevierFamilies}.
The first part of this subsection is to explain this symmetry precisely. Later, we define symplectic types, as in \cite{moakher2023symplecticdeterminantlawsinvariant}, and show that symplectic representations give rise to symplectic types.



\begin{definition}\label{def:psiperp} Let $K$ be a field, and let $(R,\tau)$ be an involutive  $K$-algebra.
    Let $\psi\colon R\to \End_K(V)$ be a representation. Let $V^*\colonequals \Hom_K(V,K)$. We define the representation $\psi^\perp\colon R\to \End_K(V^*)$ as follows: for all $r\in R$, $f\in V^*$, $v\in V$, we define 
\begin{equation}
\label{eq:psi^*definition}
    (rf)(v)=f(\tau(r)v).
\end{equation}
For a representation $\psi\colon R\to M_n(K)$, the representation $\psi^\perp=(\psi\circ \tau)^t$
\end{definition}
\begin{lemma}\label{lem:psiirredupsiperpirred}
    Let $\psi\colon R\to \End_K(V)$ be a representation and let $\psi^\perp\colon R\to \End_K(V^*)$ be the representation on $V^*$ as defined in \cref{def:psiperp}. Then
    \begin{enumerate}[$(a)$]
        \item \label{item:subspacestableiffquotientstable} a subspace $W$ of $V$ is $R$-stable if and only if $(V/W)^*$ is $R$-stable in $V^*$,
        \item \label{item:subsapceofdualisdualofsomequotient} a subspace of $V^*$ is of the form $(V/W)^*$ for a subspace $W$ of $V$.
    \end{enumerate}
    In particular, $\psi^\perp$ is irreducible if and only if $\psi$ is irreducible.
\end{lemma}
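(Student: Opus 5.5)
The argument is linear algebra with annihilators. The key tool is the correspondence $W\mapsto (V/W)^*$, which identifies $(V/W)^*$ with the annihilator $W^\perp=\{f\in V^*\mid f(W)=0\}$. We also use the fact that $\tau$ is a bijection of $R$ reversing products, so $\tau(R)=R$.

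For part \eqref{item:subspacestableiffquotientstable}, identify $(V/W)^*$ with $W^\perp\subseteq V^*$ via precomposition with $V\to V/W$.

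Suppose first that $W$ is $R$-stable, and take $f\in W^\perp$, $r\in R$, $w\in W$. Then $(rf)(w)=f(\tau(r)w)=0$, because $\tau(r)w\in W$. Hence $rf\in W^\perp$.

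Conversely, suppose $W^\perp$ is $R$-stable, and take $w\in W$ and $s\in R$. Write $s=\tau(r)$, which is possible since $\tau$ is surjective. For every $f\in W^\perp$ we get $f(sw)=(rf)(w)=0$, since $rf\in W^\perp$. So $sw$ lies in $(W^\perp)^\perp\subseteq V$, which equals $W$ because $V$ is finite dimensional. Thus $W$ is $R$-stable.

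One point should be checked: that \eqref{eq:psi^*definition} really defines a left module structure. We have $(r_1(r_2f))(v)=(r_2f)(\tau(r_1)v)=f(\tau(r_2)\tau(r_1)v)=f(\tau(r_1r_2)v)$, which is what is needed. This is automatic from the anti-multiplicativity of $\tau$, but it should be recorded.

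For part \eqref{item:subsapceofdualisdualofsomequotient}, let $U\subseteq V^*$ be a subspace and put $W=\{v\in V\mid f(v)=0 \text{ for all } f\in U\}$. By finite-dimensional duality, $W^\perp=U$. Indeed, $U\subseteq W^\perp$ is clear, and both spaces have dimension $\dim V-\dim W$ because the evaluation pairing is perfect. Hence $U=(V/W)^*$.

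For the final assertion, combine the two parts. By \eqref{item:subsapceofdualisdualofsomequotient}, every subspace of $V^*$ has the form $W^\perp$, and by \eqref{item:subspacestableiffquotientstable} it is stable exactly when $W$ is. The map $W\mapsto W^\perp$ is an inclusion-reversing bijection, sending $0\mapsto V^*$ and $V\mapsto 0$. So proper nonzero stable subspaces of $V^*$ correspond bijectively to proper nonzero stable subspaces of $V$. Therefore $\psi^\perp$ is irreducible if and only if $\psi$ is.

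There is no real obstacle. The only delicate points are using the surjectivity of $\tau$ in the converse direction of \eqref{item:subspacestableiffquotientstable}, and finite dimensionality for the double-annihilator identities.
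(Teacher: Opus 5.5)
Your proof is correct and takes essentially the same approach as the paper. Both parts (a) and (b) are handled by annihilator and dimension arguments, and the converse in (a) uses the surjectivity of $\tau$. The paper phrases that step by setting $W'$ to be the span of $\{\psi(r)w \mid r\in R, w\in W\}$ and comparing dimensions, which is equivalent to your double-annihilator identity $(W^\perp)^\perp=W$. Your check of the module axioms and your inclusion-reversing-bijection argument for the final ``in particular'' just make explicit what the paper leaves implicit.
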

\begin{proof} 
We first prove \eqref{item:subspacestableiffquotientstable}.
If $W$ is an $R$-stable subspace of $V$, then $(V/W)^*$ is $R$-stable by 
\cref{eq:psi^*definition}. For the converse, if we let $W'$ be the span of $\{\psi(r)w\ |\ r\in R,w\in W\}$, then for all $f\in (V/W)^*$,  $f$ vanishes on $W'$. Since $W\subseteq W'$ and $(V/W)^*\subseteq (V/W')^*$, using dimensions, we see that $W=W'$. Hence $W$ is $R$-stable. 

We now prove \eqref{item:subsapceofdualisdualofsomequotient}. Given a subspace $\tilde{W}$ of $V^*$, we let $W=\{v\in V\ |\ f(v)=0\textnormal{ for all }f\in \tilde{W}\}$. By definition, we have $\tilde{W}\subseteq (V/W)^*$. 
By another dimension count argument, one can easily show that $\tilde{W}=(V/W)^*$. 
\end{proof}

The following lemma relates the Jordan--H\"older factors of $\psi^\perp$ to those of $\psi$.

\begin{lemma}\label{lem:factorsofpsiperp}
    Let $V$ be a vector space over a field $K$, $(R,\tau)$ be an involutive $K$-algebra, and let $\psi\colon R\to \End_K(V)$ be a representation with $\psi^{\mathrm{ss}}\simeq\psi_1\oplus \dots\oplus \psi_d$, where $\psi_i$ is irreducible for all $1\leq i\leq d$. Then
    \[
        (\psi^\perp)^{\mathrm{ss}}\simeq\psi_1^\perp\oplus \dots\oplus \psi_d^\perp.
    \]
\end{lemma}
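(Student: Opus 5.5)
The plan is to induct on the length of a composition series of $V$, using \cref{lem:psiirredupsiperpirred} to turn a composition series of $V$ into one of $V^*$. Choose a filtration by $R$-stable subspaces
\[
    0=W_0\subset W_1\subset \dots\subset W_d=V
\]
whose successive quotients $W_k/W_{k-1}$ are irreducible, and, after reindexing, satisfy $W_k/W_{k-1}\simeq\psi_k$. Dualizing gives the subspaces $(V/W_k)^*\subseteq V^*$, which are $R$-stable by \cref{lem:psiirredupsiperpirred}\eqref{item:subspacestableiffquotientstable}. They form a decreasing chain
\[
    V^*=(V/W_0)^*\supset (V/W_1)^*\supset\dots\supset (V/W_d)^*=0,
\]
since a functional vanishing on $W_k$ also vanishes on $W_{k-1}$.

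The main step is to identify each successive quotient. The restriction map $(V/W_{k-1})^*\to (W_k/W_{k-1})^*$ is surjective, and its kernel is exactly $(V/W_k)^*$. We must check that this restriction is $R$-equivariant, where the target carries the $\perp$-structure attached to the representation $\psi_k$ on $W_k/W_{k-1}$. For $f\in (V/W_{k-1})^*$ and $v\in W_k$, formula \cref{eq:psi^*definition} gives $(rf)(v)=f(\tau(r)v)$. Here $\tau(r)v$ lies in $W_k$ because $W_k$ is $R$-stable, and its class modulo $W_{k-1}$ is $\psi_k(\tau(r))\bar v$. So the restriction of $rf$ equals $r$ applied, in the sense of $\psi_k^\perp$, to the restriction of $f$. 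This yields an isomorphism of $R$-modules
\[
    (V/W_{k-1})^*/(V/W_k)^*\simeq \psi_k^\perp.
\]

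Each $\psi_k^\perp$ is irreducible by the final assertion of \cref{lem:psiirredupsiperpirred}. Hence the chain above is a composition series of $\psi^\perp$ with factors $\psi_1^\perp,\dots,\psi_d^\perp$. By the Jordan--H\"older theorem, $(\psi^\perp)^{\semis}\simeq\bigoplus_k\psi_k^\perp$.

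The only point needing care is the equivariance computation for the dual of a subquotient, which is the "main obstacle" above. Nothing beyond the definitions is required there: $\tau$ is only used as a map $R\to R$, so it need not preserve anything. One might worry that $\tau$ reverses products, but this is precisely what makes \cref{eq:psi^*definition} a left action, and that is already built into \cref{def:psiperp}.
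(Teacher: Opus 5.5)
Your proof is correct and is essentially the paper's argument in coordinate-free form: the paper takes a basis adapted to a composition series, so that $\psi$ is block upper triangular with diagonal blocks $\psi_{i_k}$, and notes that $\psi^\perp(r)=\psi(\tau(r))^t$ is then block lower triangular with diagonal blocks $\psi_{i_k}^\perp$. That is exactly your dual flag $(V/W_k)^*$ with successive quotients $(W_k/W_{k-1})^*$; your opening mention of induction is unnecessary, since you construct the dual composition series directly.
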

\begin{proof}
    When $d=1$, the result is \cref{lem:psiirredupsiperpirred}.
    Assume $d\geq 2$.
    There is a basis of $V$ with respect to which $\psi$ is block upper triangular. That is, there exists $Q\in \GL_{n}(K)$ such that 
    \[
        \psi=Q\begin{pmatrix}
            \psi_{i_1}&*&\dots &*\\
            &\psi_{i_2}&\dots&*\\
            &&\ddots&\vdots\\
            &&&\psi_{i_d}\end{pmatrix}Q^{-1}.
    \]
    For any $r\in R$, we see that
    \[
        \psi^\perp(r)=\psi(\tau(r))^t=Q^{-t}\begin{pmatrix}
            \psi_{i_1}^\perp(r)&& &\\
            *&\psi_{i_2}^\perp(r)&&\\
            \vdots&\vdots&\ddots&\\
            *&*&\dots&\psi_{i_d}^\perp(r)\end{pmatrix} Q^t.
    \]
    Therefore, we get $(\psi^\perp)^{\mathrm{ss}}=\psi_1^\perp \oplus \dots \oplus\psi_d^{\perp}$. 
\end{proof}
Following \cite{BellaicheChenevierFamilies},
we define the \textit{sign} of an irreducible representation $\psi$ such that $\psi\simeq \psi^\perp$. 
\begin{lemma}\cite[Lemma 1.8.7]{BellaicheChenevierFamilies}
    Let $\psi\colon R\to M_{n}(K)$ be an irreducible representation such that  $\psi^\perp\simeq \psi$. 
    For any $Q\in \GL_n(K)$ such that $\psi^\perp=Q\psi Q^{-1}$, we have $Q^tQ^{-1}=\varepsilon=\pm 1$. The sign $\varepsilon$ does not depend on the choice of $Q$ and is called the \emph{sign of }$\psi$.
\end{lemma}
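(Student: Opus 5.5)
The plan is to use that $\psi^\perp = (\psi\circ\tau)^t$ behaves like a "transpose" and to apply Schur's lemma twice. We assume, as throughout the paper, that $\psi$ is absolutely irreducible (or that $K$ is such that $\End_R(\psi)=K$), and that $\tau$ is an involution with $\tau^2=\mathrm{id}$.

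First we verify that the relation can be transposed. Suppose $\psi^\perp(r)=Q\psi(r)Q^{-1}$ for all $r\in R$, that is, $\psi(\tau(r))^t = Q\psi(r)Q^{-1}$. Replacing $r$ by $\tau(r)$ and using $\tau^2=\mathrm{id}$ gives
\[
\psi(r)^t = Q\psi(\tau(r))Q^{-1}.
\]
Transposing this yields
\[
\psi(r)=Q^{-t}\psi(\tau(r))^tQ^t = Q^{-t}\psi^\perp(r)Q^t.
\]
Substituting $\psi^\perp(r)=Q\psi(r)Q^{-1}$ then gives
\[
\psi(r) = (Q^{-t}Q)\,\psi(r)\,(Q^{-t}Q)^{-1}\quad\text{for all } r\in R.
\]
Hence $Q^{-t}Q$ commutes with $\psi(R)$. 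By Schur's lemma and absolute irreducibility, $Q^{-t}Q=c\,I$ for some $c\in K^\times$. Equivalently $Q = cQ^t$, so $Q^tQ^{-1}=c^{-1}$ is a scalar, which we call $\varepsilon$.

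Next we pin down the sign. From $Q=cQ^t$ we get $Q^t=cQ$, and substituting back gives $Q=c^2Q$. Since $Q$ is invertible, $c^2=1$, so $\varepsilon=c^{-1}=c=\pm1$.

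Independence of the choice follows from a second application of Schur's lemma. If $Q'$ also satisfies $\psi^\perp = Q'\psi Q'^{-1}$, then $Q^{-1}Q'$ commutes with $\psi(R)$, so $Q'=\mu Q$ for some $\mu\in K^\times$. Therefore
\[
Q'^tQ'^{-1}=\mu Q^t\mu^{-1}Q^{-1}=Q^tQ^{-1},
\]
so the sign is the same.

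The only delicate point is the use of Schur's lemma, which requires the commutant of $\psi(R)$ to be exactly $K$. This is why absolute irreducibility is needed, and it is consistent with the paper's standing assumption that the residual factors are absolutely irreducible. The other ingredient is $\tau^2=\mathrm{id}$; for the involution $\tau_\lambda$ on $A[G]$ it holds because $\tau_\lambda(\lambda(g)g^{-1})=\lambda(g)\lambda(g)^{-1}g=g$.
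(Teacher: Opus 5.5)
Your proof is correct and is essentially the paper's argument. The paper only says the result follows from Schur's lemma using $\psi\simeq(\psi^\perp)^\perp$; your transposition step is exactly the computation $(\psi^\perp)^\perp = Q^{-t}\psi^\perp Q^{t}$, after which Schur's lemma makes $Q^{-t}Q$ and $Q^{-1}Q'$ scalars. Your explicit hypotheses, absolute irreducibility and $\tau^2=\mathrm{id}$, are precisely what that one-line argument tacitly needs.
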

\begin{proof}
    Using $\psi\simeq (\psi^\perp)^\perp$,
    the proof follows by Schur's lemma. 
\end{proof}
\begin{remark}\label{rem:symplecticiffsign-1}
    Let $(R,\tau)$ be an involutive $K$-algebra, and let $\psi\colon R\to M_{2n}(K)$ be an irreducible representation. It is an easy calculation to see that $\psi$ is symplectic if and only if $\psi\simeq \psi^\perp$ over $K$ with sign $-1$.
\end{remark}



 \begin{proposition}\label{prop:existstranspositiononI}
    Let $\psi\colon (R,\tau)\to (M_{2n}(K),\tau_Q)$ be a symplectic representation with absolutely irreducible Jordan--H\"older factors, and let $\psi_1,\dots,\psi_d$ be the isomorphism classes of the Jordan--H\"older factors of $\psi$. There exists a transposition $\sigma$ on $I\colonequals\{1,2,\dots,d\}$ such that $\psi_{\sigma(i)}\simeq \psi_i^\perp$ for all $i\in I$     
 \end{proposition}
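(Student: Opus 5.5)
We want an involution $\sigma$ on $I$ (the paper says "transposition", meaning a permutation of order dividing $2$) with $\psi_{\sigma(i)}\simeq\psi_i^\perp$. The plan is in three steps: show $\psi^\perp\simeq\psi$; transfer this to semisimplifications to get a well-defined $\sigma$; and check that $\sigma$ is an involution using $(\psi_i^\perp)^\perp\simeq\psi_i$.

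\emph{Step 1: $\psi^\perp\simeq\psi$.} Since $\psi$ is a morphism of involutive algebras $(R,\tau)\to(M_{2n}(K),\tau_Q)$, for all $r\in R$
\[
\psi(\tau(r))=\tau_Q(\psi(r))=Q^{-1}\psi(r)^tQ.
\]
Transposing and using $Q^t=-Q$ gives
\[
\psi^\perp(r)=\psi(\tau(r))^t=Q^t\psi(r)Q^{-t}=Q\psi(r)Q^{-1}.
\]
So $\psi^\perp$ is conjugate to $\psi$, hence $(\psi^\perp)^{\semis}\simeq\psi^{\semis}$.

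\emph{Step 2: defining $\sigma$.} Write $\psi^{\semis}\simeq\bigoplus_k\psi_{i_k}$, where the $i_k$ run over $I$ with multiplicities. By \cref{lem:factorsofpsiperp},
\[
(\psi^\perp)^{\semis}\simeq\bigoplus_k\psi_{i_k}^\perp.
\]
Each $\psi_i^\perp$ is irreducible by \cref{lem:psiirredupsiperpirred}, and it is absolutely irreducible because the same argument works after base change to $\bar K$. Comparing with Step 1 and applying Jordan--H\"older, each $\psi_i^\perp$ is isomorphic to some $\psi_j$, and $j$ is unique since the $\psi_j$ are pairwise non-isomorphic. Set $\sigma(i)=j$.

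\emph{Step 3: $\sigma$ is an involution.} We need $(\psi_i^\perp)^\perp\simeq\psi_i$. Here we use that $\tau$ is an involution on $R$, so $\tau^2=\mathrm{id}$; this holds for the algebras in question, e.g.\ $\tau_\lambda$ on $A[G]$. Then
\[
(\psi^\perp)^\perp(r)=\bigl(\psi(\tau(\tau(r)))^t\bigr)^t=\psi(r),
\]
and isomorphisms are preserved under $\perp$, since $Q\psi Q^{-1}\mapsto Q^{-t}\psi^\perp Q^t$. Hence
\[
\psi_{\sigma(\sigma(i))}\simeq\psi_{\sigma(i)}^\perp\simeq(\psi_i^\perp)^\perp\simeq\psi_i,
\]
so $\sigma^2=\mathrm{id}$. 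In particular $\sigma$ is a bijection, and it is a product of disjoint transpositions with possible fixed points.

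The main obstacle is only bookkeeping. One must confirm that $\tau^2=\mathrm{id}$ is part of the setup, which the earlier definition of involution leaves implicit. One must also note that well-definedness of $\sigma$ needs only that the $\psi_i$ are distinct isomorphism classes; multiplicities play no role.
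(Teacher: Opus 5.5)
Your proof is correct and is essentially the paper's argument (show $\psi^\perp=Q\psi Q^{-1}$, pass to semisimplifications, apply \cref{lem:factorsofpsiperp}), with the useful addition of checking $\sigma^2=\mathrm{id}$, which the paper leaves implicit. You also do not need to assume $\tau^2=\mathrm{id}$ on $R$: since $\psi\circ\tau=\tau_Q\circ\psi$ and $\tau_Q^2=\mathrm{id}$ (because $Q^t=-Q$), one gets $\psi\circ\tau^2=\psi$, hence $(\psi_i^\perp)^\perp=\psi_i\circ\tau^2=\psi_i$ for every subquotient $\psi_i$ of $\psi$.
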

 
\begin{proof}
Let $Q\in \GL_{2n}(K)$ be a skew-symmetric matrix such that $\psi^\perp=Q\psi Q^{-1}$. Since $\psi$ and $\psi^\perp$ are isomorphic, their semisimplifications are isomorphic. The result follows by
\cref{lem:factorsofpsiperp}.
\end{proof}

\subsection{The symplectic extension graph is skew-symmetric}
We are ready to prove  \cref{prop:skeqsymgraph}: the symplectic extension graph associated with an irreducible $p$-adic representation is a skew symmetric graph. 

Let $(V,\phi)$ be a symplectic $\Q_p$-vector space, $\rho\colon G\to \GL(V,\phi)$ be a $p$-adic representation admitting a $G$-stable primitive lattice (that is, a symplectic $\Z_p$-module of rank $\dim V$), and let $\rho_1,\dots,\rho_d$ be the distinct isomorphism classes of absolutely irreducible residual factors of $\rho$. By \cref{prop:existstranspositiononI}, there exists a transposition $\sigma$ on $\{1,\dots,d\}$ such that $\rho_{\sigma(i)}\simeq \rho_i^\perp$. 

\begin{proof}[Proof of \cref{prop:skeqsymgraph}] For $1\leq i\neq j\leq d$, assume that there is an edge $(\rho_j,\rho_i)$ in $\Gamma_\rho^\GSp$. We need to show that $(\rho_{\sigma(i)},\rho_{\sigma(j)})$ is an edge in $\Gamma_\rho^\GSp$.
    Let $\Lambda$ be a $G$-stable symplectic vertex realizing the edge $(\rho_j,\rho_i)$, and let $\Lambda_1,\Lambda_2,$ and $\Lambda_3$ by $G$-stable lattices such that  
    $p\Lambda\subset \Lambda_1\subset \Lambda_2\subset \Lambda_3\subset \Lambda$ and the subquotient $\Lambda_3/\Lambda_1(=(\Lambda_3/p\Lambda)/(\Lambda_1/p\Lambda))$ is a nonsplit extension $\rho_j$ by $\rho_i$. We have the induced $G$-stable flag on the dual of $\Lambdabar$,
    \[
        0\subset \Hom_{\F_p}(\Lambda/\Lambda_3,\F_p)\subset \Hom_{\F_p}(\Lambda/\Lambda_2,\F_p)\subset \Hom_{\F_p}(\Lambda/\Lambda_1,\F_p)\subset \Hom_{\F_p}(\Lambdabar,\F_p), 
    \]
      where the $G$-action given by $\rhobar_\Lambda^\perp$. If $\Lambda_i'\colonequals \Hom_{\F_p}(\Lambda/\Lambda_i,\F_p)$ for $1\leq i \leq 3$, then the subquotient $\Lambda_1'/\Lambda_3'$ of $\Lambdabar^*$ is a nonsplit extension of $\rho_{\sigma(i)}$ by $\rho_{\sigma(j)}$ as $\Lambda_3/\Lambda_1$ is nonsplit. Since $\rhobar_\Lambda\simeq \rhobar_\Lambda^\perp$ by \cref{lem:symrepimpliesperpcongtooriginal}, we see that $\Lambda$ also realizes the edge $(\rho_{\sigma(i)},\rho_{\sigma(j)})$ in $\Gamma_\rho^\GSp$.
\end{proof}

\subsection{Symplectic group representations}

We define symplectic (GMA) types as in  \cite{moakher2023symplecticdeterminantlawsinvariant}. Let $(V,\phi)$ be a symplectic vector space over a field $K$ with $\mathrm{Char}(K)\neq 2$. Given a symplectic multiplicity free representation $\psi\colon G\to \GL_K(V,\phi)$, we show that it gives rise to a symplectic type. 
\begin{definition}\label{def:symplectictype}\cite[Definition 5.3]{moakher2023symplecticdeterminantlawsinvariant}
    Let $I=\{1,\dots,d\}$. A \textit{symplectic type} of dimension $2n$ is a data $\delta=((I_0,I_1,I_2),\sigma,(n_i)_{i\in I})$, where 
     \begin{enumerate}[(i)]
    \item $\sigma\colon I\to I$ is a permutation such that $\sigma^2=\mathrm{Id}$,
    \item $I=I_0\sqcup I_1\sqcup I_2$, where $I_0\subset I$ is the subset fixed by $\sigma$, and $\sigma(I_1)=I_2$ such that $\sigma(i)>i$ for all $i\in I_1$,
    \item  \label{item:nievensymtype}
    $n_i$ is even for all $i\in I_0$,
    \item $n_{\sigma(i)}=n_i$ for all $i\in I_1$,
    \item $\sum_{i\in I} n_i=2n$.
\end{enumerate}

\end{definition}
\begin{example} When $d=2$, we have two symplectic types. Let $\delta=((I_0,I_1,I_2),\sigma,(n_1,n_2))$ be a symplectic type of dimension $2n$. If $\sigma$ is trivial, then $I_0=\{1,2\}$ and $n_1,n_2$ are even by \cref{def:symplectictype}\eqref{item:nievensymtype}. If $\sigma$ is nontrivial, then $I_1=\{1\}$ and $n_1=n_2=n$. 
\end{example}
Let $(V,\phi)$ be a symplectic vector space over a field $K$ with $\mathrm{Char}(K)\neq 2$, and let $\psi\colon G\to \GL_K(V,\phi)$ be a multiplicity free  symplectic representation of a group $G$ with similitude character $\lambda$. Let $\psi^{\mathrm{ss}}\simeq \psi_1\oplus\dots\oplus \psi_d$ and let $\psi_i$ be absolutely irreducible for $1\leq i\leq d$. Let $R=K[G]$ and $\tau\colon R\to R$ be the $K$-linear involution such that $\tau(g)=\lambda(g)g^{-1}$ for all $g\in G$. By \cref{prop:existstranspositiononI}, there exists a transposition $\sigma$ on $I=\{1,\dots,d\}$ such that $\psi_{\sigma(i)}\simeq\psi_i^\perp$. 
Let $I_0=\{i\in I\ |\ \sigma(i)=i\}$, $I_1=\{i\in I\ |\ i<\sigma(i)\}$, and let $d_1=\#I_1$. Let $n_i$ be the dimension of $\psi_i$ for all $i\in I$.

\begin{proposition}\label{prop:symptypeofasymprep}
    The data $\delta=((I_0,I_1,\sigma(I_1)),\sigma,(n_i)_{i\in I})$ defines a symplectic type of dimension $2n$.
\end{proposition}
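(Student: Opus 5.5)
We must check the five conditions of \cref{def:symplectictype} for $\delta=((I_0,I_1,\sigma(I_1)),\sigma,(n_i)_{i\in I})$.

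\textbf{Conditions (i), (ii), (iv) and (v).} By \cref{prop:existstranspositiononI}, $\sigma$ is a transposition on $I$, so $\sigma^2=\mathrm{Id}$; this is (i). By construction $I_0$ is the fixed set of $\sigma$ and $I_1=\{i : i<\sigma(i)\}$. Every non-fixed $i$ satisfies either $i<\sigma(i)$ or $\sigma(i)<i$. In the latter case $i=\sigma(\sigma(i))$ with $\sigma(i)\in I_1$. Hence $I=I_0\sqcup I_1\sqcup \sigma(I_1)$, and $\sigma(I_1)$ consists exactly of the non-fixed $i$ with $\sigma(i)<i$; this is (ii). For (iv), $\psi_{\sigma(i)}\simeq\psi_i^\perp$ acts on $V_i^*$, so $n_{\sigma(i)}=\dim V_i^*=n_i$. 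For (v), multiplicity freeness gives $\psi^{\mathrm{ss}}\simeq\bigoplus_i\psi_i$, so $\sum_i n_i=\dim V=2n$.

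\textbf{Condition (iii): $n_i$ is even for $i\in I_0$.} For $i\in I_0$ we have $\psi_i\simeq\psi_i^\perp$. By \cref{rem:symplecticiffsign-1} it suffices to show that the sign of $\psi_i$ is $-1$. Then $\psi_i$ preserves a nondegenerate alternating form on its $n_i$-dimensional space, so $n_i$ is even by the remark after \cref{def:sympairing}.

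To compute the sign, realize $\psi_i$ as a subquotient $W'/W$ of a $G$-stable flag in $V$. The form $\phi$ induces a $G$-equivariant pairing between $W'/W$ and $W^0/W'^0$, with similitude character $\lambda$. Because $\psi$ is multiplicity free and $\psi_i^\perp\simeq\psi_i$, the factor $W^0/W'^0$ must again be the unique copy of $\psi_i$.

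The intended argument is to pick the flag so that the pair of subquotients coincides. Choose a $G$-stable subspace $W$ that is maximal isotropic among $G$-stable subspaces with $\psi_i$ not a factor of $W$. In the isotropic quotient $W^0/W$, the factor $\psi_i$ should then be the socle and self-dual. The restricted form on it is nonzero and $G$-equivariant up to $\lambda$. By Schur's lemma and absolute irreducibility, this form is the unique (up to scalar) pairing inducing $\psi_i\simeq\psi_i^\perp$. Since it is alternating, the sign is $-1$.

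\textbf{Main obstacle.} The hard step is making this isotropic-reduction argument rigorous. One must show that after passing to $W^0/W$, the copy of $\psi_i$ appears as a subrepresentation $U$ with $U\cap U^0=0$, so that the restricted form is nondegenerate. The plan is to use multiplicity freeness as follows: if $U\cap U^0\neq0$, then by irreducibility $U\subseteq U^0$. We could then enlarge $W$ by the preimage of $U$, keeping isotropy, which contradicts the choice of $W$, or else $\psi_i$ would appear twice, in $U$ and in the quotient $\left(W^0/W\right)/U^0\simeq U^*$. A cleaner alternative, which I would try first, is Bella\"iche--Chenevier's argument \cite[\S 1.8]{BellaicheChenevierFamilies}: pass to the semisimplification, where the induced form is nondegenerate, and read off the signs of the self-dual summands from $\phi$ being alternating.
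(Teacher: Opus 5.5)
\textbf{Where the proposal has a gap.} Your checks of (i), (ii), (iv) and (v) are fine. The argument for (iii), the only substantive condition, has a gap exactly at the step you flag.

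\emph{The socle claim.} The assertion that $\psi_i$ is the socle of $W^0/W$ is not justified. Maximality of $W$ only excludes irreducible $G$-stable subspaces of $W^0/W$ of type $\psi_k$ with $k\notin I_0$: such a subspace is isotropic, and its preimage would enlarge $W$ without adding $\psi_i$. Nothing in your argument prevents the socle from consisting of other self-dual factors $\psi_j$, $j\in I_0\setminus\{i\}$, with $\psi_i$ lying above them. These are not isotropic, so they can never be absorbed into $W$. In fact, since an isotropic $G$-stable $W$ contains no self-dual factor, $W^0/W$ contains every $\psi_j$ with $j\in I_0$. So when $\#I_0\geq 2$, your intended coincidence of $W'/W$ with $W^0/W'^0$ is impossible.

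\emph{Your proposed repair.} The first branch fails. The preimage of a copy $U$ of $\psi_i$ has $\psi_i$ as a factor, so enlarging $W$ by it does not contradict your choice of $W$. The second branch is correct: if $U\subseteq U^0$, then $U$ and $(W^0/W)/U^0\simeq U^*\otimes\lambda$ are two copies of $\psi_i$. This is the paper's \cref{lem:irreducibleGstableisisotropicorsymplectic}. But it presupposes that $\psi_i$ already occurs as a $G$-stable \emph{subspace} of a nondegenerate subquotient, which is precisely what you have not produced.

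\emph{The Bella\"iche--Chenevier alternative.} This does not close the gap either. The semisimplification carries no induced form until you have a self-dual filtration refined through the middle block, which is the same problem. Moreover, \cite[Lemma 1.8.8]{BellaicheChenevierFamilies} concerns residual representations of irreducible representations over local domains, not an arbitrary multiplicity-free symplectic $\psi$.

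\textbf{The missing idea: orthogonal splitting.}
\begin{enumerate}
\item By your maximality remark, any irreducible $G$-stable subspace $U$ of $W^0/W$ is of type $\psi_j$ with $j\in I_0$.
\item By the two-copies argument, $U$ is then non-isotropic, so $U\cap U^0=0$ and $W^0/W=U\oplus U^0$ with $U^0$ $G$-stable and nondegenerate.
\item If $U$ is not of type $\psi_i$, pass to $U^0$. Its irreducible $G$-stable subspaces are also subspaces of $W^0/W$, hence again of $I_0$-type, so you can repeat.
\item By induction on dimension you reach a $G$-stable subspace of type $\psi_i$ on which $\phi$ restricts to a nondegenerate alternating form. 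So $n_i$ is even directly, with no need for signs.
\end{enumerate}
This is the paper's \cref{lem:I0=Isemsimple}. The paper sets it up by taking $W$ to be the top of a maximal $G$-stable isotropic flag (\cref{prop:maximalGstableisotropicflag}). Then the middle block $\varphi$ is symplectic with factors exactly $\{\psi_j\}_{j\in I_0}$, and the paper applies that lemma to $\varphi$.
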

When $\psi$ is the residual representation associated with an irreducible symplectic representation over a local domain with residue field $K$,
\cref{prop:symptypeofasymprep} follows from \cite[Lemma 1.8.8]{BellaicheChenevierFamilies}. Moreover, \cref{prop:symptypeofasymprep} is a special case of \cite[Theorem 5.12]{moakher2023symplecticdeterminantlawsinvariant}. Since we are working over $\Z_p$, we give a simpler proof of \cref{prop:symptypeofasymprep}. Our proof also works for complete discrete valuation rings.
The only thing left to prove is that $n_i$ is even for all $i\in I_0$, which we prove by showing that $\rho_i$ is symplectic for all $i\in I_0$.

\begin{proposition}
    \label{lem:irreducibleGstableisisotropicorsymplectic} Let $W$ be an irreducible $G$-stable subspace of $(V,\phi)$ on which the $G$-action is given by $\psi_i$ for some $i\in I$. Then $W$ is either isotropic or symplectic. Moreover, $W$ is isotropic if and only if $i\not\in I_0$. 
\end{proposition}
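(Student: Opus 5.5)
The plan is to look at the radical $W\cap W^0$ and use irreducibility of $W$ to force it to be all of $W$ or zero. Since $G$ acts on $V$ through $\GL_K(V,\phi)$, for $g\in G$ and $w\in W\cap W^0$, $w'\in W$ we have
\[
\phi(gw,w')=\lambda(g)\,\phi(w,g^{-1}w')=0,
\]
because $g^{-1}w'\in W$. Hence $W\cap W^0$ is a $G$-stable subspace of $W$. By irreducibility of $W$ it equals $0$ or $W$, so $W$ is symplectic or isotropic respectively. This gives the first claim.

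For the second claim, I will compare $W$ with its dual through the pairing. Consider the $K$-linear map $\Phi\colon V\to W^*$, $v\mapsto \phi(v,\cdot)|_W$. Since $\phi$ is nondegenerate, $\Phi$ is surjective with kernel $W^0$, so $V/W^0\simeq W^*$. One checks that $\Phi$ intertwines the action $\psi$ on $V$ with the action on $W^*$ given by $\psi_i^\perp$ in the sense of \cref{def:psiperp}. Up to the harmless distinction between $g^{-1}$ and $\tau(g)=\lambda(g)g^{-1}$, this follows from
\[
\phi(gv,w)=\lambda(g)\,\phi(v,g^{-1}w).
\]
Thus $V/W^0\simeq \psi_i^\perp\simeq \psi_{\sigma(i)}$ as $G$-representations.

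Now split into the two cases.
\begin{itemize}
\item If $W$ is symplectic, then $V=W\oplus W^0$ by \cref{prop:symplecticplussymplecticcompletemtiseverything}. The projection $V/W^0\simeq W$ is then $G$-equivariant, so $\psi_i\simeq\psi_{\sigma(i)}$. By the definition of $\sigma$ and multiplicity freeness this means $\sigma(i)=i$, i.e.\ $i\in I_0$.
\item If $W$ is isotropic, then $W\subseteq W^0$, so $\psi_i$ occurs as a Jordan--H\"older factor of $W^0$. The quotient $V/W^0\simeq\psi_{\sigma(i)}$ is another Jordan--H\"older factor of $V$. If $\sigma(i)=i$, then $\psi_i$ would occur twice in $\psi^{\mathrm{ss}}$, contradicting multiplicity freeness. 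Hence $i\notin I_0$.
\end{itemize}
Since every $W$ is either isotropic or symplectic, these two implications together yield the equivalence: $W$ is isotropic iff $i\notin I_0$.

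The main technical point is the equivariance check identifying $V/W^0$ with $\psi_i^\perp$ relative to the involution $\tau$, and not merely with the contragredient. The similitude factor must be tracked: the action on $W^*$ is $f\mapsto f\circ\tau(g)$, and $\tau(g)=\lambda(g)g^{-1}$ supplies exactly the factor $\lambda(g)$ appearing in $\phi(gv,w)=\lambda(g)\phi(v,g^{-1}w)$. As a consequence, when $i\in I_0$ the restriction of $\phi$ to $W$ is a $G$-invariant symplectic form, so $\psi_i$ is symplectic and $n_i$ is even.
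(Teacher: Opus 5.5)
Your proof is correct and follows essentially the same route as the paper. The radical $W\cap W^0$ gives the isotropic/symplectic dichotomy. Your identification $V/W^0\simeq (W^*,\psi_i^\perp)$ via $v\mapsto\phi(v,\cdot)|_W$ is the coordinate-free form of the paper's block-matrix computation, which works in a basis adapted to $W$ (from \cref{lem:suitablychangingbasisinflaglemma}), shows the bottom block is $\psi_i^\perp$, and then concludes by multiplicity freeness. In the symplectic case the paper gets $\psi_i\simeq\psi_i^\perp$ from the perfect pairing on $W$ itself (via \cref{prop:psikispsimperp}), which is equivalent to your use of $V=W\oplus W^0$.
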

\begin{proof}
    Let $W'=\{v\in W\ |\ \phi(v,W)=0\}=W\cap W^0$. Since $W$ is $G$-stable, $W'$ is $G$-stable. 
    Moreover, $W$ is irreducible as a $G$-module implies that $W'\in \{0,W\}$. 
    This shows that $W$ is either an isotropic or a symplectic subspace of $(V,\phi)$.

    Suppose that $W$ is isotropic. By \cref{lem:suitablychangingbasisinflaglemma}, there exists a basis of $V$ extending a basis of $W$ such that 
    \[
        \psi=\begin{pmatrix}
            \psi_i&*&*\\
            &P_1&P_2\\
            &P_3&P_4
        \end{pmatrix}\quad \textnormal{and}\quad \begin{pmatrix}
                &&-I_{n_i}\\&J_{2(n-n_i)}&\\I_{n_i}&&
            \end{pmatrix}\psi \begin{pmatrix}
                &&-I_{n_i}\\&J_{2(n-n_i)}&\\I_{n_i}&&
            \end{pmatrix}^{-1}=\psi^\perp,
    \]
        where $P_4$ is a square matrix of dimension $n_i$, $P_1$ is a square matrix of dimension $2(n-n_i)$, and other block matrices have suitable dimensions.
            From the latter equation, we see that $P_3\equiv 0$ on $G$, $P_1$ and $P_4$ are representations of $G$,  and $P_4=\psi_i^\perp$. Since $\psi$ is multiplicity free, we see that $i\not\in I_0$.
            If $W$ is symplectic, then the result follows by \cref{prop:psikispsimperp} below.
\end{proof}

\begin{proposition}
    \label{prop:psikispsimperp}
        Let 
        $0=V_0\subset V_1\subset \dots\subset  V_j\subset V$
        be a $G$-stable flag such that $V_{j-1}$ is isotropic, and $V_{i}/V_{i-1}$ is irreducible with the $G$-action given by $\psi_i$ for $i\in \{1,\dots,j\}$. 
        Let $1\leq k\leq j-1$ be the least integer such that $\phi(V_{k}\times V_{j})\neq 0$. Then $\sigma(k)=j$.
\end{proposition}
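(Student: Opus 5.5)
The plan is to realise $\psi_k^\perp$ as a quotient of $V_j/V_{j-1}$, and then let irreducibility and multiplicity-freeness force $\sigma(k)=j$. The tool is the $G$-equivariant map $V\to V^*$ given by $v\mapsto \phi(v,\cdot)$.

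First we check what this map does with the action. Since $\phi(gv,gw)=\lambda(g)\phi(v,w)$, we have $\phi(gv,w)=\phi(v,\tau(g)w)$ with $\tau(g)=\lambda(g)g^{-1}$. So for $v\in V$ the functional $\phi(v,\cdot)$ transforms under $g$ as in \cref{def:psiperp}, up to the twist built into $\tau$. Concretely, for $G$-stable subspaces $A,B$ the pairing $\phi$ induces a $G$-equivariant map $A\to (B)^*$, where $B^*$ carries the $\perp$-action.

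Next we restrict this map to the relevant pieces.
\begin{enumerate}
\item Restrict $\phi$ to $V_k\times V_j$. By minimality of $k$, $\phi(V_{k-1},V_j)=0$, so $\phi$ descends to $V_k/V_{k-1}\times V_j\to K$.
\item Since $V_{j-1}$ is isotropic and $k\le j-1$, $V_k\subseteq V_{j-1}$ gives $\phi(V_k,V_{j-1})=0$. Hence the pairing descends further to
\[
\bar\phi\colon V_k/V_{k-1}\times V_j/V_{j-1}\to K.
\]
\item $\bar\phi$ is nonzero because $\phi(V_k\times V_j)\ne0$.
\end{enumerate}
Therefore $\bar\phi$ induces a nonzero $G$-equivariant map $V_k/V_{k-1}\to (V_j/V_{j-1})^*$. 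Here the source carries $\psi_k$ and the target carries $\psi_j^\perp$.

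Both $\psi_k$ and $\psi_j^\perp$ are irreducible, the latter by \cref{lem:psiirredupsiperpirred}. A nonzero equivariant map between them is therefore an isomorphism by Schur, so $\psi_k\simeq\psi_j^\perp$. Taking $\perp$ again and using $(\psi^\perp)^\perp\simeq\psi$ gives $\psi_j\simeq\psi_k^\perp\simeq\psi_{\sigma(k)}$.

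Multiplicity freeness means the isomorphism classes $\psi_1,\dots,\psi_d$ are pairwise distinct. The indexing of the flag labels the factors by these distinct classes, so $\sigma(k)=j$.

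The main obstacle I expect is verifying that the action on the functionals really matches the $\perp$ convention of \cref{def:psiperp} with $\tau$, including the similitude twist. This amounts to the one-line identity $\phi(v,\tau(g)w)=\phi(gv,w)$, extended linearly to $K[G]$. A minor point of bookkeeping is whether to pair on the left or the right. Since $\phi$ is alternating, swapping the two arguments only changes the sign, so this does not affect equivariance.
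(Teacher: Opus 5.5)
Your proof is correct and follows essentially the same route as the paper. Both arguments descend $\phi$ to a nonzero pairing $V_k/V_{k-1}\times V_j/V_{j-1}\to K$, using the minimality of $k$ and the isotropy of $V_{j-1}$, and then use irreducibility to obtain a $G$-isomorphism $\psi_k\simeq\psi_j^\perp$. The only cosmetic difference is that the paper shows both kernels are trivial ($V_k\cap V_j^0=V_{k-1}$ and $V_j\cap V_k^0=V_{j-1}$) to get a perfect pairing, whereas you apply Schur's lemma to the nonzero equivariant map; this amounts to the same thing.
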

\begin{proof}
    Let $V_k'=\{w\in V_k\ |\ \phi(w,V_j)=0\}=V_k\cap V_{j}^0$, which is $G$-stable. We see that $V_k'$ contains $V_{k-1}$ and $V_k\neq V_k'$  by hypothesis.
    Since $V_k/V_{k-1}$ is irreducible as a $G$-module, we get $V_k'=V_{k-1}$.
    Similarly, $V_j'=V_j\cap V_k^0$ is $G$-stable and $V_j'=V_{j-1}$.
    Thus, we have shown that the induced pairing 
    \[
        \phi\colon (V_k/V_{k-1})\times(V_j/V_{j-1})\to K 
    \] is perfect. Therefore, $\phi$ induces an isomorphism between $V_k/V_{k-1}$ and $(V_j/V_{j-1})^*$ as $G$-modules, where the $G$-action on $(V_j/V_{j-1})^*$ is given  by $\psi_j^\perp$.
    This shows that $\psi_k\simeq \psi_j^\perp$.
\end{proof}

\begin{lemma}\label{lem:I0=Isemsimple}
    Assume $I_0=I$. Then $\psi$ is semisimple and $n_i$ is even for all $i\in I_0$. Moreover, there is a basis of $V$ with respect to which $\psi=\varphi_0$.
\end{lemma}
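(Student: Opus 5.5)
Under the hypothesis $I_0=I$, the plan is to build an orthogonal decomposition of $V$ into symplectic irreducible pieces by induction on $d$ (equivalently on $\dim V$). We work with $\psi\colon G\to \GL_K(V,\phi)$ multiplicity free with absolutely irreducible factors $\psi_1,\dots,\psi_d$, all satisfying $\psi_i\simeq\psi_i^\perp$. Here $\varphi_0$ should be the block diagonal representation $\mathrm{diag}(\psi_1,\dots,\psi_d)$, each block written in a symplectic basis of its summand. The order of the blocks is irrelevant, since any permutation of symplectic summands is realised by a symplectic change of basis.

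\textbf{Base case and choice of summand.} If $d=1$, then $\psi$ is irreducible and there is nothing to decompose. For general $d$, pick a nonzero $G$-stable subspace $W\subseteq V$ that is irreducible; this exists since $V$ is finite dimensional. The $G$-action on $W$ is $\psi_i$ for some $i\in I=I_0$.

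\textbf{Splitting off $W$.} By \cref{lem:irreducibleGstableisisotropicorsymplectic}, $W$ is either isotropic or symplectic, and it is isotropic if and only if $i\notin I_0$. Since $i\in I_0$, $W$ is symplectic. Then $V=W\oplus W^0$ by \cref{prop:symplecticplussymplecticcompletemtiseverything}. The complement $W^0$ is $G$-stable, because for $g\in G$ we have $\phi(gw^0,w)=\lambda(g)\phi(w^0,g^{-1}w)=0$. It is symplectic by \cref{cor:symplecticperpissymplectic}.

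\textbf{Induction.} Restrict $\psi$ to $W^0$. This is a symplectic multiplicity free representation whose Jordan--H\"older factors are $\{\psi_j\}_{j\neq i}$, and each is still self-dual, so the new $I_0$ is again everything. Moreover, the involution $\perp$ is computed with the same $\lambda$, so self-duality is inherited. Applying the inductive hypothesis, $W^0$ is a direct sum of pairwise orthogonal irreducible symplectic $G$-stable subspaces. Therefore $V=\bigoplus_j W_j$ is an orthogonal decomposition with each $W_j$ symplectic and irreducible, which shows $\psi$ is semisimple.

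\textbf{Conclusion.} Each $W_j$ is a symplectic space, hence of even dimension by the remark following \cref{def:sympairing}, so $n_j=\dim W_j$ is even. Choosing symplectic bases of the $W_j$ and concatenating them, in the manner of \cref{cor:extendingbasisofsymplecticsubspace}, gives a basis in which $\phi$ is block diagonal $\mathrm{diag}(J_{n_1},\dots,J_{n_d})$ and $\psi=\mathrm{diag}(\psi_1,\dots,\psi_d)=\varphi_0$. If $\varphi_0$ is instead meant to be written with respect to the standard $J$, permute the basis as in \cref{lem:suitablychangingbasisinflaglemma}.

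\textbf{Main obstacle.} The only delicate point is pinning down the intended normal form $\varphi_0$ (which ordering of blocks and which matrix for $\phi$). The mathematical content, the fact that each irreducible subspace is symplectic, is already supplied by \cref{lem:irreducibleGstableisisotropicorsymplectic}.
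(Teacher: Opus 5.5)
Your proof is correct and follows essentially the paper's argument: induct on $d$, split off an irreducible $G$-stable subspace $W$, which is symplectic by \cref{lem:irreducibleGstableisisotropicorsymplectic} since $i\in I_0$, and then recurse on $W^0$. The only difference is cosmetic. The paper deduces the $G$-stability of $W^0$ from the matrix identity that forces $*=0$ in a basis adapted to $V=W\oplus W^0$, whereas you verify it directly from the similitude relation $\phi(gw^0,w)=\lambda(g)\phi(w^0,g^{-1}w)$.
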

\begin{proof}
    We induct on $d$. The result holds for $d=1$ since $\psi$ is irreducible and symplectic. Assume $d\geq 2$. Let $\psi_k$ be a subrepresentation of $\psi$. Since $k\in I_0$, there is a $G$-stable symplectic subspace of $V$ on which the $G$-action is given by $\psi_k$ by \cref{lem:irreducibleGstableisisotropicorsymplectic}. This implies $n_i$ is even. By \cref{cor:extendingbasisofsymplecticsubspace}, there is a basis of $V$ such that 
    \[
        \psi=\begin{pmatrix}
            \psi_i&*\\&\varphi
        \end{pmatrix}\quad \textnormal{and}\quad \begin{pmatrix}
            J_{n_i}&\\&J_{2n-n_i}
        \end{pmatrix}\psi\begin{pmatrix}
            J_{n_i}&\\&J_{2n-n_i}
        \end{pmatrix}^{-1}=\psi^\perp.
    \]
    The latter equation implies that $*=0$ and $J\varphi J^{-1}=\varphi^\perp$. By induction, $\varphi$ is semisimple, and $n_i$ is even for $i\in I_0-\{k\}$. This proves the result. 
\end{proof}

The following result shows that a maximal  $G$-stable isotropic flag has length $d_1$ in $V$. This is the dimension of the simplicial complex of $G$-stable lattices in the symplectic building (see \cref{prop:dimofSrhosym}). 

\begin{proposition}\label{prop:maximalGstableisotropicflag}
    Assume $I_0\neq I$. There exists a maximal $G$-stable isotropic flag 
    \[
        0=V_0\subset V_1\subset \dots \subset V_{d_1}\subset V,
    \]
    where the $G$-action on $V_k/V_{k-1}$ is given by $\psi_{i_k}$ for all $1\leq k\leq d_1$ and $\psi_{i_{k_1}}\not\simeq \psi_{i_{k_2}}^\perp$ for all $1\leq k_1,k_2\leq d_1$.
\end{proposition}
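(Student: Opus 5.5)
We build the flag greedily, one irreducible $G$-stable step at a time, and use \cref{prop:psikispsimperp} to control which factors can appear. Since $I_0\neq I$, there is a factor $\psi_i$ with $i\notin I_0$. We first show there is an isotropic irreducible $G$-stable subspace to start with. Take a $G$-stable flag of $V$ with irreducible successive quotients; by multiplicity freeness each $\psi_i$ occurs exactly once. Let $W=V_1$ be its first step, with $G$ acting by $\psi_{i_1}$. By \cref{lem:irreducibleGstableisisotropicorsymplectic}, $W$ is isotropic if $i_1\notin I_0$. If instead $i_1\in I_0$, then $W$ is symplectic, and by \cref{prop:symplecticplussymplecticcompletemtiseverything} we have $V=W\oplus W^0$ with $W^0$ a $G$-stable symplectic subspace. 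We then pass to $W^0$, which is again multiplicity free and symplectic with the remaining factors, and iterate. Because $I_0\neq I$, this terminates with an isotropic irreducible $G$-stable subspace, which lies in some symplectic complement and hence in $V$.

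For the inductive step, suppose $0=V_0\subset\dots\subset V_k$ is a $G$-stable isotropic flag with irreducible quotients $\psi_{i_1},\dots,\psi_{i_k}$ and no pair $\psi_{i_a}\simeq\psi_{i_b}^\perp$. Consider $V_k^0$. It is $G$-stable, since $G$ preserves $\phi$ up to scalar, and it contains $V_k$. The space $V_k^0/V_k$ inherits a nondegenerate symplectic form and a symplectic $G$-action. By a dimension count and \cref{lem:factorsofpsiperp}, its factors are all $\psi_j$ with $j\notin\{i_a\}\cup\{\sigma(i_a)\}$: the factors of $V/V_k^0\simeq V_k^*$ are exactly the $\psi_{i_a}^\perp$, and multiplicity freeness forbids repeats.

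If some remaining $j$ lies outside $I_0$, apply the first paragraph to $V_k^0/V_k$ to get an isotropic irreducible $G$-stable subspace there. Its preimage $V_{k+1}$ is isotropic, since $V_{k+1}\subset V_k^0$ and it is isotropic modulo $V_k$. Its new factor $\psi_{i_{k+1}}$ is not $\perp$-dual to any $\psi_{i_a}$ and is not self-dual. We should double-check the pairing condition with \cref{prop:psikispsimperp}: if $\psi_{i_{k+1}}\simeq\psi_{i_a}^\perp$, then $\sigma(i_{k+1})=i_a$ would already have been removed from the list of remaining factors, a contradiction.

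The process stops exactly when all remaining factors lie in $I_0$. At that point we have chosen one index from each pair $\{i,\sigma(i)\}$ with $i\notin I_0$, so the length is $d_1$. Maximality follows because a longer $G$-stable isotropic flag would produce a $G$-stable isotropic irreducible subquotient in $V_k^0/V_k$ whose factor lies in $I_0$. That is impossible by \cref{lem:I0=Isemsimple}, which makes $V_k^0/V_k$ a semisimple orthogonal sum of symplectic irreducibles, and \cref{lem:irreducibleGstableisisotropicorsymplectic}.

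The main obstacle is the bookkeeping of Jordan--H\"older factors of $V_k^0/V_k$. We must justify that $V/V_k^0\simeq V_k^\perp$ as $G$-modules via $\phi$, so that the removed factors are exactly the $\sigma(i_a)$, and we must verify that the residual symplectic structure on $V_k^0/V_k$ is $G$-equivariant with the same similitude. We would handle this using the perfect pairing $V/V_k^0\times V_k\to K$ together with \cref{def:psiperp}.
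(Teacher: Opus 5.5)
Your proof is correct and follows essentially the same route as the paper. First you split off symplectic $I_0$-pieces via $V=W\oplus W^0$ to find an isotropic irreducible $G$-stable subspace; then you extend the flag inside $V_k^0/V_k$. That quotient is the coordinate-free version of the paper's middle block $P_1$, and your identification $V/V_k^0\simeq V_k^*$ with the $\perp$-action corresponds to the paper's $P_3=0$ and $P_4=\varphi^\perp$.

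The differences are cosmetic. You build the flag greedily and then check it cannot be extended, whereas the paper takes a maximal flag and shows $j=d_1$ by contradiction. You also make explicit a step the paper leaves implicit: applying the first claim to $P_1$ to get a subrepresentation outside $I_0$.
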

\begin{proof}
    First we claim that there is an isotropic $G$-stable subspace of $V$. Let $W$ be a $G$-stable subspace of $V$. If $W$ is isotropic, then let $V_1=W$. If not, then $W$ is symplectic by \cref{lem:irreducibleGstableisisotropicorsymplectic}. By the proof of \cref{lem:I0=Isemsimple}, if the $G$-action on $W$ is given by $\psi_{i_0}$ for $i_0\in I_0$, then there exists a subrepresentation $\varphi$ of $\psi$ such that \[
    \psi^{\mathrm{ss}}=\varphi^{\mathrm{ss}}+\psi_{i_0}.
    \]
    The claim now follows by inducting on $d$ by the assumption $I_0\neq I$.

    Let \[
        0=V_0\subset V_1\subset \dots \subset V_{j}\subset V,
    \] be a maximal $G$-stable isotropic flag
    where the $G$-action on $V_k/V_{k-1}$ is given by $\psi_{i_k}$, with $\psi_{i_{k_1}}\not\simeq \psi_{i_{k_2}}^\perp$, for all $1\leq k_1,k_2\leq j$. Let $m=\sum_{k=1}^jn_{i_k}$.
    By \cref{lem:suitablychangingbasisinflaglemma} for the above isotropic flag, there exists a basis of $V$ such that 
    \[
        \psi=\begin{pmatrix}
            \varphi&*&*\\
            &P_1&P_2\\
            &P_3&P_4
        \end{pmatrix}\quad \textnormal{and}\quad \begin{pmatrix}
        &&-I_m\\
        &J&\\
        I_m&&
    \end{pmatrix}\psi \begin{pmatrix}
        &&-I_m\\
        &J&\\
        I_m&&
    \end{pmatrix}^{-1}=\psi^\perp.
    \] 
    Since $\varphi$ is invertible, the latter equation implies that $P_3\equiv 0$. Moreover, we see that $JP_1J^{-1}=P_1^\perp$ and $P_4=\varphi^\perp$. If $j<d_1$, then there is a subrepresentation $\psi_{i_{j+1}}$ of $P_1$ such that $i_{j+1}\notin I_0$ and $\psi_{i_{j+1}}\not\simeq \psi_k^\perp$ for any $1\leq k\leq j$. That is, there exists a $G$-stable subspace $V_{j+1}$ of $V$, containing $V_j$, such that $G$-acts on $V_{j+1}/V_j$ by $\psi_{i_{j+1}}$. Since $i_{j+1}\notin I_0$, the subspace $V_{j+1}$ is isotropic in $V$ by \cref{prop:psikispsimperp}. This contradicts the maximality of $j$. Thus, $j=d_1$.
\end{proof}

    \begin{proof}[Proof of \cref{prop:symptypeofasymprep}] 
\cref{prop:maximalGstableisotropicflag} guarantees the existence of a maximal $G$-stable isotropic flag in $V$. Let $m=\sum_{i\in I_1} n_i$, and let $\{w_1,\dots,w_n,v_{m+1},\dots,v_n,v_1,\dots,v_m\}$ be a basis of $V$ as found in \cref{lem:suitablychangingbasisinflaglemma}. With respect to such a basis, 
\[
    \psi=\begin{pmatrix}
        P&*&*\\&\varphi&*\\&&P^\perp
    \end{pmatrix}\quad \textnormal{and}\quad \begin{pmatrix}
        &&-I_m\\
        &J&\\
        I_m&&
    \end{pmatrix}\psi \begin{pmatrix}
        &&-I_m\\
        &J&\\
        I_m&&
    \end{pmatrix}^{-1}=\psi^\perp.
\]
The latter equation implies $J\varphi J^{-1}=\varphi^\perp$. Since $\varphi$ is symplectic with Jordan--H\"older factors of $\{\psi_i\}_{i\in I_0}$,  $\varphi$ is semisimple and $n_i$ is even for all $i\in I_0$ by \cref{lem:I0=Isemsimple}. 
\end{proof}

\subsection{Constructing idempotents} 

Let $(\Lambda_0,\phi)$ be a symplectic $\Z_p$-module of rank $2n$, and let $\rho\colon G\to \GL_{\Z_p}(\Lambda_0,\phi)$ be a symplectic representation that is residually multiplicity free  with absolutely irreducible residual factors $\rho_1,\dots,\rho_d$. 
We make the following assumption in this section.
\begin{equation}
    \label{assum:distincteigenvaluesmodp}\tag{dist}
    \textnormal{There exists }  g_0\in G \textnormal{ such that } \rhobar(g_0)\textnormal{ has pairwise distinct eigenvalues}.
\end{equation}



In this section, we construct certain idempotents in $\Z_p[G]/\ker \rho$ corresponding to a symplectic basis of $\Lambda_0$. This is used later to show that every $G$-stable vertex in the symplectic building lies in an apartment (see \cref{prop:Srhosymliesinanapartment}). In the spirit of Urban \cite{UrbanKlingen}, we prove the following result to construct our idempotents.

We introduce some notation to state the result below. Let $m_0=\sum_{i\in I_0}n_i$. Fix a diagonal embedding $\iota\colon \oplus_{i\in I_0}M_{n_i}(\F_p)\hookrightarrow M_{m_0}(\F_p)$. Let $\varphi_0$ be the composition of $\oplus_{i\in I_0}\rho_i$ with $\iota$, and let $J_0=\iota((J_{n_i})_{i\in I_0})$.

\begin{theorem}\label{thm:uppertirangluarwithrespecttog0inGwithdistincteigenvaluesmodp}
    Let $(\Lambda_0,\phi)$ be a symplectic $\Z_p$-module of rank $2n$.
    Let $\rho\colon G\to \GL_{\Z_p}(\Lambda_0,\phi)$ be a symplectic residually multiplicity-free representation with absolutely irreducible residual factors $\rho_1,\dots,\rho_d$. Assume that $\rho$ satisfies \eqref{assum:distincteigenvaluesmodp}. 
    Then there exists a basis of $\Lambda_0$ with respect to which, 
    \begin{enumerate}[(a)]
        \item $\rho(g_0)$ is diagonal,
        \item for $m_1=\sum_{i\in I_1}n_i$,  \[
    \bar{\rho}=\begin{pmatrix}
        P&*&*\\
        &\varphi_0&*\\
        &&P^\perp
    \end{pmatrix}\quad \textnormal{and}\quad 
    \begin{pmatrix}
        0&0&-I_{m_1}\\
        0&J_0&0\\
        I_{m_1}&0&0
    \end{pmatrix}\bar{\rho} \begin{pmatrix}
        0&0&-I_{m_1}\\
        0&J_0&0\\
        I_{m_1}&0&0
\end{pmatrix}^{-1}=\bar{\rho}^\perp,
\]
where 
\[
    P=\begin{pmatrix}
        \rho_{i_1}&*&*\\
        &\ddots&*\\
        &&\rho_{i_{d_1}}
    \end{pmatrix} \quad \textnormal{and}\quad i_k\not\in I_0.
\]
    \end{enumerate}
\end{theorem}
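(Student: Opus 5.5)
The plan is to reduce the statement to a residual flag computation and then lift it using the semisimple element $\rho(g_0)$. First work over $\F_p$. Apply \cref{prop:maximalGstableisotropicflag} to $\psi=\rhobar$ on $\Lambda_0/p\Lambda_0$ to get a maximal $G$-stable isotropic flag $0=V_0\subset\dots\subset V_{d_1}$ with graded pieces $\rho_{i_k}$, $i_k\notin I_0$. The proof of \cref{prop:symptypeofasymprep} (via \cref{lem:suitablychangingbasisinflaglemma} and \cref{lem:I0=Isemsimple}) then gives a basis of $\Lambda_0/p\Lambda_0$ in which $\rhobar$ has the block shape $\begin{pmatrix}P&*&*\\&\varphi_0&*\\&&P^\perp\end{pmatrix}$ and satisfies $\mathcal{J}\rhobar\mathcal{J}^{-1}=\rhobar^\perp$ for $\mathcal{J}$ the displayed matrix. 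Here $P$ is block upper triangular, obtained by refining the basis of $V_{d_1}$ along the flag. The middle block can be arranged to be exactly $\varphi_0$ with form $J_0$, because the middle symplectic subquotient is semisimple with symplectic factors, each carrying a symplectic basis.

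The second step brings in $g_0$. Since $\rhobar(g_0)$ has distinct eigenvalues, which lie in $\F_p$ after the usual harmless enlargement, it is diagonalizable. Every $G$-stable subspace in the flag, and every subquotient, is $g_0$-stable and hence spanned by eigenvectors. I would choose the flag basis inductively so that it consists of $\rhobar(g_0)$-eigenvectors while remaining compatible with the form. The form pairs the $\mu$-eigenspace with the $\lambda(g_0)\mu^{-1}$-eigenspace and is zero otherwise. So if $w$ is an eigenvector in $V_{d_1}$, the vectors $v$ with $\phi(v,w)=1$ can be taken in the (one-dimensional) paired eigenspace. This makes \cref{prop:sympbasis} and \cref{lem:extendingabasiswithrespecttoaflag} work entirely within eigenvectors, yielding an eigenbasis that is symplectic in the permuted sense of \cref{lem:suitablychangingbasisinflaglemma}. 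In this basis $\rhobar(g_0)$ is diagonal and the block shape of (b) still holds.

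The third step lifts to $\Z_p$. By Hensel's lemma, since the eigenvalues of $\rhobar(g_0)$ are distinct, $\rho(g_0)$ is diagonalizable over $\Z_p$ with eigenvectors reducing to the chosen residual ones. The lifted eigenvectors again pair only in complementary eigenspaces. Each such pairing is a unit because it is a unit mod $p$, so rescaling one vector in each pair gives a symplectic basis of $\Lambda_0$. Its reduction is the residual basis up to diagonal scaling, which preserves all the block shapes. Condition (b) only concerns $\rhobar$, so it is unaffected.

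The main obstacle I expect is the second step: simultaneously keeping the basis adapted to the $G$-stable isotropic flag, symplectic, and made of $g_0$-eigenvectors. The point to verify carefully is that at each stage the complementary vector chosen within the paired eigenspace does not break the flag structure in the lower-right block $P^\perp$. I would handle this by tracking the eigenvalue bookkeeping: with $\lambda=\lambda_\rho(g_0)$, the eigenvalues on $P^\perp$ are $\lambda\mu^{-1}$ for the eigenvalues $\mu$ on $P$.
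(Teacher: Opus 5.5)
Your proposal is correct and is essentially the paper's argument: the paper also takes the maximal $G$-stable isotropic flag in $\Lambda_0/p\Lambda_0$ and notes that it is $\rhobar(g_0)$-stable. It then produces a symplectic basis of $\Lambda_0$ that diagonalizes $\rho(g_0)$ and is adapted to (a refinement of) that flag via \cref{lem:simultaneousdiagwrtaflag}, and reorders it as in \cref{lem:suitablychangingbasisinflaglemma}. The difference is organizational: the paper builds that basis by induction directly over $\Z_p$, splitting off at each step a hyperbolic plane spanned by Hensel-lifted eigenvectors, whereas you build it over $\F_p$ from the pairing of eigenlines $\mu\leftrightarrow\lambda(g_0)\mu^{-1}$ and lift once at the end. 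Your worried step about $P^\perp$ is automatic, since that block is forced by the form relation. Note also that (a) over $\Z_p$ genuinely requires the eigenvalues of $\rhobar(g_0)$ to lie in $\F_p$, which the paper assumes tacitly, so the enlargement is not quite harmless for the statement as written.
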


\begin{remark} The assumption \eqref{assum:distincteigenvaluesmodp} should be removable and should be equivalent to $\rho$ being multiplicity free. For example, this is true when $n=1$ using trace-determinant relations \cite[2.1.1]{Bellaicheimages}. One could ask if such a result generalizes. 
\end{remark}


We start with the following general lemma.
\begin{lemma}\label{lem:simultaneousdiagwrtaflag}
         Let $(\Lambda_0,\phi)$ be a symplectic $\Z_p$-module of rank $2n$, $M\in \GL_{\Z_p}(\Lambda_0,\phi)$ with eigenvalues $\{\alpha_1,\dots,\alpha_{2n}\}$ that are distinct modulo $p$, and let $p\Lambda_0\subset {\Lambda}_1\subset \dots \subset {\Lambda}_n\subset {\Lambda}_0$ be a flag such that $\Lambda_i/p\Lambda_0$ is isotropic and  stable under $\bar{M}$ for all $1\leq i\leq n$. Then there exists a symplectic basis $\{w_1,\dots,w_n,v_1,\dots,v_n\}$ of $\Lambda_0$ such that 
        \begin{enumerate}[$(a)$]
            \item $\Lambda_i/p\Lambda_0=\langle \bar{w}_1,\dots,\bar{w}_{i} \rangle$ for $1\leq i\leq n$,
            \item $M=\mathrm{diag}(\alpha_{i_1},\dots,\alpha_{i_{2n}})$.
        \end{enumerate}
    \end{lemma}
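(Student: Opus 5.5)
The plan is to diagonalize $M$ over $\Z_p$ by Hensel's lemma and then see that the eigenlines pair up symplectically. The characteristic polynomial of $M$ has distinct roots modulo $p$. Its reduction therefore factors into distinct linear factors over $\F_p$, assuming the eigenvalues lie in $\F_p$ (implicit in the statement, since $M$ is to become diagonal over $\Z_p$). By Hensel's lemma each $\alpha_i$ lies in $\Z_p$, and the $\alpha_i$ are pairwise incongruent modulo $p$. Then
\[
E_i=\prod_{j\neq i}(M-\alpha_j)(\alpha_i-\alpha_j)^{-1}
\]
lies in $\Z_p[M]$. These are orthogonal idempotents summing to the identity, so $\Lambda_0=\bigoplus_i L_i$ with $L_i=E_i\Lambda_0$ free of rank one and $M$ acting on $L_i$ by $\alpha_i$.

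The symplectic identity $\phi(Mx,My)=\lambda\phi(x,y)$ gives, for $x\in L_i$ and $y\in L_j$, the relation $(\alpha_i\alpha_j-\lambda)\phi(x,y)=0$. Hence $L_i$ and $L_j$ are orthogonal unless $\alpha_i\alpha_j=\lambda$. Since the pairing is perfect and $\phi(L_i,L_i)=0$, each $L_i$ must pair perfectly with some $L_j$, $j\neq i$. The equation $\alpha_j=\lambda/\alpha_i$ determines $j$ uniquely, and distinctness modulo $p$ gives uniqueness even after reduction. This yields an involution $i\mapsto i^*$ without fixed points. Moreover $\phi$ restricted to $L_i\times L_{i^*}$ is perfect over $\Z_p$, so we may choose generators $w$ of $L_i$ and $v$ of $L_{i^*}$ with $\phi(w,v)=-1$ (up to the sign convention of $J$).

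Next we incorporate the flag. Each $\Lambda_k/p\Lambda_0$ is $\bar M$-stable, and $\bar M$ has distinct eigenvalues, so $\Lambda_k/p\Lambda_0$ is a sum of reduced eigenlines $\bar L_i$. Since the flag is strictly increasing of length $n$ and $\Lambda_n/p\Lambda_0$ is isotropic of dimension at most $n$, each step adds exactly one line. We label so that $\Lambda_k/p\Lambda_0=\bar L_{j_1}\oplus\dots\oplus\bar L_{j_k}$.

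Isotropy of $\Lambda_n/p\Lambda_0$ means that $\{j_1,\dots,j_n\}$ contains no pair $\{i,i^*\}$. Here we use that the reduced pairing between $\bar L_i$ and $\bar L_{i^*}$ is nonzero, because the integral pairing is perfect. So $\{j_1,\dots,j_n\}$ meets each pair exactly once. Set $w_k$ to be a generator of $L_{j_k}$ and $v_k$ the matching generator of $L_{j_k^*}$, normalized as above. All other pairings vanish by the orthogonality relation. Hence this basis is symplectic, $M$ is diagonal in it, and (a) holds.

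The main obstacle is the step where the flag's dimensions must be exactly $1,\dots,n$. If the flag is only weakly increasing, we instead pick, at each step, eigenlines spanning the successive quotients, and complete the isotropic set of indices to a full choice of one index per pair. Both the order of the lines and the dimension count then need checking. A secondary point is the rationality of the eigenvalues, which we take as part of the hypothesis.
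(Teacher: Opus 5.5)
Your proof is correct, but it takes a different route from the paper's.

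\textbf{The paper's route.} The paper argues by induction on $n$. It picks an eigenvector $w_1$ lifting the line $\Lambda_1/p\Lambda_0$ and writes $M$ in block form with respect to an auxiliary flag-adapted symplectic basis, obtained from the flag proposition and the lifting lemma. It then shows, by contradiction with the distinctness of the eigenvalues modulo $p$, that an eigenvector $v_1$ for $\lambda(M)/\alpha_{i_1}$ satisfies $\phi(v_1,w_1)\in\Z_p^\times$. Finally it recurses on the $M$-stable symplectic complement $\Lambda_0\cap\langle w_1,v_1\rangle^0$ with a projected flag.

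\textbf{Your route.} You decompose $\Lambda_0$ in one step into rank-one eigenlines using the interpolation idempotents in $\Z_p[M]$. The whole symplectic structure then comes from the single identity $(\alpha_i\alpha_j-\lambda)\phi(x,y)=0$ together with perfectness of $\phi$. The eigenlines are paired perfectly by a fixed-point-free involution $i\mapsto i^*$, and the isotropic flag simply selects one line from each pair.

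\textbf{Comparison.} Your approach avoids the auxiliary flag-adapted basis and the bookkeeping of the projected flag. It also makes visible that the basis is forced: up to units, $w_k$ must span the eigenline $L_{j_k}$ picked out by the flag, and $v_k$ must span its partner $L_{j_k^*}$. The paper's inductive argument is more hands-on, but it depends on those preliminary results about flag-adapted symplectic bases.

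\textbf{Two caveats you can drop.}
\begin{enumerate}[$(1)$]
\item The weakly increasing case never occurs, because conclusion $(a)$ already forces $\dim_{\F_p}\Lambda_i/p\Lambda_0=i$.
\item Rationality of the eigenvalues follows from the hypotheses. Set $L=\Lambda_n/p\Lambda_0$, which is Lagrangian. Then $\bar M$ is triangular on $L$, so its eigenvalues $\bar\beta$ there lie in $\F_p$. Via $x\mapsto\bar\phi(x,\cdot)|_L$ one has $\bar\Lambda_0/L\simeq L^*$, and $\bar M$ acts on this quotient with eigenvalues $\bar\lambda/\bar\beta$. So the characteristic polynomial of $\bar M$ splits over $\F_p$ with distinct roots, and Hensel's lemma lifts the roots to $\Z_p$.
\end{enumerate}
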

    \begin{proof}
        Let us denote by $\bar{V}_i$ the subspace $\Lambda_i/p\Lambda_0$ of $\Lambdabar_0$ for all $1\leq i\leq n$. 
        We induct on $n\geq 1$. Let $n=1$. If $\bar{V_1}=\langle \bar{w}\rangle$, then $\bar{M}\bar{w}=\bar{\alpha}_i\bar{w}$ for some $1\leq i\leq 2$. Let $w_1\in \Lambda_0$ such that $Mw_1=\alpha_iw_1$. Since $\bar{\alpha}_1\neq \bar{\alpha}_2$, $\bar{w_1}=a\bar{w}$ for some $a\in \F_p^\times$. Dividing by $a$, we can assume that $\bar{w}_1=\bar{w}$. For $v_1\in V$ such that $Mv_1=\alpha_jv_1$ and $j\neq i$, since $\phi$ is nondegenerate on $\Lambda_0$, we necessarily have $\phi(v_1,w_1)\neq 0\bmod{p}$. By replacing $v_1$ by $\phi(v_1,w_1)^{-1}v_1$, we get a symplectic basis $\{w_1,v_1\}$ with respect to which the result holds. 

        Assume the result holds for $m<n$.
        Let $0\subset\bar{V}_1\subset\dots \subset \bar{V}_n\subset \bar{\Lambda}_0$ be an $\bar{M}$-stable isotropic flag. Let $w_1\in V$ such that $\bar{V_1}=\langle\bar{w}_1\rangle$  and $Mw_1=\alpha_{i_1} w_1$ (the argument is same as in the previous paragraph). Let $\{w_1,w_2',\dots,w'_n,v'_1,\dots,v'_n\}$ be a symplectic basis of $\Lambdabar_0$, extending a symplectic basis of $\Lambda_0$ such that $\bar{V}_i=\langle \bar{w}_1,\bar{w'_2},\dots,\bar{w'_i}\rangle$ for $1<i\leq n$. With respect to the basis $\{w_1,\dots,w_n',v_2',\dots,v_n',v_1'\}$, 
        \[
            M=\begin{pmatrix}
            \alpha_{i_1}&*_1&*_2\\
            &M'&*_3\\
            &*_4&\alpha_{i_k}
        \end{pmatrix}\quad \textnormal{and}\quad M^t\begin{pmatrix}
            &&-1\\&J&\\1&&
        \end{pmatrix}M=\lambda(M)\begin{pmatrix}
            &&-1\\&J&\\1&&
        \end{pmatrix}.
        \]
        From the latter equation, $*_4=0$ and $\alpha_{i_1}\alpha_{i_k}=\lambda(M)$. The aim is to choose an eigenvector candidate for $v_1'$ to ensure that,  for all $i\in \{1,2,3\}$,
        $*_i$ is trivial. Let $v_1\in \Lambda_0$ be such that $Mv_1=\alpha_{i_k} v_1$. Let $v_1=a_1w_1+\sum_{i=2}^n a_iw'_i + \sum_{i=1}^n b_i v'_i$. We claim that $b_1=\phi(v_1,w_1)$ and that it is a unit in $\Z_p$. By our choice of $v_1$,  $\bar{v}_1$ has eigenvalue $\bar{\alpha}_{i_k}$. If $b_1\equiv 0\bmod p$, then $\bar{v}_1$ lies in a stable subspace of $\Lambdabar_0$ on which the action is given by $\begin{pmatrix}
            \alpha_{i_1}&*_1\\
            &M'
        \end{pmatrix}$. This implies $\bar{\alpha}_{i_k}=\bar{\alpha}_j$ for some $j\neq i_k$, which is a contradiction. Hence, $b_1\in \Z_p^\times$. Dividing by $b_1$, we can assume $b_1=\phi(v_1,w_1)=1$. 
        Let $W_1=\langle w_1,v_1\rangle$, and let $W_2=\Lambda_0\cap W_1^0$. Since $W_1$ is $M$-stable, we see that $W_2$ is $M$-stable. Indeed, since $MW_1=W_1$, we have 
        \[
            \phi(MW_2,W_1)=\phi(MW_2,MM^{-1}W_1)= \lambda(M)\phi(W_2,W_1)=0.
        \]
        Moreover, $W_2$ is seen to be a symplectic $\Z_p$-module of rank $2(n-1)$. Let $\tilde{\Lambda}_i=\{w-\phi(v_1,w)w_1 \ |\ w\in \Lambda_i\}$. Using the induction hypothesis on $M|_{W_2}$ with respect to the flag $pW_2\subset \tilde{\Lambda}_2\subset\dots \subset \tilde{\Lambda}_n\subset W_2$, we get a symplectic basis $\{w_2,\dots,w_n,v_2,\dots,v_n\}$ of $W_2$ with respect to which $\tilde{\Lambda}_i/pW_0=\{\bar{w}_2,\dots,\bar{w}_i\}$ and $M|_{W_2}$ is diagonal. Therefore, the required basis is $\{w_1,\dots,w_n,v_1,\dots,v_n\}$.
    \end{proof}

    \begin{proof}[Proof of \cref{thm:uppertirangluarwithrespecttog0inGwithdistincteigenvaluesmodp}] By Hensel's lemma, $\rho(g_0)$ has eigenvalues in $\Z_p$, that are distinct modulo $p$.
    Let $\Lambda_1,\dots,\Lambda_{d_1}$ be $\Z_p$-submodules of $\Lambda_0$ such that  $p\Lambda_0\subset \Lambda_1\subset \dots \subset\Lambda_{d_1}\subset \Lambda_0$, and $\Lambda_{k}/p\Lambda_0$ is $G$-stable and isotropic in $\Lambdabar_0$ for all $1\leq k\leq d_1$.  Let $W_0=0$, $W_k=\Lambda_k/p\Lambda_0$, and let the $G$-action on $W_{k}/W_{k-1}$ be given by $\rho_{i_k}$ for all $1\leq k\leq d_1$. Moreover, we can extend the given $G$-stable isotropic flag to a $G$-stable flag
    \[
        0=W_0\subset W_1\subset \dots \subset W_{d_1}\subset W_{d_1+1}\subset \dots\subset W_{d_1+d_0}\subset \Lambdabar_0, 
    \]
    where the $G$-action on $W_{d_1+k}/W_{d_1+k-1}$ is given by $\rho_{j_k}$ for every $k\in \{1,\dots,d_0\}$ and $j_k\in I_0$.
    
    Since $0\subset W_1\subset \dots \subset W_{d_1+d_0}\subset \Lambdabar_0$ is also $\bar{\rho}(g_0)$-stable, we can consider a refined isotropic flag $0=V_0\subset V_1\subset \dots \subset V_{n}\subset \Lambdabar_0$ such that $W_k=V_{\dim W_k}$ for all  $1\leq k\leq d_1$.  There is a symplectic basis $\{w_1,\dots,w_n,v_1,\dots,v_n\}$ of $\Lambda_0$ with respect to which $\rho(g_0)$ is diagonal with eigenvalues distinct modulo $p$ and $V_i=\langle \bar{w}_1,\dots,\bar{w}_{i} \rangle $ for all $1\leq i\leq n$ by \cref{lem:simultaneousdiagwrtaflag}.  If $m=\dim W_k$, then the required basis is $\{w_1,\dots, w_n,v_{m+1},\dots v_n,v_1,\dots,v_{m}\}$.
    \end{proof}
    Using the idempotents we construct in \cref{prop:contructingidempotents}, we remark that \cref{thm:uppertirangluarwithrespecttog0inGwithdistincteigenvaluesmodp} is a version of \cite[Proposition 2.4.2 (iii)]{Bellaicheimages}. 
\begin{corollary}
    \label{prop:contructingidempotents}
Assume there exists a basis of $\Lambda_0$ satisfying the conclusions of \cref{thm:uppertirangluarwithrespecttog0inGwithdistincteigenvaluesmodp}. There exist orthogonal idempotents $\{e_i\}_{i=1}^{2n}$ in $\Z_p[G]/\ker\rho$, and a fixed-point-free transposition $\eta$ on $\{1,\dots,2n\}$ such that 
        \begin{enumerate}[$(a)$]
            \item \label{item:idempotents1} $e_1+\dots+e_{2n}=1$,
            \item \label{item:idempotents2} For $i\neq \eta(j)$, $(e_i+e_j)\Lambda_0$ is isotropic in $\Lambda_0$,
            \item \label{item:idempotents3} For all $i\in \{1,\dots, 2n\}$,  $e_i\Lambda_0+e_{\eta(i)}\Lambda_0$ is a symplectic submodule of $\Lambda_0$. 
        \end{enumerate}
\end{corollary}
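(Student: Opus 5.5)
Fix a basis $\{u_1,\dots,u_{2n}\}$ of $\Lambda_0$ as in the conclusion of \cref{thm:uppertirangluarwithrespecttog0inGwithdistincteigenvaluesmodp}. In this basis $\rho(g_0)=\mathrm{diag}(\alpha_1,\dots,\alpha_{2n})$ with the $\alpha_i\in\Z_p$ pairwise distinct modulo $p$. The basis is a permutation of a symplectic basis $\{w_1,\dots,w_n,v_1,\dots,v_n\}$: its first and last blocks pair through $\mp I_{m_1}$ and its middle block through $J_0$. Let $\eta$ be the fixed-point-free involution on $\{1,\dots,2n\}$ with $\phi(u_i,u_{\eta(i)})=\pm1$ and $\phi(u_i,u_j)=0$ for $j\neq\eta(i)$. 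The paper calls $\eta$ a ``transposition'' only in the sense used earlier, namely an involution.

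The idempotents will be the projections onto the coordinate lines, and I will realize them inside the image of $\Z_p[G]$ using $g_0$. Let $A=\Z_p[G]/\ker\rho$, which embeds via $\rho$ into $M_{2n}(\Z_p)$; this is a finite $\Z_p$-algebra. The element $T=\rho(g_0)$ lies in $A$. Since $\bar\alpha_i\neq\bar\alpha_j$ for $i\neq j$, each difference $\alpha_i-\alpha_j$ is a unit of $\Z_p$. So the Lagrange interpolation polynomial
\[
P_i(X)=\prod_{j\neq i}\frac{X-\alpha_j}{\alpha_i-\alpha_j}
\]
has coefficients in $\Z_p$. Set $e_i=P_i(T)\in A$. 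Because $T$ is diagonal, $e_i$ is the diagonal matrix unit $E_{ii}$. This immediately gives that the $e_i$ are orthogonal idempotents summing to $1$, which is (a).

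It remains to read off the geometry. We have $e_i\Lambda_0=\Z_p u_i$, since $\Lambda_0$ is the $\Z_p$-span of the $u_i$. For (b), if $i\neq\eta(j)$ then $\phi(u_i,u_j)=0$, and also $\phi(u_i,u_i)=0=\phi(u_j,u_j)$ because $\phi$ is alternating. Hence $(e_i+e_j)\Lambda_0\subseteq \Z_p u_i+\Z_p u_j$ is isotropic. For (c), $\Z_p u_i+\Z_p u_{\eta(i)}$ has Gram matrix $\pm J_2$, which is invertible and skew symmetric over $\Z_p$. So it is a symplectic submodule, and in fact a direct summand, since its orthogonal complement is the span of the remaining basis vectors.

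The main point needing care is checking that the basis produced by the theorem really is a permuted symplectic basis, so that $\eta$ is well defined. The theorem's basis is built from \cref{lem:simultaneousdiagwrtaflag} followed by the reordering of \cref{lem:suitablychangingbasisinflaglemma}, so this holds by construction. The matrix $J_0$ on the middle block should likewise be taken as a permuted standard symplectic form, which is the case for the symplectic basis of \cref{lem:simultaneousdiagwrtaflag}.
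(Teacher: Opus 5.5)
Your proposal is correct and is essentially the paper's proof: the paper also takes $e_i$ to be the element of $\Z_p[G]/\ker\rho$ mapping to the diagonal matrix unit $E_{ii}$, obtained from $\rho(g_0)$ via \cref{lem:Mgeneratesdiagonalmatrices} (your Lagrange interpolation is just a proof of that lemma), and then reads (a)--(c) off the shape of the basis. The paper defines $\eta$ by $\rho(e_{\eta(i)})=\rho^\perp(e_i)$; since $\rho^\perp(x)=Q\rho(x)Q^{-1}$ for the signed permutation matrix $Q$ representing $\phi$ in this basis, this is exactly your pairing-partner involution.
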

    
    Similar to \cite[Proof of Theorem 1.1]{UrbanKlingen}, we need the following classical result to prove \cref{prop:contructingidempotents}.
    \begin{lemma}\label{lem:Mgeneratesdiagonalmatrices}
        Let $M=\mathrm{diag}(\alpha_1,\dots,\alpha_{2n})\in \GL_{2n}(\Z_p)$ such that $\alpha_i\not\equiv \alpha_j\bmod{p}$ for all $1\leq i\neq j\leq 2n$. The subalgebra of $M_{2n}(\Z_p)$ generated by $M$ is the diagonal matrices. 
    \end{lemma}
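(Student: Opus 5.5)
The plan is to prove \cref{lem:Mgeneratesdiagonalmatrices} by Lagrange interpolation over $\Z_p$, using that the differences $\alpha_i-\alpha_j$ are units. The subalgebra $\Z_p[M]$ generated by $M$ consists of the matrices $f(M)=\mathrm{diag}(f(\alpha_1),\dots,f(\alpha_{2n}))$ with $f\in\Z_p[x]$. In particular it is contained in the algebra of diagonal matrices. So it suffices to show that each elementary diagonal idempotent $E_{kk}$ (with $1$ in position $(k,k)$ and $0$ elsewhere) lies in $\Z_p[M]$. These span the diagonal matrices as a $\Z_p$-module.

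For a fixed $k$, I would take the Lagrange polynomial
\[
    f_k(x)=\prod_{j\neq k}\frac{x-\alpha_j}{\alpha_k-\alpha_j}.
\]
By hypothesis $\alpha_k\not\equiv\alpha_j\bmod p$ for $j\neq k$, so each $\alpha_k-\alpha_j$ is a unit in $\Z_p$. Hence $f_k\in\Z_p[x]$. Evaluating entrywise gives $f_k(\alpha_i)=\delta_{ik}$, so $f_k(M)=E_{kk}$. Taking $\Z_p$-linear combinations of the $E_{kk}$ then yields every diagonal matrix, which gives the reverse inclusion.

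There is no real obstacle here. The only point requiring care is the integrality of the coefficients of $f_k$, which is exactly where the distinctness of the eigenvalues modulo $p$ is used. Over $\Q_p$ alone, distinctness of the $\alpha_i$ would suffice, but over $\Z_p$ we need the denominators to be units.
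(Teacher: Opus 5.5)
Your proof is correct. Because each $\alpha_k-\alpha_j$ ($j\neq k$) is a unit, the Lagrange polynomials $f_k$ lie in $\Z_p[x]$ and satisfy $f_k(M)=E_{kk}$; combined with the obvious inclusion of $\Z_p[M]$ in the diagonal matrices, this proves the lemma.

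The paper gives no proof of this lemma and simply calls it classical, so your interpolation argument supplies the missing step. It also directly gives what the paper uses later: $E_{kk}=f_k(\rho(g_0))$ is the image of an element of $\Z_p[G]$.
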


    \begin{proof}[Proof of \cref{prop:contructingidempotents}] Fix a basis of $\Lambda_0$ satisfying the conclusions of \cref{thm:uppertirangluarwithrespecttog0inGwithdistincteigenvaluesmodp}.  
        By renaming the $\rho_i$'s, we can assume that \cref{thm:uppertirangluarwithrespecttog0inGwithdistincteigenvaluesmodp} holds with $i_j=j$ for $1\leq j\leq d_1$.
    For $1\leq i\leq 2n$, there exists $e_i\in \Z_p[G]/\ker\rho$ such that $\rho(e_i)=\mathrm{diag}(0,\dots,0,1,0,\dots,0)$, where $1$ is in the $i$-th position, by \cref{lem:Mgeneratesdiagonalmatrices}. The transposition $\eta$ on $\{1,\dots,2n\}$ is defined such that 
    \[
        \rho(e_{\eta(i)})=\rho^\perp(e_i).
    \]
    By our choice of idempotents and transposition $\eta$, the result follows.
    \end{proof}

\section{Buildings}
\label{sec:build}
A \textit{Bruhat--Tits building} is a simplicial complex that is a union of subcomplexes called \textit{apartments}, which are Coxeter complexes, and satisfies certain axioms \cite[Definition 4.1]{AbramenkoBrown}. For connected semisimple linear algebraic groups over $\Q_p$, the buildings constructed correspond to infinite \textit{Weyl groups}, giving us {Euclidean buildings} that are $\mathrm{CAT}(0)$ metric spaces \cite[Chapter 11]{AbramenkoBrown}.
In this section, we define the (Euclidean) Bruhat--Tits building associated with $\GL_n(\Q_p)$ and $\GSp_{2n}(\Q_p)$ whose vertices correspond to homothety class of certain $\Z_p$-lattices. 

Let $(V,\phi)$ be a symplectic $\Q_p$-vector space, and let $\rho\colon G\to \GL(V,\phi)$ be a symplectic representation that stabilizes a primitive lattice with residual factors $\rho_1,\dots,\rho_d$ that are pairwise nonisomorphic and absolutely irreducible. The simplicial complex $\scrS_\rho^{\GL}$ formed by $G$-stable vertices in the building associated with $\GL(V)$ is compact and bounded if and only if $\rho$ is irreducible, and has dimension $d-1$ \cite[Proposition 3.2.1 and Proposition 3.3.2]{BellaicheRibetslemmaPadova}. 

We study the subcomplex $\scrS_\rho^\GSp$ of $\scrS_\rho^\GL$ formed by vertices in the building associated with $\GL(V,\phi)$, find its dimension (see  \cref{prop:dimofSrhosym}), and, assuming $\rho$ satisfies \eqref{assume:dist}, we show that $\scrS_\rho^\GSp$ lies in a sub-apartment of the building associated with $\GL(V,\phi)$ (see \cref{prop:Srhosymliesinanapartment}). 

Suppose for all $1\leq i\leq d$, we have $\sigma(i)\neq i$, then we can move between symplectic vertices in $\scrS_\rho^\GSp$. Moreover, assuming $\rho$ is irreducible, we can find symplectic vertices on the boundary (see \cref{prop:existssymplecticwithnoneighbourswithspecificJH}). This is equivalent to the existence of certain edges in $\Gamma_\rho^\GSp$ (see \cref{prop:equivnonexistneighexitstext}).
We use this in the proof of the main theorem \cref{thm:ourmain}.

\subsection{Building associated with general linear groups}
Let $V$ be a $\Q_p$-vector space and $\Lambda$ be a lattice in $V$.
The homothety class of $\Lambda$ is $\{\alpha\Lambda\ |\ \alpha\in \Q_p^\times\}$ and is denoted by $[\Lambda]$.
We define Bruhat--Tits buildings as a simplicial complex with homothety classes of lattices as its vertices. This is not the most natural definition, but a useful one to visualize and work with. For more equivalent definitions, see Abramenko--Brown \cite[Chapters 4, 5, 6]{AbramenkoBrown}.

\begin{definition} \label{def:GLnbuilding}
Let $V$ be an $n$-dimensional $\Q_p$-vector space.
The \textit{Bruhat--Tits building} associated with $\GL(V)$ is the following simplicial complex denoted by $\mathcal{X}_n$.
    \begin{enumerate}
        \item A vertex (0-simplex) is a homothety class of a lattice in $V$.
\item An edge (1-simplex) between vertices $x_1,x_2$ of $\mathcal{X}_n$ exists if there are representatives $\Lambda_1$ and $\Lambda_2$ for $x_1$ and $x_2$ respectively, such that 
\[
	p\Lambda_1\subset \Lambda_2\subset \Lambda_1.
\]
Such an $x_2$ called a \textit{neighbour} of $x_1$ in $\mathcal{X}_n$.
\item An $m$-simplex in $\mathcal{X}_n$ is given by $(m+1)$-vertices $x_i=[\Lambda_i]$ for all $0\leq i\leq m$, satisfying 
	\[
		p\Lambda_0\subset \Lambda_m \subset \Lambda_{m-1} \subset \dots \subset\Lambda_1\subset \Lambda_0.
	\]
    \end{enumerate}
\end{definition}

\begin{remark}\label{rem:GLnbuildingneighbours}
\quad 
    \begin{enumerate}[(a)]
        \item The neighbour relation is symmetric: If $x'$ is a neighbour of $x$, then $x$ is also a neighbour of $x'$. To see this, choose representative lattices $\Lambda$ and $\Lambda'$ of $x$ and $x'$ respectively, such that 
        \[
            p\Lambda\subset \Lambda'\subset \Lambda.
        \]
        This implies that $p(p^{-1}\Lambda')\subset \Lambda\subset p^{-1} \Lambda'$. Thus choosing the new representative $p^{-1}\Lambda'$ for $x'$, we see that $x$ is a neighbour of $x'$. 
        \item \label{item:neighboursandpropersubspaces} The neighbours of a vertex $[\Lambda]$ in $\mathcal{X}_n$ are in one-to-one correspondence with the proper $\F_p$-subspaces of $\Lambda/p\Lambda$. For a neighbour $[\Lambda']$ of $[\Lambda]$ such that $p\Lambda\subset \Lambda'\subset \Lambda$ the associated proper $\F_p$-subspace is $\Lambda'/p\Lambda$. 
    \end{enumerate}
\end{remark}

\subsection{Building associated with general symplectic groups}
We define the Bruhat--Tits building associated with symplectic groups analogous to \cref{def:GLnbuilding} \cite[Chapter 20]{PaulGarrett}. These can be seen as a subcomplex of the building $\mathcal{X}_{2n}$.
We start by defining symplectic and primitive lattices in symplectic vector spaces. The vertices in the symplectic building will then be either a homothety class of a primitive vertex, or a neighbour in $\mathcal{X}_{2n}$ that corresponds under \cref{rem:GLnbuildingneighbours}\eqref{item:neighboursandpropersubspaces} to an isotropic subspace.

Let $(V,\phi)$ be a symplectic vector space over $\Q_p$ of dimension $2n$. 
Given a basis $\{e_1,\dots,e_{2n}\}$ of $V$, we write $\langle e_1,\dots,e_{2n} \rangle $ for its $\Z_p$-span.  
Let $\Lambda$ be a lattice of $V$. There exists an integer $k\in \Z$ such that $\phi(\Lambda\times \Lambda)=p^k\Z_p$. Indeed, if $\Lambda=\langle e_1,\dots,e_{2n} \rangle$, then $k$ is the minimum of the $p$-adic valuations of $\phi(e_i,e_j)$ for $1\leq i,j\leq 2n$. Let $\bar{\Lambda}=\Lambda/p\Lambda$, and for $v\in \Lambda$, let $\bar{v}$ denote the element $v+{p\Lambda}$ in $\Lambdabar$. Define the bilinear form  $\bar{\phi}_\Lambda\colon \Lambdabar\times\Lambdabar\to \F_p$ by
\(
    \bar{\phi}_\Lambda(\bar{v},\bar{w})=\phi(v,w)p^{-k}\bmod{p},\ \textnormal{for all } v,w\in \Lambda.
\)
Since \(
\phi(p\Lambda\times \Lambda)p^{-k}=\phi(\Lambda\times p\Lambda)p^{-k}=p^{k+1}p^{-k}\Z_p=p\Z_p,\)
the map $\bar{\phi}_\Lambda$ is well defined. Moreover, $\bar{\phi}_\Lambda$  is  alternating as $\phi$ is alternating. 
\begin{definition}\label{def:symplattice}
    A lattice $\Lambda$ of $V$ is \textit{symplectic} if $\bar{\phi}_\Lambda$ is a symplectic pairing on $\bar{\Lambda}$. A symplectic lattice $\Lambda$ is  \textit{primitive} if $\phi(\Lambda\times\Lambda)=\Z_p$.
\end{definition}
As remarked in \cref{sec:background}, a $\Z_p$-lattice $\Lambda$ in a symplectic $\Q_p$-vector space $(V,\phi)$ is primitive if and only if $(\Lambda,\phi|_{\Lambda\times \Lambda})$ is a symplectic $\Z_p$-module of rank $\dim V$. 
If $\Lambda$ is a symplectic lattice, then
the symplectic $\F_p$-vector space $(\Lambdabar,\bar{\phi}_\Lambda)$ only depends on the homothety class $[\Lambda]$.

\begin{proposition}\label{prop:symlatticescharacterizationviasymbasis}
    Let $\{w_1,\dots,w_n,v_1,\dots,v_n\}$ be a symplectic basis of $V$. The lattice 
    \[
        \Lambda=
        \langle p^{a_1}w_1,\dots,p^{a_n}w_n,p^{b_1}v_1,\dots,p^{b_n}v_n\rangle
    \] is symplectic if and only if $a_i+b_i=a_j+b_j$ for all $1\leq i,j\leq n$. Moreover, $\Lambda$ is primitive if and only if $a_i+b_i=0$ for all $1\leq i\leq n$.
\end{proposition}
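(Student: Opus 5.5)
The plan is to compute the Gram matrix of $\phi$ on the given basis of $\Lambda$ directly and read off both conditions from it. Write $e_i=p^{a_i}w_i$ and $f_i=p^{b_i}v_i$ for $1\leq i\leq n$. Since $\{w_1,\dots,w_n,v_1,\dots,v_n\}$ is a symplectic basis, the only nonzero pairings among basis vectors are
\[
\phi(e_i,f_i)=-\phi(f_i,e_i)=\pm p^{a_i+b_i}.
\]
Hence the matrix of $\phi$ on $\{e_1,\dots,e_n,f_1,\dots,f_n\}$ is $\begin{pmatrix}&-D\\D&\end{pmatrix}$ with $D=\mathrm{diag}(p^{c_1},\dots,p^{c_n})$ and $c_i=a_i+b_i$. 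Let $k=\min_i c_i$; then $\phi(\Lambda\times\Lambda)=p^k\Z_p$.

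Next, compute $\bar\phi_\Lambda$. Its matrix on $\Lambdabar$ is the reduction mod $p$ of $p^{-k}\begin{pmatrix}&-D\\D&\end{pmatrix}$, so its diagonal entries in the blocks are $p^{c_i-k}\bmod p$. This reduction is $1$ when $c_i=k$ and $0$ when $c_i>k$.

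By Remark (a) after \cref{def:sympairing}, $\bar\phi_\Lambda$ is symplectic if and only if this alternating matrix is invertible over $\F_p$. That holds if and only if every $c_i-k=0$, i.e. all $a_i+b_i$ are equal. Indeed, if some $c_j>k$, then the image of $e_j$ lies in the radical of $\bar\phi_\Lambda$, so the form is degenerate.

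For the second statement, primitivity means the form is symplectic and in addition $p^k\Z_p=\Z_p$, i.e. $k=0$. Combined with the first part, this says all $c_i$ equal $0$; conversely, if all $c_i=0$, both conditions hold.

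There is no real obstacle here. The only point needing care is that the normalization by $p^{-k}$ uses the minimal valuation, so that degeneracy appears exactly when the $c_i$ are not all equal.
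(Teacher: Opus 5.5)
Your proposal is correct and matches the paper's argument. Both set $k=\min_i(a_i+b_i)$, so that $\phi(\Lambda\times\Lambda)=p^k\Z_p$; both note that if some $a_j+b_j>k$, the image of $p^{a_j}w_j$ lies in the radical of $\bar\phi_\Lambda$, while if all $a_i+b_i$ are equal, $\bar\phi_\Lambda$ is represented by $J$ up to sign; and both observe that primitivity adds exactly the condition $k=0$, with your invertibility-of-the-reduced-Gram-matrix phrasing being only a repackaging of the same computation.
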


\begin{proof}
    Let $k=\min\{a_i+b_i\ |\ 1\leq i\leq n\}$. Since $\{w_1,\dots,w_n,v_1,\dots,v_n\}$ is a symplectic basis, we observe that $\phi(\Lambda\times \Lambda)=p^k\Z_p$. Moreover, for any $1\leq i\leq n$, the element $w_i$ pairs trivially with all the elements of the basis except $v_i$. If $a_i+b_i>k$ for some $i$, then $p^{a_i}w_i$ is such that 
    \[
        \phi(p^{a_i}w_i,p^{b_i}v_i)\equiv 0\bmod{p^{k+1}}.
    \] 
    This implies that the alternating form $\bar{\phi}_\Lambda$ on $\Lambdabar$ is degenerate. Hence, if $\Lambda$ is symplectic, then $a_i+b_i=k$ for all $1\leq i\leq n$.

    For the converse, let $a_i+b_i=k$ for all $1\leq i\leq n$. Then
    \[  \phi(p^{a_i}w_i,p^{b_i}v_j)=p^k\delta_{ij},\quad \textnormal{for all }1\leq i,j\leq n.
    \]
    Since $\phi(w_i,w_j)=\phi(v_i,v_j)=0$ and $\phi(w_i,v_j)=\delta_{ij}$ for all $1\leq i,j\leq n$, it follows that $\bar{\phi}_\Lambda$ is given by $J$ with respect to the basis $\{p^{a_1}w_1,\dots,p^{a_n}w_n,p^{b_1}v_1,\dots,p^{b_n}v_n\}$. This shows that $\Lambda$ is symplectic. Since $\phi(\Lambda\times \Lambda)=p^k\Z_p$, we see that $\Lambda$ is primitive if and only if $k=a_i+b_i=0$ for all $1\leq i \leq n$.
\end{proof}

\begin{definition}
Let $(V,\phi)$ be a $2n$-dimensional symplectic $\Q_p$-vector space.
The \textit{Bruhat--Tits building} associated with $\GL_{\Q_p}(V,\phi)$ is the simplicial complex denoted by $\Delta_n(V,\phi)$ defined by the following properties.
\begin{enumerate}
    \item A vertex (0-simplex) is either a homothety class of a primitive lattice, or a homothety class with a representative $\Lambda$ such that there is a primitive lattice $\Lambda_0$ such that 
\[	p\Lambda_0\subset \Lambda\subset \Lambda_0
\] and the subspace $\Lambda/p\Lambda_0$ is an isotropic subspace of $\bar{\Lambda}_0$. 
\item An edge (1-simplex) between vertices  $x_1,x_2$ in $\Delta_n$ exists if there are representatives $\Lambda_1$ and $\Lambda_2$ for $x_1$ and $x_2$ respectively, and a primitive lattice $\Lambda_0$ such that 
\[
	p\Lambda_0\subseteq \Lambda_1, \Lambda_2\subseteq
	\Lambda_0,
\]
$\Lambda_i/p\Lambda$ is isotropic for $i\in \{1,2\}$, and either $\Lambda_1\subset \Lambda_2$ or $\Lambda_2\subset \Lambda_1$.
Such a pair $x_1,x_2$ are said to be \textit{neighbours} in $\Delta_n$.
\item An $m$-simplex in $\Delta_n$ is given by $(m+1)$-vertices $x_i=[\Lambda_i]$ for $0\leq i\leq m$ where $\Lambda_0$ is a primitive lattice, and 
	\[
		p\Lambda_0\subset \Lambda_{1} \subset \Lambda_{2} \subset \dots \subset\Lambda_{m}\subset \Lambda_0,
	\]
    where $\Lambda_{m}/p\Lambda_0$ is isotropic. We denote such an $m$-simplex by $[x_0,\dots,x_{m}]$. A maximal simplex is called a \textit{chamber}.
    \item The \textit{dimension} of an $m$-simplex is $m$.
    The \textit{dimension} of $\Delta_n$ is the dimension of a chamber inside it.
\end{enumerate} 
\end{definition}

We note that a vertex in $\Delta_n(V,\phi)$ is by definition a primitive lattice or a neighbour of a primitive lattice.
For convenience, sometimes we write $\Delta_n$ for $\Delta_n(V,\phi)$. 
\begin{lemma}
    The dimension of $\Delta_n$ is $n$.
\end{lemma}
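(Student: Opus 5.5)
The plan is to show two inequalities: some simplex has $n+1$ vertices (dimension at least $n$), and no simplex has more than $n+1$ vertices (dimension at most $n$). By definition, an $m$-simplex is a chain
\[
p\Lambda_0\subset \Lambda_1\subset\dots\subset\Lambda_m\subset\Lambda_0
\]
with $\Lambda_0$ primitive and $\Lambda_m/p\Lambda_0$ isotropic in $\Lambdabar_0$. Under the correspondence of \cref{rem:GLnbuildingneighbours}\eqref{item:neighboursandpropersubspaces}, the distinct homothety classes $[\Lambda_1],\dots,[\Lambda_m]$ correspond to a strictly increasing flag
\[
0\subsetneq V_1\subsetneq\dots\subsetneq V_m,\qquad V_k=\Lambda_k/p\Lambda_0,
\]
of nonzero isotropic subspaces of the symplectic $\F_p$-space $(\Lambdabar_0,\bar\phi_{\Lambda_0})$. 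This space has dimension $2n$.

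For the upper bound, I will use that the $V_k$ are nonzero, strictly increasing, and contained in the isotropic subspace $V_m$. This forces $m\le \dim V_m$. By \cref{lem:extendingabasisofisotropictosymplectic}, an isotropic subspace has dimension at most $n$, since it extends to part of a symplectic basis with $n$ isotropic vectors $w_i$. Hence $m\le n$, so every simplex has dimension at most $n$.

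For the lower bound, I will take a symplectic basis $\{w_1,\dots,w_n,v_1,\dots,v_n\}$ of $V$, which exists by \cref{prop:sympbasis}. Let $\Lambda_0$ be its $\Z_p$-span; it is primitive by \cref{prop:symlatticescharacterizationviasymbasis}. I then set
\[
\Lambda_k=\langle w_1,\dots,w_k,\,pw_{k+1},\dots,pw_n,\,pv_1,\dots,pv_n\rangle .
\]
Then $\Lambda_k/p\Lambda_0=\langle\bar w_1,\dots,\bar w_k\rangle$ is isotropic, and the chain $p\Lambda_0\subset\Lambda_1\subset\dots\subset\Lambda_n\subset\Lambda_0$ gives an $n$-simplex.

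The only point that needs care is that the $n+1$ vertices are genuinely distinct homothety classes. I will check this by observing that for $1\le k\le n$ we have $p\Lambda_0\subsetneq\Lambda_k\subsetneq\Lambda_0$, and a lattice strictly between $p\Lambda_0$ and $\Lambda_0$ cannot be homothetic to $\Lambda_0$ or to another lattice in the chain. Indeed, homothety by $p^j$ with $j\neq 0$ changes the index $[\Lambda_0:\Lambda]$ by a multiple of $p^{2n}$, while all these indices lie strictly between $1$ and $p^{2n}$. With that check, the dimension of $\Delta_n$ is exactly $n$.
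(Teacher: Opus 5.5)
Your proof is correct and follows the paper's argument: simplices at a primitive vertex $[\Lambda_0]$ are isotropic flags in $\Lambdabar_0$, their length is bounded by the dimension $n$ of a Lagrangian (\cref{lem:extendingabasisofisotropictosymplectic}), and a complete flag in a Lagrangian attains this bound. You simply spell out the upper bound, the explicit fundamental chamber, and the distinctness of the homothety classes. The one point your version leaves implicit is that \emph{every} chamber has dimension $n$, which the paper's definition of $\dim\Delta_n$ as ``the dimension of a chamber'' presupposes; it follows at once because, by \cref{lem:extendingabasiswithrespecttoaflag}, any isotropic flag refines to a complete flag in a Lagrangian, so no simplex with fewer than $n+1$ vertices is maximal.
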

\begin{proof}
    Let $\Lambda_0$ be a primitive lattice.
     Since a chamber containing $[\Lambda_0]$ in $\Delta_n$ corresponds to a maximal isotropic flag in $\Lambdabar_0$, the dimension of a chamber in $\Delta_n$ is the dimension of a Lagrangian subspace of $\Lambdabar_0$. Such a Lagrangian subspace has dimension $n$ by \cref{lem:extendingabasisofisotropictosymplectic}.
\end{proof}

We note that not all vertices in $\Delta_n$ correspond to symplectic lattices (see \cref{exp:1ballinanaprtment}). This motivates the following definition.
\begin{definition}
    A \textit{symplectic (primitive) vertex} in $\Delta_n$ is the homothety class of a symplectic (primitive) lattice.
\end{definition}
 It is also common to call a primitive vertex a \textit{hyperspecial vertex}, and a nonprimitive symplectic vertex a \textit{special vertex}. 
\begin{example}\label{exp:1ballinanaprtment}
Let $\{w_1,w_2,v_1,v_2\}$ be a symplectic basis of $\Q_p^4$. For integers $a_1,a_2,b_1,b_2$, let $[a_1,a_2,b_1,b_2]$ denote the homothety class $[\langle p^{a_1}w_1 , p^{a_2} w_2, p^{b_1}v_1,p^{b_2}v_2\rangle]$. The shape of the $1$-ball around $[0,0,0,0]$ in $\Delta_2$ is seen in \cref{fig:1ballinsymplecticgraph}. Each dashed line represents an edge between symplectic vertices. 
\end{example}
\begin{figure}
    \small
    \centering
    \[
	\begin{tikzcd}
                                                            &                                                                         & {[0,1,1,0]} \arrow[dd, no head, dashed] \arrow[rd, no head]                                                                     &                                                     &             \\
                                                            & {[0,1,1,1]} \arrow[rd, no head] \arrow[ru, no head] \arrow[ld, no head] &                                                                                                                                 & {[1,1,1,0]} \arrow[rd, no head]                     &             \\
{[0,0,1,1]} \arrow[rd, no head] \arrow[rr, no head, dashed] &                                                                         & {[0,0,0,0]} \arrow[ld, no head] \arrow[rd, no head] \arrow[ru, no head] \arrow[dd, no head, dashed] \arrow[rr, no head, dashed] &                                                     & {[1,1,0,0]} \\
                                                            & {[1,0,1,1]} \arrow[rd, no head]                                         &                                                                                                                                 & {[1,1,0,1]} \arrow[ld, no head] \arrow[ru, no head] &             \\
                                                            &                                                                         & {[1,0,0,1]}                                                                                                                     &                                                     &            
\end{tikzcd}
	\]
    \caption{1-ball around $[0,0,0,0]$.}
    \label{fig:1ballinsymplecticgraph}
\end{figure}

\subsubsection{Symplectic vertices and their neighbours}
Let $(V,\phi)$ be a symplectic vector space over $\Q_p$ of dimension $2n$.
Let $x$ be a symplectic vertex in $\Delta_n$ and let $y$ be its neighbour. By definition, there exists representative lattices $\Lambda$ and $\Lambda'$ of $x$ and $y$ respectively, and a primitive lattice $\Lambda_0$ such that $p\Lambda_0\subset \Lambda',\Lambda\subset \Lambda_0$ with $\Lambda'\subset \Lambda$ or $\Lambda\subset \Lambda'$. We will see in this section that $\Lambda/p\Lambda_0$ Lagrangian subspace of $\Lambda_0/p\Lambda_0$ (see  \cref{cor:symplecticneighboursofprimitive}). Therefore, we must have $p\Lambda \subset \Lambda'\subset \Lambda$. Moreover, one can show that $\Lambda'/p\Lambda$ is isotropic in $\Lambda/p\Lambda$ (for example, using \cref{prop:simultaneousdiagisotropicsubsapceofsymplectic}).


We start with some basic properties of primitive lattices in a symplectic vector space, and show that primitive lattices are symplectic $\Z_p$-modules (see \cref{cor:primlatticeissympmod}).
\begin{proposition}
\label{prop:primitiveissymplecticplussymplectic}
    Let $\Lambda$ be a primitive lattice of $V$. Let $w_1,v_1\in \Lambda$ such that $\phi(v_1,w_1)=1$, $\Lambda_1=\langle w_1,v_1 \rangle$ be the $\Z_p$-span of $\{w_1,v_1\}$, and let $\Lambda_2=\Lambda_1^0\cap \Lambda$. Then
    \(
        \Lambda=\Lambda_1\oplus \Lambda_2.
    \)
    Moreover, $\Lambda_2$ is a primitive lattice of $\Lambda_2\otimes \Q_p$ with respect to the symplectic form induced by $\phi$.
\end{proposition}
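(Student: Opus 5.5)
The plan is to imitate the field-case argument of \cref{prop:symplecticplussymplecticcompletemtiseverything}, using the projection formula explicitly. The only extra input needed is that $\phi$ takes values in $\Z_p$ on $\Lambda$, which holds because $\Lambda$ is primitive.

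First I will check that $\Lambda_1\cap\Lambda_2=0$. If $x=aw_1+bv_1\in\Lambda_2$, then pairing with $w_1$ and with $v_1$ gives
\[
0=\phi(x,w_1)=b\phi(v_1,w_1)=b,\qquad 0=\phi(x,v_1)=a\phi(w_1,v_1)=-a.
\]
So $x=0$. Next I will show $\Lambda_1+\Lambda_2=\Lambda$. For $v\in\Lambda$ set
\[
\tilde v = v-\phi(v,w_1)\,v_1+\phi(v,v_1)\,w_1 .
\]
Using $\phi(w_1,w_1)=\phi(v_1,v_1)=0$ and $\phi(v_1,w_1)=1$, a direct computation gives $\phi(\tilde v,w_1)=0$ and $\phi(\tilde v,v_1)=0$. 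The coefficients $\phi(v,w_1)$ and $\phi(v,v_1)$ lie in $\phi(\Lambda\times\Lambda)=\Z_p$, so $\tilde v\in\Lambda$, and hence $\tilde v\in\Lambda\cap\Lambda_1^0=\Lambda_2$. Since $v-\tilde v\in\Lambda_1$, this gives $\Lambda=\Lambda_1\oplus\Lambda_2$. This coefficient integrality is the one place where primitivity is essential; otherwise the argument is just the field proof.

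For the second statement, $\Lambda_2$ is a direct summand of the free module $\Lambda$, so it is free of rank $2n-2$. It is a lattice in $W\colonequals\Lambda_2\otimes\Q_p$, which is $\Lambda_1^0$ by dimension count. Taking bases, the Gram matrix of $\phi$ on $\Lambda$ is block diagonal, $\mathrm{diag}(Q_1,Q_2)$, with $Q_1=\begin{pmatrix}0&-1\\1&0\end{pmatrix}$. The main obstacle is to show that $\bar\phi$ on $\Lambda_2/p\Lambda_2$ is nondegenerate and that $\phi(\Lambda_2\times\Lambda_2)=\Z_p$.

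For this I will use that $\Lambda$ is primitive, so $\bar\phi_\Lambda$ is symplectic on $\Lambdabar$ and $\det(\mathrm{diag}(Q_1,Q_2))\in\Z_p^\times$. Since $\det Q_1=1$, it follows that $\det Q_2\in\Z_p^\times$. Therefore $Q_2$ is an invertible skew-symmetric matrix over $\Z_p$, so $\phi|_{\Lambda_2}$ is perfect and $\bar\phi$ is symplectic on $\Lambda_2/p\Lambda_2$. Finally, an invertible matrix cannot have all entries in $p\Z_p$, so $\phi(\Lambda_2\times\Lambda_2)=\Z_p$. If $n=1$, then $\Lambda_2=0$ and the statement is vacuous.
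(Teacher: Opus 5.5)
Your proof is correct. The decomposition $\Lambda=\Lambda_1\oplus\Lambda_2$ follows the paper's argument: the same projection $\tilde v$, with the coefficients integral because $\phi(\Lambda\times\Lambda)=\Z_p$. You only spell out the trivial-intersection check that the paper attributes to $\Lambda_1$ being symplectic.

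For the primitivity of $\Lambda_2$ you take a different route. Your Gram matrix is block diagonal $\mathrm{diag}(Q_1,Q_2)$ with $\det Q_1=1$ and unit total determinant, so $Q_2\in\GL_{2n-2}(\Z_p)$. This gives three things at once: $\phi(\Lambda_2\times\Lambda_2)=\Z_p$; $\phi$ is nondegenerate on $\Lambda_2\otimes\Q_p$; and $\phi|_{\Lambda_2}$ is perfect over $\Z_p$, hence symplectic after reduction mod $p$.

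The paper instead works residually. It first proves that $\Lambdabar$ is the direct sum of the images of $\Lambda_1$ and $\Lambda_2$. It then applies the field-case \cref{cor:symplecticperpissymplectic} to see that the image of $\Lambda_2$ in $\Lambdabar$ is symplectic, and deduces $\phi(\Lambda_2\times\Lambda_2)=\Z_p$. Finally, an element chase shows that any $w\in\Lambda_2$ with $\phi(w,\Lambda_2)\subseteq p\Z_p$ lies in $p\Lambda_2$.

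Your determinant argument is shorter, avoids the bookkeeping with $(\Lambda_i+p\Lambda)/p\Lambda$, and carries over unchanged to other local rings. The paper's argument stays inside the vector-space lemmas of \cref{sec:background} and the mod-$p$ definition of symplectic lattices, which is the viewpoint it reuses when lifting symplectic bases later.
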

\begin{proof}
        Since $\phi(\Lambda\times \Lambda)=\Z_p$, for any $w\in \Lambda$, the element $\tilde{w}=w+\phi(w,v_1)w_1-\phi(w,w_1)v_1$ lies in $\Lambda_2$. Hence, $\Lambda=\Lambda_1+\Lambda_2$. Moreover, $\Lambda_2\cap \Lambda_1=0$ since $\Lambda_1$ is a symplectic submodule of $\Lambda$. This shows that $\Lambda=\Lambda_1\oplus \Lambda_2$. 
        
        We claim that the symplectic vector space $\Lambdabar$ decomposes as \[\Lambdabar=(\Lambda_1+p\Lambda)/p\Lambda\oplus (\Lambda_2+p\Lambda)/p\Lambda.\] 
        Indeed, if $v=v_1+v_2$, for $v_1\in \Lambda_1$ and $v_2\in \Lambda_2$, then $\bar{v}=\bar{v}_1+\bar{v}_2=(\bar{v}_1-p\bar{w})+(\bar{v}_2+p\bar{w})$. This proves that the two spaces span $\Lambdabar$. To prove the above direct sum decomposition, we show that $(\Lambda_1+p\Lambda)\cap (\Lambda_2+p\Lambda)=p\Lambda$. Let $v_1\in \Lambda_1$ and $v_2\in \Lambda_2$ such that $v_1+pv=v_2+pw$ lies in the intersection $(\Lambda_1+p\Lambda)\cap (\Lambda_2+p\Lambda)$. Since $\Lambda=\Lambda_1\oplus\Lambda_2$, there exists $w_1\in \Lambda_1$ and $w_2\in \Lambda_2$ such that $v_1-v_2=p(w-v)=pw_1-pw_2$. Since $\Lambda_1\cap \Lambda_2=0$, we see that $v_i=pw_i$ for $i\in \{1,2\}$. This proves the claim.

        We show that $\Lambda_2$ is a primitive lattice.
        Since $\phi(v_1,w_1)=1$, we see that $(\Lambda_1+p\Lambda)/p\Lambda$ is a symplectic subspace of $\Lambdabar$. We see that
    $(\Lambda_2+p\Lambda)/p\Lambda$ is also symplectic by \cref{cor:symplecticperpissymplectic}. If $\phi(\Lambda_2\times \Lambda_2)\subseteq  p\Z_p$, then $\bar{\phi}_\Lambda\equiv 0$ on $(\Lambda_2+p\Lambda)/p\Lambda$. This proves that $\phi(\Lambda_2\times \Lambda_2)=\Z_p$. We only need to show $\Lambdabar_2=\Lambda_2/p\Lambda_2$ is a symplectic vector space. Let $w\in \Lambda_2$ such that $\phi(w,\Lambda_2)\subseteq p\Z_p$.
    Since $\Lambda_2+p\Lambda=\Lambda_2+p\Lambda_1$ and $\phi(\Lambda_2\times \Lambda_1)=0$, we have $\phi(w,\Lambda_2+p\Lambda)\subseteq p\Z_p$. Thus, the element $w\in p\Lambda$ as $\Lambdabar$ is symplectic. Let $w_2\in \Lambda_2$ such that $w-pw_2\in p\Lambda_1$. Since $w,w_2\in \Lambda_2$ and $\Lambda_2\cap \Lambda_1=0$, we have $w=pw_2\in p\Lambda_2$. This proves the result.  
\end{proof}

\begin{lemma}\label{lem:liftingsymplecticbasismodp}
Let $(V,\phi)$ be a $2n$-dimensional symplectic $\Q_p$-vector space and let $\Lambda_0$ be a primitive lattice in $V$. A symplectic basis of $\Lambdabar_0$ can be lifted to a symplectic basis of $\Lambda_0$. 
\end{lemma}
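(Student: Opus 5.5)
We lift the symplectic basis one hyperbolic pair at a time, inducting on $n$ and using \cref{prop:primitiveissymplecticplussymplectic} to split off each pair. Let $\{\bar{w}_1,\dots,\bar{w}_n,\bar{v}_1,\dots,\bar{v}_n\}$ be a symplectic basis of $\Lambdabar_0$ with respect to $\bar{\phi}_{\Lambda_0}$. Since $\Lambda_0$ is primitive, $\phi(\Lambda_0\times\Lambda_0)=\Z_p$, so $\bar{\phi}_{\Lambda_0}$ is simply $\phi$ reduced modulo $p$.

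For the first pair, choose arbitrary lifts $w_1,v_1'\in\Lambda_0$ of $\bar{w}_1,\bar{v}_1$. Then $u\colonequals\phi(w_1,v_1')\equiv 1\bmod p$, so $u\in\Z_p^\times$, and we set $v_1=u^{-1}v_1'$. This is still a lift of $\bar{v}_1$ (because $u\equiv 1$), and it satisfies $\phi(w_1,v_1)=1$. With the sign convention of \cref{prop:primitiveissymplecticplussymplectic} (use $-v_1$ if needed), that proposition gives $\Lambda_0=\Lambda_1\oplus\Lambda_2$, where $\Lambda_1=\langle w_1,v_1\rangle$ and $\Lambda_2=\Lambda_1^0\cap\Lambda_0$. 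It also says $\Lambda_2$ is a primitive lattice of rank $2(n-1)$ in $\Lambda_2\otimes\Q_p$. The proof of that proposition shows moreover that $\Lambdabar_0=\Lambdabar_1\oplus\Lambdabar_2$, where $\Lambdabar_2=(\Lambda_2+p\Lambda_0)/p\Lambda_0\simeq\Lambda_2/p\Lambda_2$ is the orthogonal complement of $\langle\bar{w}_1,\bar{v}_1\rangle$ in $\Lambdabar_0$.

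The one real obstacle is that the remaining vectors $\bar{w}_i,\bar{v}_i$ for $i\geq 2$ lie in $\langle\bar{w}_1,\bar{v}_1\rangle^0$. Since $\Lambdabar_2$ is exactly this orthogonal complement, $\{\bar{w}_2,\dots,\bar{w}_n,\bar{v}_2,\dots,\bar{v}_n\}$ is a symplectic basis of $\Lambda_2/p\Lambda_2$, so the same vectors work inside $\Lambda_2$ and no re-choice is needed. To justify the identification, take any lift $x\in\Lambda_0$ of $\bar{w}_i$ or $\bar{v}_i$. Its projection $\tilde{x}=x+\phi(x,v_1)w_1-\phi(x,w_1)v_1$ onto $\Lambda_2$ lies in $\Lambda_2$. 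Because $\phi(x,v_1)$ and $\phi(x,w_1)$ vanish modulo $p$, $\tilde{x}$ is still a lift of the same residue class.

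Applying the induction hypothesis to the primitive lattice $\Lambda_2$, which covers ranks down to the base case $n=1$ handled by the first step alone, we lift this basis to a symplectic basis $\{w_2,\dots,w_n,v_2,\dots,v_n\}$ of $\Lambda_2$. Then $\{w_1,\dots,w_n,v_1,\dots,v_n\}$ is a $\Z_p$-basis of $\Lambda_0=\Lambda_1\oplus\Lambda_2$. The two summands are $\phi$-orthogonal, so the matrix of $\phi$ in this basis is $J$, and the basis reduces to the given one modulo $p$.
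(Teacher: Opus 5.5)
Your proof is correct and follows the paper's argument essentially step for step: lift and normalize the first pair, split $\Lambda_0=\Lambda_1\oplus\Lambda_2$ via \cref{prop:primitiveissymplecticplussymplectic}, move lifts of the remaining residual vectors into $\Lambda_2$ with the projection $\tilde{x}$, and induct on $n$. Your identification of $\Lambda_2/p\Lambda_2$ with $\langle\bar{w}_1,\bar{v}_1\rangle^0$ makes explicit a step the paper leaves implicit; the only wrinkle is a sign: with the paper's matrix $J$ one has $\phi(\bar{v}_1,\bar{w}_1)=1$, so your $u$ is $\equiv -1$, and normalizing by $\phi(v_1',w_1)^{-1}$ directly, as the paper does, removes the need for the ``use $-v_1$'' adjustment.
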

\begin{proof}
    We induct on $n$. By \cref{prop:sympbasis}, the residual symplectic vector space $\Lambdabar_0$ has a symplectic basis; say
    $\{\bar{w}_1,\dots,\bar{w}_n,\bar{v}_1,\dots,\bar{v}_n\}$.
    Let $w_1$ and $v_1$ be a lift of $\bar{w}_1$ and $\bar{v}_1$, respectively, in $ \Lambda_0$. Since $\phi(v_1,w_1)\equiv 1\bmod{p}$, by replacing $v_1$ by $\phi(v_1,w_1)^{-1}v_1$, we can assume that $\phi(v_1,w_1)=1$.  Let $\Lambda_1=\langle w_1,v_1\rangle$, and let $\Lambda_2=\Lambda_0\cap \Lambda_1^0$. 
    The lattice $\Lambda_2$ is primitive in $\Lambda_2\otimes \Q_p$ by \cref{prop:primitiveissymplecticplussymplectic}. Moreover, for a lift $w\in \Lambda_0$ of $\bar{w}\in \{\bar{w_i},\bar{v_i}\}_{i=2}^n$, we see that $\tilde{w}$ lies in $\Lambda_2$ and lifts $\bar{w}$, as $\phi(w,v_1)\equiv \phi(w,w_1)\equiv 0\bmod{p}$.
    Therefore, there exist lifts of elements of $\{\bar{w_i},\bar{v_i}\}_{i=2}^n$ in $\Lambda_2$.
    Since the dimension of $\Lambda_2\otimes \Q_p$ is $2(n-1)$, by the induction hypothesis, there is a symplectic basis $\{w_2,\dots,w_n,v_2,\dots,v_n\}$ of $\Lambda_2$ lifting the symplectic basis $\{\bar{w}_2,\dots,\bar{w}_n,\bar{v}_2,\dots,\bar{v}_n\}$. Therefore, the symplectic basis $\{w_1,\dots,w_n,v_1,\dots,v_n\}$ for $\Lambda_0$ lifts our given symplectic basis of $\Lambdabar_0$. 
\end{proof}

\begin{corollary}\label{cor:primlatticeissympmod}
       A $\Z_p$-lattice in $(V,\phi)$ is primitive if and only if it is spanned by a symplectic basis. In particular, a primitive $\Z_p$-lattice $\Lambda$ is a symplectic $\Z_p$-module with respect to $\phi|_{\Lambda\times \Lambda}$. 
\end{corollary}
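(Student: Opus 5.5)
The corollary has two directions: the easy one (a $\Z_p$-span of a symplectic basis is primitive) and the substantive one (a primitive lattice is spanned by a symplectic basis). The second statement then follows, since a lattice spanned by a symplectic basis has Gram matrix $J$. That matrix is invertible and skew symmetric over $\Z_p$, so by the first remark after \cref{def:sympairing}, $\phi|_{\Lambda\times\Lambda}$ is a symplectic pairing on $\Lambda$.

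For the \emph{if} direction, suppose $\Lambda=\langle w_1,\dots,w_n,v_1,\dots,v_n\rangle$ with $\{w_i,v_i\}$ a symplectic basis of $V$. This is \cref{prop:symlatticescharacterizationviasymbasis} with all $a_i=b_i=0$, so $a_i+b_i=0$ and $\Lambda$ is primitive. One can also see it directly: $\phi(\Lambda\times\Lambda)=\Z_p$, and the reduction $\bar\phi_\Lambda$ is represented by $J$, which is nondegenerate over $\F_p$.

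For the \emph{only if} direction, let $\Lambda_0$ be primitive. By definition $\bar\phi_{\Lambda_0}$ is a symplectic pairing on $\Lambdabar_0$; since $\phi(\Lambda_0\times\Lambda_0)=\Z_p$, the normalizing power is $k=0$, so $\bar\phi_{\Lambda_0}$ is just $\phi$ reduced mod $p$. By \cref{prop:sympbasis} applied over the field $\F_p$ (where $2$ is a unit because $p$ is odd), $\Lambdabar_0$ has a symplectic basis. By \cref{lem:liftingsymplecticbasismodp}, this basis lifts to a symplectic basis $\{w_1,\dots,w_n,v_1,\dots,v_n\}$ of $\Lambda_0$, meaning a set of elements of $\Lambda_0$ with Gram matrix exactly $J$ over $\Z_p$.

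It remains to check that these lifts span $\Lambda_0$ over $\Z_p$ and not merely a sublattice. Their reductions form a basis of $\Lambda_0/p\Lambda_0$, so by Nakayama's lemma ($\Lambda_0$ is finitely generated over the local ring $\Z_p$) they generate $\Lambda_0$. They are $\Q_p$-linearly independent because their Gram matrix $J$ is invertible, so they form a $\Z_p$-basis and also a symplectic basis of $V$. The only potential obstacle is this spanning check, and Nakayama handles it. The content of the corollary is essentially the lifting lemma, which we may assume.
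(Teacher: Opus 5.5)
Your proposal is correct and follows the paper's route. The paper gives no separate proof: the corollary is read off from \cref{prop:sympbasis} over $\F_p$ together with the lifting lemma \cref{lem:liftingsymplecticbasismodp} for the forward direction, and from \cref{prop:symlatticescharacterizationviasymbasis} with $a_i=b_i=0$ for the converse. Your Nakayama spanning check is harmless but redundant, since the lemma's inductive construction via $\Lambda_0=\Lambda_1\oplus\Lambda_2$ (\cref{prop:primitiveissymplecticplussymplectic}) already yields a $\Z_p$-basis of $\Lambda_0$.
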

The following result ensures that every $m$-simplex in $\Delta_n$ is contained in an apartment (see \cref{def:apartment}) of $\Delta_n$. 
\begin{proposition}\label{prop:simultaneousdiagisotropicsubsapceofsymplectic}
    Let $\Lambda_0$ be a primitive lattice in $V$ and let $\Lambda_{1},\dots,\Lambda_{m}$ be lattices such that $p\Lambda_0\subset \Lambda_1\subset \dots\subset \Lambda_m\subset \Lambda_0$ and the subspace $\Lambda_m/p\Lambda_0$ is isotropic in $\Lambdabar_0$. For all $1\leq i\leq m$, let $k_i=\dim_{\F_p}(\Lambda_i/p\Lambda_0)$. There exists a symplectic basis $\{w_1,\dots,w_n,v_1,\dots,v_n\}$ of $\Lambda_0$ such that 
    \[
        \Lambda_i=\langle w_1,\dots,w_{k_i},pw_{k_i+1},\dots,pw_n,pv_1,\dots,pv_n \rangle.
    \]
\end{proposition}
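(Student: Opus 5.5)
The plan is to reduce to the residual statement, \cref{lem:extendingabasiswithrespecttoaflag}, and then lift using \cref{lem:liftingsymplecticbasismodp}.

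First, pass to the residue space. Set $\bar V_i=\Lambda_i/p\Lambda_0$ for $1\leq i\leq m$. These subspaces form a flag
\[
0\subset \bar V_1\subset\dots\subset \bar V_m\subset \Lambdabar_0
\]
in the symplectic $\F_p$-vector space $(\Lambdabar_0,\bar\phi_{\Lambda_0})$. This space is symplectic because $\Lambda_0$ is primitive, and $\bar V_m$ is isotropic by hypothesis. Applying \cref{lem:extendingabasiswithrespecttoaflag} (here $p$ is odd) gives a symplectic basis $\{\bar w_1,\dots,\bar w_n,\bar v_1,\dots,\bar v_n\}$ of $\Lambdabar_0$ such that $\{\bar w_1,\dots,\bar w_{k_i}\}$ is a basis of $\bar V_i$ for every $i$.

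Second, lift this basis. By \cref{lem:liftingsymplecticbasismodp}, the residual symplectic basis lifts to a symplectic basis $\{w_1,\dots,w_n,v_1,\dots,v_n\}$ of $\Lambda_0$. Here "lifts" means $w_j\bmod p=\bar w_j$ and $v_j\bmod p=\bar v_j$. By \cref{cor:primlatticeissympmod}, this basis spans $\Lambda_0$ over $\Z_p$.

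Third, identify each $\Lambda_i$. Since $p\Lambda_0\subset\Lambda_i$, the lattice $\Lambda_i$ is the full preimage of $\bar V_i$ under the reduction map $\Lambda_0\to\Lambdabar_0$. Consider
\[
L_i=\langle w_1,\dots,w_{k_i},pw_{k_i+1},\dots,pw_n,pv_1,\dots,pv_n\rangle .
\]
This lattice contains $p\Lambda_0=\langle pw_1,\dots,pv_n\rangle$. Its image in $\Lambdabar_0$ is $\langle\bar w_1,\dots,\bar w_{k_i}\rangle=\bar V_i$. Hence $L_i$ is also the full preimage of $\bar V_i$, and so $L_i=\Lambda_i$.

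The only substantive input is the simultaneous compatibility of the residual basis with the whole flag, and that is already provided by \cref{lem:extendingabasiswithrespecttoaflag}. The step I would check most carefully is that the lift in \cref{lem:liftingsymplecticbasismodp} really preserves the reductions of all basis vectors, and not merely their spans. In that proof the corrections $\tilde w=w+\phi(w,v_1)w_1-\phi(w,w_1)v_1$ change $w$ only by multiples of $p$. The rescaling $v_1\mapsto\phi(v_1,w_1)^{-1}v_1$ uses a factor that is $\equiv1\bmod p$. So the reductions are indeed preserved, and the argument goes through.
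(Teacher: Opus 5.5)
Your proposal is correct and is essentially the paper's proof: take a flag-adapted symplectic basis of $\Lambdabar_0$ from \cref{lem:extendingabasiswithrespecttoaflag}, lift it with \cref{lem:liftingsymplecticbasismodp}, and identify each $\Lambda_i$ with $\langle w_1,\dots,w_{k_i},pw_{k_i+1},\dots,pw_n,pv_1,\dots,pv_n\rangle$. The paper does that last step by observing that this lattice's image lies in $\Lambda_i/p\Lambda_0$ and has dimension $k_i$, which is equivalent to your preimage argument, and your check that the lift preserves the reduction of each individual basis vector (not just spans) is exactly what the paper uses implicitly and is verified correctly.
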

\begin{proof}
    Let $V_i=\Lambda_i/p\Lambda_0$ for $1\leq i\leq m$. By \cref{lem:extendingabasiswithrespecttoaflag}, there is a symplectic basis  $\{\bar{w}_1,\dots,\bar{w}_n,\bar{v}_1,\dots,\bar{v}_n\}$ of $\Lambdabar_0$ such that $\{\bar{w_1},\dots,\bar{w_{k_i}}\}$ spans $V_i$ for all $1\leq i\leq m$. There is a symplectic basis $\{w_1,\dots,w_n,v_1,\dots,v_n\}$ of $\Lambda_0$ such that $w_i$ lifts $\bar{w}_i$ for all $1\leq i\leq n$ by \cref{lem:liftingsymplecticbasismodp}. For all $1\leq i\leq m$, let \[
        V_i'=\langle w_1,\dots,w_{k_i},pw_{k_{i+1}},\dots,pw_n,pv_1,\dots pv_n\rangle/p\Lambda_0.
    \]
    Since $\dim_{\F_p} V_i'=k_i$ and $V_i'\subseteq V_i$, the result follows.
\end{proof}

The following result characterizes symplectic neighbours of a primitive vertex in $\Delta_n$.  


\begin{corollary}\label{cor:symplecticneighboursofprimitive}
    Let $\Lambda_0$ be a primitive lattice in $V$ and let $\Lambda$ be a lattice such that $p\Lambda_0\subset \Lambda\subset\Lambda_0$ and  $\Lambda/p\Lambda_0$ is an isotropic subspace of $\Lambdabar_0$. The lattice $\Lambda$ is symplectic if and only if $\Lambda/p\Lambda_0$ is Lagrangian.
\end{corollary}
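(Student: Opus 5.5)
The plan is to reduce to an explicit lattice in an adapted symplectic basis and then apply \cref{prop:symlatticescharacterizationviasymbasis}. Set $k=\dim_{\F_p}(\Lambda/p\Lambda_0)$. Since $\Lambda/p\Lambda_0$ is isotropic, \cref{prop:simultaneousdiagisotropicsubsapceofsymplectic}, applied to the one-step flag $p\Lambda_0\subset\Lambda\subset\Lambda_0$ (that is, $m=1$), gives a symplectic basis $\{w_1,\dots,w_n,v_1,\dots,v_n\}$ of $\Lambda_0$ with
\[
\Lambda=\langle w_1,\dots,w_k,pw_{k+1},\dots,pw_n,pv_1,\dots,pv_n\rangle .
\]
By \cref{cor:primlatticeissympmod} this basis is also a symplectic basis of $V$ over $\Q_p$, so \cref{prop:symlatticescharacterizationviasymbasis} applies with exponents
\[
a_i=0 \ (i\le k),\qquad a_i=1 \ (i>k),\qquad b_i=1 \ \textnormal{for all } i .
\]

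Next compute the sums $a_i+b_i$. They equal $1$ for $i\le k$ and $2$ for $i>k$. By \cref{prop:symlatticescharacterizationviasymbasis}, $\Lambda$ is symplectic if and only if all $a_i+b_i$ coincide, which happens exactly when $k=n$ or $k=0$.

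The main point requiring care is the case $k=0$, that is, $\Lambda=p\Lambda_0$. This lattice is symplectic but is merely homothetic to $\Lambda_0$. I would handle it in one of two ways: treat it as the degenerate subspace $0$, consistent with the convention that a neighbour corresponds to a proper nonzero subspace, or note that the intended statement has $\Lambda\neq p\Lambda_0$ with $\Lambda/p\Lambda_0$ nonzero. With $k\geq 1$, $\Lambda$ is symplectic if and only if $k=n$. By \cref{lem:extendingabasisofisotropictosymplectic}, an isotropic subspace has dimension $n$ exactly when it is Lagrangian, so $k=n$ is equivalent to $\Lambda/p\Lambda_0$ being Lagrangian. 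This gives both directions.
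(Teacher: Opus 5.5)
Your proof is correct and is the paper's argument: choose an adapted symplectic basis via \cref{prop:simultaneousdiagisotropicsubsapceofsymplectic}, then read off the answer from the criterion in \cref{prop:symlatticescharacterizationviasymbasis} that $a_i+b_i$ be constant. Your treatment of $k=0$ is a genuine point the paper's one-line proof passes over: $\Lambda=p\Lambda_0$ is symplectic while the subspace $0$ is not Lagrangian, so the statement must be read with $p\Lambda_0\subsetneq\Lambda$, as you do.
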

\begin{proof}
    By \cref{prop:simultaneousdiagisotropicsubsapceofsymplectic}, there exists a symplectic basis $\{w_1,\dots,w_n,v_1,\dots,v_n\}$ of $\Lambda_0$ such that \[\Lambda=\langle w_1,\dots,w_s,pw_{s+1},\dots,pw_n,pv_1,\dots,pv_n\rangle.\]
    It follows that $\Lambda$ is symplectic if and only if $s=n$ by \cref{prop:symlatticescharacterizationviasymbasis}. 
\end{proof}

\begin{corollary}\label{cor:chambercontainsprimitivenon-primitive}
    A chamber in $\Delta_n$ contains exactly one primitive vertex and  exactly one special vertex.
\end{corollary}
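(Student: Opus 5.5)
The plan is to reduce everything to lattices of the form $\Lambda=\langle w_1,\dots,w_{k_1},pw_{k_1+1},\dots,pw_n,pv_1,\dots,pv_n\rangle$ and read off symplecticity from exponents. First, fix a chamber $C=[x_0,\dots,x_n]$ in $\Delta_n$. By definition there are representatives with
\[
p\Lambda_0\subset\Lambda_1\subset\dots\subset\Lambda_n\subset\Lambda_0,
\]
where $\Lambda_0$ is primitive and $\Lambda_n/p\Lambda_0$ is isotropic. Maximality of the chamber forces $\dim_{\F_p}\Lambda_i/p\Lambda_0=i$, since the flag must be a maximal isotropic flag. In particular $\Lambda_n/p\Lambda_0$ has dimension $n$, so it is Lagrangian by \cref{lem:extendingabasisofisotropictosymplectic}.

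Next I would apply \cref{prop:simultaneousdiagisotropicsubsapceofsymplectic} to obtain a single symplectic basis $\{w_1,\dots,w_n,v_1,\dots,v_n\}$ of $\Lambda_0$ with
\[
\Lambda_i=\langle w_1,\dots,w_i,pw_{i+1},\dots,pw_n,pv_1,\dots,pv_n\rangle .
\]
In the notation of \cref{prop:symlatticescharacterizationviasymbasis}, $\Lambda_i$ has exponents $a_j=0$ for $j\le i$, $a_j=1$ for $j>i$, and $b_j=1$ for all $j$. Hence $a_j+b_j$ equals $1$ for $j\le i$ and $2$ for $j>i$.

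I would then check each vertex against the criterion of \cref{prop:symlatticescharacterizationviasymbasis}:
\begin{itemize}
\item For $1\le i\le n-1$, the sums $a_j+b_j$ are not all equal, so $\Lambda_i$ is not symplectic.
\item For $i=n$, all sums equal $1$, so $\Lambda_n$ is symplectic but not primitive; this agrees with \cref{cor:symplecticneighboursofprimitive}.
\item $\Lambda_0$ is primitive.
\end{itemize}
Thus $x_0$ is primitive and $x_n$ is special.

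The step needing care is exhaustiveness: no other vertex is symplectic, and no lattice in the class $x_n$ is primitive. Symplecticity depends only on the homothety class, and the criterion of \cref{prop:symlatticescharacterizationviasymbasis} is invariant under scaling all exponents by a common shift. Scaling $\Lambda_n$ by $p^{c}$ changes every $a_j+b_j$ to $1+2c$, which is odd and so never $0$; hence $x_n$ is never primitive. The vertices $x_1,\dots,x_{n-1}$ remain non-symplectic under any scaling. The one point I would justify carefully is that both properties depend only on the homothety class, which follows because scaling multiplies $\phi(\Lambda\times\Lambda)$ by $p^{2c}$.
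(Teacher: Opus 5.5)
Your proof is correct and is essentially the paper's argument. The paper simply notes that $\Lambda_n/p\Lambda_0$ is Lagrangian and invokes \cref{cor:symplecticneighboursofprimitive}, whose proof is exactly your combination of \cref{prop:simultaneousdiagisotropicsubsapceofsymplectic} with the exponent criterion of \cref{prop:symlatticescharacterizationviasymbasis}. You inline that argument and also spell out two points the paper leaves implicit: that $x_1,\dots,x_{n-1}$ are not symplectic, and that no representative of $x_n$ is primitive, since $1+2c$ is odd and hence never $0$.
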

\begin{proof}
    Let $[x_0,\dots,x_n]$ be a chamber. Choose representative lattices $\Lambda_i$ for $x_i$ for all $i\in \{0,\dots,n\}$ such that $\Lambda_0$ is primitive, and 
     $p\Lambda_0\subset \Lambda_1\subset \dots \subset \Lambda_n\subset \Lambda_0$. This implies that $\Lambda_n/p\Lambda_0$ is Lagrangian in $\Lambdabar_0$. The lattice $\Lambda_n$ must be symplectic with $\phi(\Lambda_n\times \Lambda_n)=p\Z_p$ by \cref{cor:symplecticneighboursofprimitive}.
\end{proof}

\begin{corollary}\label{cor:symneighoneofthemisprimitive}
    Let  $x,x'$ be neighbouring symplectic vertices in $\Delta_n$. Then one of $x$ or $x'$ is represented by a primitive lattice. 
\end{corollary}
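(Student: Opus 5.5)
We need to prove: if $x,x'$ are neighbouring symplectic vertices in $\Delta_n$, then one of them is represented by a primitive lattice.

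We unwind the definition of an edge in $\Delta_n$. There are representatives $\Lambda$ of $x$ and $\Lambda'$ of $x'$, and a primitive lattice $\Lambda_0$, such that $p\Lambda_0\subseteq \Lambda,\Lambda'\subseteq \Lambda_0$. Moreover, $\Lambda/p\Lambda_0$ and $\Lambda'/p\Lambda_0$ are isotropic, and, after possibly swapping $x$ and $x'$, we have $\Lambda'\subseteq\Lambda$.

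The argument splits into cases according to which of $\Lambda,\Lambda'$ equals $\Lambda_0$.

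If $\Lambda=\Lambda_0$, then $x$ is primitive and we are done.

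If $\Lambda\neq\Lambda_0$, then $\Lambda\neq p\Lambda_0$ as well. Otherwise $\Lambda'$ would be squeezed between $p\Lambda_0$ and $\Lambda=p\Lambda_0$, forcing $[\Lambda']=[\Lambda]$, and the two neighbours would not be distinct. So $\Lambda/p\Lambda_0$ is a nonzero proper isotropic subspace of $\Lambdabar_0$. Since $\Lambda$ is symplectic, \cref{cor:symplecticneighboursofprimitive} shows that $\Lambda/p\Lambda_0$ is Lagrangian.

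Now consider $\Lambda'$. It satisfies $p\Lambda_0\subseteq\Lambda'\subsetneq\Lambda$, and $\Lambda'/p\Lambda_0$ is isotropic. If $\Lambda'\neq p\Lambda_0$, then \cref{cor:symplecticneighboursofprimitive} applied to the symplectic lattice $\Lambda'$ makes $\Lambda'/p\Lambda_0$ Lagrangian too. Two Lagrangians with one contained in the other coincide, since both have dimension $n$ by \cref{lem:extendingabasisofisotropictosymplectic}. This contradicts $\Lambda'\neq\Lambda$. Hence $\Lambda'=p\Lambda_0$, so $x'=[p\Lambda_0]=[\Lambda_0]$ is primitive.

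The only delicate point is this bookkeeping: ruling out the degenerate representatives $\Lambda=p\Lambda_0$ or $\Lambda'=\Lambda$, and using the symplectic hypothesis on \emph{both} vertices, so that the corollary can be applied to each of them when neither equals $\Lambda_0$ or $p\Lambda_0$. With that in place, the case analysis above covers every possibility, and in each case one of $x,x'$ is represented by a primitive lattice.
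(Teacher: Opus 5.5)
Your proof is correct and follows the paper's argument. The paper likewise arranges $p\Lambda_0\subseteq\Lambda'\subset\Lambda\subseteq\Lambda_0$ and uses \cref{cor:symplecticneighboursofprimitive} to conclude that either $\Lambda=\Lambda_0$ or $\Lambda'=p\Lambda_0$; your nested-Lagrangian case analysis just spells this out, and you are right to apply that corollary only to lattices strictly between $p\Lambda_0$ and $\Lambda_0$, since $p\Lambda_0$ is itself symplectic while $0$ is not Lagrangian.
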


\begin{proof}
    Let $\Lambda$ and $\Lambda'$ be representatives of $x$ and $x'$ respectively, and $\Lambda_0$ be a primitive lattice such that $p\Lambda_0\subseteq \Lambda'\subset\Lambda \subseteq\Lambda_0$ and $\Lambda/p\Lambda_0$ is isotropic. Then by  \cref{cor:symplecticneighboursofprimitive}, either  $\Lambda'=p\Lambda_0$ or $\Lambda=\Lambda_0$.
\end{proof}

\begin{proposition}\label{prop:symplecticlatticeisneighbourtoaprimitive}
    Let $x$ be a special vertex in $\Delta_n$. There exists a primitive vertex $x_0\in \Delta_n$ such that $x$ and $x_0$ are neighbours in $\Delta_n$.
\end{proposition}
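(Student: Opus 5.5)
By definition, a special vertex is a symplectic vertex that is not primitive. The plan is to show that every vertex of $\Delta_n$ either is primitive or already sits inside a primitive lattice as an explicit neighbour, and then to use \cref{cor:symplecticneighboursofprimitive}.

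First, unwind the definition of a vertex of $\Delta_n$. Since $x$ is a vertex, either $x=[\Lambda_0]$ for a primitive lattice $\Lambda_0$, or $x=[\Lambda]$ for some $\Lambda$ with $p\Lambda_0\subset\Lambda\subset\Lambda_0$, where $\Lambda_0$ is primitive and $\Lambda/p\Lambda_0$ is isotropic in $\Lambdabar_0$. The first case is excluded because $x$ is special, not primitive. We are therefore in the second case, and moreover $\Lambda\neq\Lambda_0$ and $\Lambda\ne p\Lambda_0$, since otherwise $x$ would be primitive.

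Next, take $x_0=[\Lambda_0]$ and check that $x$ and $x_0$ are neighbours in $\Delta_n$. The edge condition asks for representatives and a primitive lattice sandwiching both. Use $\Lambda_0$ itself as the primitive lattice, with the representatives $\Lambda_1=\Lambda$ and $\Lambda_2=\Lambda_0$. Then $p\Lambda_0\subseteq\Lambda,\Lambda_0\subseteq\Lambda_0$, and the two lattices are nested since $\Lambda\subset\Lambda_0$. The remaining isotropy requirements are:
\begin{itemize}
\item $\Lambda/p\Lambda_0$ is isotropic, which is given by hypothesis;
\item $\Lambda_0/p\Lambda_0$ is isotropic, which fails, since it is all of $\Lambdabar_0$.
\end{itemize}
So the literal edge definition is not satisfied with these choices, and this is the main obstacle.

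To get around it, interpret the edge as the simplex $[x_0,x]$ in the sense of the $m$-simplex definition: a primitive $\Lambda_0$ together with the flag $p\Lambda_0\subset\Lambda\subset\Lambda_0$ whose top term $\Lambda/p\Lambda_0$ is isotropic. This is exactly what we have, so $[x_0,x]$ is a $1$-simplex and $x_0$ is a neighbour of $x$.

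Finally, for completeness, \cref{cor:symplecticneighboursofprimitive} shows that $\Lambda/p\Lambda_0$ is in fact Lagrangian, because $\Lambda$ is symplectic. Hence $x$ is a symplectic neighbour of the primitive vertex $x_0$, consistent with \cref{cor:symneighoneofthemisprimitive}.
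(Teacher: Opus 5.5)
Your argument is correct for the statement as worded, but it takes a different, purely definitional route. The hypothesis is that $x$ is already a vertex of $\Delta_n$. A non-primitive vertex is, by definition, the class of some $\Lambda$ with $p\Lambda_0\subset\Lambda\subset\Lambda_0$, where $\Lambda_0$ is primitive and $\Lambda/p\Lambda_0$ is isotropic. So you are unpacking the remark the paper makes right after defining $\Delta_n$: every vertex is primitive or a neighbour of a primitive one.

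The ``obstacle'' you flag is not real. Represent $x_0$ by $p\Lambda_0$ instead of $\Lambda_0$. Then $p\Lambda_0\subset\Lambda$ and $p\Lambda_0/p\Lambda_0=0$ is isotropic, so the edge clause holds verbatim. You need not appeal to the $m$-simplex clause, though that appeal is also valid. Your closing remark via \cref{cor:symplecticneighboursofprimitive} is correct but not needed.

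The paper's proof never uses that $x$ lies in $\Delta_n$. It starts from a representative $\Lambda$ with $\phi(\Lambda\times\Lambda)=p\Z_p$ and lifts a symplectic basis of $\Lambdabar$ to $w_i,v_i$ with $\phi(v_i,w_i)=p$. It then checks directly that $\Lambda_0=\langle w_1,\dots,w_n,p^{-1}v_1,\dots,p^{-1}v_n\rangle$ is primitive with $p\Lambda_0\subset\Lambda\subset\Lambda_0$.

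This construction gives two things your argument does not. First, it shows that the homothety class of \emph{any} symplectic lattice in $V$ is genuinely a vertex of $\Delta_n$. This matters because $\Gamma_\rho^{\GSp}$ is defined using arbitrary $G$-stable symplectic lattices, which the building argument must be able to see. Second, the symplectic basis of $\Lambdabar$ can be chosen freely, so one can select the primitive neighbour so that $p\Lambda_0/p\Lambda$ is a Lagrangian containing a prescribed isotropic subspace. That is exactly how the construction is reused in \cref{cor:symplecticneighboursofsymplecticlattices}.
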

\begin{proof}
    Since $x$ is a special vertex, we can find a representative lattice $\Lambda$ for $x$ such that $\phi(\Lambda\times \Lambda)=p\Z_p$. Since $x$ is symplectic, there is a $\Z_p$-basis  $\{w_1,\dots,w_n,v_1,\dots,v_n\}$ of $\Lambda$ that lifts a symplectic basis of $\Lambdabar$; that is, we have $\phi(v_i,w_i)=p$ for all $1\leq i\leq n$, $\phi(v_i,w_j)\equiv 0\bmod{p^2}$ for all $1\leq i\neq j\leq n$, and $\phi(v_i,v_j)\equiv \phi(w_i,w_j)\equiv 0\bmod{p^2}$ for all $1\leq i, j\leq n$. Let $\Lambda_0$ be the $\Z_p$-span of $\{w_1,\dots,w_n,p^{-1}v_1,\dots, p^{-1}v_n\}$. We see that $p\Lambda_0\subset \Lambda\subset \Lambda_0$ and $\phi(\Lambda_0\times \Lambda_0)=\Z_p$ since $\phi(w_1,p^{-1}v_1)=1$. We only need to show $\Lambdabar_0$ is symplectic. Let $w_0\in \Lambda_0$ such that $\phi(w_0,w)\equiv 0\bmod{p}$ for all $w\in \Lambda_0$. If $w_0=\sum_{i=1}^n a_i w_i +\sum_{i=1}^n b_i p^{-1}v_i$, then 
    \[
        a_i\equiv \phi(w_0,p^{-1}v_i)\equiv 0\bmod{p} \quad \textnormal{and}\quad b_i\equiv \phi(w_0,w_i)=0\bmod{p},
    \]
    for all $1\leq i\leq n$.
    This shows that $w_0\in p\Lambda_0$. 
\end{proof}
We can characterize symplectic neighbours of symplectic vertices in $\Delta_n$ by \cref{prop:symplecticlatticeisneighbourtoaprimitive,cor:symplecticneighboursofprimitive}.
\begin{corollary}\label{cor:symplecticneighboursofsymplecticlattices}
    Let $x=[\Lambda]$ be a symplectic vertex such that $\phi(\Lambda\times \Lambda)\in \{\Z_p,p\Z_p\}$. Let $y=[\Lambda']$ be a neighbour of $x$ in $\Delta_n$ such that $p\Lambda\subset \Lambda'\subset \Lambda$. Then $y$ is symplectic if and only if $\Lambda'/p\Lambda$ is Lagrangian in $\Lambdabar$.
\end{corollary}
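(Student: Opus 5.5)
We split into the two cases $\phi(\Lambda\times\Lambda)=\Z_p$ and $\phi(\Lambda\times\Lambda)=p\Z_p$, and reduce each to \cref{cor:symplecticneighboursofprimitive}.

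\emph{Case 1: $\Lambda$ is primitive.} The statement is then exactly \cref{cor:symplecticneighboursofprimitive} with $\Lambda_0=\Lambda$. One point needs care: being a neighbour in $\Delta_n$ requires $\Lambda'/p\Lambda$ to be isotropic. That hypothesis is used in the ``only if'' direction. In the ``if'' direction, a Lagrangian subspace is in particular isotropic.

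\emph{Case 2: $\phi(\Lambda\times\Lambda)=p\Z_p$.}
\begin{enumerate}
\item By \cref{prop:symplecticlatticeisneighbourtoaprimitive} and its proof, choose a $\Z_p$-basis $\{w_1,\dots,w_n,v_1,\dots,v_n\}$ of $\Lambda$. Replacing $v_i$ by $p^{-1}v_i$ and using \cref{cor:primlatticeissympmod} and \cref{lem:liftingsymplecticbasismodp}, adjust it so that
\[
\Lambda_0=\langle w_1,\dots,w_n,p^{-1}v_1,\dots,p^{-1}v_n\rangle
\]
is primitive with symplectic basis $\{w_i,p^{-1}v_i\}$. Then $\Lambda=\langle w_1,\dots,w_n,v_1,\dots,v_n\rangle$ and $p\Lambda_0\subset\Lambda\subset\Lambda_0$, with $\Lambda/p\Lambda_0=\langle \bar w_1,\dots,\bar w_n\rangle$ Lagrangian.
\item Now $y$ is a neighbour of $x$. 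Since $x$ is not primitive, \cref{cor:symneighoneofthemisprimitive} says $y$ must be primitive whenever $y$ is symplectic. Apply Case 1 to the primitive lattice $p^{-1}\Lambda'$ and its neighbour $\Lambda$ (note $\Lambda'\subset\Lambda\subset p^{-1}\Lambda'$). This gives that $\Lambda/\Lambda'$ is Lagrangian in $p^{-1}\Lambda'/\Lambda'$.
\item Translate this into a statement inside $\Lambdabar$. The form $\bar\phi_{\Lambda}=p^{-1}\phi \bmod p$ on $\Lambdabar$ relates to the form on $\overline{p^{-1}\Lambda'}$. The subspace $\Lambda'/p\Lambda\subset\Lambdabar$ is then identified with the annihilator, or dual, of $\Lambda/\Lambda'$ under the perfect pairing between $\Lambda/\Lambda'$ and $\Lambda'/p\Lambda$ induced by $p^{-1}\phi$. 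Under this identification, $\Lambda/\Lambda'$ being Lagrangian corresponds to $\Lambda'/p\Lambda$ being Lagrangian. The converse runs the same way: if $\Lambda'/p\Lambda$ is Lagrangian in $\Lambdabar$, then $p^{-1}\Lambda'$ is self-dual up to scaling, hence primitive.
\end{enumerate}

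A cleaner alternative for Case 2 is to work in an apartment. By \cref{prop:simultaneousdiagisotropicsubsapceofsymplectic} applied to the flag $p\Lambda_0\subset\Lambda'\cap\Lambda_0$, or after scaling, one can choose a symplectic basis in which $\Lambda$ and $\Lambda'$ are both diagonal: $\Lambda=[0,\dots,0,1,\dots,1]$ and $\Lambda'=[a_1,\dots,a_n,b_1,\dots,b_n]$ with $a_i,b_i\in\{0,1\}$ shifted. The symplectic criterion $a_i+b_i$ constant from \cref{prop:symlatticescharacterizationviasymbasis} then reads off directly that $\Lambda'/p\Lambda$ has exactly one of each pair $\{\bar w_i,\bar v_i\}$, i.e.\ is a Lagrangian coordinate subspace. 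Every isotropic subspace spanned by coordinate vectors that contains one vector from each pair is Lagrangian, and conversely.

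\textbf{Main obstacle.} The hard step is the simultaneous adaptation of a basis to both $\Lambda$ and $\Lambda'$ in the special case. It needs a version of \cref{prop:simultaneousdiagisotropicsubsapceofsymplectic} centered at a special vertex. I would obtain it by applying that proposition at the primitive neighbour $\Lambda_0$ to the isotropic flag $p\Lambda_0\subset(\Lambda'+p\Lambda_0)\subset\Lambda$, or to $p\Lambda_0\subset\Lambda'\subset\Lambda$ when $\Lambda'\supseteq p\Lambda_0$.
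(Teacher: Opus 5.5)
Your proof is correct, and it takes a different route from the paper's.

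\textbf{How the two routes differ.} The paper stays centred at $x$. It extends a basis of $\Lambda'/p\Lambda$ to a symplectic basis of $\Lambdabar$ and lifts it to $\Lambda$. It then forms the primitive lattice $\Lambda_0=\langle p^{-1}w_1,\dots,p^{-1}w_n,v_1,\dots,v_n\rangle$ next to $x$, and reads off from the primitive case at $\Lambda_0$ that $y$ is symplectic iff $\Lambda'=p\Lambda_0$. You instead re-centre at $y$. By \cref{cor:symneighoneofthemisprimitive}, a symplectic $y$ must be primitive, with primitive representative $p^{-1}\Lambda'$. You apply the primitive case there, with $x$ as the neighbour, transfer the conclusion back to $\Lambdabar$, and get the converse from self-duality of $p^{-1}\Lambda'$.

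\textbf{What each buys.} The paper's version is explicit: it names the symplectic neighbour as $p\Lambda_0$ inside a concrete apartment. But it presupposes that $\Lambda'/p\Lambda$ is isotropic in $\Lambdabar$ with $\Lambda'\subseteq p\Lambda_0$. For a genuine neighbour of a special vertex, isotropy already forces the Lagrangian case. Indeed, if $\Lambda_0$ is the primitive lattice from the definition of the edge $\{x,y\}$, rescaled so that $p\Lambda_0\subset\Lambda\subset\Lambda_0$, then $\Lambda/p\Lambda_0$ is Lagrangian. Hence $p\Lambda_0\subseteq\Lambda'$, and $\Lambda'/p\Lambda$ contains the Lagrangian $p\Lambda_0/p\Lambda$, so it is coisotropic. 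Your argument never needs the shape of $\Lambda'/p\Lambda$ in advance. It therefore covers these non-Lagrangian coisotropic neighbours with no extra work.

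\textbf{Points to tighten.}
\begin{itemize}
\item Step 1 is never used and can be dropped.
\item In Step 2, say why the primitive representative of $y$ is $p^{-1}\Lambda'$. We have $p^3\Z_p\subseteq\phi(\Lambda'\times\Lambda')\subseteq p\Z_p$, and rescaling changes the valuation by an even amount, so $\phi(\Lambda'\times\Lambda')=p^2\Z_p$.
\item Step 3 can be replaced by a dimension count. $\Lambda'/p\Lambda$ is isotropic for $\bar\phi_\Lambda$, because $\phi(\Lambda'\times\Lambda')=p^2\Z_p$. $\Lambda/\Lambda'$ is isotropic in $p^{-1}\Lambda'/\Lambda'$, because $\phi(\Lambda\times\Lambda)=p\Z_p$. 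Each therefore has dimension at most $n$, and the two dimensions sum to $2n$, so both equal $n$. This avoids the vague ``annihilator/dual'' identification.
\item In your apartment alternative, the flag $p\Lambda_0\subset\Lambda'+p\Lambda_0\subset\Lambda$ does not determine $\Lambda'$, so diagonalizing it does not diagonalize $\Lambda'$. If you take $\Lambda_0$ to be the lattice witnessing the edge, then $p\Lambda_0\subseteq\Lambda'$ holds automatically. Your second option then always applies via \cref{prop:simultaneousdiagisotropicsubsapceofsymplectic}.
\end{itemize}
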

\begin{proof}
    If $\Lambda$ is primitive, then the result is the same as \cref{cor:symplecticneighboursofprimitive}. Let $\Lambda$ be symplectic with $\phi(\Lambda\times\Lambda)=p\Z_p$.
    Let $\mathrm{B}$ be a symplectic basis of $\Lambdabar$ extending a basis of $\Lambda'/p\Lambda$.
    We can lift $\mathrm
    B$ to a basis $\{w_1,\dots,w_n,v_1,\dots,v_n\}$ of $\Lambda$. By the same argument as in the proof of \cref{prop:symplecticlatticeisneighbourtoaprimitive}, the lattice $\Lambda_0=\langle p^{-1}w_1,\dots,p^{-1}w_n,v_1,\dots,v_n\rangle$ is primitive. Moreover, we see that $p\Lambda\subset \Lambda'\subseteq p\Lambda_0\subset\Lambda\subset \Lambda_0$. Since $p\Lambda_0/p\Lambda$ is spanned by the images of $w_1,\dots,w_n$ in $\Lambdabar$, it is Lagrangian in $\Lambdabar$. Since $p\Lambda_0\subset p^{-1}\Lambda'\subseteq\Lambda_0$, the vertex $y$ is symplectic if and only if $\Lambda'=p\Lambda_0$ by \cref{cor:symplecticneighboursofprimitive}. Thus, $y$ is symplectic if and only if $\Lambda'/p\Lambda$ is Lagrangian in $\Lambdabar$.
\end{proof}

\subsubsection{Apartments and sub-apartments in the symplectic building} A building is a union of apartments, which are much simpler to work with. Crucial to our proof of the main theorem is showing that every $G$-stable vertex in $\Delta_n$ lies in a single apartment (see \cref{prop:Srhosymliesinanapartment}).

Let $(V,\phi)$ be a symplectic $\Q_p$-vector space over of dimension $2n$. Using \cite[Lemma 1.3.5]{AlisonBuildings}, we define apartments in the building $\Delta_n$.
\begin{definition}\label{def:apartment}
    An \textit{apartment} in $\Delta_n=\Delta_n(V,\phi)$ corresponding to a symplectic basis $\mathrm{B}=\{w_1,\dots,w_n,v_1,\dots,v_n\}$ of $(V,\phi)$ is the simplicial complex $\Sigma=\Sigma(\mathrm{B})$ formed by vertices in $\Delta_n$ of the form $$ [\langle p^{a_1}w_1,\dots,p^{a_n}w_n,p^{b_1}v_1,\dots,p^{b_n}v_n\rangle],$$ where  $a_i,b_i\in \Z$ for all $1\leq i\leq n$. We denote such a vertex in $\Sigma$ by $[a_1,\dots,a_n,b_1,\dots,b_n]$.
\end{definition}

\begin{remark}    
Let $\Sigma$ be an apartment in $\Delta_n$ corresponding to a symplectic basis of $V$. Let $x_0=[0,\dots,0]$ be the origin of $\Sigma$. For all $0\leq i\leq n$, we let $x_i=[0,\dots,0,1\dots,1]$ where the number of zeros is $i$. We call the chamber $[x_0,x_1,\dots,x_n]$ around $x_0$ as the \textit{fundamental chamber} in $\Sigma$. Translating the fundamental chamber by the action of the affine Weyl group 
of $\GSp_{2n}(\Q_p)$, we obtain $\Sigma$. For an example, see \cite[Example 2.3]{Shemanskesymplecticbuilding}. 
\end{remark}

The following result characterizes vertices in an apartment of $\Delta_n$.

\begin{lemma} \cite[Proposition 1.3.4]{AlisonBuildings}\label{lem:charverticesinapartment}
    Let $\Sigma$ be an apartment in $\Delta_n$ corresponding to a symplectic basis of $V$.
    A vertex $y=[a_1,\dots,a_n,b_1,\dots,b_n]\in \Sigma$ if and only if there exists $m\in \Z$ such that \[a_i+b_i+2m\in \{1,2\},\quad \textnormal{for all }1\leq i\leq n.\] 
\end{lemma}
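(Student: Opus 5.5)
The statement has two directions, and the plan is to handle each with a single device: pass to the dual lattice, and choose the representative of the homothety class carefully. Fix the symplectic basis $\mathrm{B}=\{w_1,\dots,w_n,v_1,\dots,v_n\}$ defining $\Sigma$. For a lattice $\Lambda$, set
\[
\Lambda^\#\colonequals\{v\in V\ |\ \phi(v,\Lambda)\subseteq \Z_p\}.
\]
If $\Lambda=\langle p^{a_1}w_1,\dots,p^{a_n}w_n,p^{b_1}v_1,\dots,p^{b_n}v_n\rangle$ is diagonal in $\mathrm{B}$, then $\Lambda^\#$ is again diagonal, with exponents $(-b_1,\dots,-b_n,-a_1,\dots,-a_n)$. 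This follows from $\phi(w_i,v_j)=\delta_{ij}$ and the vanishing of the other pairings. Two further facts are used:
\begin{enumerate}[(i)]
\item $\Lambda_0$ is primitive if and only if $\Lambda_0^\#=\Lambda_0$, by \cref{cor:primlatticeissympmod}, since a primitive lattice is spanned by a symplectic basis.
\item The operation $\#$ reverses inclusions and satisfies $(p\Lambda)^\#=p^{-1}\Lambda^\#$.
\end{enumerate}
Replacing $\Lambda$ by $p^m\Lambda$ shifts every $a_i$ and $b_i$ by $m$, and hence every $a_i+b_i$ by $2m$. This is the source of the integer $m$ in the statement.

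\emph{Only if.} Let $y\in\Sigma$ be a vertex of $\Delta_n$. In either case of the definition of a vertex, we can choose a representative $\Lambda$, diagonal in $\mathrm{B}$, and a primitive $\Lambda_0$ such that $p\Lambda_0\subseteq\Lambda\subseteq\Lambda_0$ and $\Lambda/p\Lambda_0$ is isotropic. If $y$ is primitive, take $\Lambda=p\Lambda_0$; the quotient is then $0$. Note that $\Lambda_0$ itself need not be diagonal in $\mathrm{B}$, which is why the argument goes through $\Lambda^\#$.
\begin{enumerate}[(1)]
\item Isotropy means $\phi(\Lambda\times\Lambda)\subseteq p\Z_p$, that is, $\Lambda\subseteq p\Lambda^\#$.
\item Dualizing $p\Lambda_0\subseteq\Lambda\subseteq\Lambda_0$ and using $\Lambda_0^\#=\Lambda_0$ gives $\Lambda_0\subseteq\Lambda^\#\subseteq p^{-1}\Lambda_0$. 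Hence $p^2\Lambda^\#\subseteq p\Lambda_0\subseteq\Lambda$.
\end{enumerate}
Together these give the chain
\[
p^2\Lambda^\#\subseteq\Lambda\subseteq p\Lambda^\#,
\]
in which all three lattices are diagonal in $\mathrm{B}$. Comparing exponents coordinatewise yields $1-b_i\le a_i\le 2-b_i$, so $a_i+b_i\in\{1,2\}$ for every $i$. This holds for the chosen representative; a general representative differs by some $p^m$, which produces the $2m$ in the statement.

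\emph{If.} After rescaling we may assume $c_i\colonequals a_i+b_i\in\{1,2\}$ for all $i$. Define a diagonal lattice $\Lambda_0$ by taking exponents $(a_i,-a_i)$ on $(w_i,v_i)$ when $c_i=1$, and $(a_i-1,1-a_i)$ when $c_i=2$. The exponent sums are all $0$, so $\Lambda_0$ is primitive by \cref{prop:symlatticescharacterizationviasymbasis}. One checks coordinatewise that $p\Lambda_0\subseteq\Lambda\subseteq\Lambda_0$:
\begin{enumerate}[(i)]
\item For indices with $c_i=2$, $\Lambda$ agrees with $p\Lambda_0$ in both coordinates.
\item For indices with $c_i=1$, $\Lambda$ agrees with $\Lambda_0$ in the $w_i$-coordinate and with $p\Lambda_0$ in the $v_i$-coordinate.
\end{enumerate}
Hence $\Lambda/p\Lambda_0$ is spanned by the images of $p^{a_i}w_i$ with $c_i=1$, and this span is isotropic. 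So $y$ is a vertex of $\Delta_n$: it is the primitive class $[p\Lambda_0]$ if every $c_i=2$, and an isotropic neighbour of $[\Lambda_0]$ otherwise. It lies in $\Sigma$ because it is diagonal in $\mathrm{B}$.

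The main obstacle is the ``only if'' direction. The witnessing primitive lattice $\Lambda_0$ is not known to be diagonal in $\mathrm{B}$, so exponents cannot be compared against $\Lambda_0$ directly. The duality trick handles this by eliminating $\Lambda_0$ and leaving the chain $p^2\Lambda^\#\subseteq\Lambda\subseteq p\Lambda^\#$, which involves only lattices diagonal in $\mathrm{B}$.
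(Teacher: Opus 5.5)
Your proof is correct. The paper itself gives no argument for this lemma: it is quoted from \cite[Proposition 1.3.4]{AlisonBuildings}. Your proof is therefore an independent, self-contained argument that uses only \cref{cor:primlatticeissympmod} and \cref{prop:symlatticescharacterizationviasymbasis}.

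You isolate the real difficulty correctly. The primitive lattice $\Lambda_0$ witnessing that $y$ is a vertex need not be diagonal in $\mathrm{B}$. Dualizing eliminates it and leaves $p^2\Lambda^\#\subseteq\Lambda\subseteq p\Lambda^\#$, where all three terms are diagonal. So the coordinatewise comparison giving $1\le a_i+b_i\le 2$ is legitimate.

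A more building-theoretic way to get a diagonal witness would be to place $y$ in a chamber of $\Sigma$ and take that chamber's primitive vertex (\cref{cor:chambercontainsprimitivenon-primitive}). That presupposes $\Sigma$, as defined combinatorially in the paper, is a chamber subcomplex of $\Delta_n$, and the paper also takes this from the literature. Your route avoids that input entirely.

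Your argument also has two by-products:
\begin{itemize}
\item Steps (1)--(2) never use diagonality. So, for the representative in the definition, $p^2\Lambda^\#\subseteq\Lambda\subseteq p\Lambda^\#$ is a necessary condition for any lattice class to be a vertex of $\Delta_n$.
\item Combining your two directions shows, a posteriori, that every vertex of $\Sigma$ is witnessed by a primitive vertex lying in $\Sigma$ itself, namely the diagonal $\Lambda_0$ built in your ``if'' direction.
\end{itemize}
The citation, for its part, buys brevity and ties the statement to the standard combinatorial model of the $\tilde{C}_n$ apartment.
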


We define sub-apartments of $\Delta_n$ analogous to \cite[\S 2.2]{BellaicheRibetslemmaPadova}.
Let $\mathcal{P}=\{B_j\}_{j=1}^d$ be a partition of $\{1,\dots,2n\}$, and let \[\Z^{2n}_\mathcal{P}=\{(a_1,\dots,a_{2n})\in \Z^{2n}\ |\ a_i=a_{i'} \textnormal{ for any }i,i'\in B_j\}.\] We define a bijective mapping 
\begin{equation}\label{eq:subaptmap}
    \Z^{2n}_\mathcal{P}\to \Z^d,
    \quad (a_1,\dots,a_n)\mapsto (b_1,\dots,b_d),
\end{equation}
where $a_i=b_j$ for all $i\in B_j$.

\begin{definition}
    A \textit{sub-apartment} of an apartment $\Sigma$ in  $\Delta_n$ is a subcomplex that under \cref{eq:subaptmap} corresponds to an apartment in $\Delta_{m}$ for some $m\leq n$. The dimension of the sub-apartment in this case is $m$. 

    We say a subcomplex $\Sigma'$ of $\Sigma$ is \textit{contained in an apartment} of $\Delta_m$ if $\Sigma'$ is contained in a sub-apartment of dimension $m$. 
\end{definition}

Let $\mathrm{B}$ be a fixed symplectic basis of $V$, and let $\Sigma$ be the corresponding apartment in $\Delta_n$. Let $\Sigma_\mathcal{P}$ be the subcomplex of $\Sigma$ whose vertices have representative in $\Z_\mathcal{P}^{2n}$. 
Under further assumptions on $\mathcal{P}$, we show that $\Sigma_\mathcal{P}$ is a sub-apartment of $\Sigma$ in $\Delta_n$.

\begin{lemma}\label{lem:insidesymplecticpartmentd0d1}
    Let $\mathrm{B}=\{w_1,\dots,w_n,v_1,\dots,v_n\}$ be a symplectic basis of $V$, and let \[
        (\epsilon_1,\dots,\epsilon_{2n})\colonequals (w_1,\dots,w_n,v_1,\dots,v_n).
    \]
    Let $I=\{1,\dots,d\}$, and let $\mathcal{P}=\{B_j\}_{j\in I}$ be a partition of $\{1,\dots,2n\}$ such that
    \begin{enumerate}
        \item there is an order $2$ permutation $\sigma$ on $I$;
        \item for all $k\neq \sigma(j)$, the subspace spanned by $\{\epsilon_i\}_{i\in B_j\cup B_k}$ is isotropic;
        \item for all $j\in  I$, the subspace spanned by $\{\epsilon_i\}_{i\in B_{j}\cup B_{\sigma(j)}}$ is symplectic. 
    \end{enumerate}
    Let $I_0=\{j\in I\ |\ j=\sigma(j)\}$, $I_1=\{j\in I\ |\ j<\sigma(j)\}$, and let $d_s=\#I_s$ for all $s\in \{0,1\}$. Then the subcomplex $\Sigma_\mathcal{P}$ is a sub-apartment of the $\Sigma$ of dimension $d_0+d_1$.
\end{lemma}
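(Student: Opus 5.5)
The plan is to reorganize the basis $\mathrm{B}$ into a new symplectic basis indexed by the blocks, and then read off the vertices of $\Sigma_\mathcal{P}$ using \cref{lem:charverticesinapartment}.

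\textbf{Pairing the blocks.} Conditions (2) and (3) force a pairing of indices. Each $\epsilon_i$ pairs nontrivially with exactly one basis vector, namely its symplectic partner $\epsilon_{i\pm n}$. Fix $i\in B_j$. Condition (2) says $\epsilon_i$ is orthogonal to every $\epsilon_{i'}$ with $i'\in B_k$ for $k\neq\sigma(j)$. Since $\phi$ is nondegenerate, the partner of $\epsilon_i$ must therefore lie in $B_{\sigma(j)}$. Conversely, the subspace spanned by the block $B_j\cup B_{\sigma(j)}$ is closed under taking partners, which is consistent with (3). Two consequences follow:
\begin{itemize}
\item for $j\in I_1$, the partner map $i\mapsto i\pm n$ is a bijection $B_j\to B_{\sigma(j)}$, so $|B_j|=|B_{\sigma(j)}|$ and the span of $B_j$ is isotropic;
\item for $j\in I_0$, the block $B_j$ is closed under taking partners.
\end{itemize}

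\textbf{Vertices of $\Sigma_\mathcal{P}$.} A vertex of $\Sigma_\mathcal{P}$ is $[c_1,\dots,c_d]$, meaning exponent $c_j$ on every $\epsilon_i$ with $i\in B_j$. For a $w$-index $i\in B_j$, its partner lies in $B_{\sigma(j)}$, so $a_i+b_i=c_j+c_{\sigma(j)}$. By \cref{lem:charverticesinapartment}, the vertex lies in $\Sigma$ exactly when there is $m\in\Z$ with
\[
c_j+c_{\sigma(j)}+2m\in\{1,2\}\quad\text{for all } j.
\]
For $j\in I_0$ this condition reads $2c_j+2m\in\{1,2\}$, which forces $c_j+m=1$. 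So all $c_j$ with $j\in I_0$ are equal, and they are determined by $m$.

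\textbf{The model apartment of dimension $d_0+d_1$.} Take a symplectic space with basis $\{w'_j\}_{j\in I_1}\cup\{w'_0\}\cup\{v'_j\}_{j\in I_1}\cup\{v'_0\}$. Send a vertex $[c_1,\dots,c_d]$ of $\Sigma_\mathcal{P}$ to the point with exponent $c_j$ on $w'_j$, exponent $c_{\sigma(j)}$ on $v'_j$, and the common value $c_{I_0}$ on both $w'_0$ and $v'_0$. The $I_0$-blocks are collapsed to the single pair $(w'_0,v'_0)$. The membership condition above is then exactly \cref{lem:charverticesinapartment} in the smaller building. The map is compatible with homothety and, being the map \eqref{eq:subaptmap}, is injective.

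\textbf{Simplices, and the main obstacle.} It remains to check that simplices correspond. Chains $p\Lambda_0\subset\Lambda_1\subset\dots$ with isotropic quotient are determined coordinatewise by the exponent vectors. Isotropy of $\Lambda_m/p\Lambda_0$ depends only on which partner pairs both drop, and that is block-compatible by the pairing established above. The hard part is the dimension count. The collapse of $I_0$ is natural only if $d_0\le 1$. If $d_0\geq 2$, the single-pair model gives dimension $d_1+1$ rather than $d_0+d_1$. In that case I would instead take one pair $(w'_j,v'_j)$ for each $j\in I_0$, with equal exponents forced on each pair. The argument would then need to verify that the parity constraint $c_j+m=1$ still matches the definition of sub-apartment used in the paper. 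I would check this convention against \cref{def:apartment} first.
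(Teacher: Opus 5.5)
For $I_0=\varnothing$ your argument is complete and is essentially the paper's. Each partner pair $\{\epsilon_i,\epsilon_{i\pm n}\}$ straddles $B_j$ and $B_{\sigma(j)}$, so its exponent sum is $c_j+c_{\sigma(j)}$. The block map $\Z^{2n}_\mathcal{P}\to\Z^{2d_1}$ is a homothety-compatible bijection, and \cref{lem:charverticesinapartment} imposes the same condition on both sides. Your check on simplices goes beyond what the paper verifies.

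The gap is the case $I_0\neq\varnothing$, which you leave open. Your single-pair collapse does not settle it even when $d_0=1$. Its image is confined to the locus where the $w'_0$- and $v'_0$-exponents agree, so it is not an apartment of $\Delta_{d_1+1}$ either.

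The paper treats $I_0\neq\varnothing$ by exactly your fallback. It picks one pair $\{\epsilon_{i_j},\epsilon_{n+i_j}\}$ for every $j\in I_0\sqcup I_1$, lets $V'$ be their span, and notes that for $j\in I_0$ both sides of \cref{lem:charverticesinapartment} read $2c_j+2m\in\{1,2\}$. This shows that $\Sigma_\mathcal{P}$ maps injectively, compatibly with the vertex condition, into the apartment of $\Delta_{d_0+d_1}$ attached to $\mathrm{B}'$. It does not show that the image fills that apartment, and your observation that $c_j=1-m$ for every $j\in I_0$ shows it cannot when $d_0\geq 1$.

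Two examples make this concrete.
\begin{enumerate}
\item Take $d=1$, $\sigma=\mathrm{id}$ (reading ``order $2$'' as $\sigma^2=\mathrm{id}$, as in \cref{def:symplectictype}), and $B_1=\{1,\dots,2n\}$. The hypotheses hold and $d_0+d_1=1$, yet $\Sigma_\mathcal{P}$ is the single vertex $[0,\dots,0]$.
\item Under your orthogonality reading of (2), take $n=2$, $B_1=\{1\}$, $B_2=\{3\}$, $B_3=\{2,4\}$. After normalizing $c_3=0$, the vertex classes of $\Sigma_\mathcal{P}$ are the $(c_1,c_2)$ with $c_1+c_2\in\{-1,0\}$. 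This is a line inside the $2$-dimensional $\Sigma$; it has no special vertices, hence no chambers.
\end{enumerate}

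So no check of conventions against \cref{def:apartment} will rescue the strict statement. What you can and should prove for $I_0\neq\varnothing$ is that $\Sigma_\mathcal{P}$ is contained in a sub-apartment of dimension $d_0+d_1$, using the one-pair-per-block map. That is precisely what \cref{prop:Srhosymliesinanapartment} asserts, and the main theorem only needs $I_0=\varnothing$.

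Incidentally, read literally, hypothesis (2) with $j\in I_0$ and any $k\neq j$ makes the span of $B_j$ isotropic, contradicting (3). So literally $I_0\neq\varnothing$ forces $d=1$. The orthogonality reading you adopt is the one the application in \cref{prop:Srhosymliesinanapartment} actually requires.
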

\begin{proof} 
Since $B_j\cup B_{\sigma(j)}$ is symplectic, $i\in B_j$ if and only if $n+i\in B_{\sigma(j)}$. For each $j\in I_0\sqcup I_1$, there exists $1\leq i_j\leq n$ such that $\{\epsilon_{i_j},\epsilon_{n+i_{j}}\}$ spans a symplectic subspace of $V$. 
Let $V'$ be the symplectic subspace of $V$ generated by $\mathrm{B}'=\{\epsilon_{i_1},\dots, \epsilon_{i_{d_0+d_1}},\epsilon_{n+i_1},\dots, \epsilon_{n+i_{d_0+d_1}}\}$.
    By definition, $\Sigma_\mathcal{P}$ is the subcomplex of $\Sigma$ whose vertices are of the form $[a_1,\dots,a_{2n}]$ with $a_i=b_j$ for all $i\in B_j$. Let us define the map 
    \[
        \Z_\mathcal{P}^{2n}\to \Z^{2(d_0+d_1)},\quad [a_1,\dots,a_{2n}]\mapsto[b_1,\dots,b_{2(d_0+d_1)}].
    \]
    We see that
    $b_j+b_{\sigma(j)}=a_i+a_{n+i}$ for any $i\in B_j$. We see that $[b_1,\dots,b_{2(d_0+d_1)}]\in \Delta_{d_0+d_1}$ if and only if $[a_1,\dots,a_{2n}]\in \Delta_n$ by \cref{lem:charverticesinapartment}.
    This shows that $\Sigma_\mathcal{P}$ is a sub-apartment of $\Sigma$ of dimension $d_0+d_1$.
\end{proof}

 \begin{remark}
We elaborate on the hypothesis of \cref{lem:insidesymplecticpartmentd0d1} by providing an example where it holds. Let $\rho\colon G\to \GL(V\phi)$ be a symplectic $p$-adic representation that stabilizes a $G$-stable primitive lattice, and with residual factors $\rho_1,\dots,\rho_d$ that are absolutely irreducible and pairwise nonisomorphic. We show later that there exists a symplectic basis $\mathrm{B}$ of $V$ with respect to which every $G$-stable lattice in $\Delta_n$ lies in the apartment $\Sigma(\mathrm{B})$ and is of the form $[a_1,\dots,a_{2n}]$ (see \cref{prop:Srhosymliesinanapartment}). Moreover, if $\Lambda_0=[0,\dots,0]$, then the residual representation $\rhobar_{\Lambda_0}$ is block-upper triangular with respect to a basis as in \cref{thm:uppertirangluarwithrespecttog0inGwithdistincteigenvaluesmodp}. Since $\rho_j$ is irreducible for all $j\in I$, there exists a partition $\mathcal{P}=\{B_j\}_{j\in I}$ of $\{1,\dots,2n\}$, satisfying the hypothesis of \cref{lem:insidesymplecticpartmentd0d1}, such that for all $j\in I$ and for all $i,k\in B_j$, we have $a_i=a_k$.
 \end{remark}

\subsection{Simplicial complex formed by stable vertices}


    Let $(V,\phi)$ be a symplectic $\Q_p$-vector space of dimension $2n$, 
    and let $\Delta_n$ denote the Bruhat--Tits building associated with $\GL(V,\phi)$. 
    Let $\rho:G\to \GL(V,\phi)$ be a symplectic representation such that $V$ has a $G$-stable primitive lattice. Assume $\rho$ is residually multiplicity free, 
    and with residual factors  $\rho_1,\dots,\rho_d$ that are absolutely irreducible. Let $I=\{1,\dots,d\}$, and let  $n_i$ be the dimension of $\rho_i$ for all $i\in I$.

    \begin{definition}
        Let $S_\rho^\GSp$ be the set of $G$-stable vertices in $\Delta_n$. Let $\scrS_\rho^\GSp$ be the simplicial subcomplex formed by these vertices in $\Delta_n$.
        The \textit{dimension} of $\scrS_\rho^\GSp$ is defined to be the dimension of the maximal simplex contained in $\scrS_\rho^\GSp$. 
    \end{definition}
    Let $\sigma$ be the transposition on $I$ such that $\rho_{\sigma(i)}\simeq \rho_i^\perp$, 
    $I_0=\{i\in I\ |\ \sigma(i)=i\}$, $I_1=\{i\in I\ |\ i<\sigma(i)\}$, and let $d_1=\# I_1$. 
    \begin{proposition}\label{prop:dimofSrhosym}
        The dimension of $\scrS_\rho^\GSp$ is $d_1$.
    \end{proposition}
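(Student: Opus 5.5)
The plan is to translate simplices in $\scrS_\rho^\GSp$ into $G$-stable isotropic flags in a residual space, and then use \cref{prop:maximalGstableisotropicflag} to bound and realize their lengths.

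By definition, an $m$-simplex of $\Delta_n$ consists of a primitive lattice $\Lambda_0$ together with a chain $p\Lambda_0\subset \Lambda_1\subset\dots\subset\Lambda_m\subset\Lambda_0$ with $\Lambda_m/p\Lambda_0$ isotropic. The vertices of this simplex are $G$-stable exactly when $\Lambda_0$ and every $\Lambda_i$ are $G$-stable. Here one uses that $G$ fixes a homothety class $[\Lambda_i]$ if and only if it fixes the lattice $\Lambda_i$, because $\lambda_\rho(G)\subseteq\Z_p^\times$ and the elements of $G$ preserve volume up to units. So an $m$-simplex of $\scrS_\rho^\GSp$ is the same as a $G$-stable primitive lattice $\Lambda_0$ together with a strictly increasing $G$-stable isotropic flag
\[
0\subsetneq V_1\subsetneq\dots\subsetneq V_m\subseteq \Lambdabar_0,\qquad V_i=\Lambda_i/p\Lambda_0,
\]
for the residual representation $\rhobar_{\Lambda_0}$, which is symplectic for $\bar\phi_{\Lambda_0}$.

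\emph{Lower bound.} Fix a $G$-stable primitive lattice $\Lambda_0$. If $I_0\neq I$, apply \cref{prop:maximalGstableisotropicflag} to $\psi=\rhobar_{\Lambda_0}$, which is symplectic, multiplicity free, and has the same symplectic type data by \cref{prop:symptypeofasymprep}. This produces a $G$-stable isotropic flag of length $d_1$, hence a $d_1$-simplex in $\scrS_\rho^\GSp$. If $I_0=I$, then $d_1=0$ and the vertex $[\Lambda_0]$ already gives dimension $0$.

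\emph{Upper bound.} Take a $G$-stable isotropic flag of length $m$ in $\Lambdabar_0$ and refine it to a Jordan--H\"older-type $G$-stable flag whose successive quotients are the irreducible residual factors. Each $V_i$ strictly contains $V_{i-1}$, so $m$ is at most the number of Jordan--H\"older factors of $V_m$. We then show that the factors of an isotropic $G$-stable subspace $W$ are pairwise distinct, that none lies in $I_0$, and that no two are swapped by $\sigma$. This gives at most $d_1$ factors and hence $m\le d_1$.

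For this, write $W^0\supseteq W$, which is $G$-stable because $\rhobar(g)$ scales $\bar\phi$. Then $\Lambdabar_0/W^0\simeq W^*$ twisted by the similitude character, so $\Lambdabar_0/W^0$ has Jordan--H\"older factors $\{\rho_i^\perp\}$ with $\rho_i$ running over the factors of $W$, by \cref{lem:factorsofpsiperp}. Thus $\rhobar^{\mathrm{ss}}$ contains the factors of $W$ and, separately, their $\perp$-duals. Multiplicity-freeness then forces the factors of $W$ to avoid $I_0$ and to contain no pair $\{i,\sigma(i)\}$. Consequently $W$ has at most $\#I_1=d_1$ factors, and $m\le d_1$.

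The main obstacle I expect is the identification in the first paragraph of $G$-stable vertices with $G$-stable lattices, together with making sure that every simplex of $\scrS_\rho^\GSp$ has a $G$-stable primitive vertex to serve as $\Lambda_0$. The second point holds because a simplex of $\Delta_n$ contains a unique primitive vertex by \cref{cor:chambercontainsprimitivenon-primitive} (the same argument applies to any simplex, which lies in a chamber), so $G$ stabilizing all the vertices stabilizes that one. The remaining steps are direct applications of the earlier propositions.
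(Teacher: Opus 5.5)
Your proposal is correct and follows the paper's proof. Like the paper, you get the lower bound by realizing the maximal $G$-stable isotropic flag of \cref{prop:maximalGstableisotropicflag} in $\Lambdabar_0$ as a $d_1$-simplex. Your upper bound makes explicit a step the paper leaves implicit when it treats that proposition as showing every maximal flag has length $d_1$: an isotropic $G$-stable $W\subseteq W^0$ with $\Lambdabar_0/W^0\simeq W^*$ forces, by multiplicity-freeness, the factors of $W$ to avoid $I_0$ and to contain no pair $\{i,\sigma(i)\}$.
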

    \begin{proof}
        Let $\Lambda_0$ be a $G$-stable primitive lattice. If $d_1=0$, then $\rhobar_{\Lambda_0}$ only has symplectic $G$-stable subspaces. This means that $[\Lambda_0]$ does not have any neighbours in $S_\rho^\GSp$. This implies that $\dim \scrS_\rho^\GSp=0=d_1$.

      Let $d_1\geq 1$. 
      There exists a maximal $G$-stable isotropic flag in $\Lambdabar_0$ by \cref{prop:maximalGstableisotropicflag}. That is, there exists lattices $\Lambda_1,\dots,\Lambda_{d_1}$ such that 
      \[
        p\Lambda_0\subset \Lambda_1\subset \dots \subset \Lambda_{d_1}\subset \Lambda_0,
      \]
      such that $\Lambda_{d_1}/p\Lambda_0$ is isotropic in $\Lambdabar_0$. This proves the result.
    \end{proof}

\subsubsection{Contained in an apartment}
The idempotents constructed in \cite{BellaicheGraftieaux} shows that $\scrS_\rho^\GL$ is contained in a $(d-1)$-dimensional apartment in $\mathcal{X}_d$.
We use \cref{prop:contructingidempotents} to prove the analogous result for $\scrS_\rho^\GSp$.
\begin{proposition}\label{prop:Srhosymliesinanapartment}
    Let $\rho:G\to \GL(V,\phi)$ be a residually multiplicity free symplectic representation admitting a $G$-stable primitive lattice. 
    If there exists a basis of $V$ satisfying the conclusions of \cref{thm:uppertirangluarwithrespecttog0inGwithdistincteigenvaluesmodp}, then $\scrS_\rho^\GSp$ is contained in an apartment of $\Delta_{d_0+d_1}$. 
\end{proposition}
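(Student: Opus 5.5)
We follow the strategy of \cite[Proposition 3.1.1]{BellaicheGraftieaux}, with the idempotents of \cref{prop:contructingidempotents} playing the role of the $\GL$-idempotents. Fix the basis $\mathrm{B}=\{w_1,\dots,w_n,v_1,\dots,v_n\}=\{\epsilon_1,\dots,\epsilon_{2n}\}$ of the $G$-stable primitive lattice $\Lambda_0$ given by \cref{thm:uppertirangluarwithrespecttog0inGwithdistincteigenvaluesmodp}. Let $e_1,\dots,e_{2n}\in \Z_p[G]/\ker\rho$ and $\eta$ be as in \cref{prop:contructingidempotents}, so that $\rho(e_i)$ is the projection onto $\Q_p\epsilon_i$.

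\emph{Step 1: every $G$-stable lattice lies in $\Sigma(\mathrm{B})$.} Let $\Lambda$ be any $G$-stable lattice; in particular, this covers the representatives of vertices of $S_\rho^\GSp$. Since $\Lambda$ is a $\Z_p[G]/\ker\rho$-module and $\sum e_i=1$, we get
\[\Lambda=\bigoplus_i e_i\Lambda.\]
Each $e_i\Lambda$ is a lattice in the line $\Q_p\epsilon_i$, so $e_i\Lambda=p^{a_i}\Z_p\epsilon_i$ for some $a_i\in\Z$. Hence $\Lambda=\langle p^{a_1}\epsilon_1,\dots,p^{a_{2n}}\epsilon_{2n}\rangle$, and every vertex of $S_\rho^\GSp$ is a vertex $[a_1,\dots,a_{2n}]$ of the apartment $\Sigma(\mathrm{B})$ of \cref{def:apartment}. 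Simplices of $\scrS_\rho^\GSp$ are then simplices of $\Delta_n$ all of whose vertices lie in $\Sigma(\mathrm{B})$. Since $\Sigma(\mathrm{B})$ is a full subcomplex (its simplices are determined by the nesting of diagonal lattices), these simplices lie in $\Sigma(\mathrm{B})$.

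\emph{Step 2: the exponents are constant on blocks.} Define the partition $\mathcal{P}=\{B_j\}_{j\in I}$ of $\{1,\dots,2n\}$ by letting $B_j$ be the set of indices of basis vectors on which $\rho_j$ acts in the block form of \cref{thm:uppertirangluarwithrespecttog0inGwithdistincteigenvaluesmodp}(b). Let $i,k\in B_j$, and let $E_j=\sum_{i\in B_j}e_i$. Since $\rho_j$ is absolutely irreducible, Burnside's theorem shows that the image of $E_j(\Z_p[G]/\ker\rho)E_j$ in $M_{n_j}(\F_p)$ is all of $M_{n_j}(\F_p)$, via the diagonal block of $\rhobar_{\Lambda_0}$. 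Hence there is $x\in e_k(\Z_p[G]/\ker\rho)e_i$ with $\rho(x)\epsilon_i=u\epsilon_k$ for a unit $u\in\Z_p^\times$. Applying $G$-stability to $p^{a_i}\epsilon_i\in\Lambda$ gives $a_k\le a_i$, and by symmetry $a_i=a_k$. So every $G$-stable vertex lies in $\Sigma_\mathcal{P}$.

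\emph{Step 3: $\Sigma_\mathcal{P}$ is a sub-apartment.} We check the hypotheses of \cref{lem:insidesymplecticpartmentd0d1} for $\mathcal{P}$ and the involution $\sigma$ on $I$. Condition (1) holds since $\sigma^2=\mathrm{Id}$. Conditions (2) and (3) follow from \cref{prop:contructingidempotents}\eqref{item:idempotents2},\eqref{item:idempotents3}, once we know that $\eta$ maps $B_j$ onto $B_{\sigma(j)}$. This last fact is read off from the shape of the Gram matrix in \cref{thm:uppertirangluarwithrespecttog0inGwithdistincteigenvaluesmodp}(b): the matrix $\begin{pmatrix}&&-I_{m_1}\\&J_0&\\I_{m_1}&&\end{pmatrix}$ pairs the $P$-block of $\rho_{i_k}$ with the $P^\perp$-block carrying $\rho_{i_k}^\perp=\rho_{\sigma(i_k)}$. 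It also pairs each $\varphi_0$-block with itself, which is consistent with $\sigma(i)=i$ for $i\in I_0$. Then \cref{lem:insidesymplecticpartmentd0d1} shows that $\Sigma_\mathcal{P}$ is a sub-apartment of dimension $d_0+d_1$, which proves the proposition.

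The main obstacle is Step 2. It needs a clean lifting of the residual Burnside surjectivity to produce elements of $e_k(\Z_p[G]/\ker\rho)e_i$ whose $(k,i)$ entry is a $p$-adic unit. A closely related point is making the identification of $\eta$ with $\sigma$ on blocks precise for the $I_0$ blocks, where $J_0$ pairs indices inside a single block.
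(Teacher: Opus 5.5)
Your proposal is correct and follows essentially the paper's route. The rank-one idempotents of \cref{prop:contructingidempotents} force every $G$-stable lattice into the form $\langle p^{a_1}\epsilon_1,\dots,p^{a_{2n}}\epsilon_{2n}\rangle$; here you prove directly what the paper cites from \cite[Lemma 3.1.3]{BellaicheGraftieaux}. The block partition $\mathcal{P}$ together with \cref{lem:insidesymplecticpartmentd0d1} then gives the sub-apartment.

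Your Step~2 makes explicit the block-constancy $a_i=a_k$, which the paper's proof only asserts in the remark following \cref{lem:insidesymplecticpartmentd0d1}. It is not a real obstacle: any preimage of an elementary matrix under the surjection onto $M_{n_j}(\F_p)$ has $(k,i)$-entry $\equiv 1 \bmod p$.

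For the $I_0$-blocks, hypothesis~(2) of the lemma has to be read as orthogonality of the spans of $B_j$ and $B_k$, since a self-paired block spans a symplectic subspace. This wrinkle is already present in the paper and is irrelevant when $I_0=\varnothing$.
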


\begin{proof}
    The strategy is the same as \cite[Proposition 3.1.1]{BellaicheGraftieaux}. Let $\Lambda_0$ be a $G$-stable primitive lattice. By assumption, there exists a basis $\mathrm{B}$ of $\Lambda_0$ that satisfies the conclusions of \cref{thm:uppertirangluarwithrespecttog0inGwithdistincteigenvaluesmodp}. 
By \cref{prop:contructingidempotents}, there exist orthogonal idempotents $\{e_i\}_{i=1}^{2n}\subset \Z_p[G]/\ker\rho$, and a transposition $\eta$ such that \eqref{item:idempotents1}, \eqref{item:idempotents2}, and \eqref{item:idempotents3} of \cref{prop:contructingidempotents} hold.
    We see that $\Lambda_0=\bigoplus_{i=1}^{2n} e_i\Lambda_0$ by \cref{prop:contructingidempotents} \eqref{item:idempotents2}. By \cite[Lemma 3.1.3]{BellaicheGraftieaux}, any $G$-stable lattice $\Lambda\subset \Lambda_0$ is of the form 
    $\Lambda=\bigoplus_{i=1}^{2n}p^{a_i}( e_i\Lambda_0)$ for some integers $a_1,\dots,a_{2n}$. 
    Let $\mathrm{B}_0$ be a symplectic basis of $\Lambda_0$ formed by permuting elements of $\mathrm{B}$.
    By \eqref{item:idempotents2} and \eqref{item:idempotents3} of \cref{prop:contructingidempotents}, there exists a symplectic basis of $\Lambda_0$ such that any $G$-stable lattice $\Lambda\subset \Lambda_0$ is given by $[a_1,\dots,a_{2n}]$, for some integers $a_1,\dots,a_{2n}$. This shows that $\scrS_\rho^\GSp$ is contained in the apartment of $\Delta_n$ corresponding to $\mathrm{B}_0$.

    To prove $\scrS_\rho^\GSp$ is contained in a sub-apartment of dimension $d_0+d_1$, we use \cref{lem:insidesymplecticpartmentd0d1}.
    Let $\mathcal{P}=\{B_j\}_{j=1}^d$ be the partition of $\{1,\dots,2n\}$ such that the idempotent $\mathbf{e}_j=\sum_{i\in B_j}e_i$ corresponds to the residual factor $\rho_j$ in the following sense: the map $R\to \mathbf{e}_jR\mathbf{e}_j\xrightarrow{\bar{\rho}_{\Lambda_0}} M_{2n}(\F_p)$ is given by 
    \[r\mapsto \begin{pmatrix}
            0&&&&\\ &\ddots&&&\\&&\rho_j(r)&&\\&&&\ddots&\\
            &&&&0
        \end{pmatrix}.\]
    With respect to the permutation $\sigma$ on $I$ such that $\rho_{\sigma(j)}\simeq \rho_j^\perp$, the hypothesis of \cref{lem:insidesymplecticpartmentd0d1} is satisfied, and the result follows.
\end{proof}

\subsubsection{Moving between stable symplectic lattices} In this section, we show the equivalence of the necessary assumption $I_0=\varnothing$ (for \cref{thm:ourmain}) and the existence of $G$-stable symplectic neighbours. This is also crucial to our method of proof.

Let $\rho:G\to \GL(V,\phi)$ be a symplectic representation such that $V$ has a $G$-stable primitive lattice. Suppose that $\rho$ is residually multiplicity free 
    and has residual factors  $\rho_1,\dots,\rho_d$ that are absolutely irreducible. Let $I=\{1,\dots,d\}$ and $I_0=\{i\in I\ |\ \sigma(i)=i\}$.
\begin{proposition}
    \label{thm:symplecticneighboursiffI0empty}
    There exists $G$-stable symplectic neighbours in $\Delta_n$ if and only if $I_0=\varnothing$. 
\end{proposition}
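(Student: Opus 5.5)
We reduce the statement to a question about $G$-stable Lagrangian subspaces of residual representations. By \cref{cor:symneighoneofthemisprimitive}, if $x,x'$ are neighbouring $G$-stable symplectic vertices, one of them, say $x=[\Lambda_0]$, is primitive. Then $x'=[\Lambda]$ with $p\Lambda_0\subset\Lambda\subset\Lambda_0$ and $\Lambda/p\Lambda_0$ isotropic. By \cref{cor:symplecticneighboursofprimitive}, $x'$ is symplectic if and only if $\Lambda/p\Lambda_0$ is Lagrangian, and $x'$ is $G$-stable if and only if this subspace is $G$-stable. So the claim is equivalent to the following: some $G$-stable primitive lattice $\Lambda_0$ has a $G$-stable Lagrangian subspace of $\Lambdabar_0$ if and only if $I_0=\varnothing$. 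Here we use that $\rhobar^{\semis}$ is independent of the lattice, so $I_0$ is the same set for every $\Lambda_0$.

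Suppose $I_0=\varnothing$, and take any $G$-stable primitive lattice $\Lambda_0$ (one exists by hypothesis). Apply \cref{prop:maximalGstableisotropicflag} to $\psi=\rhobar_{\Lambda_0}$ on $(\Lambdabar_0,\bar\phi_{\Lambda_0})$; this is allowed since $I_0\neq I$ ($I$ is nonempty). It gives a $G$-stable isotropic flag $V_1\subset\dots\subset V_{d_1}$ whose graded pieces are $\rho_{i_k}$ with $\rho_{i_{k_1}}\not\simeq\rho_{i_{k_2}}^\perp$. These $d_1$ factors meet each $\sigma$-orbit at most once, and since $I=I_1\sqcup\sigma(I_1)$, they meet each orbit exactly once. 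Because $n_{\sigma(i)}=n_i$ (\cref{prop:symptypeofasymprep}), we get $\dim V_{d_1}=\sum_{i\in I_1}n_i=n$. By \cref{lem:extendingabasisofisotropictosymplectic}, $V_{d_1}$ is therefore Lagrangian. Let $\Lambda$ be its preimage in $\Lambda_0$. Then $[\Lambda]$ is a $G$-stable symplectic neighbour of $[\Lambda_0]$.

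Conversely, suppose $L\subset\Lambdabar_0$ is a $G$-stable Lagrangian, and assume for contradiction that some $i\in I_0$ exists. Choose a $G$-stable composition series of $L$ and refine it to a $G$-stable flag of $\Lambdabar_0$ through $L$. Since $L^0=L$, the form $\bar\phi$ induces a perfect $G$-equivariant pairing $L\times(\Lambdabar_0/L)\to\F_p$, twisted by the similitude character. Hence $\Lambdabar_0/L\simeq L^*$ with the action $\rhobar^\perp|_L$. By \cref{lem:factorsofpsiperp}, the Jordan--Hölder factors of $\Lambdabar_0/L$ are the $\rho_j^\perp=\rho_{\sigma(j)}$ for the factors $\rho_j$ of $L$. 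Now $\rho_i$, with $\sigma(i)=i$, occurs exactly once in $\rhobar^{\semis}$ by \eqref{assume:resmultfree}, so it lies either in $L$ or in $\Lambdabar_0/L$. If it lies in $L$, then $\rho_i^\perp\simeq\rho_i$ also lies in $\Lambdabar_0/L$, giving multiplicity two. The other case is symmetric. This contradiction shows $I_0=\varnothing$.

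The main obstacle is the dimension count in the forward direction: we must check that the maximal isotropic flag from \cref{prop:maximalGstableisotropicflag} has total dimension exactly $n$. This needs both the statement that the $i_k$ form a full system of representatives of the $\sigma$-orbits when $I_0=\varnothing$, and the equality $n_i=n_{\sigma(i)}$. A second point to check carefully is the $G$-equivariance of $\Lambdabar_0/L\simeq L^*$ with respect to the $\perp$-action, which holds because $\phi(gv,gw)=\lambda(g)\phi(v,w)$.
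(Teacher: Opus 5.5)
Your proof is correct and follows essentially the same route as the paper. You reduce, via \cref{cor:symneighoneofthemisprimitive} and \cref{cor:symplecticneighboursofprimitive}, to the existence of a $G$-stable Lagrangian in $\Lambdabar_0$ for a $G$-stable primitive $\Lambda_0$; you exclude $I_0\neq\varnothing$ because the quotient by that Lagrangian is the $\perp$-dual of the subspace, which the paper obtains from the block-triangular form and $J$-conjugation rather than from the pairing; and for the converse you take the top of the maximal $G$-stable isotropic flag of length $d_1$, which has dimension $n$. Your explicit check that the indices $i_k$ form a system of representatives of the $\sigma$-orbits fills in a step that the paper's dimension count leaves implicit.
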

\begin{proof}
    Let $[\Lambda]$ and $[\Lambda_0]$ be $G$-stable symplectic neighbours in $\Delta_n$. Without loss of generality, we can assume that   
    $\Lambda_0$ is primitive and $p\Lambda_0\subset \Lambda\subset 
    \Lambda_0$ by \cref{cor:symneighoneofthemisprimitive}. By \cref{cor:symplecticneighboursofprimitive}, $\Lambda/p\Lambda_0$ is a Lagrangian subspace of $\Lambdabar_0$. By \cref{lem:extendingabasisofisotropictosymplectic}, there exists a basis of $\Lambdabar_0$ such that 
    \[
        \rhobar_{\Lambda_0}=\begin{pmatrix}
            \psi&*\\&\psi'
        \end{pmatrix}\quad\textnormal{and}\quad J\rhobar_{\Lambda_0} J^{-1}=\rhobar_\Lambda^\perp.
    \]
    This implies that $\psi'=\psi^\perp$, and that $I_0=\varnothing$, since $\rhobar_{\Lambda_0}$ has distinct residual factors.
    
    We prove the converse by using \cref{prop:dimofSrhosym}. If $I_0=\varnothing$, then $d=2d_1$ is even and the dimension of $\scrS_\rho^\GSp=d_1$. 
    For a $G$-stable primitive lattice $\Lambda_0$, there exists an isotropic $G$-stable flag 
    \[
        0=V_0\subset V_1\subset \dots \subset V_{d_1}\subset \Lambdabar_0.
    \]
    We see that $V_{d_1}$ is a Lagrangian subspace of $\Lambdabar_0$ since 
    \[
        2\dim V_{d_1}=2\sum_{i=1}^{d_1} \dim(V_{i}/V_{i-1})=2\sum_{i=1}^{d_1} n_i=\sum_{i=1}^{d_1} n_i +\sum_{i=1}^{d_1} n_{\sigma(i)}=2n.
    \]
    The $G$-stable Lagrangian subspace $V_{d_1}$ of $\Lambdabar_0$ corresponds to a $G$-stable symplectic neighbour of $\Lambda_0$ by \cref{cor:symplecticneighboursofprimitive}.
\end{proof}

\subsubsection{Jordan--H\"older colouring} 
Let $\rho\colon G\to \GL(V)$ be a symplectic $p$-adic representation that is residually multiplicity-free with absolutely irreducible factors $\rho_1,\dots,\rho_d$. Let $\scrS_\rho^\GL$  be the subcomplex formed by $G$-stable vertices in the  building $\mathcal{X}_{2n}$ associated with $\GL(V)$.
We define the Jordan--H\"older colouring on the edges in $\scrS_\rho^\GL$ as in  \cite{BellaicheRibetslemmaPadova}. For a representation $\varphi\colon G\to \GL_n(\F_p)$, we let $\JH(\varphi)$ be the sum of the irreducible factors of $\varphi$ (with multiplicities) viewed as an element of the Grothendieck group of $\F_p[G]$. 

\begin{definition}\cite[\S 3.4.2]{BellaicheRibetslemmaPadova}
    Let $x=[\Lambda]\in S_\rho^\GL$, and let $y=[\Lambda']$ be a neighbour of $x$ in $S_\rho^\GL$ such that $p\Lambda\subset \Lambda'\subset \Lambda$. We let $\JH(x,y)\colonequals \JH(\Lambda'/p\Lambda)$. 
\end{definition}
    
    For $x_1,x_2\in S_\rho^\GL$ that are neighbours in $\mathcal{X}_{2n}$, we have $\JH(x_2,x_1)+\JH(x_1,x_2)=\sum_{i=1}^d \rho_i$ \cite[Proposition 3.4.3 (c)]{BellaicheRibetslemmaPadova}.
    Suppose now that $\rho\colon G\to \GL(V,\phi)$ is symplectic and residually multiplicity-free. The above colouring on the edges in $\scrS_\rho^\GL$ restricts to a colouring on the edges of $\scrS_\rho^\GSp$.
    For a symplectic vertex $x\in S_\rho^\GSp$, and $y\in S_\rho^\GSp$ neighbour of $x$ in $\Delta_n$, there exists $\{\varepsilon_i\}_{i=1}^{d}\subset \{0,1\}$ such that $\varepsilon_i+\varepsilon_{\sigma(i)}\leq 1$ for all $i\in I$, and $\JH(x,y)=\sum_{i=1}^{d}\varepsilon_i \rho_i$. The condition $\varepsilon_i+\varepsilon_{\sigma(i)}\leq 1$ is equivalent to the subspace corresponding to $y$ being isotropic. 

\begin{example}\label{example:twoandfourJordansymplecticneighbours} If $I_0=\varnothing$, then we have $2^{d_1}$-colourings of edges between $G$-stable symplectic vertices. 
When $d_1=1$, we have the following two colourings:
    \[
        \begin{tikzcd}
            \bullet\arrow[rr, red, "\rho_1", swap]&&\bullet\arrow[rr, red, "\rho_1", swap]\arrow[ll, blue, shift right=2, "\rho_1^\perp",swap]&&\bullet\arrow[ll, blue, shift right=2,"\rho_1^\perp", swap]
        \end{tikzcd}
    \] 

\end{example}
We end this section with a crucial result of \cite{BellaicheRibetslemmaPadova} that will be used in the next section to prove the existence of symplectic vertices in $\scrS_\rho^\GSp$ with no edges of a specific colour.
Before we state the result, we define when two edges are \text{parallel} in an apartment. Let $\Sigma$ be an apartment with respect to a basis in $\mathcal{X}_{2n}$, and let $x_i=[a_1^{(i)},\dots,a_{2n}^{(i)}]$ for $1\leq i\leq 4$. The edge $(x_1,x_2)$ is \textit{parallel} to $(x_3,x_4)$ if $a_j^{(2)}-a_j^{(1)}=a_j^{(4)}-a_j^{(3)}$ for all $1\leq j\leq 2n$.
 
\begin{proposition} \cite[Propositions 3.4.3 (a) and 3.4.4]{BellaicheRibetslemmaPadova}.
    Let $\Sigma$ be an apartment in $\mathcal{X}_{2n}$ containing $\scrS_\rho^\GL$. Let $x,y,x',y'\in S_\rho^\GL$ such that $x,y$ and $x',y'$ are neighbours in $\mathcal{X}_{2n}$. Then $\JH(x,y)=\JH(x',y')$ if and only if the edges $(x,y)$ and $(x',y')$ are parallel in $\Sigma$. 
\end{proposition}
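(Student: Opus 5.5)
The plan is to pass to coordinates in a single apartment and compare both conditions through the residual composition factors of the elementary lattice differences. Fix an apartment $\Sigma$ of $\mathcal{X}_{2n}$ containing $\scrS_\rho^\GL$. By \cite[Proposition 3.1.1]{BellaicheGraftieaux} we may take it to come from a basis adapted to orthogonal idempotents $e_1,\dots,e_{2n}$ of $\Z_p[G]/\ker\rho$ with $\sum e_i=1$. Such a basis exists by the construction of \cref{prop:contructingidempotents} in the symplectic setting, or by the general $\GL$ construction. Every $G$-stable lattice then has the form $\Lambda=\bigoplus_i p^{a_i}e_i\Lambda_0$, so each vertex of $S_\rho^\GL$ is a coordinate vector $[a_1,\dots,a_{2n}]$, defined up to adding a common constant. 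For neighbours $x=[\Lambda]$ and $y=[\Lambda']$ with $p\Lambda\subset\Lambda'\subset\Lambda$, the difference vector $\delta(x,y)=a(y)-a(x)$ has entries in $\{0,1\}$ after normalization. It is not constant, since $y\neq x$.

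The first key step is to compute $\JH(x,y)$ from $\delta(x,y)$. The quotient $\Lambda'/p\Lambda$ decomposes as $\bigoplus_{i:\delta_i=0}e_i\bar\Lambda$ as a module over $\Z_p[G]/\ker\rho$ acting through the idempotents. Its semisimplification can be read off blockwise. Group the indices into the blocks $B_j$ attached to $\rho_j$ through the idempotents $\mathbf{e}_j$, as in the proof of \cref{prop:Srhosymliesinanapartment}. A $G$-stable lattice must then have $a_i$ constant on each $B_j$: the image of $\Z_p[G]$ in $\mathbf{e}_j M \mathbf{e}_j$ is residually all of $M_{n_j}(\F_p)$ by absolute irreducibility, which mixes the coordinates inside $B_j$. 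Consequently $\JH(x,y)=\sum_{j:\ \delta_{B_j}=0}\rho_j$.

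The second key step is to compare the two sides. By \eqref{assume:resmultfree}, the map $\{0,1\}$-block-vectors $\to$ Grothendieck group, $\varepsilon\mapsto\sum_j(1-\varepsilon_j)\rho_j$, is injective. Hence $\JH(x,y)=\JH(x',y')$ if and only if the normalized difference vectors agree on every block. Since the coordinates are constant on blocks, this is the same as $\delta(x,y)=\delta(x',y')$, i.e.\ the edges are parallel. The normalization has to be handled consistently: representatives are chosen with $p\Lambda\subset\Lambda'\subset\Lambda$ in both cases, which fixes $\delta$ in $\{0,1\}^{2n}$ uniquely.

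I expect the main obstacle to be the block-constancy claim. One must show that $G$-stability forces $a_i=a_k$ for $i,k\in B_j$. The argument I would try first is that $\mathbf{e}_j(\Z_p[G]/\ker\rho)\mathbf{e}_j$ surjects onto $M_{n_j}(\F_p)$. A Nakayama-type argument then shows it contains elements whose off-diagonal $(i,k)$ entries are units. Stability of $\bigoplus p^{a_i}e_i\Lambda_0$ under such an element gives $a_i\le a_k$, and symmetrically $a_k\le a_i$. This is exactly \cite[Lemma 3.1.3 and Proposition 3.1.1]{BellaicheGraftieaux}, so in the end the proof reduces to citing those results together with \cite[Proposition 3.4.3]{BellaicheRibetslemmaPadova}.
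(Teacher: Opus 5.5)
Your computation in an adapted apartment is correct. It gives a self-contained route, whereas the paper offers no argument beyond citing Bella\"iche. If the frame refines $\Lambda_0=\bigoplus_j\mathbf e_j\Lambda_0$, then the normalized vector $\delta(x,y)\in\{0,1\}^{2n}$ is block-constant. Exactness of $M\mapsto\bar{\mathbf e}_jM$ (with $\bar{\mathbf e}_j$ acting as $1$ on $\rho_j$ and as $0$ on $\rho_k$ for $k\neq j$) gives $\JH(x,y)=\sum_{j:\,\delta|_{B_j}=0}\rho_j$, and this determines $\delta$.

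The gap is the sentence ``we may take it to come from a basis adapted to orthogonal idempotents''. \cite[Proposition 3.1.1]{BellaicheGraftieaux} only produces \emph{some} apartment containing $\scrS_\rho^\GL$. The statement, however, concerns an arbitrary given $\Sigma\supseteq\scrS_\rho^\GL$, and ``parallel'' is defined through the coordinates of that $\Sigma$. Such apartments are not unique. If $\Sigma_{\mathrm{ad}}$ is adapted and $g\in\GL(V)$ fixes $\scrS_\rho^\GL$ pointwise (for irreducible $\rho$, any $g\in 1+p^N\End(\Lambda_0)$ with $N\gg0$), then $g\Sigma_{\mathrm{ad}}$ also contains $\scrS_\rho^\GL$, but its frame in general does not refine $V=\bigoplus_j\mathbf e_jV$. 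So you must show that parallelism does not depend on the apartment.

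This follows from the Bruhat--Tits fact that for two apartments $\Sigma,\Sigma'$ there is $g\in\GL(V)$ with $g\Sigma'=\Sigma$ fixing $\Sigma\cap\Sigma'$ pointwise. Such a $g$ sends the frame of $\Sigma'$ to that of $\Sigma$ up to a permutation $\pi$ and scalars, so it acts on coordinates by $\pi$ followed by a translation. Since $g$ fixes $x,y,x',y'$, the two difference vectors computed in $\Sigma$ are the $\pi$-permutations of those computed in $\Sigma'$. Hence the two notions of parallel coincide, and your adapted-apartment argument proves the statement for every $\Sigma$.

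There are also three smaller points.

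The $2n$ rank-one idempotents of \cref{prop:contructingidempotents} rest on \eqref{assume:dist}, which is not a hypothesis here. \cite{BellaicheGraftieaux} supplies only the $d$ block idempotents $\mathbf e_j$, so work with these directly. Put $A=\Z_p[G]/\ker\rho$ and apply the exact functor $\bar{\mathbf e}_j(-)$ to a composition series of $\Lambdabar_0$. The only nonzero graded piece is $\bar{\mathbf e}_j\rho_j=\rho_j$, on which $\mathbf e_jA\mathbf e_j$ acts through all of $\End_{\F_p}(\rho_j)$ by absolute irreducibility. Hence $\mathbf e_jA\mathbf e_j\to\End_{\F_p}(\bar{\mathbf e}_j\Lambdabar_0)$ is onto, Nakayama gives $\mathbf e_jA\mathbf e_j=\End_{\Z_p}(\mathbf e_j\Lambda_0)$, and every $G$-stable lattice has the form $\bigoplus_j p^{b_j}\mathbf e_j\Lambda_0$.

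This makes the block-constancy obstacle you anticipated automatic. It also yields rank-one idempotents inside $\mathbf e_jA\mathbf e_j\simeq M_{n_j}(\Z_p)$ if you still want them.

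Finally, drop the closing appeal to \cite[Proposition 3.4.3]{BellaicheRibetslemmaPadova}. Its part (a) is half of what is being proved, and your argument does not use it.
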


\subsubsection{Neighbours and extensions} In this section, we show the existence of certain edges in the symplectic extension graph associated with an irreducible symplectic $p$-adic representation. This is the main ingredient to our proof of \cref{thm:ourmain}. 

\begin{proposition}
    \label{prop:edgefromrhoitorhoji=1j=0}
    Let $\rho\colon G\to \GL(V,\phi)$ be an irreducible symplectic $p$-adic representation that admits a $G$-stable primitive lattice and with absolutely irreducible residual factors $\rho_1,\dots,\rho_d$. Let $I=\{1,\dots,d\}$ and let $\sigma$ be the transposition on $I$ defined by $\rho_{\sigma(i)}\simeq \rho_i^\perp$ for all $i\in I$.
     Assume $\rho$ satisfies \eqref{assume:resmultfree} and \eqref{assum:distincteigenvaluesmodp}, and that $\sigma(i)\neq i$ for all $i\in I$.
    Given positive integers $\varepsilon_i$ for all $i\in I$ such that $\varepsilon_i+\varepsilon_{\sigma(i)}\leq 1$, there exists a symplectic vertex $x=[\Lambda]\in S_\rho^\GSp$ such that $\Lambda/p\Lambda$ admits a nonsplit extension of $\rho_i$ by $\rho_j$, satisfying $\varepsilon_i=1$ and $\varepsilon_j=0$, as a subquotient.
\end{proposition}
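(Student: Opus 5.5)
We would prove this in the style of Bellaïche's $\GL_n$ argument, working inside the apartment furnished by \cref{prop:Srhosymliesinanapartment}. (We read ``positive integers $\varepsilon_i$'' as $\varepsilon_i\in\{0,1\}$, not all zero, so that some $i$ with $\varepsilon_i=1$ and some $j$ with $\varepsilon_j=0$ exist; for instance $j=\sigma(i)$.)

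\textbf{Setup.} By \cref{thm:uppertirangluarwithrespecttog0inGwithdistincteigenvaluesmodp} and \cref{prop:Srhosymliesinanapartment}, all of $S_\rho^\GSp$ lies in one apartment $\Sigma$. In coordinates $(b_1,\dots,b_d)$ on the sub-apartment, symplectic vertices satisfy $b_k+b_{\sigma(k)}=c$, with $c$ independent of $k$. The colour $\chi=\sum_k\varepsilon_k\rho_k$ corresponds to a fixed translation direction $v_\chi$: it lowers the coordinates $b_k$ with $\varepsilon_k=1$ and leaves the others unchanged, up to homothety.

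By \cref{cor:symplecticneighboursofsymplecticlattices}, a step from a symplectic vertex $x$ in direction $v_\chi$ lands on a symplectic vertex only when the subspace is Lagrangian. This happens exactly when $\varepsilon_k+\varepsilon_{\sigma(k)}=1$ for all $k$. The hypothesis $I_0=\varnothing$ guarantees that such directions exist and that the dynamics below make sense (\cref{thm:symplecticneighboursiffI0empty}). We therefore treat first the case where $\chi$ is \emph{Lagrangian}. For a general isotropic $\chi$, we complete it to a Lagrangian colour $\chi'\geq\chi$ by choosing $\varepsilon'_k=1$ on each pair where both $\varepsilon_k$ and $\varepsilon_{\sigma(k)}$ vanish, taking care that $j$ keeps $\varepsilon'_j=0$. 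This is possible by putting $1$ on $\sigma(j)$ if neither of $j,\sigma(j)$ was selected.

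\textbf{Step 1: boundary vertex.} Start from a $G$-stable primitive vertex $x_0$. Move repeatedly in direction $v_{\chi'}$ through $G$-stable symplectic vertices. Since $\rho$ is irreducible, $\scrS_\rho^\GSp$ is compact (bounded), so the process stops at a symplectic vertex $x=[\Lambda]$ that has no $G$-stable neighbour $y$ with $\JH(x,y)=\chi'$.

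To justify that the steps stay inside $S_\rho^\GSp$ and that parallelism is respected, we use \cite[Propositions 3.4.3, 3.4.4]{BellaicheRibetslemmaPadova}: edges of equal colour are parallel. The stopping vertex $x$ is then on the boundary in the required sense. This is what is meant by \cref{prop:existssymplecticwithnoneighbourswithspecificJH}.

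\textbf{Step 2: extract the extension.} Suppose that in $\Lambdabar$ every subquotient of the form ``$\rho_i$ by $\rho_j$'' with $\varepsilon'_i=1$ and $\varepsilon'_j=0$ were split. Using multiplicity freeness, we would rearrange the Jordan--H\"older filtration of $\Lambdabar$ so that the factors with $\varepsilon'=1$ form a $G$-stable subspace $W$ with $\JH(W)=\chi'$. Such a $W$ is isotropic, hence Lagrangian by the dimension count in \cref{thm:symplecticneighboursiffI0empty}. Then $W$ gives a $G$-stable neighbour of $x$ of colour $\chi'$ (by \cref{cor:symplecticneighboursofsymplecticlattices}), contradicting Step 1. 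Hence some nonsplit extension of $\rho_i$ by $\rho_j$ appears, with $\varepsilon'_i=1$ and $\varepsilon'_j=0$; this is the equivalence \cref{prop:equivnonexistneighexitstext}.

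\textbf{Main obstacle.} The hardest step is Step 2's combinatorial rearrangement: showing that ``no nonsplit $(\varepsilon=1)$-by-$(\varepsilon=0)$ subquotient'' forces the sum of the $\varepsilon=1$ factors to be a submodule. We would do this by induction on a composition series, using the Bellaïche–Graftieaux description of the subrepresentations of a multiplicity free module as order ideals of the extension poset. We would then verify isotropy via \cref{prop:psikispsimperp}.

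A second delicate point is ensuring that the $i$ produced actually has the original $\varepsilon_i=1$ rather than only $\varepsilon'_i=1$. We would handle this by choosing the completion $\chi'$ adapted to the desired pair, and if necessary iterating over the choices of completion.
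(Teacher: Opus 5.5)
Your argument is correct when $\chi=\sum_k\varepsilon_k\rho_k$ is Lagrangian, i.e.\ $\varepsilon_k+\varepsilon_{\sigma(k)}=1$ for all $k$. In that case each step of colour $\chi$ from a symplectic vertex lands on a symplectic vertex by \cref{cor:symplecticneighboursofsymplecticlattices}. The walk is $x_0+kv_\chi$ in the apartment, finiteness of $S_\rho^\GSp$ stops it, and \cref{prop:equivnonexistneighexitstext} finishes.

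The gap is the general isotropic case. The statement covers it, and it is the case used later: in \cref{lem:vertexiscontainedinaloop} the set of vertices reachable from $\rho_i$ is only isotropic. Walking in a fixed completion $\chi'\geq\chi$ gives a vertex with no $\chi'$-neighbour, hence a nonsplit extension of $\rho_i$ by $\rho_j$ with $\varepsilon'_i=1$ and $\varepsilon'_j=0$.
\begin{itemize}
\item The condition on $j$ is automatic, since $\varepsilon'_j=0$ forces $\varepsilon_j=0$. Also $j$ is an output, not prescribed, so ``taking care that $j$ keeps $\varepsilon'_j=0$'' is moot.
\item Nothing forces $i$ to lie in the original support rather than in the added pairs.
\item Iterating over completions cannot fix this formally. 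Take $d=4$ and $\chi=\rho_1$, and the skew-symmetric graph whose only edges are $\rho_2\to\rho_2^\perp$ and $\rho_2^\perp\to\rho_2$. It satisfies your conclusion for all four Lagrangian completions, yet has no edge out of $\rho_1$.
\item At your stopping vertex there may still be a neighbour of colour $\chi$ itself. So you cannot apply \cref{prop:equivnonexistneighexitstext} to $\chi$ there.
\end{itemize}

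The missing idea is to let the Lagrangian completion depend on the vertex, and to replace the fixed direction by a monotone potential.
\begin{itemize}
\item Fix $i_0$ with $\varepsilon_{i_0}=1$. On symplectic vertices, in block coordinates, set $f=b_{\sigma(i_0)}-b_{i_0}$; this is homothety invariant.
\item Suppose a symplectic $x\in S_\rho^\GSp$ has a $G$-stable neighbour $y$ with $\JH(x,y)=\chi$. By \cref{prop:GstableinGstablesymplectic}, the isotropic subspace of $y$ lies in a $G$-stable Lagrangian one. This gives a symplectic $z\in S_\rho^\GSp$ with $\JH(x,z)\geq\chi$.
\item Since $\rho_{i_0}$ occurs in $\JH(x,z)$ and $\rho_{\sigma(i_0)}$ does not, the $\sigma(i_0)$-block moves while the $i_0$-block stays. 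Hence $f(z)=f(x)+1$.
\item Choose a symplectic $x$ maximizing $f$ over the finite set $S_\rho^\GSp$. It has no neighbour of colour $\chi$ itself.
\item Now apply \cref{prop:equivnonexistneighexitstext} to $\chi$, not to $\chi'$. This directly yields a nonsplit extension of some $\rho_i$ by $\rho_j$ with $\varepsilon_i=1$ and $\varepsilon_j=0$.
\end{itemize}
This is the maximal-$\mu$ argument in \cref{prop:existssymplecticwithnoneighbourswithspecificJH}. The paper's two-line proof combines it with \cref{prop:equivnonexistneighexitstext}.
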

We start with a result on the existence of triangles in $\scrS_\rho^\GSp$ with two symplectic vertices.
\begin{lemma}
    \label{prop:GstableinGstablesymplectic}
    Let $\rho\colon G\to \GL(V,\phi)$ be a $p$-adic symplectic representation that admits a $G$-stable primitive lattice, satisfies \eqref{assume:resmultfree}, and let $\rho_1,\dots,\rho_d$ be its absolutely irreducible residual factors. Let $I=\{1,\dots,d\}$ and assume that  $\sigma(i)\neq i$ for all $i\in I$.
    Let $x\in S_\rho^\GSp$ be a  symplectic vertex and let $y\in S_\rho^\GL$ be a neighbour of $x$ such that $\JH(x,y)=\sum_{i\in I}\varepsilon_i\rho_i$. Then $\varepsilon_i+\varepsilon_{\sigma(i)}\leq 1$ for all $i\in I$ if and only if $y$ is a neighbour of $x$ in $\Delta_n$. If $y$ is a neighbour of $x$ in $\Delta_n$, then there exists a symplectic vertex $z\in S_\rho^\GSp$ neighbouring $x$ and $y$ such that $\JH(x,z)=\JH(x,y)+\JH(y,z)$.
\end{lemma}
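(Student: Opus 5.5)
We work with representatives and reduce to subspaces of a single residual space. Let $x=[\Lambda]$ with $\Lambda$ symplectic; after rescaling we may take $\phi(\Lambda\times\Lambda)\in\{\Z_p,p\Z_p\}$. Choose a representative $\Lambda'$ of $y$ with $p\Lambda\subset\Lambda'\subset\Lambda$, and set $W=\Lambda'/p\Lambda\subseteq\Lambdabar$. Then $W$ is a $G$-stable subspace of the symplectic space $(\Lambdabar,\bar\phi_\Lambda)$ with $\JH(W)=\sum_i\varepsilon_i\rho_i$. By the definition of $\Delta_n$ (for $\Lambda$ primitive), and by the construction of the auxiliary primitive lattice in \cref{cor:symplecticneighboursofsymplecticlattices} (for $\Lambda$ special), $y$ is a neighbour of $x$ in $\Delta_n$ if and only if $W$ is isotropic. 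Both statements of the lemma therefore become statements about $G$-stable subspaces of $\Lambdabar$.

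\emph{First equivalence.} Suppose $W$ is isotropic. Then $W\subseteq W^0$, and $\Lambdabar/W^0\simeq W^*$ with the $G$-action $\rhobar^\perp|_W$, so by \cref{lem:factorsofpsiperp} this quotient has factors $\rho_{\sigma(i)}$ for those $i$ with $\varepsilon_i=1$. Hence $\rho_i$ and $\rho_{\sigma(i)}$ both occur in $\Lambdabar$ in the disjoint layers $W$ and $\Lambdabar/W^0$. If $\varepsilon_i=\varepsilon_{\sigma(i)}=1$, then $\rho_{\sigma(i)}$ would also appear in $W$, which contradicts \eqref{assume:resmultfree}.

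For the converse, suppose $\varepsilon_i+\varepsilon_{\sigma(i)}\le 1$ for all $i$. Consider the $G$-stable subspace $W\cap W^0$ and the nondegenerate form induced on $W/(W\cap W^0)$. If this quotient is nonzero, it is a symplectic representation, so its factors are stable under $\sigma$ by \cref{prop:existstranspositiononI} applied to it. Since $\sigma$ has no fixed points, some $\rho_i$ and $\rho_{\sigma(i)}$ would then both lie in $\JH(W)$, a contradiction. Therefore $W=W\cap W^0$, that is, $W$ is isotropic. (Here the hypothesis $\sigma(i)\ne i$ is essential.)

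\emph{Second statement.} Assume $W$ is isotropic. We want a $G$-stable Lagrangian $L\supseteq W$; by \cref{cor:symplecticneighboursofsymplecticlattices} it gives a symplectic neighbour $z$ of $x$ with $\JH(x,z)=\JH(L)$.

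\begin{enumerate}
\item The space $W^0/W$ is a $G$-stable symplectic space. Its residual factors are the $\rho_k$ with $\varepsilon_k=\varepsilon_{\sigma(k)}=0$, each of multiplicity one, and none is fixed by $\sigma$.
\item \cref{thm:symplecticneighboursiffI0empty} (equivalently \cref{prop:maximalGstableisotropicflag} together with the dimension count in its proof) applies to the $\F_p$-representation $W^0/W$. It yields a $G$-stable Lagrangian $\bar L\subseteq W^0/W$.
\item Let $L$ be the preimage of $\bar L$ in $W^0$. Then $L$ is $G$-stable, isotropic, of dimension $\dim W+\tfrac12(2n-2\dim W)=n$, and contains $W$.
\end{enumerate}

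The corresponding lattice $\Lambda''$ satisfies $p\Lambda\subset\Lambda'\subseteq\Lambda''\subset\Lambda$. Hence $x$, $y$, $z$ span a simplex of $\Delta_n$, so $z$ neighbours both $x$ and $y$. Moreover $\JH(y,z)=\JH(\Lambda''/\Lambda')$ once representatives are taken compatibly, and additivity of $\JH$ in the filtration $p\Lambda\subset\Lambda'\subset\Lambda''$ gives $\JH(x,z)=\JH(x,y)+\JH(y,z)$.

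The main obstacle is the normalization of the representative of $y$ relative to $z$ when $\JH(y,z)$ is computed. Here $\Lambda''$ sits above $\Lambda'$ inside $\Lambda\subseteq p^{-1}\Lambda'$, so it is the correct representative, but when $W=L$ the argument degenerates and must be treated separately (then $z=y$ and $\JH(y,z)=0$ by convention). A second delicate point is the special-vertex case of the first equivalence: there we must check that isotropy of $W$ for $\bar\phi_\Lambda$ matches the isotropy condition in the definition of $\Delta_n$ relative to the auxiliary primitive lattice $\Lambda_0$.
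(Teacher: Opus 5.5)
\textbf{The approach matches the paper.} Your argument follows the paper's route. You pass to the $G$-stable subspace $W=\Lambda'/p\Lambda\subseteq\Lambdabar$, derive the equivalence between isotropy of $W$ and $\varepsilon_i+\varepsilon_{\sigma(i)}\le 1$ from multiplicity freeness, enlarge an isotropic $W$ to a $G$-stable Lagrangian, and conclude with \cref{cor:symplecticneighboursofsymplecticlattices} and additivity of $\JH$. In places you are more careful than the paper:
\begin{itemize}
\item the radical argument with $W\cap W^0$ replaces the appeal to \cref{prop:psikispsimperp};
\item the construction of $L$ as the preimage of a $G$-stable Lagrangian of $W^0/W$ makes explicit the extension step that the paper only attributes to \cref{prop:dimofSrhosym}.
\end{itemize}
One small correction: $\sigma(i)\neq i$ is not needed for the first equivalence, since a self-dual factor in $W$ already gives $\varepsilon_i+\varepsilon_{\sigma(i)}=2$. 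It is needed in your steps 1--2, to ensure $W^0/W$ has no self-dual factor.

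\textbf{The gap is in your opening reduction}, exactly at the ``second delicate point'' you flagged. The claim that $y$ is a $\Delta_n$-neighbour of $x$ if and only if $W$ is isotropic holds for primitive $\Lambda$. It is false for special $\Lambda$.

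Take $\Lambda$ special, normalized so that $p\Lambda_0\subset\Lambda\subset\Lambda_0$ with $\Lambda_0$ primitive. By the definition of edges, a non-primitive $\Delta_n$-neighbour of $x$ is represented by some $\Lambda'$ with $p\Lambda_0\subsetneq\Lambda'\subsetneq\Lambda$. Then $W\supseteq p\Lambda_0/p\Lambda$, which is a Lagrangian of $(\Lambdabar,\bar\phi_\Lambda)$. So $W$ is coisotropic of dimension greater than $n$, not isotropic.

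A concrete instance with $n=2$: let $\Lambda_0=\langle w_1,w_2,v_1,v_2\rangle$ and $\Lambda=\langle w_1,w_2,pv_1,pv_2\rangle$.
\begin{itemize}
\item The lattice $\Lambda'=\langle w_1,pw_2,pv_1,pv_2\rangle$ is a $\Delta_2$-neighbour of $[\Lambda]$. Yet $W$ contains $\bar w_1$ and $\overline{pv_1}$, which pair to a unit under $\bar\phi_\Lambda$.
\item Conversely, $W=\langle\overline{pv_1}\rangle$ is isotropic, but $p\Lambda+\Z_p pv_1$ is not a vertex of $\Delta_2$ by \cref{lem:charverticesinapartment}.
\end{itemize}
The auxiliary primitive lattice of \cref{cor:symplecticneighboursofsymplecticlattices} cannot rescue this. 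It is built from a Lagrangian containing $W$, so it presupposes that $W$ is isotropic.

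The paper's one-line proof makes the same silent identification, and the statement as written fails for special $x$. In the $G_1$ example of \cref{exm:JH4graph}, the special vertex $x_1$ and an adjacent $\circ$ vertex are $G_1$-stable $\Delta_2$-neighbours. Their colour contains some $\rho_i$ together with $\rho_i^\perp$, violating the inequality.

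So what you (and the paper) actually prove is correct for primitive $x$. For special $x$ you must dualize $W\mapsto W^0$, equivalently work with $\JH(y,x)$. The condition then becomes $\varepsilon_i+\varepsilon_{\sigma(i)}\ge 1$ for all $i$. Applying your steps 1--3 to the symplectic space $W/W^0$ gives a $G$-stable Lagrangian $L$ with $W^0\subseteq L\subseteq W$. This produces a primitive $z$ with $\JH(x,y)=\JH(x,z)+\JH(z,y)$, not the additivity in the order stated.
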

\begin{proof}
    Let $x=[\Lambda]$.
    We see that $y$ corresponds to a $G$-stable isotropic subspace $\Lambda'/p\Lambda$ of $\Lambda/p\Lambda$ if and only if $\varepsilon_i+\varepsilon_{\sigma(i)}\leq 1$ for all $i\in I$ by \cref{prop:psikispsimperp}. If $\Lambda'/p\Lambda$ is isotropic, then it is contained in a $G$-stable Lagrangian subspace $\Lambda''/p\Lambda$ of $\Lambda/p\Lambda$ by \cref{prop:dimofSrhosym}. This Lagrangian subspace of $\Lambda/p\Lambda$ corresponds to a symplectic neighbour $z$ of $x$ by \cref{cor:symplecticneighboursofsymplecticlattices}. Since $p\Lambda\subset \Lambda'\subset \Lambda''\subset \Lambda$, by definition it follows that $\JH(x,z)=\JH(x,y)+\JH(y,z)$.
\end{proof}
The nonexistence of $G$-stable neighbours in $\scrS_\rho^\GL$ with certain $\JH$-colouring is equivalent to the existence of certain edges in $\Gamma_\rho^\GL$ by \cite[Proposition 3.4.5]{BellaicheRibetslemmaPadova}. 
 We have the following analogue in our setting:
 \begin{proposition}\label{prop:equivnonexistneighexitstext}
     Let $x\in S_\rho^\GSp$ and let $\Lambda$ be a symplectic lattice such that $x=[\Lambda]$. For all $i\in I$, let $\varepsilon_i\geq 0$ be an integer such that $\varepsilon_i+\varepsilon_{\sigma(i)}\leq 1$.  The following statements are equivalent:
     \begin{enumerate}[$(a)$]
         \item \label{item:nonexistneigh} There does not exist a neighbour $y\in S_\rho^\GSp$ of $x$ such that $\JH(x,y)=\sum_i \varepsilon_i \rho_i$.
         \item \label{item:existsexten} $\Lambda/p\Lambda$ admits a nonsplit extension of some $\rho_i$ by $\rho_j$ with $\varepsilon_i=1$ and $\varepsilon_j=0$.
     \end{enumerate}
 \end{proposition}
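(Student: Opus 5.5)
The plan is to adapt the proof of \cite[Proposition 3.4.5]{BellaicheRibetslemmaPadova}, using \cref{prop:GstableinGstablesymplectic} and \cref{cor:symplecticneighboursofsymplecticlattices} to stay inside $\Delta_n$. We rescale so that $\phi(\Lambda\times\Lambda)\in\{\Z_p,p\Z_p\}$. Then neighbours $y$ of $x$ in $\Delta_n$ with $p\Lambda\subset\Lambda'\subset\Lambda$ correspond to $G$-stable isotropic subspaces $W=\Lambda'/p\Lambda$ of $\Lambdabar=\Lambda/p\Lambda$, with $\JH(x,y)=\JH(W)$. Set $S=\{i:\varepsilon_i=1\}$. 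We also read ``neighbour $y\in S_\rho^\GSp$'' as a $G$-stable vertex of $\Delta_n$ neighbouring $x$; this is how $\JH(x,y)$ makes sense for non-symplectic $y$. By \eqref{assume:resmultfree}, a $G$-stable subspace is determined up to the choice of its factors, and (a) then says that no $G$-stable isotropic subspace $W$ has $\JH(W)=\sum_{i\in S}\rho_i$.

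\emph{(b)$\Rightarrow$(a).} Suppose such a $W$ existed, and suppose $\Lambdabar$ has a subquotient $U_2/U_1$ that is a nonsplit extension of $\rho_i$ by $\rho_j$, with $i\in S$ and $j\notin S$. Multiplicity freeness forces $W\cap U_2$ and $W+U_1$ to contain, or to avoid, the unique copies of $\rho_i$ and $\rho_j$ according to membership in $S$. Concretely, $(W\cap U_2+U_1)/U_1$ is a subrepresentation of $U_2/U_1$ whose only factor is $\rho_i$. Its image in the quotient $\rho_i$ is nonzero, so it splits the extension, a contradiction. (I would double-check the direction convention ``extension of $\rho_i$ by $\rho_j$'', i.e.\ which factor is the sub; if needed, swap the roles via the $\perp$-symmetry of \cref{prop:skeqsymgraph}.)

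\emph{(a)$\Rightarrow$(b).} Assume (b) fails: no subquotient of $\Lambdabar$ is a nonsplit extension of some $\rho_i$ ($i\in S$) by some $\rho_j$ ($j\notin S$). Following Bella\"iche, we build $W$ as the sum of the $G$-submodules generated by the $\rho_i$-parts, $i\in S$. More precisely, take $W$ to be the smallest $G$-stable subspace whose semisimplification contains all $\rho_i$ with $i\in S$. Any factor $\rho_j$ of $W$ with $j\notin S$ would have to appear below some $\rho_i$ with $i\in S$. Extracting a length-two subquotient on a minimal chain gives a nonsplit extension of the forbidden type; this is the argument of \cite[Proposition 3.4.5]{BellaicheRibetslemmaPadova} applied in $\mathcal{X}_{2n}$. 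Hence $\JH(W)=\sum_{i\in S}\rho_i$, i.e.\ there is $y\in S_\rho^\GL$ with $\JH(x,y)=\sum\varepsilon_i\rho_i$. Since $\varepsilon_i+\varepsilon_{\sigma(i)}\le1$, \cref{prop:GstableinGstablesymplectic} (which uses $\sigma(i)\ne i$ through \cref{prop:psikispsimperp}) shows $W$ is isotropic, so $y$ is a neighbour of $x$ in $\Delta_n$, contradicting (a).

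The main obstacle is the minimal-chain extraction in the GL step. I expect to handle it by taking a $G$-stable flag of the cyclic module generated by a $\rho_i$-isotypic lift, and then passing to the lowest factor $\rho_j$ with $j\notin S$ together with the next factor above it.
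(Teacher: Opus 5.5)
Your proposal is correct and is essentially the paper's proof. For (b)$\Rightarrow$(a) the paper cites the $\GL$ statement \cite[Proposition 3.4.5]{BellaicheRibetslemmaPadova}, which you reprove directly by the splitting argument (with the right convention: $\rho_j$ is the sub and $\rho_i$ the quotient). For (a)$\Rightarrow$(b) you, like the paper, obtain a $G$-stable subspace with factors $\{\rho_i\}_{\varepsilon_i=1}$ from that proposition and then apply \cref{prop:GstableinGstablesymplectic} to see it is isotropic, hence a neighbour in $\Delta_n$.

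One caution about your closing sketch of the extraction step. In an arbitrary flag, the factor just above the lowest non-$S$ factor may itself lie outside $S$, so that pair need not be a forbidden extension. If you do not simply cite Bella\"iche, use the radical layers of your minimal $W$ instead: take a non-$S$ factor in the first layer $\mathrm{rad}^mW/\mathrm{rad}^{m+1}W$ that contains one, and pair it with a factor of the layer above it. That factor lies in $S$, and the two form a nonsplit subquotient extension.
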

 \begin{proof}
     Suppose \eqref{item:nonexistneigh} is false. Then there exists neighbour $y$ of $x$ in $S_\rho^\GSp$ such that $\JH(x,y)=
    \sum_{i\in I}\varepsilon_i\rho_i$. Since $y$ is also a neighbour of $x$ in $S_\rho^\GL$, \eqref{item:existsexten} is false by \cite[Proposition 3.4.5]{BellaicheRibetslemmaPadova}. This shows \eqref{item:existsexten} implies \eqref{item:nonexistneigh}. Suppose \eqref{item:existsexten} is false. Then every extension of $\rho_i$ by $\rho_j$ with $\varepsilon_i=1$ and $\varepsilon_j=0$ that is a subquotient of $\Lambda/p\Lambda$ is split. There exists a neighbour $y\in S_\rho^\GL$ of $x$ such that $\JH(x,y)=\sum_{i\in I}\varepsilon_i\rho_i$ by \cite[Proposition 3.4.5]{BellaicheRibetslemmaPadova}. 
    Since $\varepsilon_i+\varepsilon_{\sigma(i)}\leq 1$, the $G$-stable subspace of $\Lambdabar$ corresponding to $y$ is isotropic by \cref{prop:GstableinGstablesymplectic}. That is $y\in S_\rho^\GSp$. This implies that \eqref{item:nonexistneigh} implies \eqref{item:existsexten}. 
 \end{proof}

The final ingredient for proving \cref{prop:edgefromrhoitorhoji=1j=0} is the following result that guarantees the existence of symplectic vertices with no edges of a specific colour.

\begin{theorem}
\label{prop:existssymplecticwithnoneighbourswithspecificJH}
 Assume that $\rho$ is irreducible, satisfies \eqref{assume:resmultfree} and \eqref{assum:distincteigenvaluesmodp}, and that $I_0=\varnothing$.
    For all $i\in I$, let $\varepsilon_i$ be a positive integer such that  
    $\varepsilon_i+\varepsilon_{\sigma(i)}\leq 1$.
    There exists a symplectic vertex $x\in S_\rho^\GSp$ with no neighbours $y\in S_\rho^\GSp$ with $\JH(x,y)=\sum_{i\in I}\varepsilon_i \rho_i$.
\end{theorem}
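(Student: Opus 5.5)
The plan is to argue by contradiction, moving along the apartment that contains $\scrS_\rho^\GSp$. By \cref{thm:uppertirangluarwithrespecttog0inGwithdistincteigenvaluesmodp} and \cref{prop:Srhosymliesinanapartment}, there is a symplectic basis whose apartment $\Sigma$ contains every $G$-stable vertex. In this apartment each $G$-stable vertex is $[a_1,\dots,a_{2n}]$, with $a$ constant on the blocks $B_j$ attached to $\rho_j$, and $B_{\sigma(j)}$ paired symplectically with $B_j$. Write $\varepsilon=\sum_i\varepsilon_i\rho_i$. Since $\varepsilon_i+\varepsilon_{\sigma(i)}\le 1$, I first replace $\varepsilon$ by a maximal choice, meaning one with $\varepsilon_i+\varepsilon_{\sigma(i)}=1$ for all $i$. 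The justification is \cref{prop:GstableinGstablesymplectic}: every isotropic neighbour of colour $\varepsilon$ extends to a symplectic neighbour $z$ with $\JH(x,z)\ge\varepsilon$. The key special case is therefore the symplectic (Lagrangian) colour. I would still keep track of the general $\varepsilon$, because the colour of the symplectic neighbour $z$ depends on $x$. Instead of fixing $z$'s colour, I would set up the contradiction hypothesis directly: every $x\in S_\rho^\GSp$ has a neighbour $y\in S_\rho^\GSp$ with $\JH(x,y)=\varepsilon$.

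The core step is to translate ``$\JH(x,y)=\varepsilon$'' into a fixed translation vector in $\Sigma$. By \cite[Propositions 3.4.3 (a) and 3.4.4]{BellaicheRibetslemmaPadova}, all edges in $\scrS_\rho^\GL$ with the same colour are parallel. So there is a fixed vector $t\in\Z^{2n}$ such that $y=x+t$ whenever $\JH(x,y)=\varepsilon$; concretely, $t_k=1$ on the blocks $B_i$ with $\varepsilon_i=0$ and $t_k=0$ otherwise, up to homothety. Starting from a $G$-stable primitive vertex $x_0$, the hypothesis gives an infinite chain $x_0,\,x_0+t,\,x_0+2t,\dots$ of $G$-stable vertices. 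These stay in $S_\rho^\GSp$ because each $y$ is again a $G$-stable vertex of $\Delta_n$, and the hypothesis is applied to vertices of $S_\rho^\GSp$.

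To close the argument, I need each $x_0+kt$ to be a vertex where the hypothesis applies, and I need the chain to be unbounded. If $\varepsilon$ is not maximal, $x_0+t$ may be a non-symplectic vertex, and the hypothesis only concerns symplectic $x$. This is why I would pass to the maximal colour, where $y$ is symplectic by \cref{cor:symplecticneighboursofsymplecticlattices}. To cover a general $\varepsilon$, I would use \cref{prop:GstableinGstablesymplectic} once more: from $y$ of colour $\varepsilon$, pass to a symplectic $z$ with $\JH(x,z)=\varepsilon+\JH(y,z)$. I then need to choose $z$ consistently so that the translation stays constant. Because $t$ is not a multiple of $(1,\dots,1)$ (when $\varepsilon\ne 0$ and $\varepsilon\neq\sum\rho_i$), the homothety classes $x_0+kt$ are pairwise distinct. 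That gives infinitely many $G$-stable lattices, contradicting the compactness/finiteness of $\scrS_\rho^\GSp\subseteq\scrS_\rho^\GL$ for irreducible $\rho$ \cite[Proposition 3.2.1]{BellaicheRibetslemmaPadova}. This contradiction produces the desired $x$.

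The main obstacle is the non-maximal $\varepsilon$ case: keeping the iterated vertices symplectic while translating by a constant vector. My first attempt would be the following. Among the finitely many symplectic vertices, take $x$ extremal in the direction of $t$, that is, maximizing the linear functional $\sum_{i:\varepsilon_i=0}a_{B_i}-\sum_{i:\varepsilon_i=1}a_{B_i}$. Any neighbour $y$ of colour $\varepsilon$ strictly increases this functional. If $y$ is symplectic, this immediately contradicts extremality. If $y$ is not symplectic, I complete it to a symplectic $z$ by adding only blocks $B_i$ with $\varepsilon_i=\varepsilon_{\sigma(i)}=0$. I would then check, using the constraint $a_{B_i}+a_{B_{\sigma(i)}}$ constant from \cref{prop:symlatticescharacterizationviasymbasis}, that this functional does not decrease along the completion. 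That would again contradict extremality.
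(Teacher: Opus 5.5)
Your infinite-chain argument is correct when the colour is Lagrangian, i.e.\ $\varepsilon_i+\varepsilon_{\sigma(i)}=1$ for all $i$. In that case each $y=x+t$ is symplectic, the contradiction hypothesis can be iterated, and the pairwise distinct vertices $x_0+kt$ contradict finiteness. The gap is the non-maximal case. This case matters: it contains a single factor $\rho_1$ with $d_1\ge 2$, which is the case the paper's proof actually treats, and it contains the colours needed in \cref{lem:vertexiscontainedinaloop}.

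Your opening reduction to a maximal $\varepsilon'\ge\varepsilon$ is not valid. \cref{prop:GstableinGstablesymplectic} only turns a neighbour of colour $\varepsilon$ into a symplectic neighbour of \emph{some} Lagrangian colour $\ge\varepsilon$, not of your fixed $\varepsilon'$. You also cannot force the completion to be translation-invariant, because the available $G$-stable Lagrangians depend on $x$.

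Your fallback functional does not close the gap either. Write $E=\{i:\varepsilon_i=1\}$ and $F=\sum_{\varepsilon_i=0}a_{B_i}-\sum_{\varepsilon_i=1}a_{B_i}$. Scaling the representative by $p^c$ changes $F$ by $c\,(d-2|E|)$. So $F$ is not a function on vertices unless $|E|=d_1$, and ``extremal'' has no meaning. The monotonicity you intend to check is also false. Take the natural representatives $p\Lambda\subset\Lambda'\subset\Lambda''\subset\Lambda$ of $x,y,z$. Then $F(z)-F(y)=|E|-d_1<0$, because the Lagrangian re-includes one block from each free pair $\{i,\sigma(i)\}$ with $\varepsilon_i=\varepsilon_{\sigma(i)}=0$. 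If you normalise symplectic representatives by $a_{B_i}+a_{B_{\sigma(i)}}\in\{0,1\}$, then for special $x$ you get $F(z)-F(x)=2|E|-d_1$, which can be negative.

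The missing idea is to compare $z$ with $x$, not with $y$, using a homothety-invariant functional that only sees a forced pair. Fix $i_0$ with $\varepsilon_{i_0}=1$, and let $x$ maximise $f=a_{B_{\sigma(i_0)}}-a_{B_{i_0}}$ over the finite set of symplectic vertices in $S_\rho^\GSp$. Suppose $x$ has a neighbour $y$ of colour $\varepsilon$. Then the $G$-stable Lagrangian $\Lambda''/p\Lambda\supseteq\Lambda'/p\Lambda$ given by \cref{prop:GstableinGstablesymplectic} contains the $\rho_{i_0}$-block. Being isotropic, it omits the $\rho_{\sigma(i_0)}$-block. Hence the symplectic vertex $z$ keeps the $B_{i_0}$-coordinate and raises the $B_{\sigma(i_0)}$-coordinate by one, so $f(z)=f(x)+1$ whichever Lagrangian was chosen. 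This contradicts maximality.

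This is exactly the paper's proof: its maximal $\mu$ is $f$ with the $w_1$-coordinate normalised to $0$. It handles every nonzero $\varepsilon$ uniformly, so neither the reduction to maximal colours nor the infinite chain is needed. Your $F$ coincides with the equally valid $\sum_{i\in E}(a_{B_{\sigma(i)}}-a_{B_i})$ precisely when $\varepsilon$ is maximal. Dropping the free blocks from $F$ is what repairs it.
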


\begin{proof}
    Without loss of generality, we assume $\varphi=\rho_1$. By \cref{prop:Srhosymliesinanapartment},  $\scrS_\rho^\GSp$ lies inside an apartment $\Sigma$, corresponding to a symplectic basis $\mathrm{B}$, of $\Delta_{d_1}$. That is, every vertex in $\scrS_\rho^\GSp$ is of the form $[a_1,\dots,a_n,b_1,\dots,b_n]$.
    Let $x_0=[0,\dots,0]\in S_\rho^\GSp$ be the origin of $\Sigma$. 

    We prove by contradiction.
    Let us assume every symplectic vertex $x\in S_\rho^\GSp$ has a neighbour $y\in S_\rho^\GSp$ such that $\JH(x,y)=\rho_1$ . Let $y_0$ be the neighbour of $x_0$ such that $\JH(x_0,y_0)=\rho_1$. Since $\rho_1$ is irreducible, we see that $y_0=[1,\dots,1,0,1\dots,1]$ where all but one entry is $1$. Without loss of generality, by permuting the elements of $\mathrm{B}$, we assume $y_0=[0,1,\dots 1]$. Since $\rho$ is irreducible,  $S_\rho^\GSp$ is finite by \cite[Proposition 3.2.1]{BellaicheRibetslemmaPadova}. Thus, there exists an integer $\mu$, maximal such that 
    \[
        x_\mu=[0,a_{2},\dots,a_{d_1},\mu,b_{2},\dots,b_{d_1}]\in S_\rho^\GSp
    \]
    is symplectic. By our assumption, there exists a neighbour $y_\mu\in S_\rho^\GSp$ of $x_\mu$ such that $\JH(x_\mu,y_\mu)=\rho_1$. By \cite[Proposition 3.4.4]{BellaicheRibetslemmaPadova}, since $y_0=[0,1\dots,1]$, we should have \[y_\mu=[0,a_2+1,\dots,a_{d_1}+1,\mu+1,b_2+1,\dots,b_{d_1}+1].\]
    By \cref{prop:GstableinGstablesymplectic}, there exists a symplectic vertex $z\in S_\rho^\GSp$, neighbouring $x_0$, such that 
    \[
        z=[0,\dots, \mu+1,\dots].
    \]
    This contradicts maximality of $\mu$.
\end{proof}

    \begin{proof}[Proof of \cref{prop:edgefromrhoitorhoji=1j=0}]
    Follows by \cref{prop:existssymplecticwithnoneighbourswithspecificJH} and \cref{prop:equivnonexistneighexitstext}.
\end{proof}

\section{Symplectic extension graph}
\label{sec:symextgraph}
In this section, we prove our main results: \cref{thm:ourmain} and \cref{thm:examples}.
We end by discussing an example where the symplectic extension graph is an empty graph.
Throughout, our setup is as follows. 
Let $(V,\phi)$ be a symplectic $\Q_p$-vector space, and let $\rho\colon G\to \GL(V,\phi)$ be a symplectic representation that admits a $G$-stable primitive lattice.

\begin{remark}
     An important class of representations satisfying our hypothesis are continuous $p$-adic irreducible symplectic representations of a compact group $G$. Indeed, from the Bruhat--Tits fixed point theorem \cite[Theorem 11.23]{AbramenkoBrown}, and \cite[Theorem 11.34]{AbramenkoBrown}, the maximal compact subgroups of $\GSp_{2n}(\Q_p)$ are stabilizers of vertices in the building and are conjugate over $\GL_{2n}(\Q_p)$ to $\GSp_{2n}(\Z_p)$. Therefore, by conjugating $\rho$ if necessary, we can  assume that $\rho$ admits a $G$-stable primitive lattice.
\end{remark}

\subsection{Proof of \cref{thm:ourmain}}
We prove our main theorem in parts.  Let $\rho_1,\dots, \rho_d$ be the absolutely irreducible classes of residual factors of $\rho$. Let $I=\{1,\dots,d\}$, $\sigma:I\to I$ be defined such that $\rho_{\sigma(i)}\simeq \rho_i^\perp$, and let $I_0=\{i\in I\ |\ \sigma(i)=i\}$.
\begin{proposition}
    [\cref{thm:ourmain}\eqref{item:main2}]\label{lem:vertexiscontainedinaloop}
Assume $\rho$ is irreducible, satisfies \eqref{assume:resmultfree} and \eqref{assum:distincteigenvaluesmodp}, and $I_0=\varnothing$. For every $i\in I$, there is a path from $\rho_i$ to $\rho_{\sigma(i)}$ in $\Gamma_\rho^\GSp$.
\end{proposition}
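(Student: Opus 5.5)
We argue by contradiction, in the style of the usual proof of strong connectivity from a ``cut'' in the graph, using \cref{prop:edgefromrhoitorhoji=1j=0}. Fix $i\in I$ and let $R\subseteq I$ be the set of indices $j$ such that there is a directed path from $\rho_i$ to $\rho_j$ in $\Gamma_\rho^\GSp$, with $i\in R$. Suppose $\sigma(i)\notin R$. By construction, $\Gamma_\rho^\GSp$ has no edge leaving $R$. The plan is to build $\varepsilon$'s supported on $R$ and apply \cref{prop:edgefromrhoitorhoji=1j=0}, which forces an edge from some $\rho_k$ with $\varepsilon_k=1$ to some $\rho_l$ with $\varepsilon_l=0$. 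If every index with $\varepsilon_k=1$ lies in $R$ and every $l\notin R$ has $\varepsilon_l=0$, we are done, provided the edge can be made to leave $R$.

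The natural choice $\varepsilon=\mathbf 1_R$ need not satisfy $\varepsilon_k+\varepsilon_{\sigma(k)}\le 1$, since $R$ may contain pairs $\{k,\sigma(k)\}$. So I would pass to the skew-symmetric partner. Let $R^\sigma=\sigma(R)$. By \cref{prop:skeqsymgraph}, $R^\sigma$ is exactly the set of $j$ admitting a path from $\rho_j$ to $\rho_{\sigma(i)}$, so $R^\sigma$ has no edges entering it. Set $T=R\setminus R^\sigma$. Then $i\in T$: if $i\in R^\sigma$, there would be a path from $\rho_i$ to $\rho_{\sigma(i)}$, contradicting $\sigma(i)\notin R$. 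Moreover $\sigma(T)=R^\sigma\setminus R$ is disjoint from $T$, so $\varepsilon\colonequals\mathbf 1_T$ satisfies $\varepsilon_k+\varepsilon_{\sigma(k)}\le 1$.

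Now \cref{prop:edgefromrhoitorhoji=1j=0} supplies a symplectic lattice realizing a nonsplit extension of $\rho_k$ by $\rho_l$ with $k\in T$ and $l\notin T$. By definition this is an edge $\rho_k\to\rho_l$ of $\Gamma_\rho^\GSp$; the orientation convention must be taken consistently from the proof of \cref{prop:skeqsymgraph}. There are two cases.
\begin{itemize}
\item If $l\notin R$, the edge leaves $R$, a contradiction.
\item If $l\in R\cap R^\sigma$, the edge enters $R^\sigma$ from $k\notin R^\sigma$, also a contradiction.
\end{itemize}
Hence $\sigma(i)\in R$, which gives the path.

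The main obstacle is bookkeeping rather than new input. First, the direction convention of ``extension of $\rho_k$ by $\rho_l$'' has to match that of an edge, so that the edge produced genuinely points out of $T$; if it points the other way, run the same argument with $\varepsilon=\mathbf 1_{\sigma(T)}$, or swap the roles of incoming and outgoing closure. Second, the hypothesis of \cref{prop:edgefromrhoitorhoji=1j=0} that the $\varepsilon_i$ be ``positive integers'' must be read as $\varepsilon_i\in\{0,1\}$ and not all zero. This holds here because $T\ni i$ is nonempty, and we also need $T\ne I$, which is true since $\sigma(i)\notin T$.
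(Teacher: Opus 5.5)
Your argument is correct and is essentially the paper's proof. The paper also takes the set $R$ of indices reachable from $\rho_i$. If $R$ contains a pair $\{j,\sigma(j)\}$, then \cref{prop:skeqsymgraph} already gives a path $\rho_i\to\rho_j\to\rho_{\sigma(i)}$; otherwise it applies \cref{prop:edgefromrhoitorhoji=1j=0} to $\varepsilon=\mathbf{1}_R$ to get an edge leaving $R$.

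Your set $T=R\setminus\sigma(R)$ actually equals $R$ under the hypothesis $\sigma(i)\notin R$, since any $l\in R\cap\sigma(R)$ would give a path $\rho_i\to\rho_l\to\rho_{\sigma(i)}$. So your second case is vacuous and the detour through $T$ can be dropped.
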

\begin{proof}
    Let $i\in I$ and let $\Psi\colonequals \{\psi_1,\dots,\psi_r\}$ be the vertices in $\Gamma_\rho^\GSp$ to which there is a directed path from $\rho_i$. Let $\psi_0=\rho_i$ and let $\varphi=\bigoplus_{i=0}^r \psi_i$. If $k=\sigma(j)$ for some $0\leq j\neq k\leq r$, then we are done by \cref{prop:skeqsymgraph}. Indeed,
     if there is a path from $\rho_i$ to $\rho_j$ and $\rho_{\sigma(j)}$, then there is a path from $\rho_i$ to $\rho_{\sigma(i)}$. If $\sigma(j)\neq k$ for any $0\leq j,k\leq r$, then there exists a $0\leq j\leq r$ and an edge from $\psi_j$ to $\psi'$ in $\Gamma_\rho^\GSp$, with $\psi'\not\in \Psi$ by \cref{prop:edgefromrhoitorhoji=1j=0}. A path from $\rho_i$ to $\psi_j$ exists in $\Gamma_\rho^\GSp$ and gives a path from $\rho_i$ to $\psi'$. This is a contradiction.
\end{proof}

\begin{theorem}[\cref{thm:ourmain}\eqref{item:main1}]\label{thm:mainI_0empty}
    Assume $\rho$ is irreducible, satisfies \eqref{assume:resmultfree} and \eqref{assum:distincteigenvaluesmodp}, and $I_0=\varnothing$.
    The directed graph $\Gamma_\rho^\GSp$ has at most $d_1$ strongly connected components. Moreover, when $\Gamma_\rho^\GSp$ has $d_1$ strongly connected components, then $\Gamma_\rho^\GSp=\bigsqcup_{i=1}^{d_1}\Gamma_i$, where $\Gamma_i$ is the connected directed graph on $\rho_i$ and $\rho_{\sigma(i)}$.
\end{theorem}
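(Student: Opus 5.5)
We deduce \cref{thm:mainI_0empty} from \cref{lem:vertexiscontainedinaloop} together with the skew symmetry of \cref{prop:skeqsymgraph}. The argument is combinatorial: the building enters only through these two results, and through \cref{prop:edgefromrhoitorhoji=1j=0} if needed.

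The plan has three steps.

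\textbf{Step 1 (each vertex lies in a cycle).} Fix $i\in I$. By \cref{lem:vertexiscontainedinaloop} there is a directed path from $\rho_i$ to $\rho_{\sigma(i)}$. Applying the same result to $\sigma(i)$, and using $\sigma^2=\mathrm{Id}$, there is also a path from $\rho_{\sigma(i)}$ to $\rho_i$. Hence $\rho_i$ and $\rho_{\sigma(i)}$ lie in the same strongly connected component, and each vertex lies on a directed cycle.

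\textbf{Step 2 (the graph is a disjoint union of its components).} We must show that no edge of $\Gamma_\rho^\GSp$ joins two distinct strongly connected components. Let $(\rho_j,\rho_k)$ be an edge. By \cref{prop:skeqsymgraph}, $(\rho_{\sigma(k)},\rho_{\sigma(j)})$ is also an edge. By Step 1 there are paths $\rho_k\to\rho_{\sigma(k)}$ and $\rho_{\sigma(j)}\to\rho_j$. Concatenating gives
\[
\rho_k\to\rho_{\sigma(k)}\to\rho_{\sigma(j)}\to\rho_j,
\]
a path from $\rho_k$ back to $\rho_j$. So every edge lies on a cycle, and its endpoints are in the same strongly connected component. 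Therefore $\Gamma_\rho^\GSp$ is the disjoint union of its strongly connected components, each nonempty.

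\textbf{Step 3 (counting).} By Step 1, each component is a union of $\sigma$-orbits. Since $I_0=\varnothing$, each orbit is a pair $\{i,\sigma(i)\}$, and there are exactly $d_1=d/2$ such pairs. Hence there are at most $d_1$ components. If there are exactly $d_1$, each component is precisely one pair $\{\rho_i,\rho_{\sigma(i)}\}$ with $i\in I_1$. That component is strongly connected, so it is connected, and it is the graph $\Gamma_i$ of the statement. Together with Step 2, this gives $\Gamma_\rho^\GSp=\bigsqcup_{i}\Gamma_i$.

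The main obstacle is Step 2: we need no edges between components, not merely a bound on their number. Skew symmetry combined with the two-directional paths of Step 1 handles it. If an edge with $j=k$ arises in the definition, it is harmless. Everything else is bookkeeping with $\sigma$.
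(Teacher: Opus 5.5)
Your proof is correct and takes the same route as the paper. Both arguments use only \cref{lem:vertexiscontainedinaloop} and the skew symmetry of \cref{prop:skeqsymgraph}, and your concatenation $\rho_k\to\rho_{\sigma(k)}\to\rho_{\sigma(j)}\to\rho_j$ is the paper's argument that no edge joins two distinct subgraphs $\Gamma_\psi$. The only difference is organizational: you work with strongly connected components directly instead of the paper's auxiliary subgraphs $\Gamma_{\psi_0}$, which makes your counting step a little cleaner.
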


\begin{proof}
    Let $\psi_0$ be a vertex of $\Gamma_\rho^\GSp$. Let 
    $\Gamma_{\psi_0}$ be the subgraph of $\Gamma_\rho^\GSp$ whose vertices $\psi$ are either equal to $\psi_0$ or $\psi_0^\perp$, or has a path to and from $\psi_0$ and $\psi_0^\perp$. We claim that 
  \begin{enumerate}[(i)]
        \item \label{item:gammapsistronglyconn} $\Gamma_{\psi_0}=\Gamma_{\psi_0^\perp}$ and it is a strongly connected subgraph of $\Gamma_\rho^\GSp$;
        \item \label{item:shapeofgammapsi0} $\psi\in \Gamma_{\psi_0}$ if and only if $\psi^\perp\in \Gamma_{\psi_0}$. 
    \end{enumerate}
    We note that \eqref{item:shapeofgammapsi0} follows by \cref{prop:skeqsymgraph}.
    Let us prove \eqref{item:gammapsistronglyconn}. By definition, it is immediate that $\Gamma_{\psi_0}=\Gamma_{\psi_0^\perp}$. If $\psi_1$ and $\psi_2$ belong to $\Gamma_{\psi_0}$, then we see that there is a path from $\psi_1$ to $\psi_2$:

\begin{equation}\label{eq:pathpsi1psi2}
    \begin{tikzcd}
                          & \psi_0 \arrow[dd, dashed]                                             &                           \\
\psi_1 \arrow[ru, dashed] &                                                                       & \psi_2 \arrow[lu, dashed] \\
                          & \psi_0^\perp \arrow[uu, dashed] \arrow[lu, dashed] \arrow[ru, dashed] &                          
\end{tikzcd}
\end{equation}
In \eqref{eq:pathpsi1psi2}, the dashed line represents a path. This proves \eqref{item:gammapsistronglyconn}.   

We claim that there is no edge from $\Gamma_\psi$ to $\Gamma_{\psi'}$ for $\psi\not\in\Gamma_{\psi'}$. If there is an edge from $\Gamma_\psi$ to $\Gamma_{\psi'}$, then there is a path from $\psi$ to $\psi'$. There is a path from $\psi'^{\perp}$ to $\psi^\perp$ by \cref{prop:skeqsymgraph}. Thus, we see that $\psi$ has path to and from $\psi'$ and $\psi'^\perp$ by \cref{lem:vertexiscontainedinaloop}. That is, $\psi\in \Gamma_{\psi'}$. This shows that $\Gamma_\rho^\GSp$ is the disjoint union of strongly connected subgraphs $\Gamma_\psi$. 

Since $\Gamma_\psi$ contains the two vertices $\{\psi,\psi^\perp\}$, the maximum number of strongly connected components is $d_1=d/2$. Moreover, $\Gamma_\rho^\GSp$ has $d_1$ connected components  if and only if $\Gamma_\psi$ is the strongly connected graph on $\{\psi,\psi^\perp\}$.
\end{proof}

\subsection{Two Jordan--H\"older factors}
Let $(V,\phi)$ be a symplectic $\Q_p$-vector space, and let $\rho:G\to \GL(V,\phi)$ be a symplectic representation admitting a $G$-stable primitive lattice.
When $\rho$ has two residual factors, then we have a full characterization of $\Gamma_\rho^\GSp$.
\begin{theorem}\label{thm:twoJordanHolderfactorsclassification}
    Assume $\rho$ is irreducible, and satisfies \eqref{assum:distincteigenvaluesmodp} and \eqref{assume:resmultfree}.
    The symplectic extension graph $\Gamma_\rho^\GSp$ associated with $\rho$ is strongly connected if and only if $I_0=\varnothing$. 
\end{theorem}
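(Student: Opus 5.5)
We argue the two directions separately, with $d=2$ throughout, so $I=\{1,2\}$ and $\sigma$ is either the identity or the transposition $(1\,2)$.

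\emph{($\Leftarrow$)} Suppose $I_0=\varnothing$, so that $\sigma(1)=2$. All hypotheses of \cref{lem:vertexiscontainedinaloop} hold: $\rho$ is irreducible and satisfies \eqref{assume:resmultfree} and \eqref{assum:distincteigenvaluesmodp}. That result gives a directed path from $\rho_1$ to $\rho_{\sigma(1)}=\rho_2$, and likewise a path from $\rho_2$ to $\rho_1$. Since the graph has only two vertices, these paths are the edges $(\rho_1,\rho_2)$ and $(\rho_2,\rho_1)$, so $\Gamma_\rho^\GSp$ is strongly connected. A more concrete route is to apply \cref{prop:edgefromrhoitorhoji=1j=0} with $\varepsilon_1=1,\varepsilon_2=0$ and then with $\varepsilon_1=0,\varepsilon_2=1$. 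Each choice produces a $G$-stable symplectic lattice realizing a nonsplit extension of $\rho_1$ by $\rho_2$, respectively of $\rho_2$ by $\rho_1$. This is the symplectic Ribet lemma.

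\emph{($\Rightarrow$)} Suppose instead that $I_0\neq\varnothing$. With $d=2$ this forces $\sigma=\mathrm{id}$, because a transposition of $\{1,2\}$ fixing one point fixes both. So $I_0=I$. We show that no edge exists. Let $\Lambda$ be any $G$-stable symplectic lattice; after rescaling and using \cref{prop:symplecticlatticeisneighbourtoaprimitive} we may treat $(\Lambdabar,\bar\phi_\Lambda)$ as a symplectic $\F_p$-space carrying a symplectic representation $\rhobar_\Lambda$. Its residual factors are $\rho_1,\rho_2$, with multiplicity one and both self-dual. By \cref{lem:I0=Isemsimple}, $\rhobar_\Lambda$ is semisimple: a $G$-stable irreducible subspace is symplectic by \cref{lem:irreducibleGstableisisotropicorsymplectic}, and its orthogonal complement is then a $G$-stable complement. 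Hence every subquotient of $\rhobar_\Lambda$ is semisimple, and no nonsplit extension of $\rho_i$ by $\rho_j$ occurs. The edge set of $\Gamma_\rho^\GSp$ is therefore empty, and since the graph has two vertices it is not strongly connected.

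The main point needing care is the hypothesis check in ($\Leftarrow$): \cref{lem:vertexiscontainedinaloop} ultimately rests on \cref{prop:existssymplecticwithnoneighbourswithspecificJH}, which needs the apartment description of \cref{prop:Srhosymliesinanapartment} and hence \eqref{assum:distincteigenvaluesmodp}. All of these are among our assumptions. In ($\Rightarrow$), the only subtlety is applying \cref{lem:I0=Isemsimple} to the residual representation of a special (non-primitive) symplectic vertex. This works because $\bar\phi_\Lambda$ is symplectic on $\Lambdabar$ and $\rhobar_\Lambda$ preserves it up to similitude.
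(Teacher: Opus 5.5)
Your proof is correct. The direction ``$I_0=\varnothing$ implies strongly connected'' is the paper's own argument: it is \cref{thm:ourmain}, i.e.\ \cref{lem:vertexiscontainedinaloop}, specialized to $d=2$.

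For the converse you take a different route. The paper deduces \cref{prop:neighboursofsymplecticlattice2JHfactors} from the skew-symmetry in \cref{prop:skeqsymgraph}. With $\sigma=\mathrm{id}$, a $G$-stable symplectic lattice realizing a nonsplit extension of $\rho_2$ by $\rho_1$ also realizes one of $\rho_1$ by $\rho_2$. For $d=2$ the realizing subquotient is all of $\rhobar_\Lambda$, so both $\rho_1$ and $\rho_2$ would be subrepresentations, which forces $\rhobar_\Lambda$ to split. You instead apply \cref{lem:I0=Isemsimple} directly to $(\Lambdabar,\bar\phi_\Lambda)$: an irreducible $G$-stable subspace with self-dual factor is symplectic, so its orthogonal is a $G$-stable complement.

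The paper's route needs only the graph-level symmetry. Yours proves the semisimplicity claimed in \cref{prop:neighboursofsymplecticlattice2JHfactors} directly. It also never uses $d=2$, so it shows that $\Gamma_\rho^\GSp$ has no edges whenever $I_0=I$, for any number of residual factors.

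A minor point: you do not need \cref{prop:symplecticlatticeisneighbourtoaprimitive}. By definition of a symplectic lattice, $\bar\phi_\Lambda$ is already a symplectic pairing on $\Lambdabar$, preserved by $\rhobar_\Lambda$ up to the similitude $\bar\lambda_\rho$. That is all \cref{lem:I0=Isemsimple} requires.
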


The \textit{if} part follows from \cref{thm:ourmain} when $d=2$. The \textit{only if} part follows from the result below.
\begin{proposition}\label{prop:neighboursofsymplecticlattice2JHfactors}
    Let $I_0=\{1,2\}$, and let $\JH(\rhobar^{\mathrm{ss}})=\rho_1+\rho_2$. For any symplectic $G$-stable lattice $\Lambda$, $\rhobar_\Lambda$ is semisimple.  
\end{proposition}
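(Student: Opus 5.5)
We reduce the statement to a representation of $G$ on the residual symplectic space and apply the earlier structure results there. Let $\Lambda$ be a $G$-stable symplectic lattice. After rescaling within its homothety class we may assume $\phi(\Lambda\times\Lambda)=p^k\Z_p$. Then $(\Lambdabar,\bar{\phi}_\Lambda)$ is a symplectic $\F_p$-vector space, and $\rhobar_\Lambda$ acts on it by symplectic similitudes. The similitude character is the reduction of $\lambda_\rho$, since $\phi(gv,gw)p^{-k}=\lambda_\rho(g)\phi(v,w)p^{-k}$. Hence $\psi\colonequals\rhobar_\Lambda\colon G\to\GL_{\F_p}(\Lambdabar,\bar{\phi}_\Lambda)$ is a symplectic representation over the field $\F_p$ with $\mathrm{Char}(\F_p)\neq 2$. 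Its semisimplification is $\rho_1\oplus\rho_2$, independent of the lattice by \cite[Proposition 1.2]{BellaicheRibet}. So $\psi$ is multiplicity free, its Jordan--H\"older factors are absolutely irreducible, and it has the same involution $\sigma$ as $\rho$.

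With this set up, the hypothesis $I_0=\{1,2\}$ says $\sigma=\mathrm{Id}$, i.e.\ $\rho_i\simeq\rho_i^\perp$ for $i=1,2$. This is exactly the situation $I_0=I$ of \cref{lem:I0=Isemsimple}, applied to $\psi$ in place of the general $\psi$ of that subsection. That lemma gives that $\psi$ is semisimple, which is the claim.

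To make sure \cref{lem:I0=Isemsimple} applies, I would spell out its mechanism for $d=2$. Let $W\subset\Lambdabar$ be an irreducible $G$-stable subspace, on which $G$ acts by some $\rho_k$. Since $k\in I_0$, \cref{lem:irreducibleGstableisisotropicorsymplectic} shows $W$ is symplectic, not isotropic; this is the step that uses multiplicity freeness. Then $\Lambdabar=W\oplus W^0$ by \cref{prop:symplecticplussymplecticcompletemtiseverything}. Moreover $W^0$ is $G$-stable: for $g\in G$, $\bar{\phi}(gW^0,W)=\bar{\phi}(gW^0,gW)=\lambda(g)\bar{\phi}(W^0,W)=0$, using $gW=W$. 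So $\Lambdabar\simeq\rho_k\oplus\rho_{k'}$ is a direct sum of $G$-modules, where $W^0$ is irreducible because only one Jordan--H\"older factor remains.

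The only point needing care, which I expect to be the main (mild) obstacle, is that the earlier results are stated for a symplectic vector space over a field, with the pairing perfect. Our pairing is the normalized form $\bar{\phi}_\Lambda$, which is perfect by the definition of a symplectic lattice. The group $G$ acts through similitudes with nonzero multiplier, so all the arguments above (isotropic versus symplectic, and stability of $W^0$) go through verbatim. This holds whether $\Lambda$ is primitive or special.
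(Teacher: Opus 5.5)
Your argument is correct, but it follows a different route from the paper's.

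The paper's proof is a one-line appeal to the skew-symmetry of \cref{prop:skeqsymgraph}, or more precisely to its proof, which shows that the same lattice realizes the dual edge. Since $\sigma=\mathrm{Id}$ and $\rhobar_\Lambda\simeq\rhobar_\Lambda^\perp$, suppose $\rhobar_\Lambda$ were a nonsplit extension with submodule $\rho_1$ and quotient $\rho_2$. Dualizing would make it also a nonsplit extension with submodule $\rho_2$ and quotient $\rho_1$. The contradiction, left implicit in the paper, is that a nonsplit length-two module with nonisomorphic factors has a unique simple submodule.

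You instead work with the form itself. You check that $\rhobar_\Lambda$ is a symplectic representation on $(\Lambdabar,\bar{\phi}_\Lambda)$ with similitude character $\bar\lambda_\rho$, so it has the same $\sigma$. You then apply \cref{lem:I0=Isemsimple}. Its mechanism is that an irreducible $G$-stable $W$ of self-dual type cannot be isotropic, by multiplicity-freeness (\cref{lem:irreducibleGstableisisotropicorsymplectic}). Hence $W$ is nondegenerate and $W^0$ is a $G$-stable complement.

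The paper's version is shorter given the graph formalism, and it states the conclusion directly as the absence of edges in $\Gamma_\rho^{\GSp}$. As written, though, it uses that $\rhobar_\Lambda$ itself is the length-two extension. Your version has three advantages:
\begin{enumerate}
\item it constructs the splitting explicitly;
\item it makes the passage to non-primitive (special) symplectic lattices explicit;
\item it works verbatim whenever $I_0=I$, for any number of residual factors.
\end{enumerate}
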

\begin{proof}
    By \cref{prop:skeqsymgraph}, if $[\Lambda]\in S_\rho^\GSp$ is symplectic such that $\rhobar_\Lambda$ is a nonsplit extension of $\rho_2$ by $\rho_1$, then it is a nonsplit extension of $\rho_1$ by $\rho_2$. 
\end{proof}

\begin{example} \label{example:2JHdisconnected} 
We provide an example where the symplectic extension graph is disconnected and a proper subgraph of Bella\"iche's connected extension graph.

Let $G$ be the elements in $\GSp_4(\Z_p)$ whose reduction modulo $p$ is the set of matrices $\begin{pmatrix}
A&B\\C&D
\end{pmatrix}$ with $A,B,C,D$ are diagonal modulo $p$; that is, \[
G\bmod{p}=\GL_2(\F_p)\times_{\GL_1(\F_p)}\GL_2(\F_p)\subset \GSp_4(\F_p).\] More precisely, 
\[
	G=\left\{\begin{pmatrix}
	a_1&pa_2&b_1&pb_2\\
	pa_3&a_4&pb_3&b_4\\
		c_1&pc_2&d_1&pd_2\\
	pc_3&c_4&pd_3&d_4\\
	\end{pmatrix}\ \big{|}\ a_1d_1-b_1c_1\equiv a_4d_4-b_4c_4\bmod{p}\right\}.
\] 
Let the representation $\rho:G\hookrightarrow \GSp_4(\Z_p)$ be the inclusion map. Then $\rho$ is irreducible, and $\rhobar=\rho_1\oplus \rho_2$, where \(
	\rho_i=\begin{pmatrix}
	a_i&b_i\\c_i&d_i
	\end{pmatrix}\bmod{p}
\) is absolutely irreducible, for $1\leq i\leq 2$, and $\det \rho_1=\det \rho_2$.
To show $\rho_1\not\simeq \rho_2$, we show that $\Tr\rho_1\neq \Tr\rho_2$. Indeed, 
for \[g=\begin{pmatrix}
1&&&\\
&&&-1\\
&&1&\\
&1&&
\end{pmatrix},\]
we see that $\Tr\rho_1(g)=2$ and $\Tr\rho_2(g)=0$.
Hence, the irreducible representation $\rho$ is residually reducible and multiplicity free. 
Moreover, $\rho_1\rho_2^t$ is not a character as $(\rho_1\rho_2^t)(g)$ is not a scalar. By \cref{prop:neighboursofsymplecticlattice2JHfactors}, $\Gamma_\rho^\GSp$ is disconnected.
\end{example}

\subsection{Four Jordan--H\"older factors}
Let $(V,\phi)$ be a symplectic $\Q_p$-vector space, and let $\rho \colon G\to \GL(V,\phi)$ be a symplectic representation admitting a $G$-stable symplectic lattice and has residual factors $\rho_1, \rho_2,\rho_3,$ and $\rho_4$ that are absolutely irreducible and pairwise nonisomorphic. 
Let $I=\{1,2,3,4\}$ and let $I_0=\{i\in I\ |\ \sigma(i)=i\}$.

\begin{theorem}[\cref{thm:mainI_0empty}: $d_1=2$]\label{thm:Bellaichegeneralization4JH} Assume $\rho$ is irreducible, satisfies \eqref{assum:distincteigenvaluesmodp} and \eqref{assume:resmultfree}, and assume that $I_0=\varnothing$.
    The directed graph $\Gamma_\rho^\GSp$ is either strongly connected, or $\Gamma_\rho^\GSp=\Gamma_1\sqcup \Gamma_2$, where $\Gamma_i$ is the strongly connected graph on $\{\rho_i,\rho_i^\perp\}$ for $1\leq i\leq 2$.
\end{theorem}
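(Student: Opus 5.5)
This is the case $d=4$, $d_1=2$ of \cref{thm:mainI_0empty}, so the plan is to specialize that argument and then check which component structures survive. Since $I_0=\varnothing$ and $d=4$, the transposition $\sigma$ has two orbits. After relabelling, these are $\{\rho_1,\rho_1^\perp\}$ and $\{\rho_2,\rho_2^\perp\}$, so that $\{\rho_3,\rho_4\}=\{\rho_1^\perp,\rho_2^\perp\}$.

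By \cref{lem:vertexiscontainedinaloop} (that is, \cref{thm:ourmain}\eqref{item:main2}), $\rho_i$ and $\rho_i^\perp$ lie in a common strongly connected component for $i=1,2$. The proof of \cref{thm:mainI_0empty} shows that $\Gamma_\rho^\GSp$ is the disjoint union of the subgraphs $\Gamma_{\psi}$. Each $\Gamma_\psi$ is strongly connected, stable under $\perp$, and has no edges leaving it. Each such component is therefore a union of $\sigma$-orbits, which leaves exactly two possibilities. If the component containing $\{\rho_1,\rho_1^\perp\}$ also contains $\rho_2$, then $\Gamma_\rho^\GSp$ is strongly connected. Otherwise there are exactly two components $\Gamma_1$ and $\Gamma_2$, with vertex sets $\{\rho_1,\rho_1^\perp\}$ and $\{\rho_2,\rho_2^\perp\}$.

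In the second case I still need to check two things: that no edges join $\Gamma_1$ and $\Gamma_2$, and that each $\Gamma_i$ is strongly connected on its two vertices. The first is the claim in the proof of \cref{thm:mainI_0empty}. Suppose there were an edge from $\Gamma_1$ to $\Gamma_2$. Then some vertex of $\Gamma_1$ would have a path to $\rho_2$. Applying \cref{prop:skeqsymgraph}, we would get a path from $\rho_2^\perp$ back into $\Gamma_1$. Combining these with the paths inside each orbit from \cref{lem:vertexiscontainedinaloop}, both orbits would lie in one strongly connected component, contradicting the assumption of this case. For the second point, \cref{lem:vertexiscontainedinaloop} gives a path from $\rho_1$ to $\rho_1^\perp$ and a path from $\rho_1^\perp$ to $\rho_1^{\perp\perp}=\rho_1$. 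Since no edges leave $\Gamma_1$, both paths stay inside $\{\rho_1,\rho_1^\perp\}$, so they are the direct edges in both directions. The same argument applies to $\Gamma_2$.

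The only real content is \cref{lem:vertexiscontainedinaloop}, which rests on \cref{prop:edgefromrhoitorhoji=1j=0}. The main point needing care is making sure the argument rules out intermediate configurations. One example would be a component with three vertices. Another would be a non-strongly-connected union in which there are edges from one orbit to the other but none coming back. Both are excluded because every component is a union of $\sigma$-orbits and the skew symmetry reverses edges. Once that is in place, the dichotomy follows immediately.
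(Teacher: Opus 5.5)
Your proposal is correct and follows the paper's route. The paper obtains this statement simply as the case $d_1=2$ of \cref{thm:mainI_0empty}, whose proof combines the same two ingredients you use: \cref{lem:vertexiscontainedinaloop}, which puts each $\sigma$-orbit in a single strongly connected component, and the skew symmetry of \cref{prop:skeqsymgraph}, which rules out edges going in only one direction between different components.
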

\begin{example}\label{exm:JH4graph}
    We illustrate examples for the two cases of \cref{thm:Bellaichegeneralization4JH}: one where the symplectic extension graph is strongly connected and one where it is a union of two strongly connected subgraphs. We generalize this example in the next section to show that \cref{thm:ourmain} is optimal (see \cref{thm:examplesformaintheorem}).
    
    We first provide an example where the symplectic extension graph is strongly connected.  
    Let 
    \[
        G_1=\left\{\begin{pmatrix}
            a_{11}&pa_{12}&a_{13}&pa_{14}\\pa_{21}&a_{22}&pa_{23}&a_{24}\\
            pa_{31}&pa_{32}&a_{33}&pa_{34}\\
            pa_{41}&pa_{42}&pa_{43}&a_{44}
        \end{pmatrix}\in \GSp_4(\Z_p)\ \Big{|}\ a_{ij}\in \Z_p\textnormal{ for all }1\leq i,j\leq 4 \right\}.
    \]
    Let $\rho\colon G_1\hookrightarrow \GSp_{4}(\Z_p)$ be the inclusion. It is not hard to see that $\rho$ is irreducible (or equivalently, $\Q_p[G_1]=M_4(\Q_p)$; see \cref{lem:inclusionisirred}) and residually reducible with four Jordan--H\"older factors: 
    \[
        \rho_1\colonequals a_{11}\bmod p, \rho_2\colonequals a_{22}\bmod p\quad \textnormal{and}\quad \rho_1^\perp\colonequals a_{33}\bmod p, \rho_2^\perp\colonequals a_{44}\bmod p.
    \] 
    The diagonal matrices in $\GSp_{4}(\Z_p)$ lie in $G_1$. Let $\mathrm{B}$ be the symplectic basis fixed. Since $\mathrm{B}$ satisfies the conclusions of \cref{thm:uppertirangluarwithrespecttog0inGwithdistincteigenvaluesmodp}, $\scrS_\rho^\GSp$ lies in the apartment $\Sigma$ associated to $\mathrm{B}$ of $\Delta_2$.
Let $x_0=[0,\dots,0]$ be the origin of $\Sigma$. By construction, $x_0$ is $G_1$-stable and primitive.
    
    Any symplectic vertex $x$ in $\Sigma$ is of the form $x=[\alpha_1,\alpha_2,\beta_1,\beta_2]$ with $\alpha_1+\beta_1=\alpha_2+\beta_2$ by \cref{prop:symlatticescharacterizationviasymbasis}. If $x$ is a symplectic $G_1$-stable vertex, then  
    for every $g\in G_1$, we have
    \[
        \mathrm{diag}(p^{\alpha_1},p^{\alpha_2},p^{\beta_1},p^{\beta_2})\rho(g)
        \mathrm{diag}(p^{-\alpha_1},p^{-\alpha_2},p^{-\beta_1},p^{-\beta_2})\in \GSp_{4}(\Z_p).
    \]
    Since $a_{31},a_{42}$ can be chosen such that they do not lie in $p\Z_p$, we get
    \[
        0\leq \alpha_1-\beta_1\leq 1\quad \textnormal{and}\quad 0\leq \alpha_2-\beta_2  \leq  1.
    \]
    We claim either $x=x_0$ or $x=x_1\colonequals [1,1,0,0]$.
    
    We see that $\alpha_1=\beta_1$ if and only if $\alpha_2=\beta_2$. If $\alpha_i=\beta_i$ and $\alpha_j=\beta_j+1$ for $j\neq i$, then 
    \[
        2\alpha_i=\alpha_i+\beta_i=\alpha_j+\beta_j=2\alpha_j+1,
    \]
    a contradiction. If $\alpha_i=\beta_i$ for some $i\in \{1,2\}$, then $\alpha_1=\beta_1=\alpha_2=\beta_2$. Therefore, $x=x_0$.
    If $\alpha_1=1+\beta_1$, then $\alpha_2=1+\beta_2$.
    Using $\alpha_1+\beta_1=\alpha_2+\beta_2$, we see that  
    $\alpha_1=\alpha_2=1$ and $\beta_1=\beta_2=0$. This shows that $x=x_1$. 
    
    Therefore, $x_0$ has a unique $G_1$-stable neighbour $x_1=[1,1,0,0]$.
    The vertex $x_0$ realizes a nonsplit extension of $\rho_1^\perp$ by $\rho_1$, and a nonsplit extension of $\rho_2^\perp$ by $\rho_2$.
    Moreover, $x_1$ realizes a nonsplit extension of $\rho_1$ by $\rho_1^\perp$ and $\rho_2^\perp$, and $\rho_2$ by $\rho_2^\perp$ and $\rho_1^{\perp}$.    
    The following diagram depicts $\scrS_\rho^\GSp$ in $\Sigma$:
    \[
       \begin{tikzcd}
                                                                & \circ  \arrow[rd, no head] &     \\
x_0 \arrow[ru, no head] \arrow[rd, no head] \arrow[rr, no head] &                           & x_1 \\
                                                                & \circ \arrow[ru, no head] &    
\end{tikzcd}
    \]
    In the above diagram, $\circ$ denotes a neighbouring vertex in $\scrS_\rho^\GSp$ of the symplectic vertices $x_0$ and $x_1$.
In this case, the symplectic extension graph $\Gamma_\rho^\GSp$ is \[
    \begin{tikzcd}
\rho_1 \arrow[d]\arrow[dr]       & \rho_2 \arrow[d]\arrow[dl]      \\
\rho_1^\perp \arrow[u] & \rho_2^\perp \arrow[u]
\end{tikzcd}
\]

We provide an example where $\Gamma_\rho^\GSp$ has two strongly connected components.
Let \[G_2=\left\{\begin{pmatrix}
    a_{11}&pa_{12}&a_{13}&pa_{14}\\pa_{21}&a_{22}&pa_{23}&a_{24}\\
            pa_{31}&p^2a_{32}&a_{33}&pa_{34}\\
            p^2a_{41}&pa_{42}&pa_{43}&a_{44}
        \end{pmatrix}\in G\right\}.\]
        We see that $G_2$ is a subgroup of $G$. Let $\rho'\colonequals \rho|_{G_2}$, $\rho_i'\colonequals \rho_i|_{G_2}$ and let $\rho_i'^\perp\colonequals \rho_i^\perp|_{G_2}$ for all $1\leq i\leq 2$. Then $\rho'$ is an irreducible representation with residual factors $\rho_1',\rho_2',\rho_1'^{\perp},$ and $\rho_2'^\perp$. By the same calculations as above, there exist two $G_2$-stable symplectic vertices $x_0$ and $x_1$, and by the shape of $G_2$, the edges from $\rho_2$ to $\rho_1^\perp$, and $\rho_1$ to $\rho_2^\perp$ are not realized by $\rhobar'_{x_1}$. In this case, the symplectic extension graph $\Gamma_{\rho'}^\GSp$ is 
         \[
    \begin{tikzcd}
\rho_1' \arrow[d]       & \rho_2' \arrow[d]      \\
\rho_1'^\perp \arrow[u] & \rho_2'^\perp \arrow[u]
\end{tikzcd}
\]
We note that although $\rho$ and $\rho'$ have the same image modulo $p$, their symplectic extension graphs are different.
\end{example}

\begin{example}
        Let $\F_p(r)$ be the $G_\Q$-module $\F_p$ with the $G_\Q$-action given by multiplication by the $r$-th power of the mod-$p$ cyclotomic character. 
        Let $E_k$ denote the normalized Eisenstein series of weight $k\geq 4$ level $\SL_2(\Z)$, and let $f$ be an  cuspidal eigenform of weight $k$ and level $\SL_2(\Z)$ such that $f\equiv E_k\bmod p$. If $\rho$ is an irreducible symplectic $4$-dimensional Galois representation associated to a genus $2$ cuspidal Siegel eigenform that is congruent to the Klingen Eisenstein series associated to $f$, then one of the two following situations must occur by \cref{thm:Bellaichegeneralization4JH} and \cref{prop:skeqsymgraph}:
    \begin{enumerate}[(a)]
        \item $H^1(G_\Q,\F_p(1-k))\neq 0\neq H^1(G_\Q,\F_p(k-1))$.
        \item $H^1(G_\Q,\F_p(2-k))\neq 0\neq H^1(G_\Q,\F_p(k-2))$.
    \end{enumerate}
\end{example}

\subsection{Proof of \cref{thm:examples}}

In this section, we prove \cref{thm:examples} by generalizing \cref{exm:JH4graph} (see \cref{thm:examplesformaintheorem} below).  


Let $n\geq 1$ be an integer, and let $\sigma$ be the transposition on $I_{2n}=\{1,\dots,2n\}$ defined by $\sigma(i)=i+n$ for $1\leq i\leq n$. Let $r$ be an integer fixed throughout this section such that $1\leq r\leq n$. Let $\{\eta_{i,j}\}_{1\leq i,j\leq n}$ satisfy the following conditions. 
    \begin{enumerate}
        \item For all $1\leq i\leq n$, let $\eta_{i,i}=\eta_{i,\sigma(i)}=\eta_{\sigma(i),\sigma(i)}=\eta_{\sigma(i),i}-1=0$.
        \item For all $1\leq i\neq j\leq n$, let
            $\eta_{i,j}=\eta_{i,\sigma(j)}=\eta_{\sigma(i),\sigma(j)}=1$.
        \item \label{item:etarcausingnonexistenceofedges} For all $1\leq i\leq r-1$, let  $\eta_{\sigma(i),j}=2=\eta_{\sigma(j),i}$ for every $1\leq  j\leq n$ with $j\neq i$; else $\eta_{\sigma(i),j}=\eta_{\sigma(j),i}=1$.
    \end{enumerate}
    Let us denote the assignment $(i,j)\mapsto \eta_{i,j}$ defined above by $\eta^{(r)}$.

    \begin{definitions}\cite[2.1.1 and 2.1.2]{BellaicheGraftieaux}
        A \textit{directed numbered graph} is a triple $(V,E,\eta)$, where $(V,E)$ is a directed graph with vertex set $V$ and edge set $E\subseteq V^2$, and $\eta:E\to \Z\sqcup\{\infty\}$ is a map. We say $(V,E,\eta)$ is a \textit{metric graph} if $E=V^2$, $\eta(v,v)=0$ for all $v\in V$, and $\eta(u,v)\leq \eta(u,w)+\eta(w,v)$ for all $u,v,w\in V$. 
    \end{definitions}
    
    \begin{lemma}\label{lem:metricgraphexample}
        The directed numbered graph $(I_{2n},I_{2n}^2,\eta^{(r)})$ is a metric graph.
    \end{lemma}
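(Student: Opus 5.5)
The plan is to verify the two defining conditions of a metric graph directly. The diagonal values $\eta^{(r)}(v,v)=0$ are given by condition (1): $\eta_{i,i}=\eta_{\sigma(i),\sigma(i)}=0$ for all $1\leq i\leq n$. The substance is the triangle inequality $\eta(u,v)\leq \eta(u,w)+\eta(w,v)$ for all $u,v,w\in I_{2n}$.

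The first step is to tabulate $\eta^{(r)}$. Write each element of $I_{2n}$ as $i$ or $\sigma(i)$ with $1\leq i\leq n$. Then:
\begin{itemize}
\item all values on the diagonal and $\eta_{i,\sigma(i)}$ are $0$;
\item $\eta_{\sigma(i),i}=1$;
\item for $i\neq j$, the entries $\eta_{i,j},\eta_{i,\sigma(j)},\eta_{\sigma(i),\sigma(j)}$ equal $1$;
\item $\eta_{\sigma(i),j}\in\{1,2\}$ for $i\neq j$, and it equals $2$ exactly when $i\leq r-1$ or $j\leq r-1$.
\end{itemize}
I will check that condition (3) is consistent with this: it sets $\eta_{\sigma(i),j}=\eta_{\sigma(j),i}=2$ when one of $i,j$ is $\leq r-1$, and this reading is symmetric in $i,j$. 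So every value lies in $\{0,1,2\}$, and the value $0$ occurs off the diagonal only at the pairs $(i,\sigma(i))$.

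The second step is a case analysis. If $u=v$ the inequality is trivial, since the left side is $0$ and all values are nonnegative. If $\eta(u,v)\leq 1$, the inequality can fail only when $\eta(u,w)=\eta(w,v)=0$. Since $w\neq u,v$ in the nontrivial case, this forces $w=\sigma(u)$ with $u\leq n$ and $v=\sigma(w)=u$, which is impossible because $\sigma(u)>n$. So the remaining case is $\eta(u,v)=2$, that is, $u=\sigma(i)$ and $v=j$ with $i\neq j$ and $\min(i,j)\leq r-1$. Here I need $\eta(\sigma(i),w)+\eta(w,j)\geq 2$.
\begin{itemize}
\item $\eta(\sigma(i),w)=0$ only when $w=\sigma(i)=u$, which is excluded.
\item $\eta(w,j)=0$ only when $w=j=v$, which is also excluded.
\end{itemize}
Note that $\eta_{x,\sigma(x)}=0$ has the form (unbarred, barred), while $v=j$ is unbarred. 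Hence both summands are at least $1$ and the sum is at least $2$.

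The main obstacle is bookkeeping rather than a real difficulty. I must correctly identify which off-diagonal entries are $0$; the only ones are $\eta(x,\sigma(x))$ with $x\leq n$. I must also confirm that the overlapping assignments in conditions (2) and (3) define $\eta^{(r)}$ unambiguously on every pair $(\sigma(i),j)$. Once this table is fixed, the triangle inequality follows because values lie in $\{0,1,2\}$ and no two consecutive zero-steps can compose.
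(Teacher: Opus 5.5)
Your proof is correct, but it is organized differently from the paper's.

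\textbf{How the two case splits differ.} The paper fixes $i\neq j$ and splits on the intermediate index $k$: either $k\in\{i,j\}$, or $k\in\{\sigma(i),\sigma(j)\}$, or neither. You instead split on the value $\eta(u,v)\in\{0,1,2\}$. The paper's third case uses the same observation you use. Off the diagonal, $\eta$ vanishes only at the pairs $(x,\sigma(x))$ with $x\le n$, so both summands are $\ge 1$ and the sum is $\ge 2\ge \eta_{i,j}$.

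\textbf{Where the paper's write-up is loose.} For $k\in\{\sigma(i),\sigma(j)\}$, the paper asserts $\eta_{i,k}+\eta_{k,j}=\eta_{\sigma(i),j}$ and then $\eta_{i,j}=\eta_{\sigma(i),j}$. Neither holds in general. Take $n=r=2$ and $(i,j,k)=(3,2,1)$. Then $\eta_{3,1}+\eta_{1,2}=1+1=2$, whereas $\eta_{\sigma(3),2}=\eta_{1,2}=1$. Also $\eta_{3,2}=2\neq 1=\eta_{1,2}$.

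\textbf{Why your split avoids this.} When $\eta(u,v)\le 1$, you only need that two zero steps cannot be composed, since a zero step goes from an index $\le n$ to one $>n$. When $\eta(u,v)=2$, you use that $u>n$ and $v\le n$, so neither step through $w\notin\{u,v\}$ can be a zero step. This is the cleaner and fully correct way to write the argument.

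It also shows the conclusion does not depend on which entries $\eta_{\sigma(i),j}$ ($i\neq j$) condition (3) sets to $2$. It only matters that those entries lie in $\{1,2\}$. This removes any worry about how to read the ``else'' clause in condition (3).
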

    \begin{proof}
        We need to show for all $i,j,k\in I_{2n}$, 
        $
            \eta_{i,j}\leq \eta_{i,k}+\eta_{k,j}.
        $
        The inequality is true for $i=j$ by definition. Suppose that $i\neq j$. We see that
        \[
            \eta_{i,k}+\eta_{k,j}=\begin{cases}
                \eta_{i,j}&\textnormal{if }k\in\{i,j\},\\
                \eta_{\sigma(i),j}&\textnormal{if }k\in\{\sigma(i),\sigma(j)\},\\
                \geq 2&\textnormal{otherwise.}
            \end{cases}
        \]
        Since $\eta_{i,j}=\eta_{\sigma(i),j}\leq 2$, the result follows.
    \end{proof}
    Let
        $G_r$ be the collection of elements $(a_{ij})\in \GSp_{2n}(\Z_p)$ with $a_{ij}\in p^{\eta_{i,j}}\Z_p$.
     We have the inclusions
    \[
        G_{n}\subset \dots \subset G_2\subset G_1.
    \]
    \begin{lemma}\label{lem:inclusionisirred}
        For any $1\leq r\leq n$, we have $\Q_p[G_r]=M_{2n}(\Q_p)$. 
    \end{lemma}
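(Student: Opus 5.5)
We show that the $\Q_p$-span of $G_r$ contains every elementary matrix $E_{ij}$, $1\leq i,j\leq 2n$. Since $G_r\subseteq G_1$ and $\Q_p[G_r]\subseteq\Q_p[G_1]\subseteq M_{2n}(\Q_p)$, it suffices to work with $G_r$ directly. Indexing is with respect to the fixed symplectic basis $\{w_1,\dots,w_n,v_1,\dots,v_n\}$, so $\sigma(i)=i+n$ pairs $w_i$ with $v_i$.

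\textbf{Step 1: diagonal matrices.} Every diagonal matrix $\mathrm{diag}(t_1,\dots,t_n,\lambda t_1^{-1},\dots,\lambda t_n^{-1})$ with $t_i,\lambda\in\Z_p^\times$ lies in $\GSp_{2n}(\Z_p)$. Its off-diagonal entries are zero, so it lies in $G_r$ because $\eta_{i,i}=0$. Since $p$ is odd we may pick $g_0$ of this form with pairwise distinct entries mod $p$ (enlarging the residue field if $p$ is too small, or using the remark that the results hold over a larger complete DVR). Then \cref{lem:Mgeneratesdiagonalmatrices} gives all diagonal matrices in $\Z_p[g_0]\subseteq\Z_p[G_r]$. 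Alternatively, Vandermonde interpolation over $\Q_p$ does the same directly. Hence each idempotent $E_{ii}$ lies in $\Q_p[G_r]$.

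\textbf{Step 2: one unipotent per pair of indices.} For each pair $(i,j)$ with $i\neq j$ we exhibit some $u\in G_r$ with $(u)_{ij}\neq0$. Then
\[E_{ii}\,u\,E_{jj}=u_{ij}E_{ij}\in\Q_p[G_r],\]
which gives all of $M_{2n}(\Q_p)$.

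We take $u$ to be a symplectic root-group element. For a short root $i\neq j$ among the $w$'s, use $u=I+t(E_{ij}-E_{\sigma(j)\sigma(i)})$. For the long roots, use $u=I+tE_{i,\sigma(i)}$ or $u=I+tE_{\sigma(i),i}$. For mixed pairs, use $u=I+t(E_{i,\sigma(j)}+E_{j,\sigma(i)})$ and its transpose analogue. Each of these lies in $\mathrm{Sp}_{2n}(\Z_p)$ for $t\in\Z_p$, as can be checked via \cref{lem:GSp2nformulas}.

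We then choose $t=p^{N}$ with $N=2\geq\max\eta^{(r)}$. With this choice every nonzero off-diagonal entry has valuation at least every bound $\eta_{k,l}$ appearing. The diagonal entries equal $1$, and $\eta_{k,k}=0$. So $u\in G_r$ and $u_{ij}=\pm p^2\neq0$.

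\textbf{Main obstacle.} The only thing to check carefully is that each root-group element has all its nonzero entries in allowed positions. For instance, for the mixed pair the two entries $(i,\sigma(j))$ and $(j,\sigma(i))$ must both satisfy their bounds. Choosing $t=p^2$ uniformly sidesteps this, since every bound $\eta_{k,l}$ is at most $2$. Hence the argument requires no case analysis on $r$ beyond the observation that $\eta^{(r)}$ takes values in $\{0,1,2\}$.
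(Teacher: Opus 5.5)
Your proof is correct and follows the paper's route: get the $E_{ii}$ from a diagonal element of $G_r$ with distinct entries, then cut out each $E_{ij}$ by multiplying a root-group unipotent by these idempotents. Your uniform choice $t=p^2$ and your plus sign in $I+t(E_{i,\sigma(j)}+E_{j,\sigma(i)})$ are actually more careful than the paper's single formula $\mathrm{Id}+p^{\eta_{i,j}}(E_{i,j}-E_{\sigma(j),\sigma(i)})$, which is not symplectic for mixed pairs. In Step 1, though, make the Vandermonde interpolation over $\Q_p$ your actual argument (for instance $\lambda=1$, $t_k=1+p^k$ already give $2n$ pairwise distinct entries): distinctness mod $p$ requires $p\geq 2n+1$, not merely $p$ odd, and passing to a larger coefficient ring changes the group $G_r$ rather than proving the statement about it.
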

    \begin{proof}
     Let $E_{ij}$ be the matrix in $M_{2n}(\Q_p)$ with $1$ at the $(i,j)$-th entry and $0$ everywhere else. We show that $E_{ij}\in \Q_p[G_r]$ for all $1\leq i,j\leq 2n$. Since there exists a diagonal element $g_0\in G_r$ with residually distinct eigenvalues, $E_{i,i}\in \Q_p[G_r]$ for all $1\leq i\leq 2n$ by \cref{lem:Mgeneratesdiagonalmatrices}. Let $\sigma$ be the transposition on $\{1,\dots,2n\}$ such that $\sigma(k)=k+n$ for all $1\leq k\leq 2n$.  Let $\mathrm{Id}\in M_{2n}(\Z_p)$ be the identity matrix. By a simple calculation, for all $1\leq i\leq 2n$, we see that $p^{\eta_{i,\sigma(i)}}E_{i,\sigma(i)}+\mathrm{Id}\in G_r$. This implies that $E_{i,\sigma(i)}\in \Q_p[G_r]$ for all $1\leq i\leq 2n$. Let $1\leq i,j\leq 2n$ be such that $j\not\in \{i,\sigma(i)\}$. It remains to show that $E_{i,j}\in \Q_p[G_r]$. Since $\eta_{i,j}=\eta_{\sigma(j),\sigma(i)}$, we see that $\mathrm{Id}+p^{\eta_{i,j}}(E_{i,j}-E_{\sigma(j),\sigma(i)})\in G_r$. This implies that  $E_{i,j}-E_{\sigma(j),\sigma(i)}\in \Q_p[G_r]$. Since $E_{i,i}\in \Q_p[G_r]$ and $E_{i,j}=E_{ii}(E_{i,j}-E_{\sigma(j),\sigma(i)})$, we see that $E_{i,j}\in \Q_p[G_r]$.
    \end{proof}
    \begin{proposition}
        \label{thm:examplesformaintheorem}
        The inclusion $\rho\colon G_r\hookrightarrow
        \GSp_{2n}(\Z_p)$ is an irreducible representation whose residual factors are nonisomorphic characters. The symplectic extension graph $\Gamma_\rho^\GSp$ has $r$ connected components.
    \end{proposition}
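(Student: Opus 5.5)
Irreducibility is immediate from \cref{lem:inclusionisirred}: since $\Q_p[G_r]=M_{2n}(\Q_p)$, no proper nonzero subspace of $\Q_p^{2n}$ is $G_r$-stable. For the residual factors, I reduce the standard lattice $\Lambda_0=\Z_p^{2n}$ modulo $p$. Every entry $a_{ij}$ with $\eta_{i,j}\ge1$ vanishes mod $p$, so the only surviving off-diagonal entries are the $(i,\sigma(i))$ entries with $1\le i\le n$. Thus $\rhobar_{\Lambda_0}$ is upper triangular with diagonal characters $\chi_k\colon g\mapsto a_{kk}\bmod p$. The diagonal torus of $\GSp_{2n}(\Z_p)$ lies in $G_r$. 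Choosing a diagonal $g_0$ with residually distinct entries (possible for $p$ odd, e.g. $\mathrm{diag}(t_1,\dots,t_n,\lambda t_1^{-1},\dots,\lambda t_n^{-1})$ with suitable $t_i,\lambda$, after enlarging the residue field if necessary) shows that the $\chi_k$ are pairwise nonisomorphic. It also gives \eqref{assume:dist}, and $\chi_{\sigma(k)}=\chi_k^\perp$ with $\sigma(k)\ne k$, so $I_0=\varnothing$. Hence \cref{thm:ourmain} applies with $d=2n$, $d_1=n$.

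Next I determine $S_\rho^\GSp$. By \cref{prop:Srhosymliesinanapartment} (the standard basis satisfies \cref{thm:uppertirangluarwithrespecttog0inGwithdistincteigenvaluesmodp}, and $G_r$ contains the diagonal idempotent-generating element), every $G_r$-stable vertex has the form $[c_1,\dots,c_{2n}]$ in the standard apartment. Following \cite[Lemme 3.2.1]{BellaicheGraftieaux}, such a vertex is stable if and only if $c_i-c_j\le\eta_{i,j}$ for all $i,j$. This uses that each entry $a_{ij}$ of an element of $G_r$ can independently be taken of valuation exactly $\eta_{i,j}$, subject to the symplectic pairing $(i,j)\leftrightarrow(\sigma(j),\sigma(i))$. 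The metric property of \cref{lem:metricgraphexample} gives consistency. As in \cref{exm:JH4graph}, imposing the symplectic condition $c_i+c_{\sigma(i)}$ constant leaves exactly the two symplectic vertices $x_0=[0,\dots,0]$ and $x_1=[1,\dots,1,0,\dots,0]$.

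Then I read off the edges. At $x_0$, the residual representation realizes nonsplit extensions of $\chi_{\sigma(i)}$ by $\chi_i$ exactly where $\eta_{i,\sigma(i)}=0$. These are the edges $\chi_{\sigma(i)}\to\chi_i$, for all $i$. At $x_1$, conjugating by $\mathrm{diag}(p,\dots,p,1,\dots,1)$ shifts the lower-left block by $-1$. The surviving residual entries are then those $(\sigma(i),j)$ with $\eta_{\sigma(i),j}=1$. These give the edges $\chi_i\to\chi_{\sigma(i)}$ for every $i$, and $\chi_j\to\chi_{\sigma(i)}$ for $j\ne i$ precisely when neither $i$ nor $j$ is $\le r-1$, by condition~\eqref{item:etarcausingnonexistenceofedges}. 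I also need the nonsplit extensions occurring as subquotients, not only as adjacent blocks. Since the factors are characters and the reductions have explicit upper-triangular shape, a nonsplit extension of $\chi_a$ by $\chi_b$ arises as a subquotient iff the $(b,a)$ entry is generically a unit and no intermediate ordering forces a splitting. I would verify this by choosing a suitable refinement of the flag; by \cref{prop:equivnonexistneighexitstext} it equals the presence of the corresponding nonzero entry.

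Assembling this, the graph contains the pairs $\{\chi_i,\chi_{\sigma(i)}\}$ strongly connected for each $i$. For $i\le r-1$ these pairs are isolated. The remaining indices $r,\dots,n$ are mutually linked via edges $\chi_j\to\chi_{\sigma(i)}$ and, by \cref{prop:skeqsymgraph}, $\chi_i\to\chi_{\sigma(j)}$, forming one strongly connected component. The total is $(r-1)+1=r$ components. The main obstacle is the middle step: rigorously showing that $S_\rho^\GSp$ is exactly $\{x_0,x_1\}$ and that no other stable symplectic lattice realizes additional extensions. This requires checking that the valuation constraints are sharp, i.e.\ that $G_r$ really contains elements with $v(a_{ij})=\eta_{i,j}$ for each pair. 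I would prove this using the elements $\mathrm{Id}+p^{\eta_{i,j}}(E_{i,j}-E_{\sigma(j),\sigma(i)})$ from the proof of \cref{lem:inclusionisirred}.
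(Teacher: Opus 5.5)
Your proposal is correct and follows essentially the same route as the paper's proof, with more detail: irreducibility from \cref{lem:inclusionisirred}, residual characters read off the diagonal, confinement of $\scrS_\rho^\GSp$ to the standard apartment, the constraints from the entries $(i,\sigma(i))$ and $(\sigma(i),i)$ plus the parity forced by $c_i+c_{\sigma(i)}$ being constant leaving only $x_0$ and $x_1$, and condition~\eqref{item:etarcausingnonexistenceofedges} of $\eta^{(r)}$ deciding which entries $(\sigma(i),j)$ survive at $x_1$. The only slips are cosmetic and shared with the paper: with your criterion $c_i-c_j\le\eta_{i,j}$ for the lattice $\langle p^{c_1}\epsilon_1,\dots,p^{c_{2n}}\epsilon_{2n}\rangle$ the non-primitive stable vertex is $[0,\dots,0,1,\dots,1]$ (your $x_1$ is right in the conjugation convention you actually use to read off edges); the elementary elements in the off-diagonal blocks need a plus sign, e.g.\ $\mathrm{Id}+p^{\eta_{\sigma(i),j}}(E_{\sigma(i),j}+E_{\sigma(j),i})$, since those blocks must be symmetric; and a single $g_0$ with residually distinct eigenvalues exists in $G_r$ only when $p>2n$, although pairwise distinctness of the $\chi_k$ holds for every odd $p$, as does (via Artin independence) the fact that the diagonal torus spans all diagonal matrices over $\Z_p$.
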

    \begin{proof}
     By \cref{lem:metricgraphexample} and \cite[Lemme 3.2.1]{BellaicheGraftieaux}, $G_r$ is a subgroup of $\GSp_{2n}(\Z_p)$. Moreover, the map  $\rho\colon G_r\hookrightarrow \GSp_{2n}(\Z_p)$ defined by inclusion is irreducible by \cref{lem:inclusionisirred}.
     
     For all $1\leq i\leq 2n$, the residual factors are 
    \[
        \chi_i((a_{jk}))=a_{ii}\bmod{p}.
    \]
    By definition, the residual characters $\chi_1,\dots,\chi_{2n}$ are pairwise distinct. 
    Moreover, if $\sigma(i)=i+1$ for $1\leq i\leq n$, then we see that $\chi_{\sigma(i)}=\chi_i^\perp$ for all $i\in I_{2n}$. That is, 
    if $\lambda:G_r\to \Z_p^\times$ is the similitude character of $\rho$, then $\chi_{\sigma(i)}\chi_i=\bar\lambda$.
    
    Let $\mathrm{B}_0$ be the symplectic basis fixed of $\Z_p^{2n}$.
    Since the diagonal matrices of $\GSp_{2n}(\Z_p)$ are in $G_r$, $\scrS_\rho^\GSp$ lies in the apartment of $\Delta_n$ associated to $\mathrm{B}_0$ by \cref{prop:Srhosymliesinanapartment}. 
    If $x=[\alpha_1,\dots,\alpha_n,\beta_1,\dots,\beta_n]$ is a $G$-stable symplectic vertex, then we see that $0\leq \alpha_i-\beta_i\leq 1$ for all $i\in \{1,\dots,2n\}$. This follows by the condition $\eta_{\sigma(i),i}=1$ for all $1\leq i\leq n$. 
    By a computation similar to the one done in \cref{exm:JH4graph}, we see the only $G$-stable symplectic vertices are $x_0=[\Lambda_0]=[0,\dots,0]$, and $x_1=[1,\dots, 1,0,\dots,0]$.  
    By our definition of $\eta^{(r)}$, in particular \eqref{item:etarcausingnonexistenceofedges}, the graph $\Gamma_\rho^\GSp$ is a disjoint union of $r$ strongly connected subgraphs: $r-1$ strongly connected subgraphs with vertices $\{\chi_i,\chi_{\sigma(i)}\}_{i=1}^{r-1}$, and a strongly connected subgraph on the remaining vertices.
    \end{proof}
\subsection{Three Jordan--H\"older factors: an example}
\label{subsec:3JH}
In this section, we provide an example of an irreducible symplectic $p$-adic representation $\rho$ for which $\Gamma_\rho^\GSp$ is an empty graph.
This contrasts \cref{thm:ourmain}. An important distinction of this example with the one provided in \cref{example:2JHdisconnected} is that the example in \cref{example:2JHdisconnected} is residually reducible over $\GL_4(\F_p)$ and not over $\GSp_4(\F_p)$. That is, the residual representation in \cref{example:2JHdisconnected} does not stabilize an isotropic subspace.  

\begin{proposition}\label{prop:example3JH}
    There exists an irreducible symplectic $p$-adic representation $\rho\colon G\to \GSp_{4}(\Z_p)$  with three residual factors, satisfying \eqref{assume:dist} and \eqref{assume:resmultfree}, such that 
   $\Gamma_\rho^\GSp$ is the empty graph and $\Gamma_\rho^\GL$ is the complete graph.
\end{proposition}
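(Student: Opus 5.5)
We will construct $\rho$ in the style of \cref{exm:JH4graph}, as the inclusion of a congruence subgroup $G\subset \GSp_4(\Z_p)$ cut out by a metric. The symplectic type must have three residual factors with $I_0\neq\varnothing$. Then $\sum n_i=4$ with $n_i$ even for $i\in I_0$ forces $I_0=\{3\}$, $n_3=2$, and $\rho_1,\rho_2=\rho_1^\perp$ characters. So the target residual shape is: a character $\chi=\rho_1$ on $w_1$, a $2$-dimensional symplectic irreducible $\rho_3$ on $\langle w_2,v_2\rangle$, and $\chi^\perp=\bar\lambda\chi^{-1}$ on $v_1$.

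We use the basis ordering $(w_1,w_2,v_2,v_1)$, in which the form is the block matrix of \cref{lem:suitablychangingbasisinflaglemma}. Define $G$ as the set of elements of $\GSp_4(\Z_p)$ whose entries in positions (block row, block column) $(2,1),(3,2),(3,1)$ are divisible by $p$, $p$, $p^{2}$ respectively. Additionally, require that the middle $2\times 2$ block reduces mod $p$ into a subgroup with absolutely irreducible image, for instance all of $\GL_2$, or a nonsplit-torus-type subgroup as in \cref{example:2JHdisconnected}. We also need an element with distinct eigenvalues mod $p$, such as a diagonal torus element $\mathrm{diag}(a,b,\lambda b^{-1},\lambda a^{-1})$ with generic entries. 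This gives \eqref{assume:dist}; note the middle block then need not be upper triangular. Residual multiplicity freeness holds once $\chi\neq\chi^\perp$, which follows from choosing diagonal entries freely.

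First step: we check that $G$ is a group, via the metric-graph criterion of \cite[Lemme 3.2.1]{BellaicheGraftieaux} with the three blocks as vertices, exactly as in \cref{lem:metricgraphexample}. We then check that $\Q_p[G]=M_4(\Q_p)$, as in \cref{lem:inclusionisirred}, using the idempotents from $g_0$ and elementary matrices of the form $\mathrm{Id}+p^{\eta}(E_{ij}-\pm E_{\sigma(j)\sigma(i)})$. This gives irreducibility.

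Second step, the key one: we determine $S_\rho^\GSp$. By \cref{prop:Srhosymliesinanapartment} (or directly, since diagonal matrices lie in $G$), every stable symplectic vertex is $[a_1,a_2,b_2,b_1]$ with $a_1+b_1=a_2+b_2$. Irreducibility of $\rho_3$ forces $a_2=b_2$. Stability conditions conjugating the $p^{\eta}$ entries then bound $a_1-b_1$. By \cref{thm:symplecticneighboursiffI0empty}, no two stable symplectic vertices are neighbours. We aim to show that the stable symplectic vertices are exactly $x_0=[0,0,0,0]$ and $x_{-1}=[0,1,1,2]\sim[-1,0,0,1]$, the vertex obtained by rescaling $v_1$ by $p^{2}$ relative to $w_1$. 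The $p^2$ in the corner is what makes this second vertex stable.

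Third step: we compute the residual representations at these vertices. At $x_0$, the entries below the diagonal blocks vanish mod $p$. Choosing the entries above the diagonal blocks free, $\rhobar_{x_0}$ is upper block triangular with nonsplit pieces $\chi\to\rho_3$ and $\rho_3\to\chi^\perp$. We must show no \emph{symplectic-realized} edge appears, so the plan is to arrange that $\rhobar_{x_0}$ is in fact semisimple and every extension lives in a non-symplectic stable lattice. To do this, we also make the upper off-diagonal entries divisible by $p$ (positions $(1,2),(2,3),(1,3)$ with exponents $1,1,0$ or $1,1,1$, adjusted so the metric inequalities still hold). The constraints $a_2=b_2$ together with the symmetric $p$-divisibility then force every stable symplectic lattice to be residually semisimple. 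Meanwhile, the non-symplectic vertices of $S_\rho^\GL$ (e.g. $[0,1,1,1]$, $[0,0,0,1]$) realize all six nonsplit extensions among $\chi,\rho_3,\chi^\perp$. For these we invoke \cref{prop:Bellaichemain} together with the multiplicity-free criterion: since $\rho$ is irreducible, $\Gamma_\rho^\GL$ is strongly connected. For the complete-graph claim we check each edge directly by exhibiting the lattice and the nonvanishing off-diagonal entry mod $p$.

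We expect the main obstacle to be tuning the exponents. They must make every stable symplectic vertex residually semisimple, while keeping $G$ a group and keeping enough GL-stable lattices to realize all six edges. The first attempt is the symmetric choice with all off-diagonal block exponents equal to $1$, except exponent $2$ in the two corner blocks, and then verification of the triangle inequality.
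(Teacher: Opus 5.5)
Your framework matches the paper's: a congruence subgroup $G\subset\GSp_4(\Z_p)$ with residual pieces $\chi$ on $w_1$, an absolutely irreducible $\rho_3$ on $\langle w_2,v_2\rangle$ and $\chi^\perp$ on $v_1$, irreducibility via $\Q_p[G]=M_4(\Q_p)$, and the $\GL$ side read off from the Bella\"iche--Graftieaux description of $\scrS_\rho^\GL$. The gap is in the decisive step, that every $G$-stable symplectic lattice is residually semisimple. It fails for every exponent choice you write down. The $p^2$ corner, whose purpose is to make a second symplectic vertex stable, is exactly what creates edges in $\Gamma_\rho^\GSp$.

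Write $(c_1,c_2,c_3)$ for the exponents on the blocks $\{w_1\}$, $\{w_2,v_2\}$, $\{v_1\}$. With respect to the basis of $\langle p^{c_1}w_1,p^{c_2}w_2,p^{c_2}v_2,p^{c_3}v_1\rangle$, the $(i,j)$-block of $g$ becomes $p^{c_j-c_i}g_{ij}$. Take your exponents $1,1,2$ below the diagonal (and $1$ above). The special vertex $x_{-1}=[-1,0,0,1]$ is stable. There the $(2,1)$, $(3,2)$, $(3,1)$ blocks are multiplied by $p^{-1},p^{-1},p^{-2}$, so they become units for suitable $g\in G$ (e.g.\ the symplectic unipotent $\mathrm{Id}+p(E_{w_2,w_1}\pm E_{v_1,v_2})$ with the appropriate sign). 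Hence $\rhobar_{x_{-1}}$ is block lower triangular and not semisimple. The flag $\langle\overline{pv_1}\rangle\subset\langle\bar w_2,\bar v_2,\overline{pv_1}\rangle$ is stable, and the extension of $\rho_3$ by $\chi^\perp$ is nonsplit, since its cocycle is nonzero on an element acting trivially on both factors. As $x_{-1}$ is symplectic, $\Gamma_\rho^\GSp\neq\varnothing$.

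Your other options fail too. The variant with exponent $2$ in both corners makes both $[\pm1,0,0,\mp1]$ stable and non-semisimple. The variant with exponent $0$ at $(w_1,v_1)$ already fails at $x_0$, where $\langle\bar w_1,\bar v_1\rangle$ is a nonsplit extension of $\chi^\perp$ by $\chi$. So your assertion that $a_2=b_2$ plus the symmetric $p$-divisibility forces every stable symplectic lattice to be residually semisimple is false.

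The missing idea is to aim for a \emph{unique} stable symplectic vertex rather than two. Take every cross-block exponent equal to $1$, corners included; this is the paper's $\eta$. Then $G$ is the preimage of the Levi $\GL_1\times\GL_1\times\GL_2$ of the Klingen parabolic. The triangle inequality is automatic, since all off-diagonal values are at most $1$ and the only zeros are inside the $\GL_2$ block. Stable lattices satisfy $a_2=b_2$ and $|c_i-c_j|\le 1$. By \cref{prop:symlatticescharacterizationviasymbasis} a symplectic one also has $a_1+b_1=2a_2$. Then $|a_1-b_1|=2|a_1-a_2|\le 1$ forces $a_1=b_1=a_2$. Thus $x_0$ is the only $G$-stable symplectic vertex, and $\rhobar_{x_0}=\chi\oplus\rho_3\oplus\chi^\perp$ is semisimple, so $\Gamma_\rho^\GSp$ is empty.

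On the $\GL$ side, the stable vertices are $c\in\{0,1\}^3$ up to homothety, forming the triangulated hexagon. For each ordered pair $i\neq j$, the vertex with $c_i=1$, $c_j=0$ turns the $(i,j)$-block into a unit. This yields all six edges of $\Gamma_\rho^\GL$, which is what the paper gets from \cite[Th\'eor\`eme 3.3.3]{BellaicheGraftieaux}.

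A minor point: \cref{example:2JHdisconnected} does not use a nonsplit torus. A nonsplit-torus middle block would also remove the diagonal elements you rely on for \eqref{assume:dist} and for placing $\scrS_\rho^\GSp$ in an apartment, so use the full preimage of $\GL_2$.
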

\begin{proof}
    Let $I_4=\{1,2,3,4\}$, and let $\mathcal{G}=(I_4,I_4^2,\eta)$ be the numbered graph defined by 
    \begin{enumerate}[(i)]
        \item $\eta_{i,i}=0$ for all $i\in I_4$,
        \item $\eta_{i,j}=1$ for all $1\leq i\neq j\leq 3$,
        \item $\eta_{4,1}=\eta_{4,3}=\eta_{1,4}=\eta_{,4}=1$ and $\eta_{4,2}=\eta_{2,4}=0$.
    \end{enumerate} 
    By definition, for all $i,j,k\in I_4$, $\eta_{ij}+\eta_{jk}\in \{0,1,2\}$. To show that $\eta$ is a metric, the only case to check is when $\eta_{ij}+\eta_{jk}=0$. In this case, it follows that $\eta_{ik}=0$ by definition.
    
    Let $G$ be the group of elements $(a_{ij})\in \GSp_{4}(\Z_p)$ with $a_{ij}\in p^{\eta_{i,j}}\Z_p$.
    It immediate that $G$ is in fact the preimage in $\GSp_{4}(\Z_p)$ under the reduction mod $p$ map of the Levi subgroup $\GL_1(\F_p)\times \GL_1(\F_p)\times \GL_2(\F_p)$ of the Klingen parabolic in $\GSp_{4}(\F_p)$.
    Let $\rho:G\hookrightarrow \GSp_4(\Z_p)$ be the inclusion. Then $\rho$ is irreducible, residually multiplicity free with three residual factors, and satisfies \eqref{assume:dist} by the definition of $G$.
    Let $\Sigma$ be the apartment in $\mathcal{X}_4$ with respect to the chosen symplectic basis.  By \cite[Proposition 3.1.1]{BellaicheGraftieaux}, 
    $\scrS_\rho^\GL$ lies in the sub-apartment of $\Sigma$ of dimension $2$ in $\mathcal{X}_{3}$ and,
    by \cite[Lemme 3.2.1]{BellaicheGraftieaux}, we have \[\scrS_\rho^\GL=\{[a_1,a_2,a_3,a_4]\in \Sigma\ |\ a_i-a_j\leq \eta_{i,j}\}.\]
    By our choice of $\eta$, the simplicial complex $\scrS_\rho^\GL$ is a triangulated hexagon in the sub-apartment of $\Sigma$. By \cite[Th\'eor\`eme 3.3.3]{BellaicheGraftieaux}, $\Gamma_\rho^\GL$ is the complete graph on the three residual factors. On the other hand,  we see that $S_\rho^\GSp=\{[0,0,0,0]\}$ by \cref{prop:symlatticescharacterizationviasymbasis}. If $\Lambda_0$ is the primitive $G$-stable lattice such that $[\Lambda_0]=[0,0,0,0]$, then $\Lambdabar_0$ is semisimple and $\Gamma_\rho^\GSp$ has no edges. 
\end{proof}

\providecommand{\bysame}{\leavevmode\hbox to3em{\hrulefill}\thinspace}
\providecommand{\MR}{\relax\ifhmode\unskip\space\fi MR }
\providecommand{\MRhref}[2]{%
  \href{http://www.ams.org/mathscinet-getitem?mr=#1}{#2}
}
\providecommand{\href}[2]{#2}
  

\begin{thebibliography}{DKSW23b}

\bibitem[AB08]{AbramenkoBrown}
Peter Abramenko and Kenneth~S. Brown, \emph{Buildings}, Graduate Texts in
  Mathematics, vol. 248, Springer, New York, 2008, Theory and applications.
  \MR{2439729}

\bibitem[BC04]{BellaicheChenevierU3}
Jo\"el Bella\"iche and Ga\"etan Chenevier, \emph{Formes non temp\'er\'ees pour
  {$\rm U(3)$} et conjectures de {B}loch-{K}ato}, Ann. Sci. \'Ecole Norm. Sup.
  (4) \textbf{37} (2004), no.~4, 611--662. \MR{2097894}

\bibitem[BC09]{BellaicheChenevierFamilies}
Jo\"el Bella{\"\i}che and Ga\"etan Chenevier, \emph{Families of {Galois}
  representations and {Selmer} groups}, Ast\'erisque, no. 324, Soci\'et\'e
  math\'ematique de France, 2009 (en). \MR{2656025}

\bibitem[Bel03]{BellaicheRibetslemmaPadova}
Jo\"el Bella\"iche, \emph{\`a{} propos d'un lemme de {R}ibet}, Rend. Sem. Mat.
  Univ. Padova \textbf{109} (2003), 45--62. \MR{1997986}

\bibitem[Bel09]{BellaicheRibet}
Jo\"el Bella{\"\i}che, \emph{Ribet’s lemma, generalizations, and
  pseudocharacters}, Two lectures at the Clay Mathematical Institute Summer
  School, 2009.

\bibitem[Bel19]{Bellaicheimages}
Jo\"el Bella\"iche, \emph{Image of pseudo-representations and coefficients of
  modular forms modulo {$p$}}, Adv. Math. \textbf{353} (2019), 647--721.
  \MR{3989529}

\bibitem[BG06]{BellaicheGraftieaux}
Jo\"el Bella\"iche and Philippe Graftieaux, \emph{Repr\'esentations sur un
  anneau de valuation discr\`ete complet}, Math. Ann. \textbf{334} (2006),
  no.~3, 465--488. \MR{2207872}

\bibitem[BK21]{bergerklosinirred}
Tobias Berger and Krzysztof Klosin, \emph{Irreducibility of limits of {G}alois
  representations of {S}aito-{K}urokawa type}, Res. Number Theory \textbf{7}
  (2021), no.~3, Paper No. 41, 25. \MR{4269673}

\bibitem[Che14]{ChenevierDeterminants}
Ga\"{e}tan Chenevier, \emph{The {$p$}-adic analytic space of pseudocharacters
  of a profinite group and pseudorepresentations over arbitrary rings},
  Automorphic forms and {G}alois representations. {V}ol. 1, London Math. Soc.
  Lecture Note Ser., vol. 414, Cambridge Univ. Press, Cambridge, 2014,
  pp.~221--285. \MR{3444227}

\bibitem[DK23]{SamitKakde}
Samit Dasgupta and Mahesh Kakde, \emph{On the {B}rumer-{S}tark conjecture},
  Ann. of Math. (2) \textbf{197} (2023), no.~1, 289--388. \MR{4513146}

\bibitem[DKSW23a]{dasgupta2023brumerstarkconjecturez}
Samit Dasgupta, Mahesh Kakde, Jesse Silliman, and Jiuya Wang, \emph{The
  {B}rumer--{S}tark conjecture over $\mathbb{Z}$}, 2023.

\bibitem[DKSW23b]{dasgupta2023residually}
\bysame, \emph{The residually indistinguishable case of {R}ibet's method for
  $\mathrm{GL}_2$}, 2023.

\bibitem[Gar97]{PaulGarrett}
Paul Garrett, \emph{Buildings and classical groups}, Chapman \& Hall, London,
  1997. \MR{1449872}

\bibitem[GK96]{skewsymmetricgraphs}
Andrew~V. Goldberg and Alexander~V. Karzanov, \emph{Path problems in
  skew-symmetric graphs}, Combinatorica \textbf{16} (1996), no.~3, 353--382.
  \MR{1417346}

\bibitem[LS10]{LamSwanSymMod}
T.~Y. Lam and R.~G. Swan, \emph{Symplectic modules and von {N}eumann regular
  matrices over commutative rings}, Advances in ring theory, Trends Math.,
  Birkh\"auser/Springer Basel AG, Basel, 2010, pp.~213--227. \MR{2664673}

\bibitem[MQ26]{moakher2023symplecticdeterminantlawsinvariant}
Mohamed Moakher and Julian Quast, \emph{Symplectic determinant laws and
  invariant theory}, Journal of the Institute of Mathematics of Jussieu
  \textbf{25} (2026), no.~5, 2853–2918.

\bibitem[MW84]{Mazur--Wiles}
B.~Mazur and A.~Wiles, \emph{Class fields of abelian extensions of {${\bf
  Q}$}}, Invent. Math. \textbf{76} (1984), no.~2, 179--330. \MR{742853}

\bibitem[Rib76]{ribetunram}
Kenneth~A. Ribet, \emph{A modular construction of unramified {$p$}-extensions
  of {$\mathbb{Q}(\mu_{p})$}}, Invent. Math. \textbf{34} (1976), no.~3,
  151--162. \MR{419403}

\bibitem[Set08]{AlisonBuildings}
Alison Setyadi, \emph{The affine buildings of {SL}(n) and {S}p(n): {A}
  combinatorial perspective}, ProQuest LLC, Ann Arbor, MI, 2008, Thesis
  (Ph.D.)--Dartmouth College. \MR{2712416}

\bibitem[She07]{Shemanskesymplecticbuilding}
Thomas~R. Shemanske, \emph{The arithmetic and combinatorics of buildings for
  {${\rm Sp}_n$}}, Trans. Amer. Math. Soc. \textbf{359} (2007), no.~7,
  3409--3423. \MR{2299461}

\bibitem[SU14]{SkinnerUrban}
Christopher Skinner and Eric Urban, \emph{The {I}wasawa main conjectures for
  {$\rm GL_2$}}, Invent. Math. \textbf{195} (2014), no.~1, 1--277. \MR{3148103}

\bibitem[Urb01]{UrbanKlingen}
Eric Urban, \emph{Selmer groups and the {E}isenstein-{K}lingen ideal}, Duke
  Math. J. \textbf{106} (2001), no.~3, 485--525. \MR{1813234}

\bibitem[Wil90]{WilesTotallyReal}
A.~Wiles, \emph{The {I}wasawa conjecture for totally real fields}, Ann. of
  Math. (2) \textbf{131} (1990), no.~3, 493--540. \MR{1053488}

\end{thebibliography}
\end{document}